\newtheorem{theorem}{Theorem}[section]
\newtheorem{proposition}[theorem]{Proposition}
\newtheorem{corollary}[theorem]{Corollary}
\newtheorem{lemma}[theorem]{Lemma}
\theoremstyle{definition}
\newtheorem{definition}[theorem]{Definition}
\theoremstyle{remark}
\newtheorem{remark}[theorem]{Remark}
\newtheorem{example}[theorem]{Example}
\renewcommand{\leq}{\leqslant}
\renewcommand{\geq}{\geqslant}
\newcommand{\bd}{\partial}
\newcommand{\mE}{\mathcal E}
\newcommand{\mL}{\mathcal L}
\newcommand{\mN}{\mathcal N}
\newcommand{\la}{\langle}
\newcommand{\ra}{\rangle}
\newcommand{\diam}{\mathrm{diam}}
\newcommand{\dist}{\mathrm{dist}}
\newcommand{\supp}{\mathrm{supp}}
\newcommand{\eps}{\varepsilon}
\newcommand{\Diff}{\mathrm{Diff}}
\newcommand{\inj}{\mathrm{inj}}
\newcommand{\optV}{\mathcal V}
\newcommand{\spaceV}{\mathcal P}
\newcommand{\Sing}{\mathrm{Sing}}
\newcommand{\II}{\mathrm{II}}
\newcommand{\Hess}{\mathrm{Hess}}
\newcommand{\Ric}{\mathrm{Ric}}
\DeclareMathOperator{\area}{\mathrm{Area}}
\DeclareMathOperator{\vol}{\mathrm{Vol}}
\DeclareMathOperator{\ind}{\mathrm{ind}}
\DeclareMathOperator{\nul}{\mathrm{nul}}
\numberwithin{equation}{section}
\begin{document}

\title[Harmonic maps in higher dimensions]{Existence of harmonic maps and eigenvalue optimization in higher dimensions}

\author[M. Karpukhin]{Mikhail Karpukhin}
\address[Mikhail Karpukhin]{Mathematics 253-37, California Institute of Technology, 
Pasadena, CA 91125, USA}
\email{mikhailk@caltech.edu}
\author[D. Stern]{Daniel Stern}
\address[Daniel Stern]{Department of Mathematics, University of Chicago,
5734 S University Ave
Chicago IL, 60637, USA}
\email{dstern@uchicago.edu}
\date{}

\begin{abstract} 

We prove the existence of nonconstant harmonic maps of optimal regularity from an arbitrary closed manifold $(M^n,g)$ of dimension $n>2$ to any closed, non-aspherical manifold $N$ containing no stable minimal two-spheres. In particular, this gives the first general existence result for harmonic maps from higher-dimensional manifolds to a large class of positively curved targets. In the special case of the round spheres $N=\mathbb{S}^k$, $k\geq 3$, we obtain a distinguished family of nonconstant harmonic maps $M\to \mathbb{S}^k$ of index at most $k+1$, with singular set of codimension at least $7$ for $k$ sufficiently large. Furthermore, if $3\leq n\leq 5$, we show that these smooth harmonic maps stabilize as $k$ becomes large, and correspond to the solutions of an eigenvalue optimization problem on $M$, generalizing the conformal maximization of the first Laplace eigenvalue on surfaces.


\end{abstract}
\maketitle

\section{Introduction}

\subsection{Existence of harmonic maps}
A map $u\colon M\to N$ between Riemannian manifolds $(M,g)$ and $(N,h)$ is said to be \emph{harmonic} if it is a critical point for the Dirichlet energy
 $$
 E(u)=\frac{1}{2}\int_M|du|^2_{g,h}\,dv_g
 $$ 
 on the space of maps from $M$ to $N$. Generalizing classical questions about the existence of closed geodesics in a given Riemannian manifold, existence and regularity problems for harmonic maps between higher-dimensional manifolds have played an important role in the development of geometric analysis over the past sixty years (see, e.g. \cite[Section 3]{YauSurv}).

One natural strategy for producing harmonic maps is to minimize energy in a homotopy class of maps from $M$ to $N$. For targets $N$ of nonpositive sectional curvature, this was carried out successfully by Eells and Sampson in the landmark paper \cite{EeSa64}. Indeed, combined with subsequent work of Hartman \cite{Hartman}, the results of \cite{EeSa64} give a complete picture of the space of harmonic maps from a general closed manifold $(M^n,g)$ to a closed target $(N^k,h)$ of nonpositive sectional curvature: every homotopy class contains a nonempty, connected set of smooth harmonic representatives, all of which minimize energy in the class. Moreover, if $N$ has strictly negative sectional curvature, then the harmonic representative of each homotopy class is unique.

For general target manifolds $N$, the situation is quite different. For general higher-dimensional domains $M^n$ and targets $N^k$, there often exist nontrivial homotopy classes of maps which admit no energy-minimizing representative even in a weak sense, as distinct path components of $C^1(M,N)$ may merge in the weak or strong topologies on the space $W^{1,2}(M,N)$ of finite-energy maps between the manifolds. On the other hand, as in the study of closed geodesics, in many cases one can produce interesting non-energy-minimizing harmonic maps $M\to N$ via Morse-theoretic or min-max methods. In the influential paper \cite{SaUhl77}, Sacks and Uhlenbeck developed a Morse-theoretic approach to the study of harmonic maps from the $2$-sphere to general targets, obtaining existence results that later saw elegant applications to other classical problems in differential geometry, notably in the work of Siu--Yau \cite{SiuYau} and Micallef--Moore \cite{MM88}. 

While the work of Sacks-Uhlenbeck led to major improvements in the existence and compactness theory for harmonic maps from surfaces, the space of harmonic maps from manifolds of dimension $n\geq 3$ into general targets remains rather poorly understood, due in part to the presence of singularities in harmonic maps arising from variational methods on higher-dimensional domains (see, e.g., \cite{SU82}), and--perhaps more seriously--the more complicated higher-dimensional counterpart of bubbling for families of harmonic maps and approximations thereof (see \cite{Lin99}). In particular, compactness for harmonic maps and Palais-Smale sequences for the energy functional fails more dramatically in dimension $\geq 3$, making it difficult to implement min-max constructions in general.

The first result of the present paper establishes existence of nontrivial harmonic maps from arbitrary closed manifolds $M^n$ of dimension $n\geq 3$ into a large class of targets $N^k$, via min-max methods. In what follows, recall that a map $u\in W^{1,2}(M,N)$ is said to be a stationary harmonic map if it is a critical point for the energy functional in the following strong sense: in addition to solving the weak Euler-Lagrange equations for $E(u)$, $u$ satisfies $\left.\frac{d}{dt}\right|_{t=0}E(u_t)=0$ for inner variations $u_t=u\circ \Phi_t$, where $t\mapsto\Phi_t$ is a smooth family of diffeormorphisms of the domain $M$, see Section~\ref{sec:minmax} for details.

\begin{theorem}\label{exthm} Let $(N^k,h)$ be a closed Riemannian manifold containing no stable minimal two-spheres, and suppose $\pi_{\ell}(N)\neq 0$ for some $\ell \geq 3$. Then for any closed manifold $(M^n,g)$ of dimension $n\geq 3$, there exists a nonconstant stationary harmonic map
$$
u\colon (M^n,g)\to (N^k,h)
$$
of Morse index $\ind_E(u)\leq \ell+1$ as a critical point of $E$, smooth away from a set $\Sigma\subset M^n$ of dimension $\dim(\Sigma)\leq n-3$.
Moreover, if for some $m>2$, $N^k$ admits no nonconstant stable $0$-homogeneous harmonic map $v\colon \mathbb{R}^m\to N^k$, then the singular set $\Sigma=\Sing(u)$ has dimension $\dim(\Sigma)\leq n-m-1$.
\end{theorem}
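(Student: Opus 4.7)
The plan is to run a min-max for the Dirichlet energy, regularized via a Ginzburg--Landau penalization to restore Palais--Smale in dimension $n\geq 3$, and then carefully pass to the limit using the no-stable-$\Sp$ hypothesis. To set up the sweepout, pick a smooth representative $\phi\colon S^\ell\to N$ of a nontrivial class in $\pi_\ell(N)$ and, using $\phi$ together with a bubble/rescaling or suspension-type construction, assemble an $(\ell+1)$-parameter family $F\colon X\to W^{1,2}(M,N)$ over a compact polyhedron $X$ (topologically $X\simeq \Sigma S^\ell$) whose homotopy class relative to an appropriate ``trivial'' subfamily is detected by $[\phi]$ through an evaluation map. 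A quantitative argument---maps of sufficiently small $W^{1,2}$-energy into $N$ are close to constants and hence null-homotopic relative to them---shows that the associated min-max width $\omega$ is strictly positive.

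\textbf{Regularization.} Embed $N\hookrightarrow\R^L$ isometrically and fix a nonnegative smooth potential $F$ on a tubular neighborhood of $N$ with $F^{-1}(0)=N$ and Hessian nondegenerate transverse to $N$. The Ginzburg--Landau energy
$$
E_\epsilon(u)=\int_M\Bigl(\tfrac12|du|^2+\tfrac{1}{4\epsilon^2}F(u)\Bigr)dv_g,\qquad u\in W^{1,2}(M,\R^L),
$$
is a smooth Hilbert functional that is coercive and satisfies Palais--Smale. Standard Hilbert-space min-max theory applied to the pushed-forward sweepout then produces critical points $u_\epsilon$ of $E_\epsilon$ with $E_\epsilon(u_\epsilon)\to\omega>0$ and Morse index at most $\ell+1$.

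\textbf{Passage to the limit and regularity.} Along a subsequence, $u_\epsilon\rightharpoonup u_*$ weakly in $W^{1,2}$, and the limit $u_*\colon M\to N$ is a stationary harmonic map in the $W^{1,2}$ sense; the energy densities concentrate along a rectifiable set where bubbles---nontrivial harmonic maps $\Sp\to N$---form, in the spirit of the Lin--Rivi\`ere bubble-tree framework. The essential step is an index count: bounded Morse index of $u_\epsilon$ combined with the hypothesis that $N$ carries no stable minimal $\Sp$ forces each bubble to contribute at least one negative eigenvalue to the Hessian, thereby limiting both the number of bubbles and the total energy they can absorb; this guarantees $u_*$ is nonconstant and $\ind_E(u_*)\leq\ell+1$. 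Partial regularity of Bethuel/Evans/Rivi\`ere for stationary harmonic maps then yields smoothness away from a closed set $\Sigma$ with $\dim\Sigma\leq n-3$. Under the stronger hypothesis ruling out stable $0$-homogeneous harmonic maps $\R^m\to N$, Federer dimension reduction on tangent maps---which are $0$-homogeneous and, via an index-slicing argument feeding off the bounded index of $u_*$, stable wherever the singular set has codimension $<m$---improves the bound to $\dim\Sigma\leq n-m-1$.

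The main obstacle is the $\epsilon\to 0$ analysis: rescuing the min-max energy from complete absorption into bubbles while preserving the index bound requires a delicate neck decomposition coupled to the index-counting argument, and it is precisely the no-stable-minimal-two-sphere assumption that makes this feasible. A secondary technical challenge is guaranteeing that the topological nontriviality of the sweepout survives both the pushforward into $\R^L$ and the $\epsilon\to 0$ limit, which should follow from continuity of the Ginzburg--Landau deformation retraction onto $N$ on bounded-energy pieces.
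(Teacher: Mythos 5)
Your overall scaffolding (GL regularization, $(\ell+1)$-parameter sweepout, index bound from min-max, limit as $\epsilon\to0$) matches the paper, but the two decisive steps are not correct as you state them. First, the $\epsilon\to 0$ analysis: you propose a bubble-tree/neck decomposition in which ``each bubble contributes at least one negative eigenvalue,'' so that the bounded index limits the number of bubbles and the energy they absorb. This bookkeeping is both insufficient and not the right mechanism in dimension $n\geq 3$. Bounding the number of bubbles does not prevent all of the min-max energy from being absorbed into them, so nonconstancy of $u_*$ does not follow; and in fact no higher-dimensional neck analysis is needed. The paper's key lemma is that energy concentration for the Lin--Wang limit occurs along an $(n-2)$-dimensional set, and a harmonic $2$-sphere bubble that is \emph{unstable} produces, after multiplying its negative variation by translated cutoffs in the $n-2$ flat directions, \emph{infinitely many} independent negative directions. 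Hence bounded index forces every bubble to be stable, the hypothesis on $N$ then rules out all bubbles, and the convergence $u_\epsilon\to u$ is strong in $W^{1,2}$ with zero energy loss. Nonconstancy is then immediate from $E(u)=\mathcal{E}_f>0$, and the index bound passes to the limit by a logarithmic-cutoff/projection argument; your sketch gives neither of these conclusions. (Relatedly, your positivity-of-width argument via ``small-energy maps are close to constants, hence null-homotopic'' is shaky for $W^{1,2}$ maps in dimension $\geq 3$, where individual finite-energy maps carry no homotopy class; the paper instead argues at the level of the family, using the average map and the nearest-point projection.)

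Second, the regularity step as you wrote it is false: Bethuel's partial regularity for general stationary harmonic maps gives only $\mathcal{H}^{n-2}(\Sing(u))=0$, not $\dim\Sigma\leq n-3$; improving this for arbitrary stationary maps is an open problem, precisely because rescalings need not converge strongly and Federer dimension reduction cannot be run. The correct route uses the extra structure of the limit map: finite Morse index implies local stability near every point (a disjoint-annuli argument), and for locally stable stationary maps into a target with no stable harmonic $2$-spheres one has strong compactness of blow-ups (Hsu), which restores the Schoen--Uhlenbeck dimension reduction and yields $\dim\Sigma\leq n-3$, improved to $n-m-1$ under the additional hypothesis on stable $0$-homogeneous maps. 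Your ``index-slicing'' remark gestures in this direction, but without the local-stability lemma and the compactness coming from the no-stable-$2$-sphere hypothesis, the dimension-reduction step you invoke does not get off the ground.
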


Here, the condition that $N$ admits no stable minimal two-spheres is meant in the sense of branched minimal immersions; equivalently (cf. \cite{EjiriMicallef}), there exists no nonconstant stable harmonic map $\mathbb{S}^2\to N$. In \cite{Lin99}, Lin showed that stationary harmonic maps into targets $N$ carrying \emph{no} minimal two-spheres satisfy strong compactness and partial regularity properties; however, it follows from the results of \cite{SaUhl77, MM88} that such targets must be aspherical, and therefore unlikely to support a wealth of variational constructions beyond minimization with respect to a prescribed action $\pi_1(M)\to \pi_1(N)$ on fundamental groups. By contrast, the only obvious topological constraint following from the assumption that $N$ admits no \emph{stable} minimal two-spheres is the vanishing of the second homotopy group $\pi_2(N)$.

The significance of the condition that a target $N$ admit no stable harmonic maps from $\mathbb{S}^2$ was first noticed by Hsu \cite{Hsu05}, who observed that stable stationary harmonic maps to such targets enjoy strong compactness properties and, as a byproduct, optimal partial regularity results analogous to those obtained by Schoen--Uhlenbeck for energy-minimizing maps \cite{SU82}. In the special case of maps to $\mathbb{S}^k$ with $k\geq 3$, similar observations were made by Hong-Wang \cite{HW99} and Lin-Wang \cite{LW06}. Theorem \ref{exthm} rests largely on the observation that, under the same assumptions on the target manifold $N$, the same compactness and partial regularity results hold for the space of stationary harmonic maps $M\to N$ satisfying a uniform Morse index bound, and similarly for maps critical for suitable relaxations of the Dirichlet energy--namely, the Ginzburg--Landau-type energies considered in \cite{CS} and \cite{LW99}. With these analytic ingredients in place, the harmonic maps of Theorem \ref{exthm} are obtained from a min-max construction generalizing those studied in \cite{SternGL, Sternpharm, Riv19, Riv20, KS1}. 

For examples of targets satisfying the hypotheses of Theorem \ref{exthm}, note that the results of \cite{MM88} show that there are no stable minimal two-spheres in a manifold $N^k$ of dimension $k\geq 4$ with positive isotropic curvature. The same is true for $3$-manifolds $N^3$ of positive Ricci curvature: it is well-known that such manifolds admit no stable immersed minimal surfaces with trivial normal bundle \cite{Sim68}, and consequently no stable minimal immersions $\mathbb{S}^2\to N$, since any such immersion can be lifted to one $\mathbb{S}^2\to \tilde{N}$ with trivial normal bundle on a double cover of $N$ if $N$ is nonorientable. That the same holds for branched minimal immersions can be seen by a straightforward application of the log cutoff trick near branch points, cf. \cite[Section 3]{M85} for related observations. In particular, while the existence theory for harmonic maps into negatively curved targets has been well understood since the 1960s \cite{EeSa64}, Theorem \ref{exthm} and its proof show that the existence theory for harmonic maps into targets satisfying certain curvature positivity conditions can be fruitfully explored via Morse-theoretic methods.

\begin{remark} The simplest example of a target failing to satisfy the hypotheses of Theorem \ref{exthm} is the standard $2$-sphere $N=\mathbb{S}^2$, for which any holomorphic or anti-holomorphic self-map $\mathbb{S}^2\to \mathbb{S}^2$ is stable. In particular, we note that the methods of the present paper cannot be used in a direct way to advance the min-max theory for harmonic maps $\mathbb{S}^3\to \mathbb{S}^2$ considered by Rivi\`ere in \cite{Riv19, Riv20}. More generally, the complex projective spaces $N=\mathbb{CP}^m$ represent an interesting borderline case where the compactness results on which Theorem \ref{exthm} relies fail, though the space of stable minimal 2-spheres in $N$ is well-understood (see, e.g. \cite{SiuYau}).
\end{remark}

\subsection{Harmonic maps to spheres}
\label{sec:intro_spheres}
Of particular interest to us is the case of maps to the standard spheres $N=\mathbb{S}^k$ of dimension $k\geq 3$. In particular, taking $N=\mathbb{S}^k$ and $\ell=k$ in Theorem \ref{exthm} and appealing to the regularity results of \cite{LW06} for stable stationary harmonic maps to spheres, we obtain the following result, giving existence of a canonical family of sphere-valued harmonic maps from every closed Riemannian manifold.

\begin{corollary}
\label{sphere.cor}
For any closed manifold $(M^n,g)$ of dimension $n\geq 3$ and any $k\geq 3$, there is a nonconstant stationary harmonic map
$$
u_k\colon M^n\to \mathbb{S}^k
$$
of Morse index $\ind_E(u_k)\leq k+1$, smooth away from a closed set $\Sigma\subset M$ of dimension
$$\dim(\Sigma)\leq n-k-1\text{\hspace{2mm} if\hspace{2mm} }3\leq k\leq 5,$$
$$\dim(\Sigma)\leq n-6\text{\hspace{2mm} if\hspace{2mm} }6\leq k\leq 9,$$
or
$$\dim(\Sigma)\leq n-7\text{\hspace{2mm} if\hspace{2mm} }k\geq 10.$$
In particular, if $3\leq n\leq 5$, then $u_k\colon M^n\to \mathbb{S}^k$ is smooth for all $k\geq n$. 
\end{corollary}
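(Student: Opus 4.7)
The plan is to apply Theorem \ref{exthm} directly with $N = \mathbb{S}^k$ and $\ell = k$, and then to read off the explicit singular set bounds from the known classification of stable $0$-homogeneous harmonic tangent maps into spheres.

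First I would verify the hypotheses of Theorem \ref{exthm} for $N = \mathbb{S}^k$ with $k \geq 3$. The round sphere has positive isotropic curvature when $k \geq 4$ and positive Ricci curvature when $k = 3$; by the discussion immediately following Theorem \ref{exthm} (invoking \cite{MM88} for $k \geq 4$ and the log-cutoff argument at branch points for $k = 3$), $\mathbb{S}^k$ admits no stable branched minimal two-sphere. Since $\pi_k(\mathbb{S}^k) = \mathbb{Z} \neq 0$, Theorem \ref{exthm} with $\ell = k$ immediately produces a nonconstant stationary harmonic map $u_k : M^n \to \mathbb{S}^k$ with $\ind_E(u_k) \leq k+1$ and singular set $\Sigma$ of Hausdorff dimension at most $n - 3$.

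To obtain the sharper regularity stated, I would invoke the second clause of Theorem \ref{exthm}: the bound $\dim \Sigma \leq n - m - 1$ holds whenever $\mathbb{S}^k$ admits no nonconstant stable $0$-homogeneous harmonic map $\R^m \to \mathbb{S}^k$. Any such $0$-homogeneous map is of the form $v(x) = \phi(x/|x|)$ for a harmonic $\phi : \mathbb{S}^{m-1} \to \mathbb{S}^k$, and its stability on $\R^m$ reduces via a Hardy-type computation to an inequality involving the spectral data of the Jacobi operator of $\phi$ and the dimensional constant $(m-2)^2/4$. The resulting classification, as developed by Lin--Wang \cite{LW06} and Hong--Wang \cite{HW99} together with earlier work on stable tangent maps, identifies the smallest dimension $m$ admitting such a stable tangent map into $\mathbb{S}^k$ as $k+1$ for $3 \leq k \leq 5$, as $6$ for $6 \leq k \leq 9$, and as $7$ for $k \geq 10$ (the lower regime being saturated by the equator map $x/|x|: \R^{k+1} \to \mathbb{S}^k$). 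Plugging $m = k$, $m = 5$, and $m = 6$ respectively into the singular-set bound yields exactly the three cases of the corollary, and the final assertion that $u_k$ is smooth when $3 \leq n \leq 5$ and $k \geq n$ is immediate since in each such regime the relevant bound is negative and forces $\Sigma = \emptyset$.

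Given Theorem \ref{exthm}, the deduction is essentially bookkeeping; the only nontrivial input is the classification of stable $0$-homogeneous harmonic maps into $\mathbb{S}^k$, which is classical but requires a careful Hardy-inequality case analysis over the pairs $(m, k)$, parameterized by the eigenvalues of the Laplacian on $\mathbb{S}^{m-1}$. I expect this tabulation of critical dimensions to be the main (and only) point that requires care; no deeper analytical obstacle is anticipated, since compactness for the bounded-index (as opposed to stable) sequences produced by the min-max procedure is already absorbed into the statement of Theorem \ref{exthm}.
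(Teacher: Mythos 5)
Your proposal is correct and follows essentially the same route as the paper: apply the min-max existence theorem with $N=\mathbb{S}^k$, $\ell=k$ (after checking $\pi_k(\mathbb{S}^k)\neq 0$ and the absence of stable minimal two-spheres), and then upgrade the generic $n-3$ bound using the nonexistence of low-dimensional stable $0$-homogeneous harmonic maps into spheres, which is exactly the content of the Lin--Wang regularity theorem (Theorem~\ref{sph.reg.thm}) that the paper invokes directly together with Lemma~\ref{loc.stab.lem}. The only (harmless) imprecision is your claim that the thresholds are \emph{saturated} at dimensions $k+1$, $6$, $7$; only the nonexistence of stable tangent maps from $\mathbb{R}^m$ for $m$ below these thresholds is needed, and that is precisely what \cite{LW06, HW99, SU84} supply.
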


A natural question to ask is how these maps $u_k$ depend on the dimension $k$ of the target sphere. The following proposition provides a partial answer, which plays an important role in applications.

\begin{proposition}
\label{prop:intro_stab}
For any closed manifold $(M^n,g)$ of dimension $3\leq n\leq 5$, there exists $k_0(M,g)\in \mathbb{N}$ such that for any $k\geq k_0$ and any harmonic map $u\colon M\to\mathbb{S}^k$ with 
$$\ind_E(u)\leq k+1,$$
there exists a totally geodesic subsphere $\mathbb{S}^{k_0}\subset \mathbb{S}^k$ of dimension $k_0$ such that $u(M)\subset \mathbb{S}^{k_0}$.
\end{proposition}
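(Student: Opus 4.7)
Let $W=\mathrm{span}_{\mathbb{R}}u(M)\subset\mathbb{R}^{k+1}$ with $d+1 := \dim W$, so that $u$ factors through a totally geodesic subsphere $\mathbb{S}^d\subset \mathbb{S}^k$. For any unit $w\in W^\perp$, the constant section $V_w\equiv w$ lies in $\Gamma(u^*T\mathbb{S}^k)$ (since $\langle w,u\rangle\equiv 0$), and the second-variation formula for sphere-valued harmonic maps gives $Q(V_w,V_w)=-\int_M|du|^2<0$. Because $\mathbb{S}^d\subset\mathbb{S}^k$ is totally geodesic, the Jacobi operator of $u$ respects the orthogonal splitting $u^*T\mathbb{S}^k = u^*T\mathbb{S}^d \oplus u^*(W^\perp)$, and on the normal factor it acts as $L_u\otimes I_{W^\perp}$ with $L_u := -\Delta_g - |du|^2$; since $\lambda_1(L_u)\leq -\int_M|du|^2/\mathrm{vol}(M)<0$, the normal factor alone produces at least $k-d$ independent negative directions. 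Hence $\ind_E(u\colon M\to\mathbb{S}^k)\geq \ind_E(u\colon M\to\mathbb{S}^d)+(k-d)$, so $\ind_E(u\colon M\to\mathbb{S}^d)\leq d+1$; replacing $k$ by $d$, we may assume $u$ has full essential dimension in its target.

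\textbf{Spectral count.} Each coordinate $u^0,\ldots,u^d$ satisfies $\Delta u^i = -|du|^2 u^i$, so lies in $\ker L_u$. Full essential dimension forces the $u^i$ to be linearly independent, giving $\dim \ker L_u\geq d+1$, and therefore $d+1\leq N(0, L_u)$, where $N(0, L_u)$ counts non-positive eigenvalues of the Schr\"odinger operator $L_u$. In dimension $n\geq 3$ the Cwikel--Lieb--Rozenblum inequality yields $N(0,L_u)\leq C_n\int_M|du|^n$, so the proof reduces to a uniform bound
\[
\int_M|du|^n\leq C(M,g)
\]
over the class of harmonic maps $u\colon M\to\mathbb{S}^d$ with $\ind_E(u)\leq d+1$, \emph{independently of} $d$.

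\textbf{Uniform gradient bound—the main obstacle.} For $3\leq n\leq 5$, Corollary~\ref{sphere.cor} ensures every such $u$ is smooth, but a \emph{quantitative} estimate requires a compactness/blow-up argument. Arguing by contradiction, one considers a sequence $u_j\colon M\to\mathbb{S}^{d_j}$ with $\ind_E(u_j)\leq d_j+1$ along which $\int_M|du_j|^n$ is unbounded. Rescaling around a point where $|du_j|$ attains its maximum produces a nonconstant smooth harmonic map $v\colon\mathbb{R}^n\to\mathbb{S}^{\ell_\infty}$ (the target emerging in the limit, possibly after an initial passage to the essential sphere as in the first paragraph) with $|dv|\leq 1$. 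Invoking the partial-regularity and compactness theory for stationary harmonic maps of bounded index into targets admitting no stable minimal two-spheres (Section~\ref{sec:minmax}, extending Hsu's results), one concludes that the blow-up limit descends to a nonconstant stable $0$-homogeneous harmonic map from $\mathbb{R}^n$ into a finite-dimensional sphere satisfying the hypotheses of Theorem~\ref{exthm}—contradicting the non-existence of such tangent maps in the relevant dimension range. The chief technical subtlety is running the blow-up argument as the target dimension $d_j$ varies; this is handled either by uniformly embedding the $\mathbb{S}^{d_j}$ into $\ell^2$ and working with Hilbert-space-valued targets, or by tracking and controlling the essential dimension of $u_j$ throughout the limiting process.
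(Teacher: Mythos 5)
Your first two steps are sound and amount to a legitimate variant of the paper's argument: the splitting of the Jacobi operator along $u^*T\mathbb{S}^d\oplus u^*(W^\perp)$ and the resulting inequality $\ind_E(u\colon M\to\mathbb{S}^k)\geq \ind_E(u\colon M\to\mathbb{S}^d)+(k-d)$ is exactly the mechanism behind Proposition~\ref{prop:indS}, and replacing the paper's route (uniform $W^{1,2}$ bounds on $|du_k|^2$, $L^{n/2}$-compactness, and continuity of the weighted eigenvalues $\lambda_m(\beta)$, which forces discreteness of the limit spectrum) by a CLR-type counting bound $d+1\leq N(0,\Delta-|du|^2)\lesssim 1+\int_M|du|^n$ is a reasonable alternative, provided you state the closed-manifold version correctly (the constant depends on $(M,g)$, an additive constant is unavoidable, and the zero eigenvalue must be counted by a perturbation $V\mapsto V+\eps$). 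Both routes reduce the proposition to the same analytic heart: a gradient estimate, uniform in the target dimension, for harmonic maps with $\ind_E(u)\leq k+1$.

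That uniform estimate is precisely where your proposal has a genuine gap. The blow-up sketch cannot work as written. First, no a priori energy bound is available in the hypotheses, and the index bound $\ind_E(u_j)\leq d_j+1$ with $d_j\to\infty$ is not a uniform index bound, so neither the energy-based nor the bounded-index compactness theory of Section~\ref{sec:minmax} applies to the sequence. Second, rescaling at a maximum of $|du_j|$ produces a nonconstant entire harmonic map $v\colon\mathbb{R}^n\to\mathbb{S}^{\ell}$ with $|dv|\leq 1$, which is not a contradiction: such maps exist in abundance (e.g. $x\mapsto(\cos x_1,\sin x_1,0,\ldots)$), and nothing in your construction makes the limit $0$-homogeneous, finite-energy, or stable, so the appeal to nonexistence of stable tangent maps has no foothold; homogeneity of tangent maps comes from monotonicity at a point of a fixed map, not from a gradient blow-up along a sequence. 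The missing idea is the one the paper extracts from the hypothesis $\ind_E(u)\leq k+1\ll 2k$: by Lemma~\ref{low.ind.lem} and Lemma~\ref{no.ind.conc}, a nonconstant map must already have index at least $k-2$ outside any small ball, hence $\ind_E(u;B_\delta(p))\leq 3$ on every ball of a fixed radius $\delta(M)$; this local near-stability, fed into the stability inequality plus Bochner/Kato argument of Lemma~\ref{reg.imp.lem} and Proposition~\ref{l6.prop} (where the restriction $3\leq n\leq 5$ enters through $P(n,2)$, $P(n,4)<1$), yields the $k$-independent bound $\|du\|_{L^6(M)}\leq C(M)$ of Lemma~\ref{glob.l6.bd} with no energy hypothesis, after which your counting step (or the paper's compactness argument in Proposition~\ref{prop:stabilization}) closes the proof. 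Without some substitute for this local index-localization step, your "main obstacle" remains unproved, and the dimension restriction $n\leq 5$ plays no identifiable role in your sketch even though it is essential.
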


\subsection{Applications to eigenvalue optimization}
\subsubsection{Motivation and overview}
Let $(\Sigma,g)$ be a closed Riemannian surface and denote by 
$$0=\lambda_0(\Sigma,g)<\lambda_1(\Sigma,g)<\ldots$$
its (positive) Laplacian eigenvalues. Fixing a conformal class $[g]=\{e^{2\omega}g,\,\omega\in C^\infty(\Sigma)\}$, consider the normalized eigenvalue functionals 
$$
\bar\lambda_m(\Sigma,g):=\lambda_m(\Sigma,g)\area(\Sigma,g).
$$
 According to the 
 foundational results of Nadirashvili~\cite{NadirashviliT2} and El Soufi-Ilias \cite{ESIextremal}, metrics critical for $\bar{\lambda}_m(\Sigma,g)$ within the conformal class $[g]$ correspond to harmonic maps from $(\Sigma,[g])$ to spheres. In recent decades, this observation has proved to be a crucial tool in the study of the suprema
$$
\Lambda_m(\Sigma,[g]) = \sup_{h\in[g]}\bar\lambda_m(\Sigma,h),
$$
see e.g.~\cite{KS1, KNPP1, KNPP2, KRP2, Petrides, Petrides2}. In particular, $2$-dimensional counterparts of Corollary~\ref{sphere.cor} and Proposition~\ref{prop:intro_stab} were used in~\cite{KS1} to establish existence and regularity of metrics achieving $\Lambda_1(\Sigma,[g])$. These results have since found application to the study of the Steklov maximization problem on surfaces with boundary \cite{KS1,KS2}, and stability phenomena for the $\bar{\lambda}_1$-maximization problem \cite{KNPS}. 
It is then natural to ask whether Corollary~\ref{sphere.cor} and Proposition~\ref{prop:intro_stab} themselves can be used to study some eigenvalue optimization problem on higher-dimensional manifolds. Below we describe such a problem.

A natural way to generalize $\Lambda_m(\Sigma,[g])$ to higher-dimensional manifolds $(M^n,g)$ is to once again maximize appropriately normalized eigenvalues in a fixed conformal class, studying the suprema
$$
\Lambda_m(M^n,[g]):= \sup_{h\in[g]}\bar\lambda_m(M^n,h):=\sup_{h\in[g]}\lambda_m(M^n,g)\vol(M^n,g)^\frac{2}{n}.
$$
However, as is shown in~\cite{KM}, critical metrics for this problem correspond to {\em $n$-harmonic maps}, the conformally invariant analog of classical harmonic maps, which suggests that applications of classical harmonic maps lie elsewhere. The key lies in reformulating the conformal maximization problem on surfaces. Given a smooth positive function $\beta>0$ consider the eigenvalues 
$$0=\lambda_0(\Sigma,g,\beta)<\lambda_1(\Sigma,g,\beta)<\ldots$$ 
associated to the problem
$$
\Delta_g f = \lambda \beta f,
$$
i.e., the eigenvalues of the weighted Laplacian $\beta^{-1}\Delta_g$. 
Conformal covariance of the Laplacian on surfaces implies that if $h=e^{2\omega}g\in [g]$ and $f$ is a $\Delta_h$-eigenfunction, then
$$
\Delta_g f = e^{2\omega}\Delta_{h}f = \lambda e^{2\omega}f,
$$
so that $\lambda_m(\Sigma, h) = \lambda_m(\Sigma, g, e^{2\omega})$ for all $m$. Therefore, one has that for any surface $\Sigma$,
\begin{equation}
\label{eq:intro_equiv}
\Lambda_m(\Sigma,[g]) = \sup_{0<\beta\in C^\infty(\Sigma)}\left\{ \lambda_m(\Sigma,g,\beta)\int_\Sigma \beta\,dv_g\right\}.
\end{equation}
Moreover, it follows from~\cite[Proposition 5.1]{GKL} that one can relax the condition in the r.h.s to $0\leq \beta$, $\beta\not\equiv 0$.

On manifolds $M^n$ of dimension $n\geq 3$, the Laplacian is no longer conformally covariant and equality~\eqref{eq:intro_equiv} fails. Thus, we introduce the quantities
\begin{equation}
\label{def:optV}
\optV_m(M^n,g) = \sup_{0\leq\beta\in C^\infty(M^n)}\left\{ \lambda_m(M^n,g,\beta)\int_{M^n} \beta\,dv_g\right\},
\end{equation}
which generally differ from $\Lambda_m(M^n,[g])$.

Our first observation is  that the quantities  $\optV_{m}(M^n,g)$ in many ways behave similarly to $\Lambda_m(\Sigma,[g])$. For example, by a well-known result of Korevaar~\cite{Korevaar} one has $\Lambda_m(\Sigma,[g])\leq Cm$ on a surface $\Sigma$. The same method of proof gives $\optV_m(M^n,g)\leq Cm^{\frac{2}{n}}$, see~\cite[Theorem 5.4]{GNY}. Another example is the classical Hersch's inequality~\cite{Hersch} stating that the round metric $g_{\mathbb{S}^2}$ achieves $\Lambda_1(\mathbb{S}^2,[g_{\mathbb{S}^2}])$. We show below that essentially the same proof can be used to show that $\optV_1(\mathbb{S}^n,g_{\mathbb{S}^n})$ is achieved by a constant density function. Furthermore, the Laplacian with density appears naturally as a homogenization limit of the Steklov problem, see~\cite{GL}. This feature is independent of dimension and has been used in~\cite{GL, GKL, KS2} to relate optimization problems for Steklov and Laplace eigenvalues on surfaces. Below we outline generalizations of these results to $\optV_m(M^n,g)$. Finally, and most importantly to us, we prove in Proposition~\ref{prop:critical1} that densities critical for $\lambda_m(\beta)\|\beta\|_{L^1}$ are precisely the energy densities of classical sphere-valued harmonic maps $(M,g)\to\mathbb{S}^k$. This observation 
provides the bridge between the results of Section~\ref{sec:intro_spheres} and the study of the quantities $\optV_m(M^n,g)$. 

\subsubsection{Existence and regularity of maximizers for $\optV_1(M,g)$.}
On a closed surface $(\Sigma,g)$, it is shown in~\cite{KS1} that, for the harmonic maps $u_k\colon (\Sigma,g)\to \mathbb{S}^k$ given by the two-dimensional analog of Corollary~\ref{sphere.cor}, the corresponding conformal metrics $g_k=\frac{1}{2}|du_k|_g^2g$ maximize $\bar\lambda_1(\Sigma,g)$ in $[g]$ provided $k$ is sufficiently large, i.e. $\Lambda_1(\Sigma,[g]) = \bar\lambda_1(\Sigma,g_k)$. The following theorem is a direct generalization of these results to higher dimensions $\leq 5$.

\begin{theorem}\label{stab.etc} On any closed Riemannian manifold $(M^n,g)$ of dimension $3\leq n\leq 5$, for $k\geq k(M,g)$ sufficiently large, the energy densities $e(u_k)=|du_k|^2_g$ of the 
 smooth harmonic maps $u_k$ constructed in Corollary \ref{sphere.cor} realize $\mathcal{V}_1(M,g)$. Namely, the components of $u_k$ are $\lambda_1(M^n,g,e(u_k))$-eigenfunctions, $\lambda_1(M^n,g,e(u_k)) = 1$ and
$$\mathcal{V}_1(M,g)=2E(u_k).$$
\end{theorem}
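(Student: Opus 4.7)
My plan is to mirror the surface argument of~\cite{KS1}, using Proposition~\ref{prop:intro_stab} to supply the rigidity that in dimension two came from conformal invariance. Setting $\beta:=e(u_k)=|du_k|_g^2$, I would first observe that harmonicity of $u_k$ into the round sphere reads $\Delta_g u_k^i=|du_k|_g^2\, u_k^i$ in ambient coordinates, so the components $u_k^1,\dots,u_k^{k+1}$ are weighted eigenfunctions of $\beta^{-1}\Delta_g$ with eigenvalue $1$. In particular $\lambda_m(M,g,\beta)\leq 1$ for some $m\geq 1$, and $\|\beta\|_{L^1}=\int_M|du_k|^2\,dv_g=2E(u_k)$.

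Next, I would upgrade this to $\lambda_1(\beta)=1$ by the Morse index bound $\ind_E(u_k)\leq k+1$ of Corollary~\ref{sphere.cor}. Supposing for contradiction $\lambda_1(\beta)<1$, let $\{\varphi_j\}$ be an $L^2(\beta\,dv_g)$-orthonormal basis of the strictly sub-$1$ eigenspace of $\beta^{-1}\Delta_g$, with eigenvalues $\mu_j<1$, and for $w\in\R^{k+1}$ consider the tangential variations $V_{\varphi_j,w}(x):=\varphi_j(x)\bigl(w-\langle w,u_k(x)\rangle u_k(x)\bigr)\in T_{u_k(x)}\mathbb{S}^k$. By Proposition~\ref{prop:intro_stab}, $u_k(M)\subset\mathbb{S}^{k_0}$ for a $k_0=k_0(M,g)$ independent of $k$; choosing $w$ in the $(k-k_0)$-dimensional orthogonal complement of the ambient span of $u_k(M)$ gives $V_{\varphi_j,w}=\varphi_j w$ and
\[\delta^2 E_{u_k}(V_{\varphi_j,w})=\int_M(|\nabla\varphi_j|^2-\beta\varphi_j^2)|w|^2\,dv_g=(\mu_j-1)|w|^2\int_M\beta\varphi_j^2\,dv_g<0.\]
Varying $w$ and $\varphi_j$ yields a large subspace of negative directions; combining with further negative directions in the $\mathbb{S}^{k_0}$-intrinsic directions (obtained via the analogous index form computation for $w\in\mathrm{span}\,u_k(M)$, whose trace is $\bigl(k_0(\mu_j-1)+3\bigr)\int\beta\varphi_j^2$), I expect to produce strictly more than $k+1$ linearly independent negative directions for $k$ sufficiently large, contradicting the index bound. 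This forces $\lambda_1(\beta)=1$, and hence $\optV_1(M,g)\geq\lambda_1(\beta)\|\beta\|_{L^1}=2E(u_k)$.

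For the reverse inequality $\optV_1(M,g)\leq 2E(u_k)$, I would invoke Proposition~\ref{prop:critical1} from earlier in the paper: any smooth critical density for $\beta\mapsto\lambda_1(\beta)\|\beta\|_{L^1}$ arises as the energy density of a sphere-valued harmonic map of bounded Morse index. A concentration-compactness analysis adapted from~\cite{Petrides,KS1} to the weighted Laplacian on $(M^n,g)$ with $3\leq n\leq 5$---using Proposition~\ref{prop:intro_stab} to bound the target dimension of candidate harmonic maps uniformly along a maximizing sequence---should produce a maximizer $\beta_*$ in the relaxed class $\beta\geq 0,\ \beta\not\equiv 0$ (admissible by~\cite[Proposition 5.1]{GKL}), together with an associated smooth harmonic map $v_*\colon M\to\mathbb{S}^{k_0}$ realizing $\optV_1(M,g)=2E(v_*)$, the regularity following from the partial regularity theory~\cite{LW06}. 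For $k$ sufficiently large, Proposition~\ref{prop:intro_stab} together with the index bound identifies $v_*$ with $u_k$ up to an ambient isometry of $\mathbb{S}^k$, yielding $\optV_1(M,g)=2E(u_k)$.

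The hardest part will be this last step, namely establishing existence, regularity, and identification of a maximizer of $\optV_1(M,g)$ in dimension $n\geq 3$: concentration along maximizing sequences can now occur at lower-dimensional strata of $M$ in more intricate ways than on surfaces, and controlling this will rely crucially on Proposition~\ref{prop:intro_stab} to restrict all competing harmonic maps of bounded index to a fixed ambient sphere $\mathbb{S}^{k_0}$, thereby restoring the compactness needed for the surface argument of~\cite{KS1} to carry over.
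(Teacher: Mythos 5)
The first half of your proposal is essentially the paper's argument. Harmonicity of $u_k$ gives that the components are weight-$1$ eigenfunctions of $\beta^{-1}\Delta_g$ with $\beta=|du_k|_g^2$, and your counting of negative directions $\varphi_j w$ with $w$ orthogonal to the ambient span of $u_k(M)$ is precisely the mechanism behind the identity \eqref{eq:indS} of Proposition~\ref{prop:indS}: combined with Proposition~\ref{prop:stabilization} (= Proposition~\ref{prop:intro_stab}) it forces $\ind_S(u_k)=1$ for $k>2k_0+1$, hence $\lambda_1(M,g,\beta)=1$ and $\optV_1(M,g)\geq \int_M\beta\,dv_g=2E(u_k)$. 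This matches the paper.

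The genuine gap is in your reverse inequality. The paper proves $\optV_1(M,g)\leq 2\mE_k=2E(u_k)$ \emph{directly from the min-max characterization} of $E(u_k)=\mE_k$ (Proposition~\ref{prop:upper_bound}): for any admissible $\xi$ with $\nu_2(\xi)\geq 0$ one takes an almost-optimal sweepout $(u^\eps_y)\in\Gamma_k$, uses the Brouwer fixed point (Hersch) trick to find $y_\eps$ with $\int_M u^\eps_{y_\eps}\varphi_0\,dv_g=0$, passes to a weak limit $u\in W^{1,2}(M,\mathbb{S}^k)$ with $E(u)\leq\mE_k$, and tests $\nu_2(\xi)\geq 0$ against the components of $u$; summing gives $\xi(M)\leq 2E(u)\leq 2\mE_k$, with no existence theory for maximizers required. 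Your route instead asks to (a) produce a maximizer $\beta_*$ of $\optV_1$ in dimension $3\leq n\leq 5$ by a concentration-compactness argument \`a la \cite{Petrides,KS1} --- but in the paper the existence and smoothness of a maximizer is a \emph{consequence} of the equality $\optV_1=2\mE_k$ (Theorem~\ref{ev:main_theorem}(ii),(iii)), not an input, and Proposition~\ref{prop:critical1} cannot be invoked as you do since a priori a maximizer is neither a smooth nor a positive density; and (b) identify the resulting harmonic map $v_*$ with $u_k$ ``up to an ambient isometry,'' for which no uniqueness statement exists (nothing in Corollary~\ref{sphere.cor} or Proposition~\ref{prop:intro_stab} pins down $u_k$ among harmonic maps of equal energy), and which is also unnecessary: all one needs is the comparison $2E(v_*)\leq 2\mE_k$, and that comparison is exactly what the min-max sweepout argument supplies and what your sketch omits. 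As written, the second half of the proof does not close.
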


In addition to establishing the existence of smooth densities realizing $\mathcal{V}_1(M)$, our methods also imply that the maximization problem is unchanged if one considers non-negative densities $\beta\in L^{\frac{n}{2}}(M)$, or even certain ``admissible" measures $\mu$ for which the map $W^{1,2}(M,g)\to L^2(M,\mu)$ is compact. Furthermore, we are able to generalize the regularity result of~\cite{KS1} by showing that any maximal admissible measure has to be smooth, see Theorem~\ref{ev:main_theorem} for details. 


\subsubsection{Examples and relation to $\Lambda_1(M^n,[g])$.}
The following theorem provides a basic tool for computing $\optV_1(M^n,g)$ for some 
special manifolds $(M^n,g)$.

\begin{theorem}
\label{thm:ev_exmp}
Let $(M^n,g)$ be a closed Riemannian manifold, $n\geq 3$. Assume that there exists a minimal immersion $u\colon M^n\to\mathbb{S}^k$ such that the induced metric $g_u=u^*g_{\mathbb{S}^k}\in [g]$. Then for any density $0\leq\beta\in C^\infty(M^n)$ one has
$$
\lambda_1(M^n,g,\beta)\int_{M^n}\beta\,dv_g\leq n\left(\vol(M,g_u)\right)^{\frac{2}{n}}\vol(M,g)^\frac{n-2}{n}.
$$
If $(M^n,[g])\not = (\mathbb{S}^k, [g_{\mathbb{S}^k}])$ or if
 $u$ is not a conformal automorphism of $\mathbb{S}^k$, 
then equality occurs iff the components of $u$ are $\lambda_1(M^n,g_u)$-eigenfunctions, $g=a g_u$ for some constant $a\in (0,\infty)$, and $\beta\equiv b\in(0,\infty)$ is constant. If $(M^n,[g])=(\mathbb{S}^k,[g_{\mathbb{S}^k}])$ and $u$ is a conformal automorphism, then equality holds iff $g=a g_{u'}$ for some $a\in (0,\infty)$ and a (possibly different) conformal automorphism $u'$ of $\mathbb{S}^k$, and $\beta\equiv b\in(0,\infty)$.
\end{theorem}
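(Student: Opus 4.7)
The plan is to generalize the classical Hersch argument: use the components of a balanced composition $\tau\circ u$ as test functions for $\lambda_1(M,g,\beta)$, and estimate the resulting integrals against $\vol(M,g_u)$ and $\vol(M,g)$ via Hölder's inequality together with the Li--Yau conformal volume theorem. By the degree-theoretic Hersch lemma---valid here since $\beta\not\equiv 0$ and $u$ is an immersion, so $u_*(\beta\,dv_g)$ is not a Dirac mass---there is a conformal automorphism $\tau$ of $\mathbb{S}^k$ such that $v^i:=(\tau\circ u)^i$ satisfies $\int_M v^i\beta\,dv_g=0$ for all $i=1,\dots,k+1$. Each $v^i$ is then admissible in the Rayleigh characterization of $\lambda_1(g,\beta)$; summing the resulting inequalities and using $\sum_i(v^i)^2\equiv 1$ on $\mathbb{S}^k$ gives
$$
\lambda_1(g,\beta)\int_M\beta\,dv_g\;\leq\;\int_M|d(\tau\circ u)|^2_g\,dv_g.
$$

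Writing $(\tau\circ u)^*g_{\mathbb{S}^k}=e^{2\tilde\omega}g$ (conformal since $\tau$ is conformal on $\mathbb{S}^k$ and $g_u\in[g]$), one computes $|d(\tau\circ u)|^2_g=ne^{2\tilde\omega}$ and $dv_{(\tau\circ u)^*g_{\mathbb{S}^k}}=e^{n\tilde\omega}\,dv_g$, so Hölder's inequality with exponents $n/2$ and $n/(n-2)$ yields
$$
\int_M e^{2\tilde\omega}\,dv_g\;\leq\;\vol(M,(\tau\circ u)^*g_{\mathbb{S}^k})^{2/n}\,\vol(M,g)^{(n-2)/n}.
$$
The final ingredient is the higher-dimensional Li--Yau conformal volume inequality for minimal immersions into $\mathbb{S}^k$: $\vol(M,(\tau\circ u)^*g_{\mathbb{S}^k})\leq\vol(M,g_u)$ for every conformal automorphism $\tau$, with equality only when $\tau$ is an isometry---unless $u$ is itself a conformal diffeomorphism $\mathbb{S}^k\to\mathbb{S}^k$, in which case both sides trivially equal $\vol(\mathbb{S}^k,g_{\mathbb{S}^k})$. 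Chaining the three displays yields the stated inequality.

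For the equality analysis, each step in the chain must saturate. Rayleigh equality forces each $(\tau\circ u)^i$ to be a $\lambda_1(g,\beta)$-eigenfunction; Hölder equality forces $\tilde\omega$ to be constant, hence $(\tau\circ u)^*g_{\mathbb{S}^k}$ proportional to $g$; and Li--Yau rigidity (in the generic case) forces $\tau$ to be an isometry of $\mathbb{S}^k$, so that $(\tau\circ u)^*g_{\mathbb{S}^k}=g_u$ and hence $g=ag_u$ for some $a>0$. Since $\tau|_{\mathbb{S}^k}$ is the restriction of an orthogonal transformation of $\mathbb{R}^{k+1}$, the components of $u$ inherit the eigenfunction property. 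Combining $\Delta_{g_u}u^i=nu^i$ (from minimality) with $\Delta_g=a^{-1}\Delta_{g_u}$ then forces $\beta\equiv n/(a\lambda_1(g,\beta))$ to be a positive constant, and matching both sides of the equality yields $\lambda_1(g_u)=n$, confirming that the $u^i$ are indeed $\lambda_1(g_u)$-eigenfunctions. In the borderline case $(M,[g])=(\mathbb{S}^k,[g_{\mathbb{S}^k}])$ with $u$ a conformal automorphism, Li--Yau rigidity fails since both sides of its inequality equal $\vol(\mathbb{S}^k,g_{\mathbb{S}^k})$ for every $\tau$, and the equality characterization relaxes to allow any conformal automorphism $u'$ in place of $u$, as stated. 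The main technical point is invoking the precise higher-dimensional Li--Yau rigidity cleanly; the rest is a straightforward Hersch-plus-Hölder adaptation of the classical surface argument.
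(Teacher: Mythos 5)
Your argument is correct, but it takes a genuinely different and more elementary route than the paper's. The paper proves the inequality by chaining $\lambda_1(g,\beta)\int_M\beta\,dv_g\leq \optV_1(M,g)\leq 2\mE_k$, where the second step is the min--max Proposition~\ref{prop:upper_bound} (a Brouwer fixed point argument over arbitrary Ginzburg--Landau sweepouts), and then bounds $2\mE_k$ by the conformal volume via a mollified canonical family (inequality~\eqref{bd:mEVc}) before invoking Theorem~\ref{thm:conf_volume}; its equality analysis is phrased for admissible signed measures and leans on Proposition~\ref{prop:upper_bound} together with the regularity statement Proposition~\ref{prop:harm_reg}. You instead run the classical Hersch/El Soufi--Ilias scheme directly: balance the canonical family of the given minimal immersion against the measure $\beta\,dv_g$, use the components of the smooth map $\tau\circ u$ as test functions for the weighted problem, and then apply H\"older (the paper's~\eqref{ineq:n-energy}) and the conformal volume rigidity of Theorem~\ref{thm:conf_volume}. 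Because your competitor is already smooth, you bypass the Ginzburg--Landau approximation, the mollification of the weakly continuous canonical family, and all regularity issues; your equality analysis (Rayleigh rigidity, H\"older rigidity forcing $\tilde\omega$ constant, Li--Yau rigidity forcing $\tau$ isometric in the generic case, and the pointwise identity $\lambda_1(g,\beta)\beta=a^{-1}n$ forcing $\beta$ constant and $\lambda_1(g_u)=n$) is sound, including the borderline spherical case. What the paper's longer route buys is the stronger conclusion for admissible measures and signed potentials (Theorem~\ref{ev:main_theorem}(iii)) and the link $\optV_1\leq 2\mE_k$ used elsewhere, which your argument does not produce; for the theorem as stated, with smooth $\beta$, your proof suffices. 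One small justification to repair: the Hersch balancing lemma needs more than ``$u_*(\beta\,dv_g)$ is not a Dirac mass'' (a measure with an atom carrying at least half the total mass can never be balanced); the correct observation is that $u$, being an immersion of a closed manifold, has finite point preimages, so the pushforward measure is non-atomic and the degree argument applies. You should also note that any component with $\int_M (v^i)^2\beta\,dv_g=0$ contributes trivially to the summed Rayleigh inequality, so the equality analysis only asserts the eigenfunction property for the nonvanishing components, which is all that is needed.
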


Since the identity map $\mathrm{Id}\colon\mathbb{S}^n\to\mathbb{S}^n$ is a minimal immersion by first eigenfunctions, Theorem~\ref{thm:ev_exmp} implies that 
$$
\optV_1(\mathbb{S}^n,g_{\mathbb{S}^n}) = n\vol(\mathbb{S}^n,g_{\mathbb{S}^n}),
$$
and the supremum in the left hand side is achieved only for a constant density function. This is a direct generalization of Hersch's inequality for the $2$-sphere~\cite{Hersch}. 

Furthermore, the same assumption that $g_u\in [g]$ for some minimal immersion $u\colon M^n\to\mathbb{S}^k$ by the first eigenfunctions was used in~\cite{ESIconfvolume} to study $\Lambda_1(M^n,[g])$, where it is shown that 
$$
\Lambda_1(M,[g]) = n\left(\vol(M,g_u)\right)^{\frac{2}{n}},
$$
generalizing results of \cite{LiYau} from the two-dimensional setting. Combining this with Theorem~\ref{thm:ev_exmp} we obtain that for each $g\in [g_u]$
\begin{equation}
\label{ineq:ev_exmp}
\optV_1(M,g)\leq \Lambda_1(M,[g_u])\vol(M,g)^\frac{n-2}{n}.
\end{equation}
Finally, we remark that in addition to the identity map on the sphere there are many other examples of minimal immersions by first eigenfunctions, see~\cite{CES}. Inequality~\ref{ineq:ev_exmp} gives the value $\optV_1(M,g_u)$ for the corresponding metrics. 

\subsubsection{The Steklov problem.}
One 
fascinating feature of Laplace eigenvalue optimization on surfaces is its interaction with the optimization of Steklov eigenvalues, see e.g.~\cite{GL,GKL,KS1,KS2}. Below we present some analogous results on higher-dimensional manifolds.

Let $(\Omega^n,g)$ be a compact Riemannian manifold with boundary and $0\leq\rho\in C^\infty(\partial \Omega^n)$. Consider the weighted Steklov eigenvalues 
$$0=\sigma_0(\Omega^n,g,\rho)<\sigma_1(\Omega^n,g,\rho)<\ldots$$
corresponding to the weighted Dirichlet-to-Neumann problem
\begin{equation*}
\begin{cases}
\Delta_g u = 0  &\text{ in $\Omega$};\\
\bd_n u = \sigma\rho u &\text{ on $\bd \Omega$}.\\
\end{cases}
\end{equation*}
Similarly to~\eqref{def:optV} we define
\begin{equation*}
\optV^\bd_m(\Omega^n,g) = \sup_{0\leq\rho\in C^\infty(\bd\Omega^n)}\left\{ \sigma_m(\Omega^n,g,\rho)\int_{\bd\Omega^n} \rho\,ds_g\right\}.
\end{equation*}
If $n=2$, the problem of finding $\optV^\bd_m(\Omega^n,g)$ is equivalent to the classical conformal maximization problem on surfaces, see~\cite{KM}. For $n\geq 3$ this is no longer the case, but many well-known properties of the $2$-dimensional problem continue to hold for $\optV^\bd_m(\Omega^n,g)$. For example, in Proposition~\ref{prop:critical3} we show that densities critical for $\sigma_m(\rho)\|\rho\|_{L^1}$ naturally correspond to free boundary harmonic maps. This is a direct generalization of the $2$-dimensional result due to Fraser-Schoen~\cite{FS:extremal}, see also~\cite{KM}. Furthermore, it follows from~\cite[Theorem 1.12 and Theorem 5.2]{GKL} that
$$
\optV_m(M^n,g) = \sup_{\Omega^n\subset M^n}\optV^\bd_m(\Omega^n,g)
$$
and it follows from our Theorem~\ref{ev:main_theorem} that if $\Omega^n\subset M^n$ and $3\leq n\leq 5$ then
$$
\optV^\bd_1(\Omega^n,g) <\optV_1(M^n,g),
$$
which are higher-dimensional counterparts of~\cite[Corollary 1.5]{GKL} and~\cite[Theorem 1.5]{KS1} respectively.

\subsubsection{Negative eigenvalues of Schr\"odinger operators}
%
The quantities $\optV_m(M,g)$ can be equivalently defined using the following family of estimates introduced by  Grigor'yan-Netrusov-Yau \cite{GNY} and refined by Grigor'yan-Nadirashvili-Sire \cite{GNS}. On a closed Riemannian manifold $(M^n,g)$ of dimension $n\geq 3$, consider the class of Schr\"odinger operators $L_V$ of the form
$$L_V=\Delta_g-V=d^*d-V,$$
where $V\in L^{\infty}(M)$. In \cite{GNY}, it was shown that for nonnegative potentials $V\geq 0$, the number of negative eigenvalues $\mN(V)$ is bounded from below by the $L^1$-norm of the potential, raised to a suitable power; in \cite{GNS}, this was extended to arbitrary potentials, showing that
\begin{equation}\label{ind.lbd}
\mN(V)^{\frac{2}{n}}\geq C(M,g)\int_M V\,dv_g
\end{equation}
for any $V\in L^{\infty}(M)$. 

In particular, it follows that there is a uniform upper bound on the integral $\int_M V\,dv_g$ of the potential for all Schr\"odinger operators $L_V$ on $(M,g)$ with a given upper bound on the index $\mN(V)$. We show in 
Proposition~\ref{prop:equivalent} that
$$
\mathcal{V}_m(M,g):=\sup\left\{\left. \int_M V\,dv_g\right| \mN(V)\leq m\right\}.
$$
This formulation has some advantages over~\eqref{def:optV}, mainly related to the fact that there is no restriction on the sign of the potential $V$, see Section~\ref{sec:ev}. Thus, all the results described above can be equivalently reformulated on the language of Schr\"odinger operators, see e.g. Theorem~\ref{ev:main_theorem}.

\subsection{Ideas of the proofs}

The harmonic maps of Theorem~\ref{exthm} arise from a min-max construction generalizing those considered in \cite{Riv19, Riv20, KS1}, roughly along the lines suggested in the last paragraph of \cite[Section 7.1]{Sternpharm}. More precisely, fixing an isometric embedding $N\subset \mathbb{R}^L$ of the target manifold into some Euclidean space $\mathbb{R}^L$, we consider a Ginzburg--Landau type perturbation $E_{\epsilon}: W^{1,2}(M,\mathbb{R}^L)\to \mathbb{R}$ of the harmonic map problem as in \cite{CS} and \cite{LW99}, and show that there exist nonconstant critical maps $u_{\epsilon}\in C^{\infty}(M,\mathbb{R}^L)$ of Morse index $\ind_{E_{\epsilon}}(u_{\epsilon})\leq \ell+1$ for $E_{\epsilon}$, realizing a min-max energy
$$\mathcal{E}_{\epsilon}(M,g):=\inf_{(u_y)\in\Gamma}\max_{y\in \mathbb{B}^{\ell+1}}E_{\epsilon}(u_y).$$
Here, $\Gamma$ denotes the collection of continuous families of maps $\mathbb{B}^{\ell+1}\ni y\mapsto u_y\in W^{1,2}(M,\mathbb{R}^L)$ parametrized by the closed $(\ell+1)$-ball $\mathbb{B}^{\ell+1}$, such that the restriction to the boundary sphere $\mathbb{S}^{\ell}=\partial \mathbb{B}^{\ell+1}$ has the form
$$\mathbb{S}^{\ell}\ni y\mapsto u_y\equiv f(y)\in N,$$
for some homotopically nontrivial map $f\colon \mathbb{S}^{\ell}\to N$. Building on the analysis of \cite{LW99}, we establish lower-semicontinuity properties of the Morse index for critical points of $E_{\epsilon}$ in the limit $\epsilon\to 0$, and argue that if $N$ has no stable minimal two-spheres, then no energy is lost in the limit, and the min-max critical points $u_{\epsilon}$ of the perturbed functionals converge strongly in $W^{1,2}$ as $\epsilon\to 0$ to stationary harmonic maps $u\in W^{1,2}(M,\mathbb{R}^L)$ of Morse index $\ind_E(u)\leq \ell+1$. Observing that these harmonic maps $u$ must be locally stable near each point, we then appeal to the results of \cite{Hsu05} to obtain the refined partial regularity statement for these maps.

The proof of Theorem \ref{stab.etc} shares many features with the proof of the analogous result for surfaces in \cite{KS1}.  As in the 2-dimensional setting \cite{KS1}, the bound $\mathcal{V}_1(M,g)\leq 2E(u_k)$ follows in a fairly straightforward way from the min-max characterization of the maps $u_k$ in Corollary \ref{sphere.cor}, so to establish the equality $\mathcal{V}_1(M,g)=2E(u_k)$, the main challenge lies in showing that the Schr\"odinger operator $\Delta-|du_k|_g^2$ has only one negative eigenvalue for $k$ sufficiently large. And as in \cite{KS1}, to prove that $\mN(|du_k|_g^2)=1$, we first argue that the maps stabilize as $k$ becomes large, in the sense of Proposition~\ref{prop:intro_stab}.

The key difference in the higher-dimensional setting is in the formulation and proof of Proposition~\ref{prop:intro_stab}. While the two-dimensional stabilization result relies only on the uniform energy bound $\sup_k E(u_k)<\infty$ as $k\to\infty$, the stabilization on higher dimensional domains follows from the Morse index bound $\ind_E(u_k)\leq k+1$. In particular, by combining and refining arguments of \cite{ESindex}, \cite{SU84}, and \cite{LW06}, we show under the assumptions of Proposition~\ref{prop:intro_stab} that the 
harmonic maps $u_k\colon M^n\to \mathbb{S}^k$  satisfy $L^6$ gradient bounds $\|du_k\|_{L^6}\leq C$ independent of $k$, giving rise to compactness properties on the space of associated Schr\"odinger operators, from which the desired stabilization follows.

\subsection{Discussion}

In view of the connection to harmonic maps, it would be interesting to understand the existence theory for densities achieving $\mathcal{V}_m(M,g)$ with higher index $m\geq 2$. For $m=2$, one can approach the question by extending the min-max construction of \cite[Section 4]{KS1} to higher-dimensional manifolds; indeed, for $n\geq 3$, it is not difficult to show that this construction gives rise to harmonic maps $u_k\colon M^n\to \mathbb{S}^k$ with $2E(u_k)\geq \mathcal{V}_2(M,g)$ and index $\ind_E(u_k)\leq 2k+2$ (in contrast to dimension $n=2$, where a priori the min-max construction gives rise to a bubble tree for each $k$). However, the stabilization arguments from the proof of Theorem \ref{stab.etc} rely heavily on the asymptotic behavior $\limsup_{k\to\infty}\frac{\ind_E(u_k)}{2k}<1$ of the Morse index, and therefore do not carry over in a straightforward way to the case $m=2$. Indeed, it is quite possible that in general there is no density realizing $\mathcal{V}_m(M,g)$ for $m\geq 2$, generalizing non-existence phenomena for conformal maximizers of higher Laplace eigenvalues $\bar{\lambda}_m$ on surfaces.

More generally, with the basic analytic ingredients from  the proofs of Theorems \ref{exthm} and \ref{sphere.cor} in place, one can begin to ask more sophisticated questions about the space of harmonic maps from an arbitrary closed manifold into higher-dimensional spheres and other targets containing no stable minimal two-spheres. For instance, how many geometrically distinct harmonic maps can one find between a given closed manifold and the standard sphere $\mathbb{S}^k$ of dimension $k\geq 3$? Under what conditions on the domain $M$ and target $N$ can the a priori partial regularity of the maps in Theorem \ref{exthm} be improved? And to what extent can one relate the existence of harmonic maps with low Morse index from higher-dimensional manifolds into certain targets $N$ to other interesting geometric or topological features of $N$, as has been done for harmonic maps from the $2$-sphere \cite{SiuYau, MM88}?

\subsection*{Acknowledgements} The authors are grateful to Iosif Polterovich for remarks on a preliminary version of the manuscript. The research of M.K. is supported by the NSF grant DMS-2104254. The research of D.S. is supported by the NSF grant DMS-2002055.

\section{Existence and partial regularity of min-max harmonic maps}

\label{sec:minmax}

Let $(M^n,g)$ be a closed Riemannian manifold of dimension $n\geq 3$ and let $(N^k,h)$ be another closed manifold of dimension $k\geq 3$. Appealing to Nash's embedding theorem, we fix an isometric embedding
$$N\subset \mathbb{R}^L,$$
identifying $N$ henceforth with a submanifold of some high-dimensional Euclidean space $\mathbb{R}^L$. Denote by $\II_N$ the (vector-valued) second fundamental form for $N\subset \mathbb{R}^L$, defined by the convention
$$
\II_N(X,Y)=(D_XY)^{\perp},
$$
where $D$ is the usual Levi-Civita connection on $\mathbb{R}^L$ and $X,Y$ are tangent fields $X,Y\in \Gamma(TN)$.

A smooth map $u\colon M\to N$ is said to be \emph{harmonic} if it is a critical point of the energy functional
$$
E(u):=\frac{1}{2}\int_M |du|_g^2\,dv_g
$$
within the space of maps $C^{\infty}(M,N)$. Equivalently, $u\in C^{\infty}(M,N)$ is harmonic if and only if it satisfies the equation
\begin{equation}\label{wk.harm}
\Delta_g u+\langle \II(u),du^*du\rangle=0,
\end{equation}
where we write
$$du^*du:=\sum_{i=1}^L du^i\otimes du^i,$$
and denote by $\Delta_g$ the positive Laplacian $\Delta=d^*d$. An important consequence of \eqref{wk.harm} for $u\in C^{\infty}(M,N)$ is the fact that the stress-energy tensor
$$T_u:=\frac{1}{2}|du|_g^2g-du^*du$$
is divergence-free, or equivalently
\begin{equation}\label{stationary}
\int_M \frac{1}{2}|du|_g^2\mathrm{div}(X)-\langle du^*du,DX\rangle\,dv_g=0
\end{equation}
for every tangent vector field $X\in \Gamma(TM)$. Variationally, the condition \eqref{stationary} derives from the criticality of $u$ for $E$ with respect to variations of the form $u_t=u\circ \Phi_t$, where $\Phi_t\in \Diff(M)$ is a family of diffeomorphisms with $\Phi_0=\mathrm{Id}$.

The second variation of energy $E''(u)$ at a harmonic map $u\colon M\to N$ defines a quadratic form on the space 
$$\Gamma(u^*TN)=\{v\in C^{\infty}(M,\mathbb{R}^L)\mid v(x)\in T_{du(x)}N\}$$
of sections of the pullback bundle of $TN$. There are several ways to write $E''(u)$ in terms of the extrinsic or intrinsic geometry of $N\subset \mathbb{R}^L$; for our analytic purposes in this section, we simply note that
\begin{equation}\label{sec.var}
E''(u)(v,v)=\int_M |dv|^2-\langle \langle \II_N(u),du^*du\rangle, \II_N(u)(v,v)\rangle\,dv_g.
\end{equation}

In general, harmonic maps arising from variational constructions may be non-smooth, so it is necessary to extend these notions to maps which a priori lie only in the Sobolev space
$$W^{1,2}(M,N):=\{u\in W^{1,2}(M,\mathbb{R}^L)\mid u(x)\in N\text{ for a.e. }x\in M\}.$$
A map $u\in W^{1,2}(M,N)$ satisfying \eqref{wk.harm} weakly is said to be \emph{weakly harmonic}. Importantly, the maps we construct in this section will also satisfy the inner variation equation \eqref{stationary}; a map $u\in W^{1,2}(M,N)$ satisfying both \eqref{wk.harm} and \eqref{stationary} in the weak sense is known as a \emph{stationary harmonic} map to $N$. 

For a weakly harmonic map $u\in W^{1,2}(M,N)$, the second variation \eqref{sec.var} remains a well-defined quadratic form on the space
$$\mathcal{V}(u):=\{v\in [W^{1,2}\cap L^{\infty}](M,\mathbb{R}^L)\mid v(x)\in T_{u(x)}N\text{ a.e. }x\in M\}.$$
We define the \emph{Morse index} $\ind_E(u)$ of a weakly harmonic map $u\colon M \to N$ to be the index of $E''(u)$ as a quadratic form on $\mathcal{V}(u)$; i.e.,
$$\ind_E(u):=\max\{\dim V\mid V\subset \mathcal{V}(u),\text{ }E''(u)\text{ negative definite on }V\}.$$
It is sometimes convenient to refer to the index $\ind_E(u;\Omega)$ of $u$ on a given domain $\Omega\subset M$; by this we simply mean the index of $E''(u)$ restricted to variations supported in $\Omega$--i.e.,
$$\ind_E(u;\Omega):=\{\dim V\mid V\subset \mathcal{V}(u)\cap W_0^{1,2}(\Omega,\mathbb{R}^L),\text{ }E''(u)|_V<0\}.$$
A harmonic map $u\colon M\to N$ is said to be \emph{stable} if $\ind_E(u)=0$, and we say that $u$ is stable in a given domain $\Omega\subset M$ if $\ind_E(u;\Omega)=0$. We will say that $u\colon M\to N$ is \emph{locally stable} if every point $p\in M$ has a neighborhood $U\ni p$ on which $u$ is stable. For smooth harmonic maps, we may of course replace $\mathcal{V}(u)$ with the space of smooth sections $\Gamma(u^*TN)$ in all these definitions.

\subsection{The regularized min-max construction}\label{pert.minmax.sec}

The proof of Theorem \ref{exthm} is based on a min-max construction generalizing those studied in \cite{Sternpharm, KS1, Riv19, Riv20}. As in those cases, rather than attempting to apply variational methods directly to the Dirichlet energy on the highly nonlinear space $W^{1,2}(M,N)$, we consider a regularized construction based on the Ginzburg-Landau type functionals studied in \cite{CS,LW99}, whose definition we recall below.

We continue to view our target $(N^k,h)$ as an isometrically embedded submanifold $N\subset \mathbb{R}^L$ in some large-dimensional Euclidean space.
Fix a positive number $\delta_0(N)>0$ such that the squared distance to $N$ 
$$d_N^2\colon B_{\delta_0}(N)\to \mathbb{R}$$ 
is smooth on the $\delta_0$-neighborhood $B_{\delta_0}(N)\subset \mathbb{R}^L$, and the nearest point projection 
$$\Pi\colon B_{\delta_0}(N)\to N$$
is smooth and well-defined. Let $R_0>\delta_0$ be a large radius such that $N\subset B_{R_0}(0)$. Now, fix a smooth potential function $W: \mathbb{R}^L\to [0,\infty)$ such that
$$W(a)=d_N^2(a)\text{ for }a\in B_{\delta_0/2}(N),$$
$$W(a)=|a|^2\text{ for }a\in \mathbb{R}^L\setminus B_{R_0}(0),$$ 
and
$$\frac{\delta_0^2}{4}\leq W(a)\leq R_0^2\text{ on }B_{R_0}(0)\setminus B_{\delta_0/2}(L).$$
Note that we then have
\begin{equation}\label{w.dist.comp}
c_1\leq \frac{W(a)}{d_N(a)^2}\leq C_1\text{ for some constants }0<c_1<C_1<\infty.
\end{equation}
Following~\cite{CS}, we then define a family of energy functionals
$$E_{\epsilon}: W^{1,2}(M,\mathbb{R}^L)\to \mathbb{R}$$
by 
\begin{equation}
E_{\epsilon}(u):=\int_M\frac{1}{2}|du|_g^2+\frac{W(u)}{\epsilon^2}\,dv_g,
\end{equation}
whose critical points satisfy the Euler-Lagrange system
\begin{equation}\label{gl.eqn}
\Delta u+\frac{DW(u)}{\epsilon^2}=0.
\end{equation}
For potentials $W(u)$ of the form given above, it is easy to check that weak solutions $u\in W^{1,2}(M,\mathbb{R}^L)$ of \eqref{gl.eqn} are always smooth.

Now, given a homotopically nontrivial map $f\colon \mathbb{S}^{\ell}\to N$, we define a collection $\Gamma_f\subset C^0(B^{\ell+1},W^{1,2}(M,\mathbb{R}^L))$ of continuous families of maps in $W^{1,2}(M,\mathbb{R}^L)$ parametrized by the closed unit $(\ell+1)$-ball by
$$\Gamma_f:=\{\mathbb{B}^{\ell+1}\ni y\mapsto u_y\in W^{1,2}(M,\mathbb{R}^L)\mid u_y\equiv f(y)\text{ for }y\in \mathbb{S}^{\ell}\}.$$
Then define the associated min-max energy
$$\mathcal{E}_{f,\epsilon}:=\inf_{(u_y)\in \Gamma_f}\max_{y\in B^{\ell+1}}E_{\epsilon}(u_y).$$
It is straightforward to check that $\mathcal{E}_{f,\epsilon}=\mathcal{E}_{f',\epsilon}$ if $f\simeq f'$, so that the construction depends on $f$ only through its homotopy class--and, it should be noted, maps of distinct homotopy types can also give rise to the same min-max construction. 

Observing that $\mathcal{E}_{f,\epsilon}$ is a decreasing function of $\epsilon$, we define
$$\mathcal{E}_f(M,g):=\lim_{\epsilon\to 0}\mathcal{E}_{f,\epsilon}(M,g)=\sup_{\epsilon>0}\mathcal{E}_{f,\epsilon}(M,g).$$
As a first step to ensure that $\mathcal{E}_f(M,g)$ could be realized as the energy of some nonconstant harmonic map, we need to check that $0<\mathcal{E}_f(M,g)<\infty$. 

\begin{lemma} The limiting energy $\mathcal{E}_f=\lim_{\epsilon\to 0}\mathcal{E}_{f,\epsilon}$ satisfies the lower bound
\begin{equation}\label{unif.en.bds}
\mathcal{E}_f(M,g)\geq \frac{1}{2}\delta_0(N)^2\vol(M,g)\lambda_1(M,g)>0.
\end{equation}.
\end{lemma}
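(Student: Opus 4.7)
The plan is to leverage nontriviality of $[f] \in \pi_\ell(N)$ through the fiberwise spatial average. For any family $(u_y)_{y \in \mathbb{B}^{\ell+1}} \in \Gamma_f$, set
$$\bar u_y := \frac{1}{\vol(M,g)} \int_M u_y \, dv_g \in \mathbb{R}^L.$$
Since $y \mapsto u_y$ is continuous into $W^{1,2}(M,\mathbb{R}^L) \hookrightarrow L^1(M,\mathbb{R}^L)$, the map $y \mapsto \bar u_y$ is continuous, and $\bar u_y = f(y)$ for $y \in \mathbb{S}^\ell$ because $u_y \equiv f(y) \in N$ there. If $d_N(\bar u_y) < \delta_0(N)$ held for every $y \in \mathbb{B}^{\ell+1}$, then $\tilde f(y) := \Pi(\bar u_y)$ would be a continuous extension of $f$ to the ball landing in $N$, forcing $[f] = 0$ in $\pi_\ell(N)$ and contradicting the hypothesis. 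Hence some $y_0 \in \mathbb{B}^{\ell+1}$ satisfies $d_N(\bar u_{y_0}) \geq \delta_0$.

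Next, the pointwise triangle inequality $d_N(\bar u_y) \leq d_N(u_y(x)) + |u_y(x) - \bar u_y|$, integrated over $x$ and combined with Cauchy--Schwarz, yields
$$d_N(\bar u_y)\, \vol(M,g)^{1/2} \leq \left( \int_M d_N(u_y)^2 \, dv_g \right)^{1/2} + \left( \int_M |u_y - \bar u_y|^2 \, dv_g \right)^{1/2}.$$
The first summand is controlled by $\epsilon \sqrt{E_\epsilon(u_y)/c_1}$ via the comparison $W \geq c_1 d_N^2$ from \eqref{w.dist.comp}, while the componentwise Poincaré inequality bounds the second by $\sqrt{2 E_\epsilon(u_y)/\lambda_1(M,g)}$. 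Evaluating at $y_0$, using that the right-hand side is monotone increasing in $E_\epsilon(u_y)$, and passing to the infimum over families in $\Gamma_f$ produces
$$\delta_0\, \vol(M,g)^{1/2} \leq \left( \frac{\epsilon}{\sqrt{c_1}} + \sqrt{\frac{2}{\lambda_1(M,g)}} \right) \mathcal{E}_{f,\epsilon}^{1/2}.$$

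Squaring and rearranging, then letting $\epsilon \to 0$ so that the first term inside the parentheses vanishes, yields the desired inequality $\mathcal{E}_f(M,g) \geq \frac{1}{2}\delta_0(N)^2 \vol(M,g) \lambda_1(M,g)$. I anticipate no serious obstacle here: the continuity of $y \mapsto \bar u_y$ and the extension-via-$\Pi$ argument are standard, the two integral estimates chain cleanly, and the factor $\tfrac{1}{2}$ emerges precisely from the Poincaré term in the limit $\epsilon \to 0$.
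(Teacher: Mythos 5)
Your proposal is correct and follows essentially the same argument as the paper: the averaged map $\bar u_y$ must exit the tubular neighborhood $B_{\delta_0}(N)$ by homotopy nontriviality of $f$, and the triangle inequality combined with the potential comparison \eqref{w.dist.comp} and the Poincaré inequality yields the bound, with the $\epsilon$-term vanishing in the limit. (Your handling of the constant via $W\geq c_1 d_N^2$ is in fact cleaner than the paper's, which contains a small typo at that step.)
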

\begin{proof}
Given a family $\mathbb{B}^{\ell+1}\ni y \mapsto u_y\in W^{1,2}(M,\mathbb{R}^L)$ in $\Gamma_f$, observe that the map
$$\alpha\colon \mathbb{B}^{\ell+1}\to \mathbb{R}^L$$
given by taking averages
$$\alpha(y):=\frac{1}{\vol(M,g)}\int_Mu_y\,dv_g$$
cannot have image contained in the tubular neighborhood $B_{\delta_0}(N)$. Indeed, if we did have $\alpha(\mathbb{B}^{\ell+1})\subset B_{\delta_0}(N)$, then by postcomposing with the nearest-point projection $\Pi_N\colon B_{\delta_0}(N)\to N$, we would obtain a map 
$$\Pi_N\circ \alpha\colon \mathbb{B}^{\ell+1}\to N$$
continuously extending the map $f=\alpha|_{\mathbb{S}^{\ell}}\colon \mathbb{S}^{\ell}\to N$ to the ball $\mathbb{B}^{\ell+1}$, violating the assumption that $f$ is homotopically nontrivial. 

Thus, for any family $(u_y)\in \Gamma_f$, there must be some $y\in \mathbb{B}^{\ell+1}$ for which
$$\alpha(y)=\frac{1}{\vol(M)}\int_Mu_y \,dv_g\notin B_{\delta_0}(N).$$
For this $y$, the triangle inequality gives the pointwise inequality
$$\delta_0\leq d_N(\alpha(y))\leq d_N(u_y)+|u_y-\alpha(y)|,$$
and integrating over $M$ gives
\begin{equation*}
\begin{split}
&\delta_0(N)\vol(M)\leq \int_Md_N(u_y)\,dv_g+\int_M|u_y-\alpha(y)|\,dv_g\\
&\leq \vol(M)^{1/2}\left(\int d_N(u_y)^2\,dv_g\right)^{1/2}+\vol(M)^{1/2}\|u_y-\alpha(y)\|_{L^2}\\
&\leq \vol(M)^{1/2}\left(\int C_1W(u_y)^2\,dv_g\right)^{1/2}+\vol(M)^{1/2}\lambda_1(M,g)^{-1/2}\|du\|_{L^2},
\end{split}
\end{equation*}
where in the final line we've used \eqref{w.dist.comp} and the Poincar\'e inequality 
$$
\lambda_1(M,g)\|u_y-\alpha(y)\|_{L^2}^2\leq \|du\|_{L^2}^2.
$$
 Recalling the definition of $E_{\epsilon}$, we then see that
$$\delta_0(N)\vol(M)^{1/2}\leq C_1^{1/2}\epsilon E_{\epsilon}(u_y)^{1/2}+\left(\lambda_1(M,g)^{-1/2}2E_{\epsilon}(u_y)\right)^{1/2},$$
and since the family $(u_y)\in \Gamma_f$ was arbitrary, it follows that
$$\delta_0(N)\vol(M)^{1/2}\leq \left(C_1^{1/2}\epsilon+\sqrt{2/\lambda_1(M,g)}\right)\mathcal{E}_{f,\epsilon}^{1/2}.$$
Squaring both sides and taking the liminf as $\epsilon\to 0$ then gives
$$\delta_0(N)^2\vol(M)\leq \frac{2}{\lambda_1(M,g)}\mathcal{E}_f,$$
as desired.
\end{proof}

\begin{lemma} There exists $C>0$ independent of $\epsilon>0$ such that
\label{lemma:energy_upbound}
$$\mathcal{E}_{f,\epsilon}(M,g)\leq C$$
for all $\epsilon\in (0,1)$. In particular, $\mathcal{E}_f(M,g)<\infty$.
\end{lemma}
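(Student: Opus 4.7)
The plan is to exhibit, for each $\epsilon \in (0,1)$, an explicit family $(u_y) \in \Gamma_f$ whose maximum $E_\epsilon$-energy is bounded by a constant $C$ that does not depend on $\epsilon$; the desired inequality then follows from the definition of $\mathcal{E}_{f,\epsilon}$ as an infimum. The overall idea is to concentrate the topological obstruction in a single $\epsilon$-scale bubble at a base point, so that the $W/\epsilon^2$ cost remains $O(1)$ even as $\epsilon \to 0$.

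First I would fix a base point $x_0 \in M$, use the exponential map to identify $B_r(x_0)$ with $\mathbb{B}^n_r \subset \mathbb{R}^n$ for some $r$ less than the injectivity radius, and choose a continuous extension $F \colon \mathbb{B}^{\ell+1} \to \mathbb{R}^L$ of $f$ with $F(0) = p_0 \in N$ (which exists since $\mathbb{R}^L$ is contractible). Setting $r_\epsilon := \min(r, \epsilon^{2/n})$, I would then construct $(u_y)$ so that $u_y \equiv f(y)$ for $|y| = 1$, $u_y$ coincides with $F(y)$ on a small inner ball $B_{r_\epsilon/2}(x_0)$, and $u_y$ agrees with a continuous $N$-valued ``outside profile'' on $M \setminus B_{r_\epsilon}(x_0)$, with linear interpolation on the annulus $B_{r_\epsilon}(x_0) \setminus B_{r_\epsilon/2}(x_0)$; the outside profile is arranged using a cutoff in $|y|$ so that the family is continuous at $y = 0$ and matches the boundary data on $|y| = 1$.

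Next I would estimate the energy of each $u_y$. The kinetic term is concentrated on the annulus, where $|du_y| \lesssim r_\epsilon^{-1} \cdot \operatorname{diam}(F(\mathbb{B}^{\ell+1}))$, so integrating against a volume $\lesssim r_\epsilon^n$ gives $\int_M |du_y|_g^2 \, dv_g \lesssim r_\epsilon^{n-2}$, which is bounded uniformly in $\epsilon$ since $n \geq 3$. For the potential, the outside profile is $N$-valued so contributes zero, while on $B_{r_\epsilon}(x_0)$ the potential $W(u_y)$ is uniformly bounded (since $F$ has bounded image), giving $\int_M W(u_y)/\epsilon^2 \, dv_g \lesssim r_\epsilon^n/\epsilon^2 \leq 1$ by the choice $r_\epsilon = \epsilon^{2/n}$. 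Combining these, $E_\epsilon(u_y) \leq C$ uniformly in $y$ and $\epsilon$, and hence $\mathcal{E}_{f,\epsilon} \leq C$.

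The main obstacle will be arranging the outside profile to lie in $N$ while varying continuously in $y \in \mathbb{B}^{\ell+1}$ with the correct boundary condition. One cannot simply take the outside value to be $f(y/|y|)$, both because of the discontinuity at $y = 0$ and because the constant-map family $\mathbb{S}^\ell \to N$ is non-nullhomotopic in $W^{1,2}(M, N)$ when $[f] \neq 0 \in \pi_\ell(N)$. The resolution is to allow the family to leave $N$ exclusively inside the bubble $B_{r_\epsilon}(x_0)$, using $F(y)$ to absorb the topology there, and to choose a careful cutoff scheme matching $f(y)$ on $|y| = 1$; the cost of the ``leaving $N$'' part is then precisely controlled by the $r_\epsilon^n/\epsilon^2 = O(1)$ bound above.
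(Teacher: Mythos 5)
Your construction cannot exist, so the energy estimates have nothing to apply to; the gap is the ``outside profile,'' which you leave unspecified and which is in fact the entire content of the lemma. Outside the fixed ball $B_{r_\epsilon}(x_0)$ you require $u_y$ to take values in $N$, and your energy accounting (all kinetic energy on the interpolation annulus) forces $u_y$ to be essentially constant on the connected set $M\setminus B_{r_\epsilon}(x_0)$, say $u_y\equiv c(y)\in N$ there. Continuity of $y\mapsto u_y$ in $W^{1,2}$ (even just in $L^2$) makes $c\colon \mathbb{B}^{\ell+1}\to N$ continuous, and the boundary condition gives $c(y)=f(y)$ for $y\in\mathbb{S}^{\ell}$; thus $c$ is a nullhomotopy of $f$ in $N$, contradicting $[f]\neq 0$. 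The same contradiction survives if you only ask the outside Dirichlet energy to be small rather than zero: averaging $u_y$ over $M\setminus B_{r_\epsilon}(x_0)$ and using the Poincar\'e inequality shows the average lies within $C\|du_y\|_{L^2}$ of $N$, so composing with the nearest-point projection $\Pi_N$ again produces a nullhomotopy --- this is exactly the mechanism used in the preceding lemma to prove $\mathcal{E}_f>0$. In other words, the topological obstruction is global: it cannot be absorbed by letting the maps leave $N$ only in an $\epsilon$-scale bubble while staying nearly constant and $N$-valued elsewhere, and for any admissible family the $N$-valued part must itself carry a definite amount of Dirichlet energy, so the claimed bound $\int_M|du_y|^2\,dv_g\lesssim r_\epsilon^{n-2}$ is unattainable (the arithmetic $r_\epsilon^n/\epsilon^2\leq 1$ for the potential term is fine but moot).

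What the paper does instead is to construct an everywhere $N$-valued family of uniformly \emph{bounded} (not small) Dirichlet energy, namely $u_y(x)=f\bigl(\rho(P(\Phi(x))+(1-|y|)^{-1}y)\bigr)$, where $\Phi$ is a bi-Lipschitz identification of $M$ with a triangulation, $P$ a generic linear projection to $\mathbb{R}^{\ell+1}$ of full rank on each simplex, and $\rho$ the radial retraction; the uniform energy bound comes from integrating $|P(\Phi(x))-y'|^{-2}$ over $M$, using $\min\{n,\ell+1\}\geq 3$. Since these maps lie in $N$, the potential term vanishes, and the only remaining issue is that the family is merely weakly continuous; this is repaired by mollifying with the heat kernel and choosing the mollification parameter $t_\epsilon$ after $\epsilon$. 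Note also that in such constructions the concentration happens as $|y|\to 1$ (the maps degenerate to constants at the boundary of the parameter ball), rather than inside a fixed spatial bubble for all $y$; some mechanism of this kind is unavoidable if you wish to salvage a concentration-based argument.
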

\begin{proof} To prove the desired upper bounds, it suffices to produce a single family $(u_y)\in \Gamma_f$ for which the energy $E_{\epsilon}$ is uniformly bounded independent of $\epsilon$. There should be many ways to construct such a family; here we follow a construction similar to that of \cite[Section 3]{SternGL}.

By classical results on triangulations (see, e.g., \cite{Wh}), there exists a finite simplicial complex $\mathcal{K}$ in some Euclidean space $\mathbb{R}^m$ (take $m\geq \ell+1$, without loss of generality) and a bi-Lipschitz map 
$$\Phi\colon M\to |\mathcal{K}|$$
from $M$ to the underlying space $|\mathcal{K}|$ of $\mathcal{K}$. Note that for a given $n$-dimensional subspace $V\subset \mathbb{R}^m$ and a generic $(\ell+1)$-dimensional subspace $\Pi\subset \mathbb{R}^m$, the projection $P\colon V\to \Pi$ has full rank $\dim(P(V))=\min\{n,\ell+1\}$. In particular, for the finite simplicial complex $\mathcal{K}$, by projecting onto a generic $(\ell+1)$-dimensional subspace of $\mathbb{R}^m$, we can find a linear map
$$P\colon \mathbb{R}^m\to \mathbb{R}^{\ell+1}$$
such that the restriction of $P$ to each $n$-simplex $\Delta \in \mathcal{K}$ has maximal rank $\min\{n,\ell+1\}$. Denote by $\rho\colon \mathbb{R}^{\ell+1}\to \mathbb{S}^{\ell}$ the radial retraction $\rho(x):=x/|x|$. Now, with $f\colon\mathbb{S}^{\ell}\to N$ as above, we can define a family of maps $\mathbb{B}^{\ell+1}\ni y\mapsto u_y\in W^{1,2}(M, N)$ by
$$u_y(x):=f(\rho(P(\Phi(x))+(1-|y|)^{-1}y)).$$
Note that the map $u_y$ is locally Lipschitz away from the preimage $(P\circ \Phi)^{-1}(\{-(1-|y|)^{-1}y\})$ of codimension $\geq \min\{n,\ell+1\}\geq 3$.
A priori the family is defined only for $|y|<1$, but it is straightforward to check that the assignment extends to a continuous family $\mathbb{B}^{\ell+1}\ni y\mapsto u_y\in L^p(M,N)$ on the closed unit ball $\mathbb{B}^{\ell+1}$ with $u_y\equiv f(y)$ for $y\in \mathbb{S}^{\ell}$. We argue next that the Dirichlet energy $E(u_y)$ is bounded independent of $y\in \mathbb{B}^{\ell+1}$.

Indeed, since the map $f\in C^1(\mathbb{S}^{\ell},N)$, the bi-Lipschitz identification $\Phi\in \mathrm{Lip}(M,|\mathcal{K}|)$, and the linear map $P\colon\mathbb{R}^m\to \mathbb{R}^{\ell+1}$ are all fixed independent of $y$, we see that
$$|du_y|(x)\leq C|d\rho(P(\Phi(x))+(1-|y|)^{-1}y)|\leq C'\frac{1}{|P(\Phi(x))+(1-|y|)^{-1}y|},$$
so (writing $y'=-(1-|y|)^{-1}y$), we simply need to check the upper bound
\begin{equation}\label{obv.en.bd}
\int_M |P(\Phi(x))-y'|^{-2}\,dv_g\leq C
\end{equation}
independent of $y'\in \mathbb{R}^{\ell+1}$. Since the map $\Phi\colon M\to |\mathcal{K}|$ is bi-Lipschitz, we clearly have
$$\int_M|P(\Phi(x))-y'|^{-2}\leq C'\int_{|\mathcal{K}|}|P(x')-y'|^{-2}d\mathcal{H}^n(x')=C'\sum_{\Delta\in \mathcal{K}}\int_{\Delta}|P(x')-y'|^{-2}.$$

Since the affine map $P\colon\Delta \to \mathbb{R}^{\ell+1}$ has full rank $s:=\min\{n,\ell+1\}$ on each $\Delta\in \mathcal{K}$, it follows from the area formula when $n\leq \ell+1$ or the coarea formula when $n>\ell+1$ that
$$\int_{\Delta}|P(x')-y'|^{-2}d\mathcal{H}^n(x')\leq C(\Delta)\int_{P(\Delta)}|w-y'|^{-2}\mathcal{H}^{n-s}(P^{-1}\{w\}\cap\Delta)d\mathcal{H}^s(w),$$
and since $P$ restricts to an affine linear map of rank $s$ on the bounded $n$-simplex $\Delta$, there is clearly a uniform bound $\mathcal{H}^{n-s}(P^{-1}\{w\}\cap \Delta)\leq C'(\Delta)$, so that
$$\int_{\Delta}|P(x')-y'|^{-2}d\mathcal{H}^n(x')\leq C''(\Delta)\int_{P(\Delta)}|w-y'|^{-2}d\mathcal{H}^s(w).$$
Choosing $R>0$ such that $P(|\mathcal{K}|)\subset B^{\ell+1}_R(0)$, we see that $P(\Delta)$ is contained in the intersection of $B_R^{\ell+1}(0)$ with a subspace of dimension $s=\min\{n,\ell+1\}\geq 3$, and consequently
$$\int_{P(\Delta)}|w-y'|^{-2}d\mathcal{H}^s(w)\leq \int_{B^s_R(0)}|w|^{-2}dw\leq CR^{s-2}.$$
Finally, summing over the finite collection of $n$-simplices $\Delta\in \mathcal{K}$, we see that
$$\int_M|P(\Phi(x))-y'|^{-2}\,dv_g\leq \sum_{\Delta\in \mathcal{K}}C R^{s-2}=C_0,$$
as desired.

We've now seen that the family $\mathbb{B}^{\ell+1}\ni y\mapsto u_y\in W^{1,2}(M,N)$ is at least weakly continuous, with energy bounded above independent of $y\in \mathbb{B}^{\ell+1}$. Arguing as in \cite{KS1}, we now consider the convolution
$$u_y^t(x):=\int_MK_t(x,x')u_y(x')\,dv_g(x')$$
of $u_y$ with the heat kernel $K_t$ of $(M,g)$, so that, for each $t>0$, the assignment $\mathbb{B}^{\ell+1}\ni u_y^t\in W^{1,2}(M,\mathbb{R}^L)$ is continuous in $W^{1,2}$, and defines in particular a test family $(u_y^t)\in \Gamma_f$ for the min-max construction. Moreover, as in \cite{KS1}, it is not difficult to see that
$$\limsup_{t\to 0}\max_{y\in \mathbb{B}^{\ell+1}}E_{\epsilon}(u_y^t)\leq \sup_{y\in \mathbb{B}^{\ell+1}}\int \frac{1}{2}|du_y|_g^2\,dv_g\leq C(M)<\infty.$$
Hence, for each $\epsilon>0$, there exists $t_{\epsilon}>0$ such that the family $(u_y^{t_{\epsilon}})\in \Gamma_f$ satisfies
$$\mathcal{E}_{f,\epsilon}\leq \max_{y\in \mathbb{B}^{\ell+1}}E_{\epsilon}(u_y^{t_{\epsilon}})\leq C'(M),$$
giving the desired energy bound.
\end{proof}

Next, to ensure that the energies $\mathcal{E}_{f,\epsilon}(M,g)$ are actually achieved by critical points of $E_{\epsilon}$, we observe that the energies $E_{\epsilon}$ are sufficiently regular functionals satisfying a Palais-Smale condition. 

\begin{proposition} The functionals $E_{\epsilon}\colon W^{1,2}(M,\mathbb{R}^L)\to \mathbb{R}$ are $C^2$, with first and second derivatives
$$\langle E_{\epsilon}'(u),v\rangle=\int_M\langle du,dv\rangle+\epsilon^{-2}\langle DW(u),v\rangle\,dv_g$$
and
$$E_{\epsilon}''(u)(v,v)=\int_M|dv|^2+\epsilon^{-2}\langle D^2W(u),v\otimes v\rangle\,dv_g,$$
where $D^2W$ denotes the Hessian of $W$.
The operator associated to $E_{\epsilon}''(u)$ is Fredholm at any solution $u$ of \eqref{gl.eqn}. Moreoever, $E_{\epsilon}$ satisfies the Palais-Smale compactness condition: for any sequence $u_j\in W^{1,2}(M,\mathbb{R}^L)$ such that 
$$
\sup_j E_{\epsilon}(u_j)<\infty\quad \text{and}\quad \lim_{j\to\infty}\|E_{\epsilon}'(u_j)\|_{(W^{1,2})^*}=0,
$$
 there exists a subsequence converging strongly in $W^{1,2}(M,\mathbb{R}^L)$.
\end{proposition}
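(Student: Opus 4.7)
The plan is to exploit two structural features of $W$ built into its construction: $W$ is smooth on $\R^L$ and agrees with $|a|^2$ outside the bounded set $B_{R_0}(0)$. These yield the uniform pointwise bounds
\[
|DW(a)|\leq C(1+|a|),\qquad |D^2W(a)|\leq C,\qquad W(a)\geq |a|^2-C
\]
on all of $\R^L$. Combined with Rellich--Kondrachov compactness on the closed manifold $M$, these bounds are the only analytic ingredients needed.

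The formulas for $E_\epsilon'$ and $E_\epsilon''$, and the $C^2$-regularity, I would verify by routine Gateaux differentiation. The Dirichlet part is a continuous quadratic form, so the only subtlety lies in the potential part, where one expands $W(u+tv)$ pointwise via Taylor's theorem to second order and passes the limit $t\to 0$ inside the integral, justified by dominated convergence using the bounds on $DW$ and $D^2W$. Continuity of the resulting derivative maps in $u$ then comes from continuity of the Nemytskii operators $u\mapsto DW(u)$ and $u\mapsto D^2W(u)$, which in turn follows from the uniform bounds above and dominated convergence.

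For the Fredholm assertion, I identify the operator associated to $E_\epsilon''(u)$ as the bounded linear map $L\colon W^{1,2}(M,\R^L)\to W^{1,2}(M,\R^L)^*$ defined by
\[
\langle Lv,w\rangle=\int_M\langle dv,dw\rangle\,dv_g+\epsilon^{-2}\int_M\langle D^2W(u)v,w\rangle\,dv_g,
\]
and decompose $L=(\Delta+I)+(\epsilon^{-2}D^2W(u)-I)$, where $\Delta=d^*d$ is the positive Laplacian. The first summand is an isomorphism $W^{1,2}\to(W^{1,2})^*$ by Lax--Milgram, since the associated bilinear form is the standard $W^{1,2}$-inner product. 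The second factors as the compact Rellich inclusion $W^{1,2}\hookrightarrow L^2$ followed by multiplication by the essentially bounded matrix-valued function $\epsilon^{-2}D^2W(u)-I$ and the continuous inclusion $L^2\hookrightarrow(W^{1,2})^*$, hence is compact. Therefore $L$ is a compact perturbation of an isomorphism, and is Fredholm of index zero.

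The Palais--Smale condition is the step I expect to require the most care, and I would handle it as follows. Given $(u_j)$ with $\sup_j E_\epsilon(u_j)<\infty$ and $\|E_\epsilon'(u_j)\|_{(W^{1,2})^*}\to 0$, the lower bound $W(a)\geq|a|^2-C$ combined with the energy bound gives a uniform $L^2$ bound on $u_j$, which together with the control on $\|du_j\|_{L^2}$ yields $\sup_j\|u_j\|_{W^{1,2}}<\infty$. Passing to a subsequence, $u_j\rightharpoonup u_\infty$ weakly in $W^{1,2}$ and strongly in $L^2$ by Rellich. The crux is the standard identity
\[
\int_M|d(u_j-u_\infty)|^2\,dv_g=\langle E_\epsilon'(u_j),u_j-u_\infty\rangle-\epsilon^{-2}\int_M\langle DW(u_j),u_j-u_\infty\rangle\,dv_g-\int_M\langle du_\infty,d(u_j-u_\infty)\rangle\,dv_g.
\]
The first term goes to zero by the Palais--Smale hypothesis, the last term by weak $W^{1,2}$ convergence, and the middle term by the bound $\|DW(u_j)\|_{L^2}\leq C(1+\|u_j\|_{L^2})$ combined with strong $L^2$ convergence $u_j\to u_\infty$. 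Hence $\|d(u_j-u_\infty)\|_{L^2}\to 0$, upgrading the weak $W^{1,2}$ convergence to strong convergence.
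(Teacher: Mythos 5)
Your proof is correct and, for the Palais--Smale condition --- the only part the paper proves in detail --- it is essentially identical to the paper's argument: a uniform $W^{1,2}$ bound from the energy bound (using that $W(a)=|a|^2$ outside a compact set), a weak-$W^{1,2}$/strong-$L^2$ limit via Rellich, and then testing $E_\epsilon'(u_j)$ against $u_j-u_\infty$ together with the $L^2$ bound on $DW(u_j)$ to upgrade to strong convergence. The $C^2$ and Fredholm assertions, which the paper dismisses as direct computation, are correctly supplied by your dominated-convergence/Nemytskii argument and the standard compact-perturbation decomposition (Lax--Milgram for $\Delta+I$ plus Rellich compactness for the bounded multiplication term).
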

\begin{proof} Most of the statements can be checked by direct computation; the proof of the Palais--Smale condition, though standard, is less trivial, so we include it for the convenience of the reader. To verify the Palais--Smale condition, let $u_j\in W^{1,2}(M,\mathbb{R}^L$ be a sequence satisfying $E_{\epsilon}(u_j)\leq C$ and 
\begin{equation}\label{ps.hyp}
\left|\int_M \langle du_j,dv\rangle+\epsilon^{-2}\langle DW(u_j),v\rangle\,dv_g\right|\leq \delta_j\|v\|_{W^{1,2}}
\end{equation}
for some sequence $\delta_j\to 0$. Since the potential function $W(a)$ agrees with $|a|^2$ outside of a compact subset of $\mathbb{R}^L$, it is easy to see that boundedness of $E_{\epsilon}(u_j)$ implies boundedness of $\|u_j\|_{W^{1,2}}$, and therefore we can find a subsequence (unrelabelled) and a map $u\in W^{1,2}(M,\mathbb{R}^L)$ for which 
$$u_j\to u\text{ weakly in }W^{1,2}(M,\mathbb{R}^L)\text{ and strongly in }L^2(M,\mathbb{R}^L).$$
Since energy is lower semi-continuous under weak convergence, we have
\begin{equation*}
\begin{split}
\lim_{j\to\infty}\int |d(u-u_j)|^2 & =\lim_{j\to\infty}\int |du|^2+|du_j|^2-2\langle du,du_j\rangle \\
&\leq \lim_{j\to\infty}2\int (|du_j|^2-\langle du,du_j\rangle),
\end{split}
\end{equation*}
so to show that $u_j\to u$ strongly in $W^{1,2}(M,\mathbb{R}^L)$, it suffices to show that
\begin{equation}\label{cvg.suff}
\lim_{j\to\infty}\int_M |du_j|^2-\langle du_j,du\rangle\,dv_g=0.
\end{equation}
To this end, note that $u_j-u$ is bounded in $W^{1,2}$, so taking $v=u_j-u$ in \eqref{ps.hyp} gives
$$
\lim_{j\to\infty}\int_M |du_j|^2-\langle du_j,du\rangle+\epsilon^{-2}\langle DW(u_j),u_j-u\rangle\,dv_g=0.
$$
Moreover, we know that $DW(u_j)$ is bounded in $L^2$ and $u_j-u\to 0$ in $L^2$, so
$$
\lim_{j\to\infty}\int_M \epsilon^{-2}\langle DW(u_j),u_j-u\rangle\,dv_g=0
$$
as well, and \eqref{cvg.suff} follows.

\end{proof}

With the preceding ingredients in place, standard results in critical point theory (see, e.g., \cite[Chapter 10]{Ghou}) imply that the min-max energies $\mathcal{E}_{f,\epsilon}$ are indeed achieved by critical points with Morse index bounded above by the number of parameters in the construction.

\begin{proposition}\label{pert.minmax} For each $\epsilon>0$, there exists a critical point $u_{\epsilon}\colon M\to \mathbb{R}^L$ for $E_{\epsilon}$ of Morse index $\ind_{E_{\epsilon}}(u_{\epsilon})\leq \ell+1$ and
$$
E_{\epsilon}(u_{\epsilon})=\mathcal{E}_{f,\epsilon}(M,g).
$$
\end{proposition}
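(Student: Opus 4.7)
The plan is to view Proposition \ref{pert.minmax} as an instance of the classical min-max principle with Morse index bounds, as developed in, e.g., \cite[Chapter 10]{Ghou}. Since $E_\epsilon$ is a $C^2$ functional on the Hilbert space $W^{1,2}(M,\mathbb{R}^L)$ satisfying the Palais--Smale condition with Fredholm Hessian at every critical point, the general abstract machinery applies: for a min-max family parametrized by a space of ``homotopical dimension'' $d$, one recovers a critical point at the min-max level of Morse index at most $d$. In our setting the parameter space is $\mathbb{B}^{\ell+1}$ with its boundary $\mathbb{S}^\ell$ fixed, so the predicted bound is $\ell+1$.

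First, I would observe that the min-max level strictly exceeds the energies on the boundary: by the previous lemma $\mathcal{E}_{f,\epsilon}\geq \tfrac{1}{2}\delta_0(N)^2\vol(M,g)\lambda_1(M,g)>0$, whereas every boundary map $u_y\equiv f(y)\in N$ satisfies $|du_y|_g=0$ and $W(u_y)=0$, hence $E_\epsilon(u_y)=0$. Thus $\mathcal{E}_{f,\epsilon}$ is a genuine mountain-pass level relative to the boundary family.

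Next, I would construct a locally Lipschitz pseudo-gradient vector field for $E_\epsilon$ whose associated negative flow $\Psi_t$ preserves the class $\Gamma_f$. For this it suffices that the flow fix every constant map $u\equiv p\in N$, which it does, since such maps are critical points of $E_\epsilon$: indeed $\Delta u=0$ and $DW(p)=0$ as $p$ lies in the zero set of $W$. The standard deformation lemma then yields that if the critical set $K_c$ at level $c=\mathcal{E}_{f,\epsilon}$ were empty, one could apply $\Psi_t$ to a near-minimizing family $(u_y)\in\Gamma_f$ and strictly decrease its maximum energy below $c$, contradicting the definition of $\mathcal{E}_{f,\epsilon}$. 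Hence $K_c\neq\emptyset$ and the min-max level is achieved by some critical point.

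The main technical point, and what I view as the principal obstacle, is upgrading mere existence to the Morse index bound $\ind_{E_\epsilon}(u_\epsilon)\leq \ell+1$. Arguing by contradiction, assume that every $u\in K_c$ has index $\geq \ell+2$. Fredholmness of $E_\epsilon''(u)$ provides a local splitting $W^{1,2}(M,\mathbb{R}^L)=V^-\oplus V^+$ near each such $u$, with $\dim V^-\geq \ell+2$ on which the Hessian is uniformly negative and a complement on which one has coercivity modulo finite-dimensional kernel. A local deformation pushing down along $V^-$ can be spliced with the global pseudo-gradient flow away from $K_c$, yielding, for any $(u_y)\in\Gamma_f$ with $\max_y E_\epsilon(u_y)$ sufficiently close to $c$, a competitor family with strictly smaller maximum energy. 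The essential topological input is that a continuous $\mathbb{B}^{\ell+1}$-family with boundary $f\colon \mathbb{S}^\ell\to N$ cannot be pushed entirely below $c$ by a flow whose local ``exit directions'' near each critical point have codimension at least $\ell+2$: a Sard/transversality-type projection of the family onto $V^-$ produces a parameter $y_0$ whose image remains at level $\geq c$. This is exactly the linking argument underlying \cite[Chapter 10]{Ghou}, and its application in our setting delivers the claimed index bound.
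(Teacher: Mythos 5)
Your proposal is correct and follows essentially the same route as the paper, which simply invokes the standard min-max principle with Morse index bounds (\cite[Chapter 10]{Ghou}) once the $C^2$ regularity, Palais--Smale condition, Fredholm Hessian, and finiteness/positivity of $\mathcal{E}_{f,\epsilon}$ are in place; your sketch of the deformation and index arguments is just an unpacking of that citation. One small imprecision: the bound $\frac{1}{2}\delta_0(N)^2\vol(M,g)\lambda_1(M,g)$ is stated in the paper for the limit $\mathcal{E}_f$, not for each $\mathcal{E}_{f,\epsilon}$, but the proof of that lemma does give $\mathcal{E}_{f,\epsilon}>0$ for each fixed $\epsilon$ (with an $\epsilon$-dependent constant), which is all your argument needs.
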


Note that nowhere in the preceding subsection have we invoked any assumptions on the geometry of the target $N$, beyond the topological condition that $\pi_{\ell}(N)\neq 0$. The assumption that $N$ admits no stable minimal two-spheres enters in the next section, where it plays a crucial role in ensuring strong compactness as $\epsilon\to 0$ for families of critical points $u_{\epsilon}$ of $E_{\epsilon}$ like those given by Proposition \ref{pert.minmax}.

\subsection{Convergence to stationary harmonic maps}

The goal of this subsection is to prove the following compactness result, showing that families of critical points $u_{\epsilon}$ of $E_{\epsilon}$ like those given by Proposition \ref{pert.minmax} converge strongly to stationary harmonic maps as $\epsilon\to 0$, provided $N$ satisfies the crucial hypothesis that every stable harmonic map $\phi\colon \mathbb{S}^2\to N$ is constant.

\begin{theorem}\label{stat.lim.thm} Let $(M^n,g)$ be a closed Riemannian manifold of dimension $n\geq 3$ and $N^k\subset \mathbb{R}^L$ a closed manifold of dimension $k$ as above, such that every stable harmonic map $\phi\colon \mathbb{S}^2\to N$ is constant. Let $u_{\epsilon}\colon M\to \mathbb{R}^L$ be a family of critical points for $E_{\epsilon}$ with $E_{\epsilon}(u_{\epsilon})\leq C<\infty$ and $\ind_{E_{\epsilon}}(u_{\epsilon})\leq I_0<\infty$ as $\epsilon\to 0$. Then $u_{\epsilon}$ converges strongly in $W^{1,2}$ to a stationary harmonic map $u\colon M\to N$ of Morse index $\ind_E(u)\leq I_0$.
\end{theorem}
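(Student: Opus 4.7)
The plan is to combine the Ginzburg--Landau compactness theory of Chen--Struwe~\cite{CS} and Lin--Wang~\cite{LW99} with a bubbling analysis constrained by the Morse index bound (in the spirit of \cite{SaUhl77,MM88,Hsu05}) to upgrade weak to strong convergence, and then invoke a lower semicontinuity argument to transfer the index bound to the limit.

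\textbf{Step 1: weak limit.} The uniform bound $E_\epsilon(u_\epsilon)\leq C$ together with $W\geq 0$ yields a uniform $W^{1,2}$-bound. Passing to a subsequence, $u_\epsilon\rightharpoonup u$ in $W^{1,2}$ and $u_\epsilon\to u$ in $L^2$. Since $\int_M W(u_\epsilon)\,dv_g\leq C\epsilon^2\to 0$, Fatou forces $W(u)=0$ a.e., hence $u(x)\in N$ a.e. The analysis of \cite{CS,LW99} then identifies $u$ as a stationary harmonic map $M\to N$ and produces a Radon defect measure $\nu$, supported on a closed $(n-2)$-rectifiable set $\Sigma\subset M$, so that the energy densities $e_\epsilon(u_\epsilon)\,dv_g$ converge weakly to $\tfrac{1}{2}|du|_g^2\,dv_g+\nu$.

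\textbf{Step 2: ruling out concentration.} At $\mathcal{H}^{n-2}$-a.e.\ $x_0\in\Sigma$, the blow-up analysis of \cite{LW99} yields, after a rescaling $v_\epsilon(y):=u_\epsilon(x_0+r_\epsilon y)$ with $r_\epsilon\to 0$ and $r_\epsilon/\epsilon\to \infty$, a nonconstant finite-energy harmonic map $\phi\colon\mathbb{R}^2\to N$ (via translation invariance along the rectifiable directions of $\Sigma$ at $x_0$), which extends through removable singularities to a nonconstant harmonic $\phi\colon\mathbb{S}^2\to N$. I would then transplant any negative direction for $E''(\phi)$ via a cutoff supported on the blow-up scale $r_\epsilon$ around $x_0$ to a negative direction for $E''_\epsilon(u_\epsilon)$; choosing $I_0+1$ concentration points and blow-up scales tending to zero at sufficiently separated rates yields $I_0+1$ essentially orthogonal negative directions, contradicting $\ind_{E_\epsilon}(u_\epsilon)\leq I_0$---unless the bubble $\phi$ at $\mathcal{H}^{n-2}$-a.e.\ $x_0\in\Sigma$ is stable. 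The standing hypothesis on $N$ then forces $\phi$ to be constant, in contradiction with the nonzero energy of the bubble. Consequently $\Sigma=\emptyset$ and $\nu=0$, which combined with lower semicontinuity of the energy upgrades $u_\epsilon\to u$ to strong convergence in $W^{1,2}(M,\mathbb{R}^L)$.

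\textbf{Step 3: lower semicontinuity of the index.} Given any $k$-dimensional subspace $V\subset\mathcal{V}(u)$ of smooth sections on which $E''(u)$ is negative definite, with basis $v_1,\dots,v_k$, define tangential approximants $\tilde v_i^\epsilon:=v_i-\Pi^{\perp}_{u_\epsilon}(v_i)$ using the smooth nearest-point projector $\Pi$ on $B_{\delta_0}(N)$, which is well-defined at points where $u_\epsilon$ is close to $N$---a condition that holds on sets of full measure in $M$ as $\epsilon\to 0$. Strong $W^{1,2}$-convergence, smoothness of $W$, and a direct comparison of the formula for $E''_\epsilon$ with \eqref{sec.var} give
$$\lim_{\epsilon\to 0}E''_\epsilon(u_\epsilon)(\tilde v_i^\epsilon,\tilde v_j^\epsilon)=E''(u)(v_i,v_j).$$
Hence $E''_\epsilon(u_\epsilon)$ is negative definite on $\mathrm{span}\{\tilde v_i^\epsilon\}$ for $\epsilon$ sufficiently small, giving $\ind_{E_\epsilon}(u_\epsilon)\geq k$; taking $k=\ind_E(u)$ yields $\ind_E(u)\leq I_0$.

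The principal obstacle is Step 2: precisely controlling the blow-up at concentration points and transferring the uniform second-variation bound at unit scale to a stability statement at the blow-up scale. One must simultaneously choose the rescaling to produce a genuinely nonconstant bubble \emph{and} retain enough of the second-variation information to conclude the bubble's stability---a balance requiring the delicate varifold/rectifiability machinery of \cite{LW99} adapted from the stability setting of \cite{Hsu05} to the present bounded-index framework.
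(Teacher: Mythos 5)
Your overall architecture---Lin--Wang blow-up at the defect measure, using the index bound to force the two-dimensional bubble to be stable, the hypothesis on $N$ to kill it, and a lower-semicontinuity statement for the index---is the same as the paper's, but the two steps you yourself flag as delicate are exactly where the argument has genuine gaps, and what you write does not close them. First, ``transplanting'' a negative direction of the bubble (a statement about the second variation of the \emph{limiting} harmonic map) into a negative direction for $E''_{\epsilon}(u_{\epsilon})$ is not a routine cutoff argument: $E''_{\epsilon}(u_{\epsilon})(v,v)$ contains the term $\epsilon^{-2}\langle D^2W(u_{\epsilon}),v\otimes v\rangle$, which is of size $\epsilon^{-2}$ in directions normal to $N$, and whose tangential part converges to the curvature term in \eqref{sec.var} only after projecting $v$ onto $T_{u_{\epsilon}}N$, expanding $\Hess(d_N^2)$ as in Lemma \ref{dsquared.lem}, substituting the equation \eqref{gl.eqn} to replace $\epsilon^{-2}DW(u_{\epsilon})$ by $-\Delta u_{\epsilon}$, and using the \emph{locally smooth} convergence from Theorem \ref{lw.thm} to show $\int \epsilon^{-2}d_N(u_{\epsilon})^2\to 0$ on the region where the test fields live. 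This is precisely the content of the paper's Lemma \ref{ind.lim}; your Step 3 claim that ``strong $W^{1,2}$-convergence, smoothness of $W$, and a direct comparison'' give $E''_{\epsilon}(u_{\epsilon})(\tilde v_i^{\epsilon},\tilde v_j^{\epsilon})\to E''(u)(v_i,v_j)$ is false as stated for this reason (and ``$u_\epsilon$ close to $N$ on a set of full measure'' is not enough even to define the projected fields in $W^{1,2}\cap L^{\infty}$; one needs uniform closeness on the support, again from smooth local convergence). The same missing computation is what your Step 2 transplantation requires, in the rescaled picture.

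Second, a cutoff ``supported on the blow-up scale $r_{\epsilon}$'' does not produce a negative direction. The blow-up limit is invariant in $n-2$ directions, so a test field of the form $\psi(x_3,\dots,x_n)\,v(x_1,x_2)$ picks up the positive contribution $\|v\|_{L^2}^2\|d\psi\|_{L^2}^2$; if $\psi$ lives at unit scale in the rescaled coordinates (i.e.\ at scale $r_{\epsilon}$ before rescaling), its Rayleigh quotient is of order one and need not be beaten by $E''(\phi)(v,v)$. One must dilate $\psi$ in the invariant directions (support of radius $\Lambda r_{\epsilon}$ with $\Lambda$ large), which is exactly the mechanism of the paper's Lemma \ref{bub.stab}. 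Once that mechanism is in place, your multi-point bookkeeping with $I_0+1$ separated concentration points is unnecessary: the paper transfers the index bound $I_0$ to the blow-up limit (the localized Lemma \ref{ind.lim}, Remark \ref{loc.ind.lim}) and observes that a \emph{single} unstable translation-invariant bubble already has infinite index, by translating $\psi$, so one concentration point suffices. A smaller inaccuracy: in your Step 1 the limit is only weakly harmonic at that stage; stationarity of $u$ is deduced only after $\nu=0$, by passing the inner-variation identity to the limit under strong convergence.
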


Under the stronger condition that $N$ admits \emph{no} nonconstant harmonic maps $\mathbb{S}^2\to N$, we could appeal directly to \cite[Theorem A, Corollary B]{LW99} to deduce the strong convergence of the critical points $u_{\epsilon}$ to a stationary harmonic map. In this case, however, $\pi_{\ell}(N)=0$ for every $\ell\geq 3$, so one cannot produce nontrivial families of critical points via the methods of the previous section. To prove Theorem \ref{stat.lim.thm}, we revisit the analysis of \cite{LW99} of possible energy concentration for the maps $u_{\epsilon}$, observing that the added assumption of bounded Morse index allows us to rule out energy concentration more generally in the absence of \emph{stable} harmonic maps $\mathbb{S}^2\to N$.

As in \cite{LW99}, the first ingredients needed to begin the blow-up analysis are the monotonicity formula and a small-energy regularity theorem for solutions of \eqref{gl.eqn}, which we recall below.

\begin{lemma}\label{mono}
Let $u\colon M\to \mathbb{R}^L$ be a critical point for $E_{\epsilon}$. Then on geodesic balls $B_r(p)$ of radius $r<\inj(M)$, we have
$$\frac{d}{dr}\left(e^{Cr^2}r^{2-n}\int_{B_r(p)}e_{\epsilon}(u)\right) \geq e^{Cr^2}r^{2-n}\left(\int_{\partial B_r(p)}\left|\frac{\partial u_{\epsilon}}{\partial \nu_p}\right|^2+\frac{2}{r}\int_{B_r(p)}\frac{W(u_{\epsilon})}{\epsilon^2}\right),$$
where $\nu_p$ denotes the gradient of the distance function $d_p$ to $p$, and $C=C(n,k)$ is a constant depending on the geometry of $(M,g)$ only through the dimension $n=\dim M$ and a sectional curvature bound $k\geq |\mathrm{sec}(M,g)|$.
\end{lemma}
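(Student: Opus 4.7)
The plan is to derive the monotonicity formula from the inner-variation (stationarity) identity satisfied by any smooth critical point of $E_\epsilon$. First I would differentiate $E_\epsilon(u \circ \Phi_t)$ at $t=0$, where $\Phi_t$ is the flow of a compactly supported vector field $X$; using \eqref{gl.eqn} to integrate by parts, this would yield the inner-variation identity
\begin{equation*}
\int_M \left[ e_\epsilon(u)\,\mathrm{div}_g(X) - \langle du^* du,\, DX\rangle_g \right] dv_g = 0,
\end{equation*}
where $e_\epsilon(u) := \tfrac12|du|_g^2 + W(u)/\epsilon^2$. Morally, this is just the statement that the $\epsilon$-modified stress-energy tensor $T_u^\epsilon = e_\epsilon(u)\,g - du^*du$ is divergence-free on a critical point.

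Next I would specialize to the radial field $X = \xi(r)\nabla r$, where $r = d_p$ and $\xi$ is a smooth approximation of $s \mapsto s\chi_{[0, r_0]}(s)$, on a ball $B_{r_0}(p)$ with $r_0 < \inj(M,g)$. In geodesic normal coordinates at $p$ one has
\begin{equation*}
\mathrm{div}_g(\nabla r) = \frac{n-1}{r} + O(r), \qquad \Hess(r) = \frac{1}{r}\bigl(g - dr \otimes dr\bigr) + O(r),
\end{equation*}
where the $O(r)$ errors are controlled by the two-sided bound $k \geq |\mathrm{sec}(M,g)|$. Substituting $DX = \xi'(r)\,dr\otimes\nabla r + \xi(r)\,\Hess(r)$ into the stationarity identity, passing the regularization of $\xi$ to the limit, and using the coarea formula to convert the $\xi'$-term into a boundary integral, I would obtain
\begin{equation*}
r_0\!\int_{\partial B_{r_0}(p)}\!\!\Bigl(e_\epsilon(u) - \bigl|\tfrac{\partial u}{\partial \nu_p}\bigr|^2\Bigr) = \!\int_{B_{r_0}(p)}\!\!\Bigl[(n-2)\,e_\epsilon(u) + \tfrac{2W(u)}{\epsilon^2}\Bigr] + R(r_0),
\end{equation*}
with error bound $|R(r_0)| \leq 2C r_0^2 \int_{B_{r_0}(p)} e_\epsilon(u)$ for some $C = C(n,k)$.

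Finally, I would divide through by $r_0^{n-1}$ and rearrange, recognizing that the identity above is equivalent to a differential inequality of the form
\begin{equation*}
\frac{d}{dr}\!\left(r^{2-n}\!\!\int_{B_r(p)}\! e_\epsilon(u)\right) + 2Cr \cdot r^{2-n}\!\!\int_{B_r(p)}\! e_\epsilon(u) \,\geq\, r^{2-n}\!\left(\int_{\partial B_r(p)}\!\! \bigl|\tfrac{\partial u}{\partial \nu_p}\bigr|^2 + \tfrac{2}{r}\!\int_{B_r(p)}\!\tfrac{W(u)}{\epsilon^2}\right).
\end{equation*}
Introducing the integrating factor $e^{Cr^2}$, so that the LHS collapses to $\frac{d}{dr}\bigl(e^{Cr^2} r^{2-n}\int_{B_r} e_\epsilon(u)\bigr)$ while the RHS is multiplied by the same positive factor, gives exactly the stated inequality. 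The only real delicacy is verifying that the curvature errors in the expansions of $\mathrm{div}_g(\nabla r)$ and $\Hess(r)$ are genuinely $O(r)$ rather than $O(1)$, so that after multiplying by $\xi(r) \sim r$ they contribute only $O(r^2) e_\epsilon(u)$ to the identity and can be absorbed by the integrating factor; this scaling is precisely what the two-sided sectional curvature bound delivers, and accounts for the claimed dependence $C = C(n,k)$.
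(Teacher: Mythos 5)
Your proposal is correct and follows essentially the same route as the paper: the paper also pairs the divergence-free $\epsilon$-stress-energy tensor against the radial field (there taken as $X=\tfrac12\nabla d_p^2$, integrated by parts on the ball rather than via a cutoff and coarea), controls $DX-g$ by $C(n,k)\,d_p^2$ through Hessian comparison, and absorbs the error with the integrating factor $e^{Cr^2}$.
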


\begin{lemma}\label{eps.reg} There exist constants $C(N,n,k)<\infty$, $\eta_0(N,n,k)>0$ depending only on the target manifold $N\subset \mathbb{R}^L$, the dimension $n=\dim M$, and a sectional curvature bound $|\mathrm{sec}(M,g)|\leq k$, such that if $u\colon B_r(p)\to N$ solves \eqref{gl.eqn} on a ball $B_{2r}(p)\subset M$ with $2r<\min\{\inj(M,g),1\}$, and
$$r^{2-n}E_{\epsilon}(u;B_{2r}(p))<\eta_0,$$
then $r^2 e_{\epsilon}(u)\leq 1$ on $B_{r/2}(p)$.
\end{lemma}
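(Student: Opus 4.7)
The plan is to run the standard $\epsilon$-regularity argument in the style of Schoen--Uhlenbeck \cite{SU82}, adapted to the Ginzburg--Landau setting as in Chen--Struwe and Lin--Wang \cite{LW99}. The scheme combines a Bochner-type subelliptic inequality for the energy density $e_\epsilon(u)$ with the monotonicity formula of Lemma \ref{mono} and a Moser iteration.

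The first step is to derive a Bochner inequality. Since $u$ solves $\Delta u = -\epsilon^{-2} DW(u)$ and $W(a) = |a|^2$ outside a compact set, a maximum principle argument applied to $\tfrac{1}{2}\Delta |u|^2 = |du|^2 - \epsilon^{-2}\langle DW(u), u\rangle$ gives an a priori interior bound $\|u\|_{L^\infty(B_r(p))} \le R_1$ depending only on $N$ and the (scaled) energy. Then computing $\Delta|du|^2$ and $\Delta(\epsilon^{-2}W(u))$ via the equation and the Bochner identity on $(M,g)$ leads to a pointwise inequality of the form
$$
-\Delta e_\epsilon(u) \le C_1 \, e_\epsilon(u)^2 + C_2 \, e_\epsilon(u),
$$
where $C_1, C_2$ depend only on $N \subset \mathbb{R}^L$, the $C^2$-norm of $W$ on $\bar B_{R_1}(0)$, and the sectional curvature bound $|\mathrm{sec}(M,g)| \le k$.

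Next, from the hypothesis $r^{2-n} E_\epsilon(u; B_{2r}(p)) < \eta_0$, the monotonicity formula of Lemma \ref{mono} propagates the smallness to all smaller scales: for every ball $B_\rho(q) \subset B_{3r/2}(p)$ with $\rho \le r$, one obtains $\rho^{2-n} E_\epsilon(u; B_\rho(q)) \le C_3 \eta_0$ (the exponential factor $e^{Cr^2}$ is harmless since $r \le 1$). Feeding this uniform smallness of the Morrey-type norm of $e_\epsilon(u)$ into a Moser iteration applied to the subelliptic inequality above produces the mean value bound
$$
\sup_{B_{r/2}(p)} e_\epsilon(u) \le C_4\, r^{-n} \int_{B_r(p)} e_\epsilon(u)\, dv_g \le C_5 \, \eta_0 \, r^{-2},
$$
and choosing $\eta_0$ sufficiently small yields $r^2 e_\epsilon(u) \le 1$ on $B_{r/2}(p)$.

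The main technical obstacle is closing the Moser iteration despite the critical nonlinearity $C_1 e_\epsilon^2$, which has exactly the scaling that can defeat a naive iteration. The standard remedy, going back to Schoen, is to use the Sobolev embedding $W^{1,2} \hookrightarrow L^{2n/(n-2)}$ together with H\"older's inequality to bound the contribution of the nonlinear term by a small constant times the $L^{2n/(n-2)}$-norm that appears on the left, where the smallness of the constant is governed precisely by the Morrey density $\sup_{\rho, q} \rho^{2-n} \int_{B_\rho(q)} e_\epsilon$; monotonicity ensures this is as small as needed. An alternative, perhaps more robust route is a blow-up argument: failure of the lemma combined with a maximal-function point-picking would produce, after rescaling, an entire nonconstant critical point on $\mathbb{R}^n$ (of $E$ if $\lambda_j \epsilon_j \to 0$, or of a rescaled Ginzburg--Landau functional otherwise) with vanishing Morrey density and energy density $1$ at the origin, contradicting the monotonicity formula for the limit.
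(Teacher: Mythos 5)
Your main route, the Moser iteration, has a genuine gap at exactly the step you flag as the ``main technical obstacle.'' After the Caccioppoli step for the inequality $d^*d\,e_\epsilon(u)\le C e_\epsilon(u)+Ce_\epsilon(u)^2$, the critical term to absorb is $\int \varphi^2 e_\epsilon^{q+2}$, and H\"older plus Sobolev bound it by $\|e_\epsilon\|_{L^{n/2}(\supp\varphi)}\,\|\varphi\, e_\epsilon^{(q+1)/2}\|_{L^{2n/(n-2)}}^2$. The smallness you actually possess -- from the hypothesis and from Lemma~\ref{mono} -- is smallness of the scale-invariant $L^1$ quantity $\sup_{q,\rho}\rho^{2-n}\int_{B_\rho(q)}e_\epsilon(u)\,dv_g$, and this controls neither $\|e_\epsilon\|_{L^{n/2}}$ nor the multiplier bound $\int e_\epsilon\phi^2\le C\eta_0\int|d\phi|^2$ that would be needed to close the iteration: Fefferman--Phong-type absorption requires a Morrey bound with exponent $p>1$ (scaling like $L^{n/2}$), and smallness of the $p=1$ Morrey norm is insufficient. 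Concretely, a density of size $\eta\delta^{-2}$ on the $\delta$-tube around a codimension-two plane has $L^1$-Morrey norm $O(\eta)$ uniformly in $\delta$, yet its form constant against $\int|d\phi|^2$ is of order $\eta\log(1/\delta)$; and concentration of $e_\epsilon$ along codimension-two sets is precisely the scenario one is trying to rule out here. This is the same difficulty that makes $\epsilon$-regularity for merely stationary harmonic maps delicate, so ``choose $\eta_0$ small'' does not rescue the iteration as written. A secondary point: the interior bound $\|u\|_{L^\infty}\le R_1$ does not follow from the maximum principle alone (you have no boundary control on $B_{2r}(p)$), and it is also unneeded -- by the construction of $W$ and Lemma~\ref{dsquared.lem} one has $D^2W(u)\ge -C(N)\bigl(|DW(u)|+W(u)\bigr)g_{\mathbb{R}^L}$ for all values of $u$, and in the Bochner computation the positive term $|DW(u)|^2/\epsilon^4$ coming from the potential part of $e_\epsilon$ must be retained to absorb the cross term $\epsilon^{-2}D^2W(u)(du,du)$ via Young; without it the inequality $d^*d\,e_\epsilon\lesssim e_\epsilon+e_\epsilon^2$ is not available.

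The ``alternative, more robust route'' you mention at the end is the correct repair, and it is in substance the paper's proof, run directly rather than by contradiction: maximize $\dist(\cdot,\partial B_r(p))^2e_\epsilon(u)$ at a point $x_0$ (Schoen's point-picking), so that $e_\epsilon(u)\le 4\,e_\epsilon(u)(x_0)$ on a ball $B_{\sigma_0}(x_0)$; on that ball the quadratic term in the Bochner inequality linearizes with coefficient $C(1+e_\epsilon(u)(x_0))$, and a mean value inequality (obtained from Hessian comparison) combined with the monotonicity of Lemma~\ref{mono} yields
$$
e_\epsilon(u)(x_0)\;\le\; C\sigma^{-n}E_\epsilon(u;B_\sigma(x_0))+C\sigma\,\eta_0^{1/2}\bigl(1+e_\epsilon(u)(x_0)\bigr)e_\epsilon(u)(x_0)^{1/2},
$$
after which a continuity argument in $\sigma$ gives $r^2e_\epsilon(u)\le 1$ on $B_{r/2}(p)$ for $\eta_0$ small. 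If you insist on the blow-up formulation, note that passing to an entire limiting solution requires uniform elliptic estimates that are awkward in the regime $\epsilon_j/\lambda_j\to0$ (the right-hand side of \eqref{gl.eqn} is not obviously bounded there); the point-picking plus the mean value inequality at the chosen point already yields the contradiction without extracting a limit, which is exactly how the paper argues.
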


Though both the monotonicity and small-energy regularity results are well-known to experts--perhaps in a slightly different form--we include proofs in the appendix for the convenience of the reader.

Next, we recall the main result of \cite{LW99}, which shows that families of critical points $u_{\epsilon}$ of $E_{\epsilon}$ with bounded energy exhibit limiting behavior as $\epsilon\to 0$ similar to that of stationary harmonic maps, as described in \cite{Lin99}. Technically speaking, their results don't apply directly to the functionals $E_{\epsilon}$ as defined in the previous subsection, since in \cite{LW99} the potential $W(a)$ is taken to be constant outside a tubular neighborhood of $N$, while we choose to set $W(a)=|a|^2$ outside of a compact set (simply for convenience in formulating the Palais-Smale property); however, it is easy to check that this has no effect on the analysis, since the potentials $W(a)$ agree on a tubular neighborhood of $N$.

\begin{theorem}[Lin, Wang~\cite{LW99}\label{lw.thm}]

If $u_{\epsilon}\colon M\to \mathbb{R}^L$ is a family of critical points for $E_{\epsilon}$ with $E_{\epsilon}(u_{\epsilon})\leq C$, then there exists a weakly harmonic map $u\in W^{1,2}(M,N)$ and a closed, $(n-2)$-rectifiable set $\Sigma\subset M$ such that, after passing to a subsequence, $u_{\epsilon}\to u$ in $C^{\infty}_{loc}(M\setminus\Sigma)$ and $u_{\epsilon}\to u$ weakly in $W^{1,2}(M,\mathbb{R}^L)$, and the discrepancy measure
$$
\nu:=\lim_{\epsilon\to 0}e_{\epsilon}(u_{\epsilon})dv_g-\frac{1}{2}|du|^2dv_g
$$
has the form
$$
\nu=\theta(x) \mathcal{H}^{n-2}\lfloor \Sigma.
$$
\end{theorem}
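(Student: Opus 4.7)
The proof follows the standard blow-up/concentration-compactness framework for elliptic variational problems with energy quantization, specialized to the Ginzburg--Landau system \eqref{gl.eqn}. First I would pass to a subsequence so that $u_\epsilon\rightharpoonup u$ weakly in $W^{1,2}(M,\mathbb{R}^L)$; the uniform energy bound together with the comparison $W(a)\geq c_1 d_N(a)^2$ forces $\int_M d_N(u_\epsilon)^2\,dv_g \leq C\epsilon^2 \to 0$, so $u(x)\in N$ a.e.\ and $u\in W^{1,2}(M,N)$. Extracting a further subsequence, I may assume the energy measures $e_\epsilon(u_\epsilon)\,dv_g$ converge weakly as Radon measures to a limit $\mu$, and define the defect $\nu:=\mu-\tfrac12|du|_g^2\,dv_g\geq 0$.

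The next step is to identify the concentration set. Using the $\epsilon$-regularity Lemma~\ref{eps.reg}, set
$$\Sigma:=\Bigl\{x\in M:\ \liminf_{r\to 0}\liminf_{\epsilon\to 0} r^{2-n}E_\epsilon(u_\epsilon;B_r(x))\geq \eta_0\Bigr\}.$$
For $x\notin \Sigma$ there is a small $r>0$ with $r^{2-n}E_\epsilon(u_\epsilon;B_{2r}(x))<\eta_0$ along the subsequence, so Lemma~\ref{eps.reg} yields $r^2 e_\epsilon(u_\epsilon)\leq 1$ on $B_{r/2}(x)$. Once $u_\epsilon$ is uniformly bounded and stays in a tubular neighborhood of $N$ (where $W$ is smooth), Schauder bootstrap applied to \eqref{gl.eqn} promotes this to uniform $C^k_{loc}$ estimates, so $u_\epsilon\to u$ in $C^\infty_{loc}(M\setminus \Sigma)$ and $u$ solves \eqref{wk.harm} classically there. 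The monotonicity formula (Lemma~\ref{mono}) guarantees that $r\mapsto e^{Cr^2}r^{2-n}\mu(B_r(p))$ is essentially nondecreasing, giving both a uniform upper bound $r^{2-n}\mu(B_r(x))\leq C(E,M)$ and, on $\Sigma$, a lower bound $\Theta^{n-2}(\mu,x)\geq \eta_0/(2\omega_{n-2})$. A standard Vitali-type covering argument then gives $\mathcal{H}^{n-2}(\Sigma)\leq CE/\eta_0<\infty$, so $\Sigma$ has Hausdorff dimension at most $n-2$, and since $W^{1,2}$ maps extend weakly harmonically across codimension-$2$ sets of finite $\mathcal{H}^{n-2}$ measure, $u$ is weakly harmonic on all of $M$.

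The remaining and hardest step is the $(n-2)$-rectifiability of $\Sigma$ and the representation $\nu=\theta\,\mathcal{H}^{n-2}\lfloor\Sigma$. The plan is to pass the inner-variation identity for $u_\epsilon$, which (using $W$ as the potential) reads
$$\int_M\Bigl(\bigl[\tfrac12|du_\epsilon|^2+\tfrac{W(u_\epsilon)}{\epsilon^2}\bigr]\mathrm{div}(X)-\langle du_\epsilon^*du_\epsilon,DX\rangle\Bigr)\,dv_g=0,$$
to the limit. Since the smooth limit $u$ is stationary away from $\Sigma$ and $|du|^2 dv_g$ contributes only the classical stress-energy term, one deduces that the defect $\nu$ is itself stationary: for all $X\in\Gamma(TM)$,
$$\int_M \bigl(\mathrm{div}(X)-\langle P(x),DX\rangle\bigr)\,d\nu=0,$$
where $P(x)$ is a measurable field of symmetric tensors encoding the weak limit of $du_\epsilon^*du_\epsilon/e_\epsilon(u_\epsilon)$. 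Combined with the positive, finite $(n-2)$-density of $\nu$ from monotonicity, this places $\nu$ in the setting of Ambrosio--Soner's rectifiability theorem for stationary varifolds of codimension $2$ (equivalently Preiss's theorem on measures of positive and finite density), yielding both rectifiability of $\Sigma$ and the representation $\nu=\theta\,\mathcal{H}^{n-2}\lfloor\Sigma$. This final step is the main obstacle: the delicate point is to show that tangent measures to $\nu$ at $\mathcal{H}^{n-2}$-a.e.\ point of $\Sigma$ are translation-invariant along an $(n-2)$-plane, which is where the fine structure of stationary harmonic map limits developed in \cite{Lin99}, imported to the Ginzburg--Landau setting, does the decisive work.
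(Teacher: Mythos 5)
The paper does not prove this statement at all: it is quoted verbatim (as Theorem~\ref{lw.thm}) from Lin--Wang \cite{LW99}, with only the remark that the modification of the potential $W$ far from $N$ does not affect their analysis, since the two potentials agree on a tubular neighborhood of $N$. So there is no internal proof to compare against; what can be assessed is whether your sketch is a faithful and complete reconstruction of the cited argument. In broad architecture it is: weak $W^{1,2}$ limit lying in $N$ via $\int d_N(u_\epsilon)^2\leq C\epsilon^2$, defect measure $\nu$, concentration set defined by the $\eta_0$-threshold from Lemma~\ref{eps.reg}, density bounds from Lemma~\ref{mono}, finiteness of $\mathcal{H}^{n-2}(\Sigma)$ by a covering argument, removability of $\Sigma$ for weak harmonicity by a capacity argument, and rectifiability plus the representation $\nu=\theta\,\mathcal{H}^{n-2}\lfloor\Sigma$ via stationarity/density arguments in the spirit of \cite{Lin99}. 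This is indeed the Lin--Wang strategy.

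As a proof, however, the proposal has two genuine gaps. First, the assertion that $\epsilon$-regularity plus ``Schauder bootstrap'' yields uniform $C^k_{loc}$ bounds and hence $C^\infty_{loc}$ convergence away from $\Sigma$ is not automatic: the equation \eqref{gl.eqn} contains the term $\epsilon^{-2}DW(u_\epsilon)$, whose size is not controlled by a gradient bound alone; one must first show that $\epsilon^{-2}W(u_\epsilon)$ (and then $\epsilon^{-2}DW(u_\epsilon)$) tends to zero locally uniformly off $\Sigma$, which in \cite{LW99} requires separate Bochner-type and monotonicity arguments for the potential term, not just elliptic bootstrapping. Second, and more seriously, the passage from the inner-variation identity for $u_\epsilon$ to a stationarity condition for the defect $\nu$ is exactly the delicate point: the weak limit of $du_\epsilon^*du_\epsilon$ relative to $e_\epsilon(u_\epsilon)$ need not be a projection-valued field, so one must work in the framework of generalized varifolds (as in Lin's analysis and Ambrosio--Soner) and verify that a.e.\ tangent measure of $\nu$ is $(n-2)$-plane invariant; your sketch names this obstacle but does not address it. These are precisely the reasons the paper imports the theorem from \cite{LW99} rather than reproving it, so your outline should be regarded as a correct roadmap of the cited proof rather than a self-contained argument.
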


\begin{remark}\label{bubble.rk} Moreover, it follows from the analysis of \cite{LW99} that for $\nu$-a.e. $x\in M$, there exists a sequence of points $x_\epsilon\to x$ and of scales $r_{\epsilon}\to 0$ with $\lim_{\epsilon\to 0}\frac{\epsilon}{r_{\epsilon}}\to 0$ such that the rescaled maps
$$
\tilde{u}_{\epsilon}\colon \mathbb{R}^n\to \mathbb{R}^L
$$
given by
$$\tilde{u}_{\epsilon}(y)=u_{\epsilon}(\exp_{x_\epsilon}(y/r_{\epsilon}))$$
converge in $C^{\infty}_{loc}(\mathbb{R}^n)$ to a harmonic map $\tilde{u}\colon \mathbb{R}^n\to N$ of the form (up to rotations)
$$u(x_1,\ldots,x_n)=\phi(x_1,x_2),$$
where $\phi\colon \mathbb{R}^2\to N$ is a smooth, nonconstant, finite-energy harmonic map. In particular, $\phi$ may be identified with a smooth harmonic map $\phi\colon \mathbb{S}^2\to N$.
\end{remark}

In addition to the analysis of \cite{LW99}, another key ingredient required to prove Theorem \ref{stat.lim.thm} is the following lemma, establishing \emph{lower semi-continuity} of Morse index for a family of critical points $u_{\epsilon}$ of $E_{\epsilon}$ satisfying the hypotheses of Theorem \ref{stat.lim.thm}.

\begin{lemma}\label{ind.lim} Let $u_{\epsilon}\in C^{\infty}(M,\mathbb{R}^L)$ be a family of critical points for $E_{\epsilon}$ with $E_{\epsilon}(u_{\epsilon})\leq C$ and $\ind_{E_{\epsilon}}(u_{\epsilon})\leq I_0$. Then the weakly harmonic map $u\in W^{1,2}(M,N)$ in the conclusion of Theorem \ref{lw.thm} has Morse index $\ind_E(u)\leq I_0$, in the sense that any vector subspace
$$V\subset \{v\in [W^{1,2}\cap L^{\infty}](M,\mathbb{R}^L)\mid v(x)\in T_{u(x)}N\text{ for a.e. }x\in M\}$$
on which
$$
E''(u)(v,v)=\int_M |dv|^2-\langle \II_N(u)(du(e_i),du(e_i)),\II_N(u)(v,v)\rangle\,dv_g
$$
is negative definite must have dimension $\dim(V)\leq I_0$.
\end{lemma}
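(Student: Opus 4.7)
The plan is to argue by contradiction. Suppose $V \subset \mathcal{V}(u)$ has $\dim V = K \geq I_0 + 1$ with $E''(u)$ negative definite on $V$; I aim to construct $K$ linearly independent variations $v_{1,\epsilon}, \ldots, v_{K,\epsilon}$ of $u_\epsilon$ spanning a subspace on which $E_\epsilon''(u_\epsilon)$ is negative definite, contradicting $\ind_{E_\epsilon}(u_\epsilon) \leq I_0$. As a preliminary reduction, since $\Sigma$ is $(n-2)$-rectifiable in an $n$-manifold with $n \geq 3$ it has zero $W^{1,2}$-capacity; together with the smoothness of $u$ on $M \setminus \Sigma$ coming from the $C^\infty_{loc}$ convergence in Theorem~\ref{lw.thm}, a standard log-cutoff, mollification, and projection-onto-$T_uN$ procedure lets me replace $V$ by a subspace $\tilde V \subset C^\infty_c(M \setminus \Sigma, \mathbb{R}^L)$ of the same dimension, with $\tilde v(x) \in T_{u(x)}N$ and $E''(u)$ still negative definite on $\tilde V$.

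Given $\tilde v \in \tilde V$, I set $v_\epsilon(x) := P^T_{\Pi(u_\epsilon(x))}\bigl(\tilde v(x)\bigr)$, the orthogonal projection onto $T_{\Pi(u_\epsilon(x))}N$. For $\epsilon$ small the $C^\infty_{loc}$ convergence places $u_\epsilon$ in the $\delta_0$-tube of $N$ on $\mathrm{supp}(\tilde v)$, so $v_\epsilon$ is well-defined, $\Pi(u_\epsilon) \to u$ smoothly there, and $v_\epsilon \to \tilde v$ in $C^\infty$ on $\mathrm{supp}(\tilde v)$; in particular $\int |dv_\epsilon|^2 \to \int |d\tilde v|^2$. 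The potential term hinges on two expansions. From $DW(a) = 2(a - \Pi(a))$ near $N$ and the Euler--Lagrange equation $\Delta u_\epsilon = -\epsilon^{-2} DW(u_\epsilon)$, combined with uniform $C^2$-bounds on $\mathrm{supp}(\tilde v)$,
$$
u_\epsilon - \Pi(u_\epsilon) = -\tfrac{1}{2}\epsilon^2 \Delta u_\epsilon = O(\epsilon^2).
$$
Separately, a Taylor expansion of $d_N^2$ in Fermi coordinates around $\Pi(a)$ yields, for $w \in T_{\Pi(a)}N$ and $a$ close to $N$,
$$
D^2 W(a)(w, w) = -2\langle a - \Pi(a),\, \II_N(\Pi(a))(w, w)\rangle + O(d_N(a)^2)\,|w|^2.
$$
Chaining these together,
$$
\epsilon^{-2}\langle D^2W(u_\epsilon), v_\epsilon \otimes v_\epsilon\rangle = \langle \Delta u_\epsilon,\, \II_N(\Pi(u_\epsilon))(v_\epsilon, v_\epsilon)\rangle + O(\epsilon^2)|v_\epsilon|^2,
$$
which converges uniformly on $\mathrm{supp}(\tilde v)$ to $\langle \Delta u,\, \II_N(u)(\tilde v, \tilde v)\rangle = -\langle \II_N(u)(du(e_i), du(e_i)),\, \II_N(u)(\tilde v, \tilde v)\rangle$ by the harmonic map equation. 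Integrating yields $E_\epsilon''(u_\epsilon)(v_\epsilon, v_\epsilon) \to E''(u)(\tilde v, \tilde v)$.

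Applied to a basis $\tilde v_1, \ldots, \tilde v_K$ of $\tilde V$ and to the bilinear forms obtained by polarization, the Gram matrices $\bigl(E_\epsilon''(u_\epsilon)(v_{i,\epsilon}, v_{j,\epsilon})\bigr)_{ij}$ converge to the negative-definite matrix $\bigl(E''(u)(\tilde v_i, \tilde v_j)\bigr)_{ij}$, so $E_\epsilon''(u_\epsilon)$ is negative definite on $\mathrm{span}(v_{1,\epsilon}, \ldots, v_{K,\epsilon})$ for $\epsilon$ small; the $v_{i,\epsilon}$ are linearly independent by smooth convergence. This forces $\ind_{E_\epsilon}(u_\epsilon) \geq K \geq I_0 + 1$, the desired contradiction. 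The main technical difficulty lies in the interplay between the Fermi-coordinate expansion of $D^2 W$ and the $O(\epsilon^2)$ normal-displacement bound: any loss of accuracy there would destroy the cancellation producing the extrinsic harmonic-map second variation $-\langle \II_N(du, du), \II_N(\tilde v, \tilde v)\rangle$ in the limit. By contrast, the cutoff/density step is routine given the codimension-$\geq 2$ structure of $\Sigma$.
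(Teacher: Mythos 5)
Your proposal is correct and follows essentially the same route as the paper: cut the test fields off from the concentration set $\Sigma$ with a logarithmic cutoff, project them onto $T_{\Pi(u_\epsilon)}N$, and combine the Mantegazza-type expansion of $D^2W=D^2(d_N^2)$ with the relation $u_\epsilon-\Pi(u_\epsilon)=-\tfrac12\epsilon^2\Delta u_\epsilon$ from the Euler--Lagrange equation to recover the extrinsic second variation $E''(u)$ in the limit. The only differences are presentational (arguing by contradiction via convergence of Gram matrices, and mollifying to smooth compactly supported fields, rather than the paper's direct uniform negativity estimate on $W^{1,2}\cap L^\infty$ fields), and these do not affect the validity of the argument.
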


\begin{proof} Let $u\in W^{1,2}(M,N)$ be the weakly harmonic map arising as the weak limit of the critical points $u_{\epsilon}\in C^{\infty}(M,\mathbb{R}^L)$, and let 
$$V\subset \{v\in [W^{1,2}\cap L^{\infty}](M,\mathbb{R}^L)\mid v(x)\in T_{u(x)}N\text{ for a.e. }x\in M\}$$
be a finite-dimensional subspace on which $E''(u)$ is negative definite. Since $V$ is finite-dimensional, we can then find $\beta>0$ such that
\begin{equation}\label{neg.def}
E''(u)(v,v)\leq -\beta(\|v\|_{L^{\infty}}^2+\|dv\|_{L^2}^2)\text{ for all }v\in V.
\end{equation}

By Theorem \ref{lw.thm}, there exists a closed, $(n-2)$-rectifiable set $\Sigma\subset M$ such that the convergence $u_{\epsilon}\to u$ is $C^{\infty}$ on any compact subset of $M\setminus \Sigma$. We argue next that without loss of generality the space $V$ of energy-decreasing deformations can be taken to have support in $M\setminus \Sigma$. Indeed, since $\Sigma$ is $(n-2)$-rectifiable, the $r$-neighborhoods $B_r(\Sigma)$ satisfy
\begin{equation}\label{mink.bd}
\vol(B_r(\Sigma))\leq Cr^2
\end{equation}
for a constant $C(\Sigma)$ independent of $r$, and for any $\delta\in (0,1)$, standard computations show that the logarithmic cutoff function $\phi_{\delta}$ supported in $B_{\delta}(\Sigma)$ given by 
$$\phi_{\delta}\equiv 1\text{ on }B_{\delta^2}(\Sigma)$$
and
$$\phi_{\delta}(x)=\frac{\log(\dist_{\Sigma}(x)/\delta)}{\log(\delta)}\text{ on }B_{\delta}(\Sigma)\setminus B_{\delta^2}(\Sigma)$$
satifies an estimate of the form
\begin{equation}\label{phi.en.bd}
\|d\phi_{\delta}\|_{L^2}^2\leq \frac{C(\Sigma)}{|\log\delta|}.
\end{equation}

Writing $\psi_{\delta}=1-\phi_{\delta}$, define
$$
V_{\delta}:=\{\psi_{\delta} v\mid v\in V\},
$$
so that $\supp(v)\subset M\setminus B_{\delta^2}(\Sigma)$ for every $v\in V_{\delta}$. Since $V$ is finite-dimensional and $\psi_{\delta}\to 1$ as $\delta\to 0$, it is easy to see that $\dim (V_{\delta})=\dim(V)$ for $\delta$ sufficiently small. Moreover, for every $v\in V$, we have
\begin{equation*}
\begin{split}
&E''(u)(\psi_{\delta}v,\psi_{\delta}v)-E''(u)(v,v)\leq \\
&\leq\int_M |v|^2|d\phi_{\delta}|^2+2|dv||v||d\phi_{\delta}|\,dv_g
+\|II_N\|_{L^{\infty}(N)}^2\int_{B_{\delta}(\Sigma)}|du|^2|v|^2\,dv_g \\
&\leq \frac{C}{|\log\delta|}\|v\|_{L^{\infty}}^2+\frac{C}{\sqrt{|\log\delta|}}\|v\|_{L^{\infty}}\|dv\|_{L^2}
+C\|v\|_{L^{\infty}}^2\int_{B_{\delta}(\Sigma)}|du|^2\,dv_g.
\end{split}
\end{equation*}
Now, since $u\in W^{1,2}$, certainly $\lim_{\delta\to 0}\int_{B_{\delta}(\Sigma)}|du|^2=0$, and it follows from the preceding computation that
$$E''(u)(\psi_{\delta}v,\psi_{\delta}v)\leq E''(u)(v,v)+\frac{\beta}{2}(\|v\|_{L^{\infty}}^2+\|dv\|_{L^2}^2)$$
for all $v\in V$ for $\delta>0$ sufficiently small. Together with \eqref{neg.def}, this gives
\begin{equation}\label{delta.neg}
E''(u)(v,v)\leq -\frac{\beta}{2}(\|v\|_{L^{\infty}}^2+\|dv\|_{L^2}^2)\text{ for all }v\in V_{\delta}
\end{equation}
for $\delta>0$ sufficiently small. For the remainder of the proof, let us fix some $\delta>0$ for which \eqref{delta.neg} holds.

On the domain $\Omega=M\setminus B_{\delta^2}(\Sigma)$, we have smooth convergence of the maps $u_{\epsilon}\in C^{\infty}(\Omega,\mathbb{R}^L)$ to the strongly harmonic map $u\in C^{\infty}(\Omega,N)$. For $\epsilon>0$ small enough, it follows that, on $\Omega$, $u_{\epsilon}$ takes values in a tubular neighborhood of $N$ where the potential $W$ is given by $d_N(\cdot)^2$, so that
\begin{equation}\label{dw.comp}
DW(u_{\epsilon})=2d_N(u_{\epsilon})(D d_N)(u_{\epsilon})
\end{equation}
and, by Lemma \ref{dsquared.lem} in the appendix,
\begin{equation}\label{hess.w.exp}
|D^2W(u_{\epsilon})-2P^{\perp}(u_{\epsilon})+\langle 2d_N(u_{\epsilon})(Dd_N)(u_{\epsilon}),B(u_{\epsilon})\rangle|\leq CW(u).
\end{equation}
Here we use the notation of Section \ref{app.1}, where $P(u_{\epsilon})\in \mathrm{End}(\mathbb{R}^L)$ denotes the projection onto the tangent space $T_{\Pi_N(u_{\epsilon})}N\subset \mathbb{R}^L$ of the nearest point $\Pi_N(u_{\epsilon})$ to $u_{\epsilon}$ in $N$, $P^{\perp}(u_{\epsilon})=I-P(u_{\epsilon})$ denotes the projection onto the complement, and $B(u_{\epsilon})$ is the $\mathbb{R}^L$-valued two-tensor given by 
$$B(u_{\epsilon})(v,w):=\II_N(\Pi(u_{\epsilon}))(P(u_{\epsilon})v,P(u_{\epsilon})w).$$

Define now a linear map $T_{\epsilon}\colon V_{\delta}\to [W^{1,2}\cap L^{\infty}](M,\mathbb{R}^L)$ by setting
$$(T_{\epsilon}v)(x)=v_{\epsilon}(x):=P(u_{\epsilon}(x))v(x)$$
(and $v_{\epsilon}\equiv 0$ on $B_{\delta^2}(\Sigma)$). Since $v(x)\in T_{u(x)}N$ for a.e. $x\in M$, it's easy to see that
$$v_{\epsilon}\to v \text{ in }[W^{1,2}\cap L^{\infty}](M,\mathbb{R}^L)$$
as $\epsilon\to 0$, and the space 
$$V_{\delta,\epsilon}=T_{\epsilon}(V_{\delta})$$
has dimension 
$$\dim(V_{\delta,\epsilon})=\dim(V_{\delta})=\dim(V)$$
for $\epsilon$ sufficiently small. 

For every $v_{\epsilon}=T_{\epsilon}v$ in $V_{\delta,\epsilon}$, we clearly have $P^{\perp}(u_{\epsilon})v_{\epsilon}=0$, so by \eqref{dw.comp} and \eqref{hess.w.exp}, 
$$|D^2W(u_{\epsilon})(v_{\epsilon},v_{\epsilon})+\langle DW(u_{\epsilon}),B(u_{\epsilon})(v_{\epsilon},v_{\epsilon})\rangle|\leq Cd_N(u)^2|v_{\epsilon}|^2.$$
In particular, it follows that
\begin{equation*}
\begin{split}
&E_{\epsilon}''(u_{\epsilon})(v_{\epsilon},v_{\epsilon})=\int_{\Omega}|dv_{\epsilon}|^2+\epsilon^{-2} D^2W(u)(v_{\epsilon},v_{\epsilon})\,dv_g\\
&\leq \int_{\Omega}\left(|dv_{\epsilon}|^2-\langle \epsilon^{-2}DW(u_{\epsilon}),B(u_{\epsilon})(v_{\epsilon},v_{\epsilon})\rangle\right)\,dv_g
+\int_{\Omega}C\frac{d_N(u_{\epsilon})^2}{\epsilon^2}|v_{\epsilon}|^2\,dv_g\\
&\leq\int_{\Omega}\left(|dv_{\epsilon}|^2+\langle \Delta u_{\epsilon},B(u_{\epsilon})(v_{\epsilon},v_{\epsilon})\rangle\right)\,dv_g
+C\|v_\epsilon\|_{L^{\infty}}^2\int_{\Omega}\frac{d_N(u_{\epsilon})^2}{\epsilon^2}\,dv_g,
\end{split}
\end{equation*}
where in the last inequality we have used the fact that $u_{\epsilon}$ solves
$$\Delta u_{\epsilon}+\epsilon^{-2}DW(u_{\epsilon})=0.$$
By the smooth convergence $u_{\epsilon}\to u$ on $\Omega$, the $W^{1,2}\cap L^{\infty}$ convergence $v_{\epsilon}\to v$ and harmonicity of $u$, it follows that
\begin{equation*}
\begin{split}
&\lim_{\epsilon\to 0}E_{\epsilon}''(u_{\epsilon})(v_{\epsilon},v_{\epsilon})\leq \\
&\leq\int_{\Omega}(|dv|^2+\langle \Delta u,\II_N(u)(v,v)\rangle)\,dv_g
+C\|v\|_{L^{\infty}}^2\lim_{\epsilon\to 0}\int_{\Omega}\frac{d_N(u_{\epsilon})^2}{\epsilon^2}\,dv_g\\
&=\int_{\Omega}(|dv|^2-\langle \II_N(u)(du(e_i),du(e_i)),\II_N(u)(v,v)\rangle)\,dv_g \\
&+C\|v\|_{L^{\infty}}^2\lim_{\epsilon\to 0}\int_{\Omega}\frac{d_N(u_{\epsilon})^2}{\epsilon^2}\,dv_g\\
&=E''(u)(v,v)+C\|v\|_{L^{\infty}}^2\lim_{\epsilon\to 0}\int_{\Omega}\frac{d_N(u_{\epsilon})^2}{\epsilon^2}\,dv_g\\
&\leq -\frac{\beta}{2}\left(\|v\|_{L^{\infty}}^2+\|v\|_{W^{1,2}}^2\right)+C\|v\|_{L^{\infty}}^2\lim_{\epsilon\to 0}\int_{\Omega}\frac{d_N(u_{\epsilon})^2}{\epsilon^2}\,dv_g
\end{split}
\end{equation*}
Finally, observe that
$$2\epsilon^{-2}d_N(u_{\epsilon})=|\epsilon^{-2}DW(u_{\epsilon})|=|\Delta u_{\epsilon}|$$
on $\Omega$, and since $u_{\epsilon}\to u$ in $C^{\infty}(\Omega)$, it follows that
$$\lim_{\epsilon\to 0}\int_{\Omega}\frac{d_N(u_{\epsilon})^2}{\epsilon^2}=\lim_{\epsilon\to 0}\int_{\Omega}\frac{1}{2}d_N(u_{\epsilon})|\Delta u_{\epsilon}|=0.$$
Thus, for $\epsilon>0$ sufficiently small, we must have
$$E_{\epsilon}''(u)(v_{\epsilon},v_{\epsilon})\leq -\frac{\beta}{4}\left(\|v\|_{L^{\infty}}^2+\|v\|_{W^{1,2}}^2\right)$$
for all $v\in V_{\delta}.$

In other words, for $\epsilon$ sufficiently small, $E_{\epsilon}''(u_{\epsilon})$ is negative definite on the space $V_{\delta,\epsilon}$, and therefore
$$\dim V=\dim V_{\delta}=\dim V_{\delta,\epsilon}\leq \ind_{E_{\epsilon}}(u_{\epsilon}),$$
from which the desired index bound
$$\ind_E(u)\leq\liminf_{\epsilon\to 0}\ind_{E_{\epsilon}}(u_{\epsilon})$$
follows.
\end{proof}

\begin{remark}\label{loc.ind.lim} Like the analysis of \cite{LW99}, Lemma \ref{ind.lim} can easily be localized. In particular, let $\Omega^n$ be a compact manifold with boundary, with a smoothly converging family of metrics $g_{\epsilon}\to g$, and consider a family of critical points $u_{\epsilon}\colon(\Omega,g_\epsilon)\to \mathbb{R}^L$ for $E_{\epsilon,g_{\epsilon}}$ with $E_{\epsilon,g_{\epsilon}}(u_{\epsilon})\leq C$ and $\ind_{E_{\epsilon,g_{\epsilon}}}(u_{\epsilon})\leq I_0$ (where the index is defined with respect to compactly supported variations in $\Omega$). Then the maps $u_{\epsilon}$ converge weakly in $W^{1,2}$ to a weakly harmonic map $u\in W^{1,2}(\Omega,N)$ of Morse index $\ind_E(u)\leq I_0$ with respect to compactly supported variations in $\Omega$.

\end{remark}

The final ingredient needed to prove Theorem \ref{stat.lim.thm} is the following observation, showing that the harmonic maps $\mathbb{S}^2\to N$ arising along the energy concentration set as in Remark \ref{bubble.rk} must be stable if the maps $u_{\epsilon}$ have (locally) bounded index as critical points of $E_{\epsilon}$. This should be compared with the main observation of \cite{Hsu05}, where a similar conclusion is reached in the setting of stable stationary harmonic maps.

\begin{lemma}\label{bub.stab}
Let $\phi\colon \mathbb{R}^2\to N$ be a finite-energy harmonic map such that, for some $n\geq 3$, the harmonic map $u\colon\mathbb{R}^n\to N$ given by $u(x_1,\ldots,x_n)=\phi(x_1,x_2)$ has finite Morse index. Then $\phi\colon \mathbb{R}^2\to N$ is stable, and in particular can be identified with a stable harmonic map $\phi\colon \mathbb{S}^2\to N$.
\end{lemma}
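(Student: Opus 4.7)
The strategy is to argue by contradiction: if $\phi$ were unstable on $\mathbb{R}^2$, we will exhibit an infinite-dimensional subspace of variations on $\mathbb{R}^n$ on which the second variation of $u$ is negative definite, contradicting the assumption $\ind_E(u)<\infty$. Concretely, suppose there is a compactly supported variation $w\in[W^{1,2}\cap L^{\infty}](\mathbb{R}^2,\mathbb{R}^L)$ with $w(x)\in T_{\phi(x)}N$ a.e.\ and $E''_{\phi}(w,w)<0$, and write $Q:=E''_{\phi}(w,w)$. For any $\eta\in C_c^{\infty}(\mathbb{R}^{n-2})$, set
$$v(x_1,\ldots,x_n):=w(x_1,x_2)\,\eta(x_3,\ldots,x_n);$$
this is an admissible variation of $u$ since $u(x)=\phi(x_1,x_2)$ so $v(x)\in T_{u(x)}N$.

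A direct calculation, using that $du(e_i)=d\phi(e_i)$ for $i=1,2$ and $du(e_i)=0$ for $i\geq 3$, gives
$$E''(u)(v,v)=Q\,\|\eta\|_{L^2(\mathbb{R}^{n-2})}^2+\|w\|_{L^2(\mathbb{R}^2)}^2\,\|d\eta\|_{L^2(\mathbb{R}^{n-2})}^2,$$
since the only cross-terms involve an integral of $\langle \II_N(\phi)(d\phi,d\phi),\II_N(\phi)(w,w)\rangle$ over $\mathbb{R}^2$ weighted by $\eta^2$, plus the $|w|^2|d\eta|^2$ contribution to $|dv|^2$. Fix any nontrivial $\eta_0\in C_c^{\infty}(B_1(0))\subset C_c^{\infty}(\mathbb{R}^{n-2})$ and rescale: $\eta^R(y):=\eta_0(y/R)$, so that $\|\eta^R\|_{L^2}^2=R^{n-2}\|\eta_0\|_{L^2}^2$ while $\|d\eta^R\|_{L^2}^2=R^{n-4}\|d\eta_0\|_{L^2}^2$. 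Because $n\geq 3$, the ratio of Dirichlet to $L^2$ norms scales like $R^{-2}\to 0$, so for $R$ large enough the negative term dominates and $E''(u)(v^R,v^R)<0$, where $v^R(x)=w(x_1,x_2)\eta^R(x_3,\ldots,x_n)$.

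Now fix such a large $R$ and choose arbitrarily many points $y_1,\ldots,y_K\in\mathbb{R}^{n-2}$ with pairwise distance greater than $2R$; this is possible because $n-2\geq 1$. The translated variations $v_k(x):=w(x_1,x_2)\,\eta^R((x_3,\ldots,x_n)-y_k)$ have pairwise disjoint supports in $\mathbb{R}^n$, hence are linearly independent, and the cross-terms in the index form vanish, giving
$$E''(u)\Bigl(\sum_k c_k v_k,\sum_k c_k v_k\Bigr)=\sum_k c_k^2\,E''(u)(v_k,v_k)<0$$
whenever not all $c_k$ vanish. Thus $\ind_E(u)\geq K$ for every $K$, contradicting finiteness of the Morse index. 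Consequently $\phi$ is stable on $\mathbb{R}^2$.

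Finally, the finite-energy hypothesis together with the Sacks--Uhlenbeck removable singularity theorem ensures that, under the conformal identification $\mathbb{R}^2\cong \mathbb{S}^2\setminus\{\infty\}$, the map $\phi$ extends smoothly to a harmonic map $\phi\colon\mathbb{S}^2\to N$. In dimension two the Dirichlet energy and its second variation are conformally invariant, so stability on $\mathbb{R}^2$ with respect to compactly supported variations translates into stability on $\mathbb{S}^2$ once one verifies that every smooth variation on $\mathbb{S}^2$ is a limit (in the index-form topology) of variations supported away from $\infty$; this is accomplished by the standard logarithmic cutoff trick near the puncture, using that $\{\infty\}$ has vanishing $2$-capacity. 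The only genuinely non-routine ingredient is the index-counting step with the translated $v_k$'s, but it goes through cleanly thanks to the product structure of $u$ and the positivity of $n-2$.
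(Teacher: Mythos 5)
Your proposal is correct and follows essentially the same route as the paper: the key step—testing $E''(u)$ against product variations $w(x_1,x_2)\eta(x_3,\ldots,x_n)$, rescaling $\eta$ so the $\|d\eta\|_{L^2}^2$ term is dominated, and then using disjointly supported translates to force $\ind_E(u)=\infty$—is exactly the paper's argument. The only (immaterial) structural difference is that the paper applies the logarithmic cutoff at the outset, turning instability of $\phi\colon\mathbb{S}^2\to N$ into a compactly supported destabilizing variation on $\mathbb{R}^2$, whereas you run the same cutoff argument at the end to transfer stability from compactly supported variations on $\mathbb{R}^2$ back to $\mathbb{S}^2$.
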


\begin{proof} Identifying $\mathbb{R}^2$ with $\mathbb{S}^2\setminus \{p\}$ via stereographic projection, suppose, to the contrary, that the harmonic map $\phi\colon \mathbb{S}^2\to N$ is unstable. Then there exists a smooth, nonzero vector field $v\in C^{\infty}(S^2,\mathbb{R}^L)$ with $v(x)\in T_{\phi(x)}N$ for which $E''(\phi)(v,v)<0$. As in the proof of Lemma \ref{ind.lim}, we can use a logarithmic cutoff function (with $p$ playing the role of $\Sigma$) to perturb $v$ to a new field $\tilde{v}\in C_c^{\infty}(\mathbb{S}^2\setminus \{p\},\mathbb{R}^L)$ for which the second variation $E''(\phi)(\tilde{v},\tilde{v})$ remains negative.

In particular, it follows that $\phi\colon \mathbb{R}^2\to N$ is unstable with respect to compactly supported perturbations, so in what follows we may select some $v\in C_c^{\infty}(\mathbb{R}^2,\mathbb{R}^L)$ with $v(x)\in T_{\phi(x)}N$ such that
$$
E''(\phi)(v,v)<-\beta\int_{\mathbb{R}^2}|v|^2\,dx
$$
for some $\beta>0$.

Now, for $n\geq 3$, let $u\colon \mathbb{R}^n\to N$ be the harmonic map given by 
$$u(x_1,\ldots,x_n)=\phi(x_1,x_2).$$
For any $\psi\in C_c^{\infty}(\mathbb{R}^{n-2})$, we obtain a field $v_{\psi}\in C_c^{\infty}(\mathbb{R}^n,\mathbb{R}^L)$ with $v_{\psi}(x)\in T_{u(x)}N$ by setting
$$v_{\psi}(x)=\psi(x_3,\ldots,x_n)v(x_1,x_2).$$
Direct computation then gives
\begin{equation*}
\begin{split}
&E''(u)(v_{\psi},v_{\psi}) =\\
=&\int_{\mathbb{R}^n}|d(\psi v)|^2-\sum_{i=1}^2\psi^2\langle \II_N(\phi)(d\phi(e_i),d\phi(e_i)),\II_N(\phi)(v,v)\rangle\,dx\\
=&\left(\int_{\mathbb{R}^{n-2}}\psi^2\right)E''(\phi)(v,v)+\left(\int_{\mathbb{R}^2}|v|^2\right)\int_{\mathbb{R}^{n-2}}|d\psi|^2\,dx\\
<&-\beta \|v\|_{L^2(\mathbb{R}^2)}^2\|\psi\|_{L^2(\mathbb{R}^{n-2})}^2+\|v\|_{L^2(\mathbb{R}^2)}^2\|d\psi\|_{L^2(\mathbb{R}^{n-2})}^2.
\end{split}
\end{equation*}
Now, on $\mathbb{R}^{n-2}$, one can easily find a nonzero compactly supported function $\psi\in C_c^{\infty}(\mathbb{R}^{n-2})$ with $\frac{\|d\psi\|_{L^2}^2}{\|\psi\|_{L^2}^2}$ arbitrarily small, for instance by precomposing any given function in $C_c^{\infty}$ with a suitable dilation of $\mathbb{R}^{n-2}$. In particular, we can find a function $\psi_0\in C_c^{\infty}(\mathbb{R}^{n-2})$ with 
\begin{equation}\label{ray.small}
\|d\psi_0\|_{L^2}^2<\frac{\beta}{2}\|\psi_0\|_{L^2}^2,
\end{equation}
so that the computation above gives
$$E''(u)(v_{\psi_0},v_{\psi_0})<-\frac{\beta}{2}\|\psi_0\|_{L^2}^2\|v\|_{L^2}^2<0.$$
Choosing $R>0$ such that $\psi_0\in C_c^{\infty}(B_R(0))$, and setting
$$\mathcal{C}:=\mathrm{Span}(\{\psi_0(x+2R a)\mid a\in \mathbb{Z}^{n-2}\})\subset C_c^{\infty}(\mathbb{R}^{n-2}),$$
we see that $\mathcal{C}$ gives an infinite dimensional space of test functions satisfying \eqref{ray.small}, and consequently
$$\{v_{\psi}\mid \psi \in \mathcal{C}\}$$
gives an infinite dimensional space of variation vector fields on which $E''(u)$ is negative definite. In particular, $\ind_E(u)=\infty$.
\end{proof}

We now have all the ingredients we need to complete the proof of Theorem \ref{stat.lim.thm}.

\begin{proof}[Proof of Theorem \ref{stat.lim.thm}]

Let $u_{\epsilon}\colon M\to \mathbb{R}^L$ be a family of critical points for $E_{\epsilon}$ with 
$$E_{\epsilon}(u_{\epsilon})\leq C\text{ and }\ind_{E_{\epsilon}}(u_{\epsilon})\leq I_0.$$
By Theorem \ref{lw.thm}, we can pass to a subsequence to obtain a weakly harmonic map $u\in W^{1,2}(M,N)$ such that
$$u_{\epsilon}\to u\text{ weakly in }W^{1,2},$$
and the failure of strong convergence is captured by the discrepancy measure
$$\nu=\lim_{\epsilon\to 0}e_{\epsilon}(u_{\epsilon})dv_g-\frac{1}{2}|du|_g^2dv_g.$$

Once we've shown that $\nu=0$, strong convergence will follow, as will the stationarity of the limit map $u\colon M\to N$, since in that case it follows that
\begin{eqnarray*}
\int \frac{1}{2}|du|^2\mathrm{div}(X)-\langle du^*du,DX\rangle&=&\lim_{\epsilon \to 0}\int e_{\epsilon}(u_{\epsilon})\mathrm{div}(X)-\langle du_{\epsilon}^*du_{\epsilon},DX\rangle\\
&=&0
\end{eqnarray*}
for any smooth vector field $X\in \Gamma(TM)$--using the fact that every critical point $u_{\epsilon}$ of $E_{\epsilon}$ is smooth and, thus, automatically satisfies
$$
\mathrm{div}(e_{\epsilon}(u)g-du_{\epsilon}^*du_{\epsilon})=0.
$$
The index bound $\ind_E(u)\leq I_0$ follows immediately from Lemma \ref{ind.lim}. Thus, to complete the proof, all that remains is to show that $\nu\equiv 0$. 

Suppose, to obtain a contradiction, that $\nu\neq 0$. By Remark \ref{bubble.rk}, it follows that there exists a sequence of points $x_\epsilon\to x$ and a sequence of scales $r_{\epsilon}>0$ with $\frac{\epsilon}{r_{\epsilon}}\to 0$ such that the rescaled maps
$$\tilde{u}_{\epsilon}(y)=u_{\epsilon}(\exp_{x_\epsilon}(y/r_{\epsilon}))$$
converge in $C^{\infty}_{loc}(\mathbb{R}^n)$ to a harmonic map $u\colon \mathbb{R}^n\to N$ of the form (up to rotations)
$$\tilde{u}(x_1,\ldots,x_n)=\phi(x_1,x_2),$$
where $\phi\colon \mathbb{R}^2\to N$ is a smooth, nonconstant, finite-energy harmonic map. 

For any compact subset $\Omega\subset \mathbb{R}^n$ and $\epsilon>0$ sufficiently small, we see that $\tilde{u}_{\epsilon}$ defines a critical point for $E_{r_{\epsilon}^{-1}\epsilon,\tilde{g}_{\epsilon}}$ with respect to the metric $\tilde{g}_{\epsilon}:=\Phi_{\epsilon}^*g$ obtained by pulling back $g$ via $\Phi_{\epsilon}(y)=\exp_{x_\eps}(y/r_{\epsilon})$. Moreover, it is easy to check that for $\epsilon>0$ sufficiently small,
$$\ind_{E_{r_{\epsilon}^{-1}\epsilon,\tilde{g}_{\epsilon}}}(\tilde{u}_{\epsilon};\Omega)\leq \ind_{E_{\epsilon}}(u_{\epsilon})\leq I_0,$$
so by Lemma \ref{ind.lim} (and Remark \ref{loc.ind.lim}), it follows that the harmonic map $\tilde{u}$ has
$$\ind_E(\tilde{u};\Omega)\leq I_0.$$

In particular, since this holds for any compact subset $\Omega\subset \mathbb{R}^n$, we have
$$\ind_E(\tilde{u})\leq I_0$$
with respect to all compactly supported variations on $\mathbb{R}^n$. But then Lemma \ref{bub.stab} implies that the harmonic map $\phi\colon \mathbb{R}^2\to N$ can be identified with a nonconstant \emph{stable} harmonic map $\phi\colon \mathbb{S}^2\to N$, which cannot exist by the key hypothesis for $N$. Thus, we arrive at a contradiction, and deduce that $\nu\equiv 0$, as desired.

\end{proof}

\subsection{Partial regularity for the min-max harmonic maps}

For locally energy-minimizing harmonic maps $u\colon M\to N$, the foundational results of Schoen and Uhlenbeck \cite{SU82} show that the singular set has dimension $\dim(\Sing(u))\leq n-3$, and this can be improved to $\dim(\Sing(u))\leq n-m$ in cases where $N$ admits no $0$-homogeneous energy-minimizing maps $\mathbb{R}^{m-1}\to N$ for some $m\geq 4$. 

Without an energy-minimizing assumption, the results of \cite{Bet93} imply that a stationary harmonic map $u\colon M^n\to N$ is in general smooth away from a singular set $\Sing(u)$ with vanishing $(n-2)$-dimensional Hausdorff measure $\mathcal{H}^{n-2}(\Sing(u))=0$, but it remains a challenging open problem to improve on this estimate (most optimistically, showing that $\dim(\Sing(u))\leq n-3$ in general). The key difficulty in extending Schoen and Uhlenbeck's regularity theory to general stationary harmonic maps is the noncompactness of stationary harmonic maps in $W^{1,2}$; in particular, the sequence of harmonic maps obtained by rescaling a given harmonic map at a singular point may not converge strongly to a homogeneous ``tangent map", making it impossible a priori to carry out Schoen and Uhlenbeck's dimension reduction argument.

Fortunately, the harmonic maps constructed in Theorem \ref{stat.lim.thm} have more structure than general stationary harmonic maps. Namely, they are harmonic maps of \emph{finite Morse index} taking values in a target $N$ admitting \emph{no stable harmonic $2$-spheres}. Indeed, since $N$ admits no stable harmonic $2$-sphere, we can use ideas of Hsu \cite{Hsu05}, showing that locally stable harmonic maps to $N$ satisfy a partial regularity theory comparable to that of energy-minimizers. 

\begin{theorem}[Hsu~\cite{Hsu05}]\label{loc.stab.reg}
Let $u\colon M\to N$ be a locally stable stationary harmonic map to a compact Riemannian manifold $N$ for which every stable harmonic map $\mathbb{S}^2\to N$ is constant. Then $u$ is smooth away from a singular set $\Sing(u)$ of dimension $\dim(\Sing(u))\leq n-3$. Moreover, $\dim(Sing(u))\leq n-m$ if $N$ admits no stable $0$-homogeneous harmonic map $v: \mathbb{R}^{m-1}\to N$ for some $m\geq 4$.
\end{theorem}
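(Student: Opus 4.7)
The overall strategy is to emulate Schoen--Uhlenbeck's dimension-reduction argument for energy-minimizing harmonic maps, with the hypothesis that $N$ carries no nonconstant stable harmonic $2$-sphere playing the role that minimization plays there: it supplies the compactness needed to produce well-defined, homogeneous tangent maps at every point. The first and main step is to establish strong $W^{1,2}$-compactness for sequences of locally stable stationary harmonic maps $u_j\colon M\to N$ with uniformly bounded energy. Following the argument scheme of Theorem~\ref{stat.lim.thm}, if strong convergence failed, the analysis of \cite{Lin99} would produce a nontrivial harmonic bubble $\phi\colon \mathbb{S}^2\to N$ as the smooth limit of rescalings at an energy-concentration point $x_0$. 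The local stability of the $u_j$ near $x_0$ descends to the rescaled pictures; since the rescalings live on increasingly flat Euclidean domains of dimension $n\geq 3$, a Fubini-type argument (tensoring a destabilizing variation of $\phi$ with a slowly varying cutoff in the transverse $n-2$ Euclidean directions, exactly as in Lemma~\ref{bub.stab}) produces destabilizing variations for the $u_j$, contradicting local stability. Hence no energy is lost, and the limit map, which remains locally stable by the argument of Lemma~\ref{ind.lim}, is a locally stable stationary harmonic map.

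With this compactness in place, one defines tangent maps at every point $x\in M$ via the rescalings $u_{x,r}(y):=u(\exp_x(ry))$. Stationarity yields the monotonicity formula of Lemma~\ref{mono}, which together with the strong compactness theorem above (localized as in Remark~\ref{loc.ind.lim}) shows that every subsequential limit $\varphi_x\colon \R^n\to N$ is a locally stable $0$-homogeneous stationary harmonic map. Combined with Bethuel's small-energy $\epsilon$-regularity theorem for stationary harmonic maps (itself the $\epsilon\to 0$ limit of Lemma~\ref{eps.reg} applied to smooth approximants), this produces the usual dichotomy: $x\in\Sing(u)$ if and only if every tangent map at $x$ is nonconstant, equivalently if the $x$-density of the normalized energy is bounded below by a universal constant.

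The final step is Federer's dimension-reduction principle, stratifying $\Sing(u)$ by the maximal linear subspace of $\R^n$ under which a tangent map is translation-invariant: the stratum with $(n-k)$-dimensional translation invariance has Hausdorff dimension at most $n-k$. A tangent map invariant under an $(n-2)$-plane reduces to a $0$-homogeneous harmonic map $\R^2\to N$, which upon conformal compactification is a harmonic map $\mathbb{S}^2\to N$; this map is locally stable, hence stable, hence constant by hypothesis, so the $(n-2)$-stratum of $\Sing(u)$ is empty and $\dim\Sing(u)\leq n-3$. The same dimension reduction, now ruling out tangent maps invariant under $(n-m+1)$-planes using the additional hypothesis that $N$ admits no nonconstant stable $0$-homogeneous map $\R^{m-1}\to N$, improves this to $\dim\Sing(u)\leq n-m$. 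The principal obstacle is the first step: the transfer of local stability from $u_j$ to the bubble $\phi$ is delicate because the destabilizing variations on $\phi$ must be realized at unbounded scale in the rescaled coordinates while remaining compactly supported within a fixed local neighborhood of $x_0$ where the $u_j$ are actually assumed stable, requiring careful tracking of the Ginzburg--Landau to harmonic-map passage along the bubbling scales.
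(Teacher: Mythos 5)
Your proposal follows essentially the same route as the paper, which does not reprove this statement but cites Hsu \cite{Hsu05} and sketches exactly your mechanism: the absence of nonconstant stable harmonic two-spheres yields strong $W^{1,2}_{loc}$ compactness of blow-ups of locally stable stationary harmonic maps (via the Lemma~\ref{bub.stab}-type Fubini argument transferring a destabilizing variation of the bubble back to the maps), so homogeneous tangent maps exist, and the Schoen--Uhlenbeck/Federer dimension reduction then gives the stated bounds. Two minor corrections: a $0$-homogeneous harmonic map $\mathbb{R}^2\to N$ with locally finite energy is a cone over a closed geodesic and hence automatically constant (otherwise its energy diverges logarithmically at the origin), so the $(n-2)$-stratum is empty for elementary reasons rather than via ``conformal compactification to a harmonic two-sphere''--- the two-sphere hypothesis is needed only at the compactness step; and the closing worry about a ``Ginzburg--Landau to harmonic-map passage'' is not relevant here, since the sequences in question are already stationary harmonic maps and Lin's analysis \cite{Lin99} applies to them directly.
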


The proof of Theorem \ref{loc.stab.reg} rests primarily on the observation that the nonexistence of stable harmonic maps $\mathbb{S}^2\to N$ rules out energy concentration for sequences of \emph{stable} harmonic maps to $N$ \cite[Lemma 2.2]{Hsu05}, by the same mechanism exploited in the proof of Theorem \ref{stat.lim.thm} above. In particular, the sequences of harmonic maps obtained by rescaling around a singular point converge \emph{strongly} in $W_{loc}^{1,2}$ to a homogeneous tangent map, allowing one to apply the same dimension reduction techniques used by Schoen--Uhlenbeck in the energy-minimizing case.

We observe next that stationary harmonic maps of finite Morse index are locally stable near every point, allowing us to apply Theorem \ref{loc.stab.reg} to deduce the regularity of these maps. 

\begin{lemma}\label{loc.stab.lem} Let $u\colon M\to N$ be a stationary harmonic map of finite Morse index. Then for every $p\in M$, there exists some $r_p>0$ such that $u$ is stable in the ball $B_{r_p}(p)$.
\end{lemma}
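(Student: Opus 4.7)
The plan is to argue by contradiction. Suppose for some $p\in M$ the map $u$ is unstable on every geodesic ball $B_r(p)$; I will produce an infinite sequence of variations $\tilde v_k\in\mathcal V(u)$ with pairwise disjoint supports shrinking toward $p$ and $E''(u)(\tilde v_k,\tilde v_k)<0$. Since the supports are disjoint, the cross terms $E''(u)(\tilde v_k,\tilde v_\ell)$ vanish for $k\neq\ell$, so $E''(u)$ is negative definite on $\mathrm{Span}\{\tilde v_k\}$, contradicting $\ind_E(u)<\infty$.

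The key technical step is a localization lemma: given any $v\in\mathcal V(u)\cap W_0^{1,2}(B_R(p),\mathbb R^L)$ with $E''(u)(v,v)<0$, one can find $\rho\in(0,R)$ so that $\tilde v:=\psi_\rho v$ has support in $B_R(p)\setminus B_\rho(p)$ and still satisfies $E''(u)(\tilde v,\tilde v)<0$. Here $\psi_\rho$ is a standard radial cutoff equal to $0$ on $B_\rho(p)$, equal to $1$ off $B_{2\rho}(p)$, with $|d\psi_\rho|\leq C/\rho$. The variation $\tilde v$ remains in $\mathcal V(u)$ because $\tilde v(x)$ is a scalar multiple of $v(x)\in T_{u(x)}N$ a.e. To check the negativity, expand the difference $E''(u)(\tilde v,\tilde v)-E''(u)(v,v)$ into three pieces: (i) the integral of $(\psi_\rho^2-1)$ against $|dv|^2-\langle\II_N(u)(du,du),\II_N(u)(v,v)\rangle$, which vanishes as $\rho\to 0$ by dominated convergence; (ii) the integral $\int|d\psi_\rho|^2|v|^2$, bounded by $C\|v\|_{L^\infty}^2\rho^{n-2}$ and hence vanishing thanks to $n\geq 3$; and (iii) a cross term bounded by $\|v\|_{L^\infty}\|d\psi_\rho\|_{L^2}\|dv\|_{L^2(B_{2\rho}(p))}$, which vanishes since $v\in W^{1,2}$.

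With the localization in hand, I build the sequence $\tilde v_k$ inductively. Choose $R_1>0$ arbitrarily small; instability on $B_{R_1}(p)$ provides a negative-definite variation, which the localization lemma converts into $\tilde v_1$ supported in $B_{R_1}(p)\setminus B_{\rho_1}(p)$ for some $\rho_1>0$. At step $k\geq 2$, pick $R_k<\rho_{k-1}$, invoke instability on $B_{R_k}(p)$, and localize to produce $\tilde v_k$ with support in $B_{R_k}(p)\setminus B_{\rho_k}(p)\subset B_{\rho_{k-1}}(p)$, which is automatically disjoint from the supports of $\tilde v_1,\ldots,\tilde v_{k-1}$. The resulting infinite-dimensional negative-definite subspace of $\mathcal V(u)$ delivers the contradiction.

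The only nontrivial ingredient is the cutoff estimate of the second paragraph, which I expect to be the main obstacle. It is routine but uses the standing assumption $n\geq 3$ in an essential way through $\|d\psi_\rho\|_{L^2}^2\leq C\rho^{n-2}\to 0$; in dimension two, isolated points have positive capacity and a different (logarithmic) cutoff would be required. Every other step in the argument is formal.
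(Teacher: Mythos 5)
Your argument is correct and rests on exactly the same two ingredients as the paper's proof: that pairwise disjointly supported negative variations are incompatible with finite Morse index, and the cutoff estimate $\|d\psi_\rho\|_{L^2}^2\le C\rho^{n-2}\to 0$ (valid since $n\ge 3$) used to push a negative variation off the center point. The only difference is organizational: the paper first deduces from finite index that $u$ is stable on all sufficiently small annuli about $p$ (the disjoint-annuli trick) and then transfers stability from annuli to the ball with the same cutoff, whereas you run the contradiction directly by nesting localized negative variations; the content is essentially identical.
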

\begin{proof} We begin with a standard trick (cf. e.g. \cite[Lemma 3.9]{Gu}), showing that for every point $x\in M$ there exists a radius $r(x)>0$ such that $u$ is stable on any annulus $A_{s,t}(p):=B_t(p)\setminus B_s(p)$ with $0<s<t\leq r(x)$. Indeed, if this were not the case, then we could find an infinite sequence of radii $t_1>s_1>t_2>s_2>\cdots$ such that $u$ is unstable on the annuli $A_{s_i,t_i}(p)$; but since the annuli $A_{s_i,t_i}(p)$ are pairwise disjoint, this would violate the assumption that $\ind_E(u)<\infty$. 

Now, with $r=r(x)>0$ as above, we wish to show that $u$ is stable on $B_r(x)$--i.e., the second variation $E''(u)$ is nonnegative definite on the space
$$\mathcal{V}(u;B_{r}(x)):=\{v\in [W_0^{1,2}\cap L^{\infty}](B_r(x))\mid v(x)\in T_{du(x)}N\text{ a.e. }x\in M\}.$$
To this end, let $v\in \mathcal{V}(u;B_r(x))$, and for $\delta\in (0,r(x))$, let $\varphi_{\delta}\in C^{\infty}(M)$ be a cutoff function satisfying 
$$\varphi_{\delta}\equiv 1\text{ on }M\setminus B_{2\delta}(x),\text{ }\varphi\equiv 0\text{ on }B_{\delta}(x),\text{ and }|d\varphi_{\delta}|\leq \frac{C}{\delta},$$
so that
$$\|d\varphi_{\delta}\|_{L^2}^2\leq C\delta^{n-2}.$$
Since $\varphi_{\delta}v$ is supported on the annulus $A_{\delta, r(x)}(p)$, on which $u$ is stable by our choice of $r(x)$, it follows that
$$E''(u)(\varphi_{\delta}v,\varphi_{\delta}v)\geq 0.$$
On the other hand, note that
\begin{equation*}
\begin{split}
&E''(u)(v)-E''(u)(\varphi_{\delta}v)=\int_{B_r(x)} (|dv|^2-|d(\varphi_{\delta}v)|^2)\,dv_g \\
-&\int_{B_r(x)}(1-\varphi_{\delta}^2)\langle \II_N(u)(du(e_i),du(e_i)),\II_N(u)(v,v)\rangle\,dv_g \\
=&\int_{B_r(x)} (1-\varphi_{\delta}^2)|dv|^2-2\langle v\otimes d\varphi_{\delta},\varphi_{\delta}dv\rangle-|v|^2|d\varphi_{\delta}|^2\,dv_g\\
-&\int_{B_r(x)}(1-\varphi_{\delta}^2)\langle \II_N(u)(du(e_i),du(e_i)),\II_N(u)(v,v)\rangle\,dv_g\\
\geq& -2\|v\|_{L^{\infty}}\|dv\|_{L^2}\|d\varphi_{\delta}\|_{L^2}-\|v\|_{L^{\infty}}^2\|d\varphi_{\delta}\|_{L^2}^2\\
-&\|\II_N\|_{L^{\infty}(N)}^2\|v\|_{L^{\infty}}^2\|du\|_{L^2(B_{2\delta}(p))}^2.
\end{split}
\end{equation*}
Since $v\in W^{1,2}\cap L^{\infty}$ and $\|d\varphi_{\delta}\|_{L^2}^2\leq C\delta^{n-2}$, it is easy see that the final lower bound in the preceding string of inequalities vanishes as $\delta\to 0$, and since $E''(u)(\varphi_{\delta})\geq 0$, it follows that
$$E''(u)(v)\geq 0\text{ for all }v\in \mathcal{V}(u;B_r(x)),$$
as desired.

\end{proof}

Combining Lemma \ref{loc.stab.lem} with Theorem \ref{loc.stab.reg}, we arrive immediately at the following sharp partial regularity result for stationary harmonic maps to $N$ of finite Morse index. 

\begin{theorem}\label{ind.bd.reg}
Let $u\colon M\to N$ be a stationary harmonic map of finite Morse index to a compact Riemannian manifold $N$ for which every stable harmonic map $\mathbb{S}^2\to N$ is constant. Then $u$ is smooth away from a singular set $\Sing(u)$ of dimension $\dim(\Sing(u))\leq n-3$. Moreover, $\dim(\Sing(u))\leq n-k$ if $N$ admits no stable $0$-homogeneous harmonic map $v\colon \mathbb{R}^{k-1}\to N$ for some $k\geq 4$.
\end{theorem}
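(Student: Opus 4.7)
The plan is to combine Lemma \ref{loc.stab.lem} with Hsu's partial regularity theorem (Theorem \ref{loc.stab.reg}) in an essentially formal way. By Lemma \ref{loc.stab.lem}, the finite Morse index hypothesis gives, for every $p\in M$, a radius $r_p>0$ such that the second variation $E''(u)$ is nonnegative on variations supported in $B_{r_p}(p)$. In other words, the stationary harmonic map $u$ is \emph{locally stable} in precisely the sense required by Theorem \ref{loc.stab.reg}, so feeding this directly into Hsu's result should produce the desired singular set bounds.

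To make the argument precise I would cover $M$ by the open balls $\{B_{r_p/2}(p)\}_{p\in M}$, extract a finite subcover $\{B_{r_i/2}(p_i)\}_{i=1}^N$ by compactness of $M$, and on each slightly larger ball $B_{r_i}(p_i)$, on which $u$ is stable, apply the local version of Hsu's partial regularity theorem. This bounds the dimension of $\Sing(u)\cap B_{r_i/2}(p_i)$ by $n-3$ (or by $n-k$ under the stronger hypothesis ruling out stable $0$-homogeneous maps $\mathbb{R}^{k-1}\to N$). Taking the union over a finite subcover preserves the Hausdorff dimension estimate and yields the bound on all of $M$.

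The one point worth checking is that Theorem \ref{loc.stab.reg}, although stated as a global statement for a closed domain, is genuinely local in its proof: the monotonicity formula, the $\varepsilon$-regularity of Lemma \ref{eps.reg}, the strong $W^{1,2}_{\mathrm{loc}}$-compactness of stable stationary harmonic maps (which, as in the proof of Theorem \ref{stat.lim.thm}, relies on nonexistence of nonconstant stable harmonic maps $\mathbb{S}^2\to N$ in the \emph{target}), and the Federer-style dimension reduction built on stable $0$-homogeneous tangent maps are all local in the domain. Thus no global structure of $M$ is used beyond what Lemma \ref{loc.stab.lem} already provides, and there is no genuine obstacle; all the content of the theorem has been packaged into the preceding two ingredients, and the proof reduces to their combination.
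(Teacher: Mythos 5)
Your proposal is correct and is essentially the paper's own proof: Theorem \ref{ind.bd.reg} is obtained by noting that Lemma \ref{loc.stab.lem} makes a finite-index stationary harmonic map locally stable, and then invoking Theorem \ref{loc.stab.reg}, whose hypothesis is exactly local stability. The covering argument and the check that Hsu's theorem is local are harmless but not needed, since Theorem \ref{loc.stab.reg} is already stated for locally stable maps.
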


\begin{remark} In \cite{HsuLi}, compactness and regularity results are given for harmonic maps $u\colon M\to N$ with bounded Morse index $\ind_E(u)\leq I_0$ when the target $N$ is assumed to admit no nonconstant harmonic map $\phi\colon\mathbb{S}^2\to N$ \emph{with matching index bound} $\ind_E(\phi)\leq I_0$. The simple observation that--by virtue of Lemma \ref{bub.stab}--the hypothesis on $N$ can be weakened to the absence of \emph{stable} harmonic $2$-spheres is critical for applications to variational constructions like the one considered here, allowing us to obtain compactness and regularity results for harmonic maps obtained from multi-parameter min-max constructions. 

\end{remark}

Finally, by combining Proposition \ref{pert.minmax}, Theorem \ref{stat.lim.thm}, and Theorem \ref{ind.bd.reg}, we arrive at the following existence result, from which Theorem \ref{exthm} clearly follows.

\begin{theorem}\label{sec2.exthm} In the setting of Section \ref{pert.minmax.sec}, if there exist no nonconstant stable harmonic maps $\mathbb{S}^2\to N$, then there exists a stationary harmonic map $u\colon M^n\to N$ of Morse index $\ind_E(u)\leq \ell+1$, with 
$$E(u)=\mathcal{E}_f(M,g)>0.$$
Moreover, $u$ is smooth away from a (possibly empty) singular set $\Sing(u)$ of dimension
$$\dim(\Sing(u))\leq n-m\leq n-3,$$
where $m$ is the smallest dimension for which there exists a nonconstant stable stationary $0$-homogeneous map $\phi\colon \mathbb{R}^m\to N$.
\end{theorem}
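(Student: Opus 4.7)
The plan is to assemble the existence statement by chaining together the three main ingredients developed in Section~\ref{sec:minmax}: the regularized min-max existence result (Proposition~\ref{pert.minmax}), the strong $W^{1,2}$-convergence theorem for critical points with bounded index under the no-stable-sphere hypothesis (Theorem~\ref{stat.lim.thm}), and the sharp partial regularity result for stationary harmonic maps of finite Morse index (Theorem~\ref{ind.bd.reg}). Since each of these results has already done the heavy lifting, the proof should be essentially a matter of verifying that the hypotheses propagate correctly through the chain.

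First I would apply Proposition~\ref{pert.minmax} to produce, for each $\epsilon>0$, a critical point $u_\epsilon\in C^\infty(M,\mathbb{R}^L)$ of $E_\epsilon$ with $E_\epsilon(u_\epsilon)=\mathcal{E}_{f,\epsilon}(M,g)$ and $\ind_{E_\epsilon}(u_\epsilon)\le \ell+1$. The uniform upper bound $E_\epsilon(u_\epsilon)\le \mathcal{E}_{f,\epsilon}\le \mathcal{E}_{f,1}(M,g)$ supplied by the monotonicity of $\mathcal{E}_{f,\epsilon}$ in $\epsilon$, combined with Lemma~\ref{lemma:energy_upbound}, shows that the energies $E_\epsilon(u_\epsilon)$ are controlled independently of $\epsilon\in(0,1)$. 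Together with the uniform index bound $\ind_{E_\epsilon}(u_\epsilon)\le \ell+1$, this places the family $\{u_\epsilon\}$ exactly in the setting of Theorem~\ref{stat.lim.thm}, whose hypotheses use precisely the no-stable-minimal-two-sphere assumption on $N$.

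Next, I would invoke Theorem~\ref{stat.lim.thm} to extract a subsequence $\epsilon_j\to 0$ such that $u_{\epsilon_j}\to u$ strongly in $W^{1,2}(M,\mathbb{R}^L)$, where $u\colon M\to N$ is a stationary harmonic map with $\ind_E(u)\le \ell+1$. Strong $W^{1,2}$-convergence, combined with the fact that the potential energy $\int_M \epsilon^{-2}W(u_\epsilon)\,dv_g\to 0$ along the convergent subsequence (which is built into the vanishing of the discrepancy measure in the proof of Theorem~\ref{stat.lim.thm}), yields
\begin{equation*}
E(u)=\lim_{j\to\infty}E_{\epsilon_j}(u_{\epsilon_j})=\lim_{j\to\infty}\mathcal{E}_{f,\epsilon_j}(M,g)=\mathcal{E}_f(M,g).
\end{equation*}
Positivity $\mathcal{E}_f(M,g)>0$ is given by the lower bound $\mathcal{E}_f(M,g)\ge \tfrac{1}{2}\delta_0(N)^2\vol(M,g)\lambda_1(M,g)$, so in particular $u$ is nonconstant.

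Finally, since $u$ has finite Morse index and $N$ admits no nonconstant stable harmonic map $\mathbb{S}^2\to N$, Theorem~\ref{ind.bd.reg} applies directly and provides the asserted partial regularity: $u$ is smooth outside a singular set of dimension at most $n-3$ in general, and at most $n-m$ when $m$ is the smallest dimension admitting a nonconstant stable $0$-homogeneous harmonic map $\mathbb{R}^m\to N$. I do not anticipate any genuine obstacle in this final step, since all nontrivial analytic work has been discharged to Theorems~\ref{stat.lim.thm} and~\ref{ind.bd.reg}; the only points requiring a brief verification are the passage of the energy identity through strong convergence and the fact that the finite Morse index bound $\ind_E(u)\le \ell+1<\infty$ is exactly what Theorem~\ref{ind.bd.reg} requires.
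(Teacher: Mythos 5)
Your proposal is correct and follows exactly the paper's route: the paper obtains Theorem~\ref{sec2.exthm} precisely by combining Proposition~\ref{pert.minmax} (with the uniform energy bound from Lemma~\ref{lemma:energy_upbound}), Theorem~\ref{stat.lim.thm}, and Theorem~\ref{ind.bd.reg}, with the energy identity and positivity handled just as you describe.
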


\section{Improved regularity and stabilization in the sphere-valued case}

\label{sec:sphere}

For any $k\geq 3$, we can take $N=\mathbb{S}^k$, $\ell=k$ and $f=\mathrm{Id}\colon \mathbb{S}^k\to \mathbb{S}^k$ in Theorem \ref{sec2.exthm} to deduce the existence of a nonconstant stationary sphere-valued harmonic map
$$u_k\colon M^n\to \mathbb{S}^k$$
of Morse index $\ind_E(u_k)\leq k+1$, from any compact manifold $(M^n,g)$ of dimension $n\geq 3$. For energy-minimizing maps to spheres $\mathbb{S}^k$ of dimension $k\geq 3$, Schoen and Uhlenbeck obtained a refined partial regularity theory \cite{SU84}, showing in particular that such maps have singular set $\Sing(u)$ of Hausdorff dimension $\dim(\Sing(u))\leq n-7$ for $k$ sufficiently large. Their results were later extended to the case of \emph{stable} harmonic maps by Hong-Wang \cite{HW99} and further refined by Lin and Wang \cite{LW06}, yielding the following partial regularity result.
\begin{theorem}(Lin, Wang \cite{LW06}\label{sph.reg.thm})
Let $u\colon M^n\to \mathbb{S}^k$ be a locally stable stationary harmonic map. Then $u$ is smooth away from a singular set $\Sing(u)$ of Hausdorff dimension 
$$\dim (\Sing(u))\leq n-k-1\text{\hspace{3mm} for\hspace{3mm} }3\leq k\leq 5,$$
$$\dim(\Sing(u))\leq n-6\text{\hspace{3mm} for\hspace{3mm} } 6\leq k\leq 9,$$
and
$$\dim(\Sing(u))\leq n-7\text{\hspace{3mm} for\hspace{3mm} }k\geq 10.$$
\end{theorem}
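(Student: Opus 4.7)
The plan is to adapt the Schoen--Uhlenbeck partial regularity scheme from the energy-minimizing setting to locally stable stationary harmonic maps into $\mathbb{S}^k$, using the stability inequality to substitute for energy minimization wherever the original argument relies on it. For $N=\mathbb{S}^k\subset\R^{k+1}$ the second fundamental form satisfies $\II_{\mathbb{S}^k}(X,Y)=-\la X,Y\ra u$, so local stability of a harmonic $u\colon M\to\mathbb{S}^k$ reduces to the clean estimate
\begin{equation*}
\int_{B_r(x)}|du|^2|v|^2\,dv_g\leq \int_{B_r(x)}|dv|^2\,dv_g
\end{equation*}
for every $v\in W_0^{1,2}(B_r(x),\R^{k+1})$ with $v(y)\in T_{u(y)}\mathbb{S}^k$ a.e. This is the analytic substitute I would exploit throughout.

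The first step is to show that blow-up sequences $u_{\lambda_j}(y)=u(x_0+\lambda_j y)$ at a singular point $x_0$ converge strongly in $W^{1,2}_{\mathrm{loc}}$. Combining the monotonicity formula and small-energy regularity for stationary harmonic maps with the stability inequality rules out energy concentration: any concentration point would, exactly as in Remark~\ref{bubble.rk} and Lemma~\ref{bub.stab}, produce a nonconstant harmonic $\phi\colon\mathbb{S}^2\to\mathbb{S}^k$ in the bubble limit; this $\phi$ would inherit stability as an ambient limit of stable maps, but for $k\geq 3$ every nonconstant harmonic $\mathbb{S}^2\to\mathbb{S}^k$ is unstable (there is no holomorphic structure on $\mathbb{S}^k$ to stabilize it), a contradiction. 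Strong convergence then yields a tangent map $\phi_\infty$ which is $0$-homogeneous, locally stable, and harmonic into $\mathbb{S}^k$.

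The second step is to classify nonconstant $0$-homogeneous locally stable harmonic maps $\phi\colon\R^m\to\mathbb{S}^k$. Writing $\phi(x)=\psi(x/|x|)$ for a harmonic $\psi\colon\mathbb{S}^{m-1}\to\mathbb{S}^k$ and inserting separated test fields $v(x)=\varrho(|x|)\eta(x/|x|)$ into the stability inequality, the problem reduces to a spectral condition on $\mathbb{S}^{m-1}$. Optimizing in $\varrho$ via the sharp Hardy inequality $\int_{\R^m}|\varrho'|^2\,dx\geq\left(\tfrac{m-2}{2}\right)^2\int_{\R^m}\varrho^2/|x|^2\,dx$ forces a pointwise comparison between $|d\psi|^2$ and $\left(\tfrac{m-2}{2}\right)^2$. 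For the model case $\psi=\mathrm{Id}\colon\mathbb{S}^k\to\mathbb{S}^k$ (radial projection from $\R^{k+1}$), where $|d\psi|^2=k$, stability fails exactly when $4k>(k-1)^2$, i.e.\ for $k\leq 5$; a more detailed analysis combining the Hardy bound with the structure of harmonic maps $\mathbb{S}^{m-1}\to\mathbb{S}^k$ of small dimension rules out all nonconstant examples in the ranges $m\leq k$ for $3\leq k\leq 5$, $m\leq 5$ for $6\leq k\leq 9$, and $m\leq 6$ for $k\geq 10$.

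Finally, I would apply Federer's dimension reduction: if $\dim\Sing(u)$ exceeded the asserted bound, iteratively blowing up at generic points of the top stratum of the singular set would produce a nonconstant locally stable $0$-homogeneous harmonic map $\R^m\to\mathbb{S}^k$ with $m$ below the threshold in step two, a contradiction. The main obstacle is that classification step --- in particular, the sharp identification of the dimension thresholds by balancing the Hardy constant on $\R^m$ against first eigenvalues of Jacobi operators for sphere-valued harmonic maps from $\mathbb{S}^{m-1}$. The remaining ingredients (strong compactness of stable blow-ups via the no-stable-harmonic-$\mathbb{S}^2$ hypothesis, and Federer's dimension reduction itself) are by now standard, once the compactness mechanism particular to the sphere-valued stable setting has been secured in step one.
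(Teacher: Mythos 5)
Your outline reproduces the route taken in the cited literature: the paper itself does not prove this theorem but quotes it from Lin--Wang \cite{LW06} (building on Schoen--Uhlenbeck \cite{SU84} and Hong--Wang \cite{HW99}), and that proof is indeed structured as you describe --- strong $W^{1,2}_{loc}$ compactness of blow-ups of locally stable maps because an energy-concentration bubble would be a stable nonconstant harmonic two-sphere in $\mathbb{S}^k$, impossible for $k\geq 3$, followed by Federer dimension reduction over stable $0$-homogeneous tangent maps. Steps one and three are correct in spirit; the stability of the bubble does require the cutoff/limiting argument of \cite[Lemma 2.2]{Hsu05} (compare Lemma \ref{ind.lim} in this paper), but that is routine.

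The genuine gap is your step two, which is the entire quantitative content of the theorem. The Hardy-inequality computation you describe only disposes of the model equator map $x/|x|\colon\R^{k+1}\to\mathbb{S}^k$, where the criterion $4k>(k-1)^2$, i.e.\ $k\leq 5$, is correct; but the claim that stability ``forces a pointwise comparison between $|d\psi|^2$ and $\left(\tfrac{m-2}{2}\right)^2$'' is not right: inserting separated fields $v=\varrho(|x|)\eta(x/|x|)$ into the stability inequality yields an integral inequality involving the Jacobi operator of $\psi$ on $\mathbb{S}^{m-1}$, and gives no pointwise control on $|d\psi|^2$ for a general nonconstant $0$-homogeneous stable map. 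Ruling out \emph{all} such maps $\R^m\to\mathbb{S}^k$ for $m\leq k$ ($3\leq k\leq 5$), $m\leq 5$ ($6\leq k\leq 9$) and $m\leq 6$ ($k\geq 10$) is precisely the hard part of \cite{SU84, HW99, LW06}; the actual mechanism there is not the Hardy constant of the model map but the averaged second-variation test fields $v_a=\psi\,(a-\langle u,a\rangle u)$ summed over an orthonormal basis $a_1,\dots,a_{k+1}$ (the computation reproduced in Lemma \ref{low.ind.lem} of this paper), combined with the Bochner identity, a refined Kato inequality, and an induction on the dimension of the domain sphere, which is how the specific thresholds $k+1$, $6$, $7$ emerge. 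As written, your proposal asserts these thresholds rather than deriving them, so it does not yet constitute a proof of the stated dimension bounds; since the paper uses the theorem as a black box, the appropriate fix is either to cite \cite{LW06} or to carry out that classification in detail.
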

Since Lemma \ref{loc.stab.lem} shows that stationary harmonic maps of finite Morse index are locally stable near each point, it follows that the same regularity result holds in the finite Morse index case. In particular, combining this with the existence result obtained by applying Theorem~\ref{sec2.exthm} with $\ell=k$ and $N=\mathbb{S}^k$, we obtain the following.
\begin{theorem}\label{sk.minmax.thm} On any compact Riemannian manifold $(M^n,g)$ of dimension $n\geq 3$, and for any $k\geq 3$, there exists a nonconstant stationary harmonic map $u_k\colon M\to \mathbb{S}^k$ of Morse index $\ind_E(u_k)\leq k+1$, smooth away from a singular set $Sing(u)$ of dimension
$$\dim (\Sing(u_k))\leq n-k-1\text{\hspace{3mm} for\hspace{3mm} }3\leq k\leq 5,$$
$$\dim(\Sing(u_k))\leq n-6\text{\hspace{3mm} for\hspace{3mm} }6\leq k\leq 9,$$
and
$$\dim(\Sing(u_k))\leq n-7\text{\hspace{3mm} for\hspace{3mm} }k\geq 10.$$
\end{theorem}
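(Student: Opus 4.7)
The plan is to assemble the conclusion directly from the pieces already established in the excerpt. First, I would take $N=\mathbb{S}^k$ with $k\geq 3$ and $\ell=k$ in Theorem \ref{sec2.exthm}, choosing $f=\mathrm{Id}\colon\mathbb{S}^k\to\mathbb{S}^k$, which is homotopically nontrivial so that $\pi_k(\mathbb{S}^k)\neq 0$ and the min-max construction of Section \ref{pert.minmax.sec} produces a family with strictly positive width. The key structural hypothesis to verify is that $\mathbb{S}^k$ admits no nonconstant stable harmonic map $\mathbb{S}^2\to\mathbb{S}^k$; this is classical and follows, for instance, from the fact that $\mathbb{S}^k$ has positive sectional curvature (so the results of \cite{MM88} apply for $k\geq 4$), while for $k=3$ one can argue directly via standard second variation computations or by appealing to the fact that $\mathbb{S}^3$ has positive Ricci curvature. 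Granting this, Theorem \ref{sec2.exthm} delivers a nonconstant stationary harmonic map
$$u_k\colon M^n\to \mathbb{S}^k$$
with $\ind_E(u_k)\leq \ell+1=k+1$ and $E(u_k)=\mathcal{E}_f(M,g)>0$.

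Next, the refined regularity statement follows by combining two earlier results. Since $\ind_E(u_k)\leq k+1<\infty$, Lemma \ref{loc.stab.lem} applies and shows that $u_k$ is locally stable near every point of $M$; that is, each $p\in M$ admits a neighborhood on which $E''(u_k)$ is nonnegative definite. Therefore $u_k$ satisfies the hypothesis of Theorem \ref{sph.reg.thm} of Lin--Wang, which directly yields the three stratified bounds
$$\dim(\Sing(u_k))\leq n-k-1\text{ for }3\leq k\leq 5,$$
$$\dim(\Sing(u_k))\leq n-6\text{ for }6\leq k\leq 9,$$
$$\dim(\Sing(u_k))\leq n-7\text{ for }k\geq 10,$$
exactly as claimed.

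There is essentially no substantive obstacle here beyond verifying that $f=\mathrm{Id}$ is a legitimate choice and that $\mathbb{S}^k$ meets the ``no stable harmonic $2$-sphere'' hypothesis. The rest is purely a packaging step: the min-max construction gives existence with the correct index bound, the index bound upgrades to local stability, and local stability plus the sphere target feeds into the Hong--Wang/Lin--Wang partial regularity machinery to produce the sharp codimension estimates. Thus the proof is just a one-line invocation of Theorem \ref{sec2.exthm}, Lemma \ref{loc.stab.lem}, and Theorem \ref{sph.reg.thm}, and this is precisely the structure I would write.
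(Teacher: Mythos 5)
Your proposal is correct and is essentially identical to the paper's own argument: the paper obtains Theorem \ref{sk.minmax.thm} by applying Theorem \ref{sec2.exthm} with $N=\mathbb{S}^k$, $\ell=k$, $f=\mathrm{Id}$, then upgrading the index bound to local stability via Lemma \ref{loc.stab.lem} and invoking the Lin--Wang regularity result (Theorem \ref{sph.reg.thm}). The only cosmetic caveat is that your appeal to \cite{MM88} should cite positive isotropic (rather than sectional) curvature of the round sphere, but the absence of stable harmonic two-spheres in $\mathbb{S}^k$, $k\geq 3$, is classical either way.
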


Note that in the low-dimensional cases $n=\dim(M)=3,4,5$, it follows in particular that these min-max harmonic maps
$$u_k\colon M^n\to S^k$$
are \emph{smooth} as soon as $k\geq n$. In the remainder of this section, we upgrade this qualitative regularity statement to a priori gradient estimates independent of $k$, which allow us to show that the maps $u_k$ stabilize as $k$ becomes large--in the sense that the image $u_k(M)$ lies inside some equatorial subsphere $\mathbb{S}^{k_0}\subset \mathbb{S}^k$ of dimension $k_0(M,g)$ fixed independent of $k$. As in \cite{KS1}, this stabilization result is the key analytic ingredient that allows us to relate these min-max harmonic maps to eigenvalue optimization problems, as described in the next section.

\subsection{A priori gradient estimates for low-index harmonic maps to spheres}

Let $u\colon M^n\to \mathbb{S}^k$ be a smooth harmonic map, and let $\Omega\subset M$ be a domain. As in \cite{SU84, HW99, LW06}, our estimates begin with the observation that, when the index $\ind_E(u;\Omega)$ is small, testing the second variation of energy $E''(u)$ against variations of the form
$$v_a:=\psi\cdot(a-\langle u,a\rangle u)\in \Gamma(u^*(T\mathbb{S}^k))$$
for $a\in \mathbb{S}^k$ gives rise to useful estimates in terms of scalar test functions $\psi\in C_c^{\infty}(\Omega)$. In particular, recalling that
$$E''(u)(v,v)=\int_M|dv|^2-|du|^2|v|^2\,dv_g,$$
and computing
$$|dv_a|^2=\psi^2(u_a^2|du|^2+|du_a|^2)+(1-u_a^2)|d\psi|^2-2\psi\langle d\psi, u_adu_a\rangle$$
and
$$|v_a|^2=\psi^2(1-u_a^2),$$
where we've set $u_a:=\langle u,a\rangle$, it follows that 
\begin{equation*}
\begin{split}
&E''(u)(v_a,v_a)=\\
=&\int_M\psi^2([2u_a^2-1]|du|^2+|du_a|^2)+(1-u_a^2)|d\psi|^2-2\psi\langle d\psi, u_adu_a\rangle\,dv_g\\
=&\int_M \psi^2(3u_a^2-1)|du|^2+(1-u_a^2)|d\psi|^2-4\psi\langle d\psi, u_adu_a\rangle\,dv_g.
\end{split}
\end{equation*}

Now, fixing an arbitrary $\psi\in C_c^{\infty}(\Omega)$, let $a_1,\ldots,a_{k+1}\in \mathbb{S}^k$ be an orthonormal basis for $\mathbb{R}^{k+1}$ diagonalizing the quadratic form 
$$a\mapsto E''(u)(v_a,v_a),$$
and write $v_j=v_{a_j}$. Summing the preceding identity for $E''(u)(v_j,v_j)$ over $j=1,\ldots,k+1$, we see that
\begin{equation}\label{trace.hess.comp}
\sum_{j=1}^{k+1}E''(u)(v_j,v_j)=\int_M \psi^2(2-k)|du|^2+k|d\psi|^2\,dv_g.
\end{equation}
On the other hand, if $\ind_E(u;\Omega)\leq m<k-2$, then we must have $E''(u)(v_i,v_i)\geq 0$ for $i>m$, and so--writing $u_j=u_{a_j}$--we see that
\begin{equation*}
\begin{split}
&\sum_{j=1}^{k+1}E''(u)(v_j,v_j)\geq \sum_{j=1}^mE''(u)(v_j,v_j)\\
=&\sum_{j=1}^m\int_M \psi^2(3u_j^2-1)|du|^2+(1-u_j^2)|d\psi|^2-4\psi\langle d\psi, u_jdu_j\rangle\,dv_g\\
=&\sum_{j=1}^m\int_M \psi^2(3u_j^2-1)|du|^2+(1-u_j^2)|d\psi|^2-\langle d(\psi^2), d(u_j^2)\rangle\,dv_g\\
=&\sum_{j=1}^m\int_M \psi^2(3u_j^2-1)|du|^2+(1-u_j^2)|d\psi|^2+\psi^2(|du_j|^2-u_j^2|du|^2)\,dv_g\\
\geq& -m\int_M |du|^2\psi^2\,dv_g+(m-1)\int_M |d\psi|^2\,dv_g.
\end{split}
\end{equation*}
Combining this with \eqref{trace.hess.comp}, it then follows that
$$(k-2-m)\int_M |du|^2\psi^2\,dv_g\leq (k-m+1)\int_M |d\psi|^2\,dv_g.$$
In other words, we've established the following.
\begin{lemma}\label{low.ind.lem} Let $u\colon M^n\to \mathbb{S}^k$ be a smooth harmonic map, and let $\Omega \subset M$ be a domain in $M$. If
$$\ind_E(u;\Omega)=k-2-p<k-2,$$
then
\begin{equation}\label{grad.control}
\frac{p}{p+3}\int_M |du|^2\psi^2\,dv_g\leq \int_M |d\psi|^2\,dv_g
\end{equation}
for every $\psi\in W_0^{1,2}(\Omega).$
\end{lemma}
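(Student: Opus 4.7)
The plan is to carry out the computation essentially displayed in the lead-up to the lemma and then repackage it with the substitution $m = k-2-p$. Concretely, I would first probe the second variation against the family of ``angular shift'' variations
\[
v_a := \psi\bigl(a - \langle u,a\rangle u\bigr) \in \Gamma(u^*T\mathbb{S}^k),
\]
parametrized by $a \in \mathbb{S}^k$ and a fixed test function $\psi \in C_c^\infty(\Omega)$, and write out $E''(u)(v_a,v_a)$ explicitly in terms of $\psi$ and $u_a := \langle u,a\rangle$, using $|v_a|^2 = \psi^2(1-u_a^2)$ and expanding $|dv_a|^2$ by the Leibniz rule, exactly as in the displayed formula preceding the lemma.

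The core step is the trace trick: choose an orthonormal basis $a_1,\ldots,a_{k+1}$ of $\mathbb{R}^{k+1}$ diagonalizing the quadratic form $a \mapsto E''(u)(v_a,v_a)$, and sum the formula from the previous step over $j$. Using the pointwise identities $\sum_j u_{a_j}^2 = 1$, $\sum_j |du_{a_j}|^2 = |du|^2$, and $\sum_j u_{a_j}\, du_{a_j} = 0$ coming from $|u|^2 \equiv 1$, all cross terms collapse and one obtains the clean identity
\[
\sum_{j=1}^{k+1} E''(u)(v_{a_j},v_{a_j}) = \int_M (2-k)|du|^2\psi^2 + k|d\psi|^2\,dv_g,
\]
which is exactly \eqref{trace.hess.comp}.

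Next I would feed in the index hypothesis. Because the basis diagonalizes the form, the bound $\ind_E(u;\Omega) \le m := k-2-p$ forces $E''(u)(v_{a_j},v_{a_j}) \geq 0$ for $j > m$, so the full sum dominates $\sum_{j \le m} E''(u)(v_{a_j},v_{a_j})$. Each of these $m$ terms is in turn bounded from below by $-\int|du|^2\psi^2 + \int|d\psi|^2\,$-type quantities after rewriting the cross term $-4\psi\langle d\psi, u_j du_j\rangle = -\langle d(\psi^2), d(u_j^2)\rangle$ and integrating by parts, leading to the lower bound $-m\int|du|^2\psi^2 + (m-1)\int|d\psi|^2$. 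Combining with the trace identity gives $(k-2-m)\int|du|^2\psi^2 \le (k-m+1)\int|d\psi|^2$; plugging in $m = k-2-p$ yields $p \int |du|^2\psi^2 \le (p+3)\int |d\psi|^2$, which is \eqref{grad.control} on the dense subspace $C_c^\infty(\Omega) \subset W_0^{1,2}(\Omega)$, and a standard density argument extends it to the whole Sobolev space.

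There is no genuine analytic obstacle: the proof is entirely algebraic once one has the idea to sum over a diagonalizing basis. The only thing to watch is the sign bookkeeping when extracting the lower bound for $\sum_{j \le m} E''(u)(v_{a_j},v_{a_j})$ from the explicit formula---specifically, ensuring the cross terms from $|dv_a|^2$ integrate by parts cleanly against $|u|^2 \equiv 1$ so that the coefficient in front of $\int |du|^2 \psi^2$ comes out to exactly $-m$ rather than something weaker, as this is what makes the final constant $p/(p+3)$ nontrivial whenever $p \geq 1$.
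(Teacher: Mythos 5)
Your proposal is correct and follows essentially the same route as the paper: testing $E''(u)$ against $v_a=\psi(a-\langle u,a\rangle u)$, summing over a diagonalizing orthonormal basis to get the trace identity, discarding the nonnegative terms indexed by $j>m$ via the index hypothesis, and bounding the remaining $m$ terms below by $-m\int|du|^2\psi^2+(m-1)\int|d\psi|^2$ before substituting $m=k-2-p$. No gaps; the density extension from $C_c^\infty(\Omega)$ to $W_0^{1,2}(\Omega)$ is routine, as you note.
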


Next, we argue along the lines of \cite{SU84} and \cite{LW06}, combining Lemma \ref{low.ind.lem} with the Bochner identity for $\mathbb{S}^k$-valued harmonic maps, testing \eqref{grad.control} against certain powers $\psi=|du|^{\alpha}$ of the energy density. For a harmonic map $u\colon M\to \mathbb{S}^k$, recall that the Bochner identity gives
\begin{eqnarray*}
-\frac{1}{2}\Delta |du|^2&=&|\Hess(u)|^2+\langle \Ric,du^*du\rangle-|du|^4\\
&=&|\Hess(u)^T|^2+\sum_{i,j=1}^n\langle du(e_i),du(e_j)\rangle^2+\langle \Ric_M, du^*du\rangle-|du|^4,
\end{eqnarray*}
where in the second inequality we decomposed the $\mathbb{R}^{k+1}$-valued Hessian
$$\Hess(u)=\Hess(u)^T-u\otimes du^*du$$
into components tangential and normal to $T\mathbb{S}^k$. It is easy to check that
$$\sum_{i,j=1}^n\langle du(e_i),du(e_j)\rangle^2\geq \frac{1}{n}|du|^4,$$
and \cite[Proposition 2.3]{LW06} (originally stated for $M=\mathbb{S}^n$, but the domain geometry plays no role in the proof) gives us the refined Kato inequality
$$|\Hess(u)^T|^2\geq \frac{n}{n-1}|d|du||^2.$$
Similar to \cite{LW06}, we then combine these estimates with the Bochner identity to deduce that
\begin{equation}\label{bochner}
-\frac{1}{2}\Delta |du|^2\geq \frac{n}{n-1}|d|du||^2-\|\Ric_M\|_{L^{\infty}}|du|^2-\frac{n-1}{n}|du|^4
\end{equation}
for any smooth harmonic map $u\colon M^n\to \mathbb{S}^k$. And as a consequence, for any $\alpha\geq 2$, we have
\begin{equation*}
\begin{split}
&-\frac{1}{\alpha}\Delta |du|^{\alpha}\geq\\
\geq&\left(\frac{n}{n-1}+\alpha-2\right)|du|^{\alpha-2}|d|du||^2
-\|\Ric_M\|_{L^{\infty}}|du|^{\alpha}-\frac{n-1}{n}|du|^{2+\alpha}\\
=&\frac{4}{\alpha^2}\left(\frac{n}{n-1}+\alpha-2\right)|d|du|^{\alpha/2}|^2
-\|\Ric_M\|_{L^{\infty}}|du|^{\alpha}-\frac{n-1}{n}|du|^{2+\alpha}.
\end{split}
\end{equation*}
Setting
$$Q(n,\alpha)^{-1}:=\frac{4}{\alpha^2}\left(\frac{n}{n-1}+\alpha-2\right),$$
for any test function $\varphi\in C_c^{\infty}(\Omega)$, it then follows that
\begin{equation*}
\begin{split}
&\int_M |d(\varphi |du|^{\alpha/2})|^2\,dv_g=\int_M |du|^{\alpha}|d\varphi|^2+\frac{1}{2}\langle d(\varphi^2),d |du|^{\alpha}\rangle+\varphi^2|d|du|^{\alpha/2}|^2\,dv_g \\
&\leq \int_M|du|^{\alpha}|d\varphi|^2+\frac{1}{2}\langle d(\varphi^2),d |du|^{\alpha}\rangle\,dv_g +\\
&+Q(n,\alpha)\int_M \varphi^2\left(\frac{n-1}{n}|du|^{2+\alpha}+\|\Ric_M\|_{L^{\infty}}|du|^{\alpha}-\frac{1}{\alpha}\Delta |du|^{\alpha}\right)\,dv_g\\
&=\int_M |du|^{\alpha}\left(|d\varphi|^2+C(M,n,\alpha)\varphi^2+C(n,\alpha)\Delta (\varphi^2)\right)\,dv_g +\\
&+Q(n,\alpha)\frac{n-1}{n}\int_M \varphi^2|du|^{2+\alpha}\,dv_g.
\end{split}
\end{equation*}
Combining these computations with Lemma \ref{low.ind.lem}, taking $\psi=\varphi |du|^{\frac{\alpha}{2}}$ as the test function, we arrive at the following key lemma.

\begin{lemma}\label{reg.imp.lem}
Let $u\colon M^n\to \mathbb{S}^k$ be a smooth harmonic map, such that $u$ has Morse index
$$\ind_E(u;\Omega)\leq k-2-p<k-2$$
on some domain $\Omega\subset M$. Then for $\alpha\in [2,\infty)$ and any $\varphi\in C_c^{\infty}(\Omega)$, we have
$$\left(\frac{p}{p+3}-P(n,\alpha)\right)\int_\Omega \varphi^2|du|^{2+\alpha}\,dv_g\leq C_{\alpha}\||d\varphi|^2+|\varphi||\Delta\varphi|\|_{L^{\infty}}\int_{\Omega}|du|^{\alpha}\,dv_g,$$
where $C_{\alpha}$ is a constant depending on $\alpha$ and the geometry of $M$, and
$$P(n,\alpha):=Q(n,\alpha)\frac{n-1}{n}=\frac{(n-1)^2\alpha^2}{4n[(n-1)\alpha+2-n]}.$$
\end{lemma}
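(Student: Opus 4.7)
The inequality is obtained by feeding $\psi := \varphi |du|^{\alpha/2}$ into Lemma~\ref{low.ind.lem} and controlling the resulting gradient integral via the Bochner estimate \eqref{bochner} already derived above the lemma.

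Since $u$ is smooth on $\Omega$ and $\varphi \in C_c^\infty(\Omega)$, the function $\psi = \varphi|du|^{\alpha/2}$ lies in $W_0^{1,2}(\Omega)$, so Lemma~\ref{low.ind.lem} gives
$$\frac{p}{p+3}\int_\Omega \varphi^2|du|^{\alpha+2}\,dv_g \;\leq\; \int_\Omega |d(\varphi|du|^{\alpha/2})|^2\,dv_g.$$
Expanding the right-hand side by Leibniz produces the three summands $|d\varphi|^2|du|^\alpha$, $\tfrac{1}{2}\langle d(\varphi^2),d|du|^\alpha\rangle$, and $\varphi^2|d|du|^{\alpha/2}|^2$.

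For the last summand I would invoke \eqref{bochner}. A standard chain-rule computation passing from $|du|^2$ to $|du|^\alpha$ upgrades \eqref{bochner} for $\alpha \geq 2$ to
$$\varphi^2|d|du|^{\alpha/2}|^2 \;\leq\; Q(n,\alpha)\,\varphi^2\Bigl(-\tfrac{1}{\alpha}\Delta|du|^\alpha + \|\Ric_M\|_{L^\infty}|du|^\alpha + \tfrac{n-1}{n}|du|^{\alpha+2}\Bigr),$$
with $Q(n,\alpha)^{-1} = \tfrac{4}{\alpha^2}\bigl(\tfrac{n}{n-1}+\alpha-2\bigr)$. The coefficient $Q(n,\alpha)\tfrac{n-1}{n}=P(n,\alpha)$ is exactly what generates the $P(n,\alpha)\int\varphi^2|du|^{\alpha+2}$ contribution, which will then be moved to the left-hand side to form the prefactor $\tfrac{p}{p+3}-P(n,\alpha)$.

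The final step is integration by parts on every term that still carries a derivative of $|du|^\alpha$. Both $\tfrac{1}{2}\int\langle d(\varphi^2),d|du|^\alpha\rangle = \tfrac{1}{2}\int \Delta(\varphi^2)\,|du|^\alpha$ and $-\tfrac{Q(n,\alpha)}{\alpha}\int \varphi^2\Delta|du|^\alpha = -\tfrac{Q(n,\alpha)}{\alpha}\int \Delta(\varphi^2)\,|du|^\alpha$ become integrals of $|du|^\alpha$ against $\Delta(\varphi^2)=2\varphi\Delta\varphi-2|d\varphi|^2$ (using the positive convention $\Delta=d^*d$), whose pointwise absolute value is controlled by a multiple of $|d\varphi|^2+|\varphi||\Delta\varphi|$. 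The Ricci contribution $Q(n,\alpha)\|\Ric_M\|_{L^\infty}\int\varphi^2|du|^\alpha$ and all remaining numerical factors are absorbed into the constant $C_\alpha$, which is allowed to depend on $\alpha$ and on the geometry of $M$. Rearranging yields the stated inequality.

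The main thing to watch, I would expect, is purely bookkeeping: one must track signs carefully through the two integrations by parts so that the contribution of $-\tfrac{Q(n,\alpha)}{\alpha}\Delta|du|^\alpha$ genuinely combines with the cross term and the explicit $|d\varphi|^2|du|^\alpha$ summand into a single factor bounded by $\||d\varphi|^2+|\varphi||\Delta\varphi|\|_{L^\infty}$, and so that none of the $\int\varphi^2|du|^{\alpha+2}$ mass is accidentally displaced back to the right-hand side. No analytic input beyond \eqref{bochner}, the refined Kato inequality it already incorporates, and Lemma~\ref{low.ind.lem} itself is needed.
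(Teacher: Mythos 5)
Your proposal is correct and follows essentially the same route as the paper: test Lemma~\ref{low.ind.lem} with $\psi=\varphi|du|^{\alpha/2}$, control $\varphi^2|d|du|^{\alpha/2}|^2$ via the chain-rule upgrade of \eqref{bochner} with the constant $Q(n,\alpha)$, integrate by parts the terms carrying $d|du|^{\alpha}$ against $\Delta(\varphi^2)$, and move the $P(n,\alpha)$ contribution to the left-hand side. The only point to make explicit in a write-up is that, $u$ being smooth on $\supp\varphi$ and $\alpha\geq 2$, the function $\varphi|du|^{\alpha/2}$ is indeed an admissible $W_0^{1,2}(\Omega)$ test function, which you already note.
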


Of particular importance are the cases $\alpha=2$ and $\alpha=4$. It is straightforward to check that
$$P(n,2)=\frac{(n-1)^2}{n^2}\leq \frac{16}{25}\text{ for }n\leq 5$$
while
$$P(n,4)=\frac{4(n-1)^2}{n(3n-2)}\leq \frac{64}{65}\text{ for }n\leq 5.$$
With Lemma \ref{reg.imp.lem} in hand, we can now prove the following a priori $L^6$ gradient estimate for low-index harmonic maps $u\colon M^n\to \mathbb{S}^k$ with $n\leq 5$ and $k$ large.

\begin{proposition}\label{l6.prop} Let $u\colon M^n\to \mathbb{S}^k$ be a smooth harmonic map where $n\leq 5$, and let $B_{2r}(p)\subset M$ be a geodesic ball of radius $2r$. If $k>202$ and 
$$\ind_E(u;B_{2r}(p))\leq k-2-200,$$
then 
$$\|du\|^6_{L^6(B_{r/2}(p))}\leq C(M)r^{-4}\|du\|_{L^2(B_{2r}(p))}^2.$$
\end{proposition}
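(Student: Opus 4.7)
The strategy is a two-step bootstrap from $L^2$ to $L^4$ to $L^6$ gradient control, obtained by applying Lemma \ref{reg.imp.lem} successively with $\alpha = 2$ and $\alpha = 4$ on nested balls. The whole point of the numerical threshold $200$ is to ensure that in both applications the coefficient $\tfrac{p}{p+3} - P(n,\alpha)$ is positive (in fact bounded below by a positive constant independent of $k$) when $n \leq 5$.

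I would begin by pinning down this arithmetic. With $p = 200$ we have $\tfrac{p}{p+3} = \tfrac{200}{203}$. From the formula $P(n,\alpha) = \tfrac{(n-1)^2 \alpha^2}{4 n [(n-1)\alpha + 2 - n]}$, a direct computation shows $P(n,2) = (n-1)^2/n^2$, monotone in $n$, so that $P(n,2) \leq P(5,2) = 16/25 < 200/203$; and $P(n,4) = 4(n-1)^2/[n(3n-2)]$, also maximized at $n=5$ with $P(5,4) = 64/65$. The binding constraint is the $\alpha = 4$, $n=5$ case, which requires $\tfrac{p}{p+3} > \tfrac{64}{65}$, i.e.\ $p > 192$; choosing $p = 200$ gives the tiny but positive margin $\tfrac{200}{203} - \tfrac{64}{65} = \tfrac{8}{13195}$. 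Thus for $n \leq 5$ and $\alpha \in \{2,4\}$, Lemma \ref{reg.imp.lem} becomes
$$\int_\Omega \varphi^2 |du|^{\alpha+2}\, dv_g \leq C(M) \bigl\| |d\varphi|^2 + |\varphi||\Delta \varphi|\bigr\|_{L^\infty} \int_\Omega |du|^\alpha\, dv_g,$$
with $C(M)$ independent of $k$.

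Next I would pick two standard nested cutoffs: $\varphi_1 \in C_c^\infty(B_{2r}(p))$ with $\varphi_1 \equiv 1$ on $B_r(p)$, and $\varphi_2 \in C_c^\infty(B_r(p))$ with $\varphi_2 \equiv 1$ on $B_{r/2}(p)$, each of the form $\eta(d(\cdot,p)/r)$ for a smooth scalar bump $\eta$. For $r$ bounded above by the injectivity radius, the Laplacian comparison theorem (using a sectional curvature bound $|\mathrm{sec}(M,g)| \leq k_0$ and the fact that $d \gtrsim r$ on the support of $d\eta(d/r)$) gives $\||d\varphi_i|^2 + |\varphi_i||\Delta \varphi_i|\|_{L^\infty} \leq C(M)/r^2$. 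Applying the displayed estimate with $(\alpha,\varphi) = (2,\varphi_1)$ on $\Omega = B_{2r}(p)$ yields
$$\int_{B_r(p)} |du|^4\, dv_g \leq C(M) r^{-2} \int_{B_{2r}(p)} |du|^2\, dv_g,$$
and then applying it with $(\alpha,\varphi) = (4,\varphi_2)$ on $\Omega = B_r(p)$ yields
$$\int_{B_{r/2}(p)} |du|^6\, dv_g \leq C(M) r^{-2} \int_{B_r(p)} |du|^4\, dv_g.$$
Chaining these two inequalities produces the claimed estimate $\|du\|_{L^6(B_{r/2}(p))}^6 \leq C(M) r^{-4} \|du\|_{L^2(B_{2r}(p))}^2$.

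The only genuine obstacle is the arithmetic margin: for $n = 5$ and $\alpha = 4$ the threshold $P(5,4) = 64/65$ is extraordinarily close to $1$, which is precisely why the proposition is stated with the somewhat bizarre-looking hypothesis $\mathrm{ind}_E(u; B_{2r}(p)) \leq k - 202$ (allowing the use of $200$ orthonormal variations while leaving $k - 202$ of the remaining directions nonnegative) rather than something smaller. Everything else — the Bochner/refined-Kato input, the integration by parts with cutoffs, and the Laplacian comparison used to estimate $\Delta \varphi_i$ — is packaged into Lemma \ref{reg.imp.lem} and is standard.
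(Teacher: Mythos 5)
Your proposal is correct and follows essentially the same route as the paper: two successive applications of Lemma \ref{reg.imp.lem} with $\alpha=2$ on $B_{2r}(p)$ and $\alpha=4$ on $B_r(p)$, the same arithmetic check that $\tfrac{200}{203}$ exceeds $P(n,2)\leq \tfrac{16}{25}$ and $P(n,4)\leq\tfrac{64}{65}$ for $n\leq 5$, and the same nested cutoffs with $|d\varphi|\leq C/r$, $|\Delta\varphi|\leq C/r^2$ chained to give the $r^{-4}$ bound. No gaps.
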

\begin{proof}
By assumption, the hypotheses of Lemma \ref{reg.imp.lem} hold with $\Omega=B_{2r}(p)$ and $p=200$, so that $\frac{p}{p+3}=\frac{200}{203}$, and the lemma gives
$$\left(\frac{200}{203}-P(n,\alpha)\right)\int_{B_{2r}(p)} \varphi^2|du|^{2+\alpha}\leq C_{\alpha}\||d\varphi|^2+|\varphi||\Delta\varphi|\|_{L^{\infty}}\int_{B_{2r}(p)}|du|^{\alpha}$$
for any $\varphi\in C_c^{\infty}(B_{2r}(p))$. Taking $\alpha=2$ and noting that
$$\frac{200}{203}-P(n,2)\geq \frac{200}{203}-\frac{16}{25}>0,$$
it follows that
$$\int_{B_{2r}(p)} \varphi^2 |du|^4\,dv_g\leq C\||d\varphi|^2+|\varphi||\Delta\varphi|\|_{L^{\infty}}\int_{B_{2r}(p)}|du|^2\,dv_g.$$
Choosing $\varphi\in C_c^{\infty}(B_{2r}(p))$ such that $\varphi\equiv 1$ on $B_r(p)$, $|d\varphi|\leq \frac{C}{r}$, and $|\Delta \varphi|\leq \frac{C}{r^2}$, we deduce in particular that
\begin{equation}\label{l4.gradest}
\int_{B_r(p)}|du|^4\,dv_g\leq C(M)r^{-2}\int_{B_{2r}(p)}|du|^2\,dv_g.
\end{equation}

Next, we apply Lemma \ref{reg.imp.lem} again, this time with $\Omega=B_r(p)$, $p=200$, and $\alpha=4$, and since (for $n\leq 5$)
$$\frac{p}{p+3}-P(n,4)=\frac{200}{203}-P(n,4)\geq \frac{200}{203}-\frac{64}{65}=\frac{8}{(203)\cdot 65}>0,$$
the lemma yields an estimate of the form
$$\int_{B_r(p)} \varphi^2 |du|^6\,dv_g \leq C\||d\varphi|^2+|\varphi||\Delta \varphi|\|_{L^{\infty}}\int_{B_r(p)}|du|^4\,dv_g$$
for any $\varphi\in C_c^{\infty}(B_r(p))$. As before, we choose $\varphi$ such that 
$$\varphi\equiv 1\text{ on }B_{r/2}(p),\text{ }|d\varphi|\leq \frac{C}{r},\text{ and }|\Delta \varphi|\leq \frac{C}{r^2},$$
obtaining the estimate
$$\int_{B_{r/2}(p)}|du|^6\,dv_g\leq \frac{C}{r^2}\int_{B_r(p)}|du|^4\,dv_g.$$
Combining this with \eqref{l4.gradest} then gives the desired estimate.

\end{proof}

\subsection{Stabilization for $\mathbb{S}^k$-valued harmonic maps of index $\ll2k$}

Armed with the local estimates of Proposition \ref{l6.prop}, we next establish \emph{global} $L^6$ estimates for the min-max harmonic maps $u_k\colon M^n\to \mathbb{S}^k$ given by Theorem \ref{sk.minmax.thm}, when $n=3,4,5$. More generally, the following arguments can be applied to any family of harmonic maps $u_k\colon M^n\to \mathbb{S}^k$ with $\limsup_{k\to\infty}\frac{\ind_E(u_k)}{k}<2$. 

First, we observe that a nonconstant harmonic map $u\colon M^n\to \mathbb{S}^k$, where $n\geq 3$, must have index $\geq k-2$ on the complement of any small ball.

\begin{lemma}\label{no.ind.conc} Let $n\geq 3$. There exists $\delta(M)>0$ such that for any nonconstant harmonic map $u\colon M^n\to \mathbb{S}^k$
and any geodesic ball $B_{\delta}(p)\subset M$ of radius $\delta$, 
$$\ind_E(u; M\setminus B_{\delta}(p))\geq k-2.$$
\end{lemma}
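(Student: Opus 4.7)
The proof plan is by contradiction: assume $\ind_E(u; M\setminus B_\delta(p)) \leq k-3$ and show that $u$ must be constant provided $\delta$ is smaller than a threshold depending only on $M$.

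First, I would apply Lemma \ref{low.ind.lem} with $\Omega = M\setminus B_\delta(p)$ and the index bound corresponding to $p=1$ (so that $k-2-p = k-3$). This yields the Poincar\'e-type inequality
$$
\frac{1}{4}\int_M |du|^2 \psi^2\, dv_g \;\leq\; \int_M |d\psi|^2\, dv_g
$$
for every $\psi\in W^{1,2}_0(M\setminus B_\delta(p))$. Plugging in a standard linear cutoff $\psi_\delta$ with $\psi_\delta\equiv 1$ on $M\setminus B_{2\delta}(p)$, $\psi_\delta\equiv 0$ on $B_\delta(p)$ and $|d\psi_\delta|\leq 2/\delta$, the usual capacity estimate (available since $n\geq 3$) gives $\int_M|d\psi_\delta|^2 \leq C(M)\delta^{n-2}$, so
$$
\int_{M\setminus B_{2\delta}(p)}|du|^2\,dv_g \;\leq\; C(M)\delta^{n-2}.
$$

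Next, I would control the energy on the small ball $B_{2\delta}(p)$ via the monotonicity formula (Lemma \ref{mono} applied to smooth harmonic maps): for $2\delta$ less than $\inj(M,g)$ one obtains $\int_{B_{2\delta}(p)}|du|^2 \leq C(M)\delta^{n-2}\int_M|du|^2$. Adding the two displays and absorbing the small $C(M)\delta^{n-2}$ factor on the left gives
$$
\int_M |du|^2\,dv_g \;\leq\; C'(M)\,\delta^{n-2}
$$
for all $\delta$ smaller than a geometric threshold.

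The contradiction with nonconstancy of $u$ will then come from a uniform positive lower bound $\int_M|du|^2 \geq \eta(M)>0$ valid for every nonconstant smooth harmonic map $u\colon M\to\mathbb{S}^k$, \emph{uniformly in} $k$. To obtain this bound, the plan is to run Moser iteration on the sphere-valued Bochner inequality \eqref{bochner}: if $\int_M|du|^2$ is sufficiently small, then $\sup_M|du|$ is small as well; feeding this into the harmonic map identity $\Delta u = |du|^2 u$ and a Poincar\'e estimate shows that $u$ must be $C^0$-close to some constant unit vector $c\in\mathbb{S}^k$. An implicit function theorem argument at the constant critical points of $E$, whose quantitative constants depend only on $(M,g)$ because $\mathbb{S}^k$ has constant curvature $1$ and injectivity radius $\pi$ for every $k$, then forces $u\equiv c$. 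Once $\eta(M)$ is in hand, one picks $\delta(M)$ so that $C'(M)\delta^{n-2}<\eta(M)$ and the contradiction is immediate.

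The main obstacle is precisely this uniform-in-$k$ energy gap for nonconstant sphere-valued harmonic maps: the index--capacity inequality and the monotonicity step are both routine, but the whole point of the statement is that a single threshold $\delta$ works for every $u$ and every $k$, and that uniformity can only come from a Bochner/IFT argument whose constants are insensitive to the target dimension.
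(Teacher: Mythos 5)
Your reduction is exactly the paper's: assume $\ind_E(u;M\setminus B_\delta(p))\leq k-3$, invoke Lemma \ref{low.ind.lem} with $p=1$ to get the inequality $\frac14\int|du|^2\psi^2\leq\int|d\psi|^2$, plug in the annular cutoff with capacity $C(M)\delta^{n-2}$, control the energy on $B_{2\delta}(p)$ by the harmonic-map monotonicity, absorb, and conclude $\int_M|du|^2\leq C'(M)\delta^{n-2}$, to be contradicted by a $k$-independent energy gap for nonconstant harmonic maps $M\to\mathbb{S}^k$. That gap is precisely the paper's Proposition \ref{skener.prop}, so up to this point the two arguments coincide.

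The divergence is in how you propose to prove the gap, and there your last step has a genuine problem. The Bochner/Moser (equivalently, $k$-uniform $\eps$-regularity, the paper's Lemma \ref{uni.eps}) part is fine and matches the paper. But the concluding ``implicit function theorem at the constant critical points of $E$'' does not work as stated: constant maps to $\mathbb{S}^k$ form a $k$-dimensional manifold of critical points, so $E''$ at a constant map has a nontrivial kernel (the constant sections of $u^*T\mathbb{S}^k$), and a naive IFT/nondegeneracy argument cannot force $u\equiv c$; one would have to mod out this degeneracy (e.g.\ via a Łojasiewicz-type or quotient argument) and then still verify that all constants are uniform in $k$, which is exactly the delicate point you flag but do not resolve. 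The paper avoids all of this with a one-line trick: once $\|du\|_{L^\infty}$ is small enough that $\langle u(x),e_0\rangle>\frac12$ for all $x$ (with $e_0=u(x_0)$), integrating the equation $\Delta u=|du|^2u$ against the fixed vector $e_0$ gives
$$0=\int_M|du|^2\langle u,e_0\rangle\,dv_g\geq \tfrac12\int_M|du|^2\,dv_g,$$
so $u$ is constant. I recommend replacing your IFT step by this integration argument (or supplying the missing degeneracy analysis); everything else in your proposal is sound and follows the paper's route.
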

\begin{proof} Suppose, to the contrary, that $\ind_E(u; M\setminus B_{\delta}(p))\leq k-3$. Then by Lemma \ref{low.ind.lem}, it follows that
\begin{equation}\label{low.en.est}
\frac{1}{4}\int_M |du|^2\psi^2\,dv_g\leq \int_M |d\psi|^2\,dv_g
\end{equation}
for any $\psi\in W_0^{1,2}(M\setminus B_{\delta}(p))$. Choosing $\psi\in W_0^{1,2}(M\setminus B_{\delta}(p))$ such that $\psi \equiv 1$ on $M\setminus B_{2\delta}(p)$ and $|d\psi|\leq \frac{C}{\delta}$ on $B_{2\delta}(p)\setminus B_{\delta}(p),$ we see that
$$\int_M |d\psi|^2\,dv_g\leq C \delta^{n-2},$$
which together with \eqref{low.en.est} gives
\begin{equation}\label{tiny.en}
\int_{M\setminus B_{2\delta}(p)}|du|^2\,dv_g\leq C\delta^{n-2}.
\end{equation}

On the other hand, by the well-known energy-monotonicity properties of harmonic maps, we have an estimate of the form
$$\int_{B_{2\delta}(p)}|du|^2\,dv_g\leq C(M)\delta^{n-2}\int_M |du|^2\,dv_g.$$
Provided $\delta>0$ is small enough that $\delta^{n-2}C(M)<\frac{1}{2}$, this estimate can be combined with \eqref{tiny.en} to give
\begin{equation}\label{glob.en.bd}
\int_M|du|^2\,dv_g\leq C'(M)\delta^{n-2}.
\end{equation}
However, by Proposition \ref{skener.prop} in the appendix, there is a universal lower bound $\beta(M)>0$ independent of $k$ such that
$$E(u)=\frac{1}{2}\int_M|du|^2\geq \beta$$
for any nonconstant harmonic map $u\colon M\to \mathbb{S}^k$. Thus, taking
$$\delta(M):=(\beta/C'(M))^{\frac{1}{n-2}},$$
we get a contradiction with \eqref{glob.en.bd}, completing the proof.
\end{proof}


We can now combine Lemma \ref{no.ind.conc} with Proposition \ref{l6.prop} to arrive at global $L^6$ gradient estimates when $\ind_E(u_k)\leq k+1$ and $k$ is sufficiently large.

\begin{lemma}\label{glob.l6.bd} Let $3\leq n\leq 5$ and $k\geq 205$, and let $u\colon M\to \mathbb{S}^k$ be a harmonic map of index $\ind_E(u)\leq k+1$. Then there exists a constant $C(M)<\infty$ independent of $u$ and $k$ such that 
\begin{equation}
\|du\|_{L^6(M)}\leq C(M)
\end{equation}
and
\begin{equation}
\||du|^2\|_{W^{1,2}(M)}\leq C(M).
\end{equation}
\end{lemma}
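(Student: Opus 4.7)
The plan is to leverage the local $L^6$ estimate of Proposition \ref{l6.prop} into a global bound by a covering argument, combining an index-localization step with a uniform $L^2$ energy bound. For the $W^{1,2}$ bound on $|du|^2$, I'll integrate the Bochner--refined Kato computation from the derivation preceding Lemma \ref{reg.imp.lem} against a constant test function on the closed manifold $M$.

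First I would verify that the Morse index is subadditive over disjoint open sets: if $\Omega_1,\Omega_2\subset M$ are disjoint, then variations supported in $\Omega_1$ and $\Omega_2$ are $E''(u)$-orthogonal (the cross terms in $\int \langle dv_1,dv_2\rangle - \langle\II(du,du),\II(v_1,v_2)\rangle$ vanish pointwise), so $\ind_E(u;\Omega_1)+\ind_E(u;\Omega_2)\leq \ind_E(u)$. Combined with Lemma \ref{no.ind.conc}, for any $p\in M$ and $r<\delta(M)$ we have
$$\ind_E(u;B_r(p))+(k-2)\leq\ind_E(u;B_r(p))+\ind_E(u;M\setminus\overline{B_r(p)})\leq\ind_E(u)\leq k+1,$$
so $\ind_E(u;B_r(p))\leq 3\leq k-202$ whenever $k\geq 205$. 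Hence the hypothesis of Proposition \ref{l6.prop} holds on every ball $B_{2r}(p)$ with $2r<\delta(M)$.

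Next, applying Lemma \ref{low.ind.lem} on $B_r(p)$ with index parameter $m=3$, i.e., $p=k-5$, gives
$$\tfrac{k-5}{k-2}\int_{B_r(p)}|du|^2\psi^2\,dv_g\leq\int_M|d\psi|^2\,dv_g$$
for $\psi\in W^{1,2}_0(B_r(p))$, and the prefactor is $\geq 200/203$ for $k\geq 205$. Taking $\psi\equiv 1$ on $B_{r/2}(p)$ with $|d\psi|\leq C/r$ yields $\int_{B_{r/2}(p)}|du|^2\leq Cr^{n-2}$. Fix $r_0=\delta(M)/4$ and cover $M$ by finitely many balls $B_{r_0/2}(p_i)$ with controlled overlap; summing gives $\int_M|du|^2\leq C(M)$ independent of $k$. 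Applying Proposition \ref{l6.prop} on each $B_{2r_0}(p_i)$ and summing then yields
$$\int_M|du|^6\,dv_g\leq C(M)\,r_0^{-4}\int_M|du|^2\,dv_g\leq C(M),$$
which is the first claim.

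For the $W^{1,2}$ bound on $|du|^2$, I would revisit the Bochner--Kato computation in the proof of Lemma \ref{reg.imp.lem} with $\alpha=4$ and test function $\varphi\equiv 1$. Since $M$ is closed and $u$ is smooth in the range $n\leq 5$, $k\geq 205$ (by Theorem \ref{sph.reg.thm}, which forces $\Sing(u)=\emptyset$), integration by parts is valid and the computation yields
$$\int_M|d|du|^2|^2\,dv_g\leq C\int_M|du|^4\,dv_g+C\int_M|du|^6\,dv_g.$$
Both right-hand-side terms are controlled by $C(M)$ using Hölder against the just-established $L^6$ bound and the finite volume of $M$. Combined with $\||du|^2\|_{L^2}^2=\int_M|du|^4\leq C(M)$, this gives $\||du|^2\|_{W^{1,2}}\leq C(M)$.

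The conceptual heart is the first step: the global index bound $\ind_E(u)\leq k+1$ together with Lemma \ref{no.ind.conc} forces the index to be essentially \emph{concentrated} outside any small ball, making every small ball a region of uniformly small index to which the stability-based estimates of Lemmas \ref{low.ind.lem} and \ref{reg.imp.lem} apply with constants independent of $k$. Everything else is bookkeeping with covering arguments and the already-developed Bochner machinery.
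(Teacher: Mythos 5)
Your proof is correct and follows essentially the same route as the paper: index subadditivity plus Lemma \ref{no.ind.conc} localizes the index to $\leq 3$ on small balls, Proposition \ref{l6.prop} plus a covering gives the $L^6$ bound, and the Bochner--Kato inequality integrated over the closed manifold gives the $W^{1,2}$ bound on $|du|^2$. The only (harmless) divergence is at the end of the first claim: you obtain a uniform global energy bound $\int_M|du|^2\leq C(M)$ directly from Lemma \ref{low.ind.lem} with cutoffs, whereas the paper controls $\|du\|_{L^2}^2\leq C\|du\|_{L^6}^2$ by H\"older and absorbs it, dividing by $\|du\|_{L^6}^2$.
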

\begin{proof} By Lemma \ref{no.ind.conc}, there exists $\delta(M)>0$ such that for every ball $B_{\delta}(p)\subset M$ of radius $\delta$, we have
$$\ind_E(u;M\setminus B_{\delta}(p))\geq k-2.$$
Since $M\setminus B_{\delta}(p)$ and $B_{\delta}(p)$ are disjoint, it follows in particular that
\begin{equation*}
\begin{split}
\ind_E(u;B_{\delta}(p))+k-2 & \leq \ind_E(u;B_{\delta}(p))+\ind_E(u;M\setminus B_{\delta}(p)) \\
&\leq\ind_E(u;M)\leq k+1,
\end{split}
\end{equation*}
so that
$$\ind_E(u;B_{\delta}(p))\leq 3.$$

Now, since $k\geq 205$, it follows that on every ball $B_{\delta}(p)\subset M$ of radius $\delta$, we have
$$\ind_E(u;B_{\delta}(p))\leq 3\leq k-2-200,$$
so the hypotheses of Proposition \ref{l6.prop} hold with $r=\delta/2$, and we can apply the proposition to deduce that
$$\|du\|_{L^6(B_{\delta/4}(p))}^6\leq C'(M)\delta^{-4}\|du\|_{L^2(B_{\delta}(p))}^2.$$
By a simple covering argument and H\"older's inequality, it follows that
$$\|du\|_{L^6(M)}^6\leq C(M)\|du\|_{L^2(M)}^2\leq C'(M)\|du\|_{L^6(M)}^2,$$
and dividing through by $\|du\|_{L^6}^2$ gives the desired uniform $L^6$ bound for $|du|$.

With the $L^6$ bound in place, we can simply integrate \eqref{bochner} against $|du|^2$ to see that
$$\int_M |d|du|^2|^2\,dv_g\leq C_n\int (|du|^4+|du|^6)\,dv_g,$$
so the $W^{1,2}$ norm of $|du|^2$ is controlled by the $L^6$ norm, completing the proof.

\end{proof}

With the estimates of Lemma \ref{glob.l6.bd} in place, we can now argue by a contradiction argument (similar in spirit to, but somewhat simpler than, that used in the two-dimensional setting \cite{KS1}) to arrive at the following stabilization result.

\begin{proposition} 
\label{prop:stabilization}
For any Riemannian manifold $(M^n,g)$ of dimension $n\in \{3,4,5\}$, there exists $k_0(M,g)\in \mathbb{N}$ such that for every harmonic map $u\colon M\to \mathbb{S}^k$ with
$$\ind_E(u)\leq k+1,$$
there exists a totally geodesic subsphere $\mathbb{S}^{k_0}\subset \mathbb{S}^k$ of dimension $k_0$ such that $u(M)\subset \mathbb{S}^{k_0}$.
\end{proposition}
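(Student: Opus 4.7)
The plan is to argue by contradiction, translating the hypothetical failure of stabilization into a growing multiplicity of the zero spectral value for the family of Schr\"odinger operators $\Delta_g - |du_j|_g^2$ coming from a bad sequence, and then ruling this out via a uniform upper bound on the multiplicity coming from the $L^3$ control on $|du_j|_g^2$ provided by Lemma \ref{glob.l6.bd}.

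Suppose the conclusion fails. Then there exist $k_j \to \infty$ and harmonic maps $u_j \colon M \to \mathbb{S}^{k_j}$ with $\ind_E(u_j) \le k_j + 1$ such that the smallest totally geodesic subsphere of $\mathbb{S}^{k_j}$ containing $u_j(M)$ has dimension $m_j \to \infty$. For $k_j \ge n$, Theorem \ref{sph.reg.thm} ensures that each $u_j$ is smooth, so Lemma \ref{glob.l6.bd} applies and yields uniform bounds
$$\|V_j\|_{L^3(M)} \le C, \qquad \|V_j\|_{W^{1,2}(M)} \le C,$$
where $V_j := |du_j|_g^2$. Since $n \le 5$, in particular $V_j$ is uniformly bounded in $L^{n/2}(M)$.

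The harmonic map equation for a sphere-valued map reduces to the pointwise identity $\Delta_g u_j = V_j u_j$, so each component $u_j^i$ lies in $\ker(\Delta_g - V_j)$ viewed as a self-adjoint operator on $L^2(M)$. The assumption that no totally geodesic subsphere of dimension less than $m_j$ contains $u_j(M)$ means precisely that the linear span of $\{u_j^1, \dots, u_j^{k_j+1}\}$ in $W^{1,2}(M)$ has dimension $m_j + 1$; consequently
$$\nul(\Delta_g - V_j) \ge m_j + 1 \longrightarrow \infty.$$

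To contradict this, I would invoke a Cwikel--Lieb--Rozenblum-type upper bound for Schr\"odinger operators on the closed manifold $(M,g)$, of the form
$$\mN(W) \le 1 + C(M,g) \int_M W_+^{n/2}\,dv_g \qquad (n \ge 3),$$
which can be extracted from the techniques underlying \cite{GNY, GNS}. Fixing $\epsilon > 0$ small enough to avoid the positive spectrum and applying this bound with $W = V_j + \epsilon$ gives
$$\nul(\Delta_g - V_j) \le \mN(V_j + \epsilon) \le 1 + C \int_M (V_j + \epsilon)^{n/2}\,dv_g,$$
which is uniformly bounded in $j$ by the $L^{n/2}$-control on $V_j$, contradicting the previous divergence. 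The hardest part will be securing a CLR-type inequality in the precise form needed; alternatively, one can extract a subsequence along which $V_j$ converges strongly in some $L^p$ with $p > n/2$ (using compactness of $W^{1,2} \hookrightarrow L^p$ for $n \le 5$) to a limit $V_\infty$, and then bound $\nul(\Delta_g - V_j)$ by the (finite) number of eigenvalues of $\Delta_g - V_\infty$ in a small neighborhood of $0$, using spectral continuity of Schr\"odinger operators under $L^p$-convergent potentials.
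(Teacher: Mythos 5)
Your fallback route is, in essence, the paper's own proof. The paper argues by contradiction exactly as you set up: it takes a sequence with the span of the coordinate functions of dimension $m_k\to\infty$, uses Lemma \ref{glob.l6.bd} to bound $V_k=|du_k|_g^2$ in $W^{1,2}$, and extracts (by Rellich--Kondrachov, using $n\leq 5$ so that $n/2<2n/(n-2)$) a subsequence converging strongly in $L^{n/2}$ to a limit density $V$; it then reads the harmonic map equation as saying that the coordinates are eigenfunctions of the \emph{weighted} problem $\Delta f=\lambda V_k f$ with eigenvalue $1$, and concludes via continuity of the weighted eigenvalues under $L^{n/2}$ convergence (\cite{GKL}) together with discreteness of the limiting weighted spectrum; for this it also needs $V\not\equiv 0$, which it gets from the universal energy lower bound of Proposition \ref{skener.prop}. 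Your Schr\"odinger-operator version of this argument works and even dispenses with the lower energy bound (discreteness of the spectrum of $\Delta_g-V_\infty$ holds for any $V_\infty\in L^{n/2}$), but two points should be made precise: first, ``bounding the nullity by the number of eigenvalues of $\Delta_g-V_\infty$ near $0$'' must be run through the ordered min--max eigenvalues, since the indices realizing the large nullity could drift to infinity -- from $\nul(\Delta_g-V_j)\geq m_j+1$ one gets $\nu_L(V_j)\leq 0$ for each fixed $L$ and $j$ large, hence $\nu_L(V_\infty)\leq 0$ for every $L$, contradicting discreteness and lower-boundedness; second, the ``spectral continuity under $L^p$-convergent potentials'' ($p\geq n/2$) needs a justification or accurate reference (it follows from relative form-boundedness and is the Schr\"odinger analogue of the statement from \cite{GKL} that the paper invokes).

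Your primary route is genuinely different -- a quantitative Cwikel--Lieb--Rozenblum-type bound $\mN(W)\leq 1+C(M,g)\int_M W_+^{n/2}\,dv_g$ would bound the nullity uniformly without passing to any limit -- but as written it contains a gap: this inequality is asserted rather than proved, and it is \emph{not} among ``the techniques underlying'' \cite{GNY,GNS}. Those papers prove the opposite-direction estimate, a \emph{lower} bound on $\mN(V)$ in terms of $\int_M V\,dv_g$ (equivalently, the upper bound \eqref{ind.lbd} on $\int V$ given the index), which cannot control the nullity from above. A CLR-type bound on a closed manifold of the stated form is in fact true (e.g.\ via the Birman--Schwinger principle on the orthogonal complement of the constants combined with Cwikel/Lieb-type estimates, the extra $1$ accounting for the constant mode), but you would need to supply a proof or a correct reference; you flag this yourself as the hardest step, and without it the first argument is incomplete, so the contradiction should be closed by your second, compactness-based route.
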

\begin{proof} To obtain a contradiction, suppose that the conclusion does not hold: then we can find a sequence of harmonic maps 
$$u_k\colon M\to \mathbb{S}^k$$
with $\ind_E(u_k)\leq k+1$ such that the space spanned by coordinate functions
$$\mathcal{C}_k:=\{\langle e, u_k\rangle \mid e\in \mathbb{R}^{k+1}\}$$
has dimension 
\begin{equation}
\label{dimmk}
m_k:=\dim \mathcal{C}_k\to \infty
\end{equation}
as $k\to\infty$.

Denote $V_k:=|du_k|^2$. Lemma \ref{glob.l6.bd} gives us--for $k$ sufficiently large--an estimate of the form
$\|V_k\|_{W^{1,2}}\leq C$ independent of $k$. In particular, it follows from the Rellich-Kondrachov theorem that $V_k$ is precompact in $L^p$ for any $p<\frac{10}{3}\leq\frac{2n}{n-2}$. Crucially, we observe that since $n\leq 5$, we have $\frac{n}{2}<\frac{2n}{n-2}$, and as a result, up to a choice of a subsequence, we can assume that for some non-negative function $0\leq V\in L^{\frac{n}{2}}(M)$ one has
\begin{equation}
V_k\to V\text{ in }L^{\frac{n}{2}}(M).
\end{equation}
Note, moreover, that 
$$
\int_M V\,dv_g = \lim_{k\to\infty }\int_MV_k\,dv_g= \lim_{k\to\infty }2E(u_k) \geq 2\beta>0,
$$
by virtue of the energy bound $E(u_k)\geq \beta$ from Proposition \ref{skener.prop} in the appendix. Thus, $V\not\equiv 0$.

It follows from the defining equation 
$$
\Delta u_k=|du_k|^2u_k
$$
for sphere-valued harmonic maps that the coordinate functions of $u_k$ are eigenfunctions of the problem
$$
\Delta f =\lambda(V_k)V_k f.
$$
with eigenvalue $\lambda(V_k) = 1$. In particular, the multiplicity of that eigenvalue is at most $m_k$.
It is shown in~\cite[Example 3.19]{GKL} (see also Section~\ref{sec:measures} below) that for any $0\leq W\in L^\frac{n}{2}(M)$, $W\not\equiv 0$ the eigenvalues of the problem
$$
\Delta f =\lambda(W)W f
$$
form a discrete sequence $\{\lambda_m(W)\}_{m=1}^\infty$ with a unique accumulation point at $+\infty$. Furthermore, by~\cite[Proposition 5.1]{GKL} one has that if $W_k\to W$ in $L^\frac{n}{2}(M)$ for $n\geq 3$, then $\lambda_m(W_k)\to\lambda_m(W)$. In particular, $\lambda_m(V_k)\to \lambda_m(V)$. At the same time, by~\eqref{dimmk} one has
$$
\mN_1(V_k):=\#\{i,\,\lambda_i(V_k)\leq 1\}\geq m_k\to\infty
$$
Convergence of eigenvalues implies that
$$
\mN_1(V)\geq\lim_{k\to\infty} \mN_1(V_k) = +\infty.
$$
Therefore, $V$ has infinitely many eigenvalues not exceeding $1$, which contradicts the fact that the only accumulation point of eigenvalues is $+\infty$.

\end{proof}

\section{Connections to spectral shape optimization}
\label{sec:ev}

\subsection{Descriptions of the optimization problem}

Below, unless otherwise specified, $(M,g)$ is a closed Riemannian manifold of dimension $n\geqslant 3$, and all function spaces on $M$ are defined with respect to the metric $g$.

Given a function $V\in L^\infty(M)$,  eigenfunctions of the Schr\"odinger operator $\Delta - V$ satisfy
\begin{equation}
\label{def:mu}
\Delta_g u - Vu = \nu u.
\end{equation}
for some $\nu\in \mathbb{R}$. The corresponding eigenvalues form a sequence
$$
\nu_1(V)<\nu_2(V)\leqslant \nu_3(V)\leqslant \ldots \nearrow \infty,
$$
where each number is repeated according to its multiplicity. The eigenvalues $\nu_m(V)$ can be equivalently defined variationally via
\begin{equation}
\label{var:mu}
\nu_m(V) = \inf_{F_m} \sup_{u\in F_m\setminus \{0\}} \frac{\int_M|du|_g^2 - Vu^2\,dv_g}{\int_M u^2\,dv_g},
\end{equation}
where $F_m\subset W^{1,2}(M)$ is a $m$-dimensional subspace. 

In the present section we introduce the quantity
\begin{equation}
\label{def:optV2}
\optV_m(M,g) :=\sup_{V,\,\nu_{m+1}(V)\geqslant 0} \int V\,dv_g,
\end{equation}
and consider the associated maximization problem for potentials $V$ with $\nu_{m+1}(V)\geq 0$. To simplify notation, we let $\spaceV_m\subset L^{\infty}(M,g)$  be the set of all $V$ such that $\nu_{m+1}(V)\geq 0$. This way 
$$
\optV_m(M,g) :=\sup_{V\in \spaceV_{m}} \int V\,dv_g.
$$

A version of this problem was studied in~\cite{GNS} in relation to the lower bound on the number of negative eigenvalues of the operator $\Delta_g - V$. Let $\mN(V) = \#\{i,\,\nu_i(V)<0\}$. Then for any $V$ with $\mN(V) = m$ one has $V\in\spaceV_m$, so that 
$$
\frac{\optV_m}{m^{2/n}}\geq  \frac{1}{\mN(V)^{2/n}}\int V\,dv_g.
$$
Thus, $C_{opt} = \inf_m \frac{m^{2/n}}{\optV_m}$ is the optimal constant in the following inequality
\begin{equation}
\label{ineq:mNV}
\mN(V)^{2/n}\geqslant C_{opt}\int_M V\,dv_g.
\end{equation}
An inequality of the form~\eqref{ineq:mNV} for non-negative potentials $V\geq 0$ was proved in~\cite{GNY}, and was later shown to hold with the same constant for all $L^\infty$ potentials in~\cite{GNS}. In particular, it follows from~\cite{GNS,GNY} that $C_{opt}>0$.  In fact, we will see below that potentials realizing $\optV_m$ (when they exist) arise as energy densities of harmonic maps to spheres and are therefore non-negative.

There is an equivalent definition of $\optV_m$ in terms of Laplacian eigenvalues with densities. Let $0\leq\beta\in L^\infty(M)$ be a non-negative density $\beta\not\equiv 0$. Consider the weighted problem
\begin{equation}
\label{def:lambda}
\Delta_g f =\lambda\beta f. 
\end{equation}
The eigenvalues form a sequence
$$
0 = \lambda_0(\beta)<\lambda_1(\beta)\leqslant \lambda_2(\beta)\leqslant \ldots \nearrow \infty,
$$
where eigenvalues are written with multiplicity.

The eigenvalues $\lambda_k(\beta)$ admit the variational characterization
\begin{equation}
\label{var:lambda}
\lambda_{m-1}(\beta) = \inf_{F_m} \sup_{u\in F_m\setminus \{0\}} \frac{\int_M|du|_g^2\,dv_g}{\int_M u^2\beta\,dv_g},
\end{equation}
where $F_m\subset W^{1,2}(M)$ is a $m$-dimensional subspace. Comparing variational characterizations~\eqref{var:mu} and~\eqref{var:lambda}, it is easy to see that $V_{m,\beta}=\lambda_k(\beta)\beta\in\spaceV_m$. Furthermore,
$$
\int V_{m,\beta}\,dv_g = \lambda_{m}(\beta)\int_M\beta\,dv_g,
$$
which suggests the relation between $\optV_m$ and maximizing the r.h.s over $\beta\in L^\infty$. The main difference between the two problems is the fact $V_{m,\beta}$ is always non-negative, so $\{V_{m,\beta},\,\beta\in L^\infty\}$ is a proper subset of $\spaceV_m$. Despite this, the following holds.

\begin{proposition}
\label{prop:equivalent}
One has
$$
\optV_m = \sup_{0\leq\beta\in L^\infty(M)}\lambda_m(\beta)\int_M\beta\,dv_g
$$
\end{proposition}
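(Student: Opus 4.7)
The equality splits into two inequalities, which I approach separately.

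For the easy direction $\sup_\beta\lambda_m(\beta)\int\beta \leq \optV_m$: given any $0\leq\beta\in L^\infty(M)$ with $\beta\not\equiv 0$, set $V_\beta:=\lambda_m(\beta)\beta\geq 0$. For every $(m+1)$-dimensional subspace $F\subset W^{1,2}(M)$, the variational characterization \eqref{var:lambda} of $\lambda_m(\beta)$ produces some $u\in F$ with $\int_M|du|_g^2\,dv_g\geq\lambda_m(\beta)\int_M\beta u^2\,dv_g=\int_M V_\beta u^2\,dv_g$, and hence
\[\sup_{u\in F}\frac{\int_M(|du|^2-V_\beta u^2)\,dv_g}{\int_M u^2\,dv_g}\geq 0.\]
Combining with \eqref{var:mu}, $\nu_{m+1}(V_\beta)\geq 0$, i.e.\ $V_\beta\in\spaceV_m$, giving $\lambda_m(\beta)\int\beta=\int V_\beta\leq\optV_m$.

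For the reverse direction, my plan is to reduce to non-negative potentials. If $V\in\spaceV_m$ already satisfies $V\geq 0$, direct comparison of \eqref{var:mu} and \eqref{var:lambda} yields the equivalence $\nu_{m+1}(V)\geq 0\iff\lambda_m(V)\geq 1$, so the choice $\beta=V$ witnesses $\lambda_m(V)\int V\geq\int V$. It therefore suffices to establish
\[\sup_{V\in\spaceV_m}\int_M V\,dv_g=\sup_{V\in\spaceV_m,\,V\geq 0}\int_M V\,dv_g,\]
i.e.\ that for any $V\in\spaceV_m$ one can find $\tilde V\in\spaceV_m$ with $\tilde V\geq 0$ and $\int\tilde V\geq\int V$.

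The main obstacle is precisely this reduction: the natural candidate $\tilde V=V_+$ can exit $\spaceV_m$ because $V_+\geq V$ pointwise forces $\nu_{m+1}(V_+)\leq\nu_{m+1}(V)$. My plan of attack is to interpolate along the monotone family $V^{(s)}:=V+sV_-$ for $s\in[0,1]$, which increases pointwise from $V$ at $s=0$ to $V_+$ at $s=1$, with $\int V^{(s)}=\int V+s\int V_-$ also increasing. By upper semicontinuity of $\nu_{m+1}$ in $L^\infty$, the set $\{s\in[0,1]:V^{(s)}\in\spaceV_m\}$ is closed; its supremum $s^*$ satisfies $V^{(s^*)}\in\spaceV_m$ and $\int V^{(s^*)}\geq\int V$. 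If $s^*=1$, then $V_+\in\spaceV_m$ and the reduction is complete. The delicate case is $s^*<1$, where $V^{(s^*)}$ still has nontrivial negative part but lies on the boundary $\{\nu_{m+1}=0\}$; here a more subtle argument is required, combining the $L^{n/2}$-continuity of $\lambda_m$ from \cite[Proposition 5.1]{GKL} with an analysis of the degenerate zero eigenspace to extract a non-negative competitor, equivalently passing to the limit along strictly positive approximants $\beta_\varepsilon\searrow V_+$ and identifying $\lambda_m(V_+)\int V_+\geq\int V$ as the desired bound. Making this continuity-and-closedness step rigorous is the technical heart of the proof.
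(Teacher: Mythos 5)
The first half of your argument is fine and matches what the paper observes just before the proposition: for $0\leq\beta\in L^\infty$, the potential $V_\beta=\lambda_m(\beta)\beta$ lies in $\spaceV_m$, giving $\sup_\beta\lambda_m(\beta)\int\beta\,dv_g\leq\optV_m$. The problem is the reverse inequality, and there your proposal has a genuine gap: the entire content of the proposition is the reduction from signed potentials to non-negative ones, and your interpolation $V^{(s)}=V+sV_-$ stops exactly where that content begins. Since $V^{(s)}$ increases pointwise, $\nu_{m+1}(V^{(s)})$ is non-increasing, so in the generic case you land at some $s^*<1$ with $\nu_{m+1}(V^{(s^*)})=0$ and $V^{(s^*)}$ still having a nontrivial negative part; you acknowledge that "a more subtle argument is required" and call it "the technical heart," but you do not supply it. The closing gesture is also not a correct target: you propose to establish $\lambda_m(V_+)\int V_+\,dv_g\geq\int V\,dv_g$, but no reason is given why the specific competitor $\beta=V_+$ should work. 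Indeed the monotonicity you yourself flagged cuts against it: $V\leq V_+$ forces $\nu_{m+1}(V_+)\leq\nu_{m+1}(V)$, so $\lambda_m(V_+)$ may drop below $1$, and $L^{n/2}$-continuity of $\lambda_m$ along approximants $\beta_\eps\searrow V_+$ does nothing to control the loss coming from discarding $V_-$. What is needed at the boundary point $s^*$ (or at a maximizer) is an extremality argument: a perturbation analysis of the zero eigenspace of $\Delta_g-V^{(s^*)}$ showing that one can push the potential upward on the set where it is negative without violating $\nu_{m+1}\geq 0$, and this is nontrivial.

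For comparison, the paper does not argue potential by potential at all. It invokes the result of Grigor'yan--Nadirashvili--Sire that the truncated problems $\optV_m^N=\sup\{\int V\,dv_g:\ V\in\spaceV_m,\ \|V\|_\infty\leq N\}$ admit maximizers which are non-negative for $N$ large, and then uses $\optV_m=\sup_N\optV_m^N$; the detailed extremality argument (carried out in \cite[Section 2]{KNPP2}, dimension-independent) is exactly the step your plan leaves open. To repair your proof you would either have to reproduce that extremality/eigenspace analysis at $V^{(s^*)}$, or cite it as the paper does; as written, the hard inequality is not established.
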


\begin{proof}
In~\cite{GNS} it is shown that the maximizers of the auxiliary optimization problem
$$
\optV_m^N = \sup_{V\in\spaceV_m,\, \|V\|_\infty\leq N}\int_M V\,dv_g
$$ 
exist and are non-negative for $N$ sufficiently large. Since $\optV_m = \sup_N\optV_{m}^N$ this is enough to finish the proof.
A detailed argument is carried out in~\cite[Section 2]{KNPP2}. Note that that the paper~\cite{KNPP2} is concerned with eigenvalues of surfaces, but the arguments in Section 2 do not rely on the dimension of the manifold and thus can be repeated verbatim in our case.
\end{proof}

\subsection{Admissible measures} 
\label{sec:measures}
In many situations it is convenient to relax the regularity conditions on the potential $V$ or density $\beta$. In dimension two, for example, this is an important ingredient in establishing the explicit link between optimization of Steklov and Laplace eigenvalues on surfaces, see~\cite{GL,GKL, KS1,KS2}.

\begin{definition}[See~\cite{GKL}]
A (positive) Radon measure is called {\em admissible} if the identity map on $C^\infty(M)$ can be extended to a compact operator $W^{1,2}(M,g)\to L^2(\mu)$.
\end{definition}

One can then define variational eigenvalues $\lambda_k(\mu)$ by a formula similar to~\eqref{var:lambda}
\begin{equation*}
\lambda_{m-1}(\mu) = \inf_{F_m} \sup_{u\in F_m\setminus \{0\}} \frac{\int_M|du|_g^2\,dv_g}{\int_M u^2\,d\mu},
\end{equation*}
where $F_m\subset W^{1,2}(M)$ is a $m$-dimensional subspace. Eigenvalues form a discrete sequence accumulating to infinity and there exist corresponding eigenfunctions satisfying $\Delta_g f\,dv_g = \lambda f\,d\mu$ in the weak sense, see~\cite{GKL}.

\begin{example}
It is shown in~\cite[Example 3.19]{GKL} that $\mu = \beta\,dv_g$ is admissible provided $0\leq\beta\in L^{\frac{n}{2}}(M)$. The eigenvalues $\lambda_m(\mu)$ can be equivalently defined via equation~\eqref{def:lambda}. In particular, the eigenvalues $\lambda_m(\beta)$ make sense for $\beta\in L^{\frac{n}{2}}(M)$.  
\end{example}

\begin{example}
\label{ex:Steklovdensity}
Assume that $\bd M\ne\varnothing$. It is shown in~\cite[Lemma 3.3]{KL} that $\mu = \rho\,ds_g$ is admissible, where $0\leq\rho\in L^{n-1}(\bd M)$ and $ds_g$ is the volume measure of $\bd M$. The quantities $\lambda_m(\mu)$ are the eigenvalues of the Steklov problem with density
\begin{equation*}
\begin{cases}
\Delta_g u = 0  &\text{ in $M$};\\
\bd_n u = \lambda\rho u &\text{ on $\bd M$}.\\
\end{cases}
\end{equation*}
Following the standard convention used for Steklov eigenvalues, we write $\sigma_m(\rho):=\lambda_m(\rho\,ds_g)$.
\end{example}

One can also define the appropriate relaxation of the eigenvalue equation~\eqref{def:mu}. Namely, a signed Radon measure $\xi$ is called admissible if its total variation $|\xi|$ is admissible. One can then define eigenvalues of the Schr\"odinger operator $\Delta_g - \xi$ variationally as
\begin{equation*}
\nu_m(\xi) = \inf_{F_m} \sup_{u\in F_m\setminus \{0\}} \frac{\int_M|du|_g^2\,dv_g - \int_M u^2\,d\xi}{\int_M u^2\,dv_g},
\end{equation*}
where $F_m\subset W^{1,2}(M)$ is a $m$-dimensional subspace. 
%
%

\begin{example}
\label{ex:boundarypotential}
According to Example~\ref{ex:Steklovdensity} the measure $\xi= v\,ds_g$ is admissible as soon as $v\in L^{n-1}(\bd M)$. Quantities $\nu_m(\xi)$ are eigenvalues of the Robin problem
\begin{equation*}
\begin{cases}
\Delta_g u = \nu u  &\text{ in $M$};\\
\bd_n u = v u &\text{ on $\bd M$}.\\
\end{cases}
\end{equation*}
We say $v\in\optV_{m}^\bd$ if $\nu_{m+1}(\xi)\geq 0$.
\end{example}

In what follows, we denote by $\mathcal{E}_k(M,g)$ the energies $E(u_k)$ of the harmonic maps $u_k\colon M\to \mathbb{S}^k$ in Theorem \ref{sk.minmax.thm}. By construction, these energies have the min-max characterization
$$\mathcal{E}_k:=\lim_{\epsilon\to 0}\mathcal{E}_{k,\epsilon},$$
where
$$\mathcal{E}_{k,\epsilon}:=\inf_{(u_y)\in \Gamma_k(M)}\max_{y\in \mathbb{B}^{k+1}}E_{\epsilon}(u_y),$$
$$\Gamma_k(M):=\{\mathbb{B}^{k+1}\ni y\mapsto u_y\in W^{1,2}(M,\mathbb{R}^{k+1})\mid u_y\equiv y\text{ for }y\in \mathbb{S}^k\},$$
and $E_{\epsilon}\colon W^{1,2}(M,\mathbb{R}^{k+1})\to \mathbb{R}$ is a Ginzburg-Landau-type functional as described in Section \ref{sec:minmax}. With this notation in place, we can now state the main theorem of this section.

\begin{theorem}
\label{ev:main_theorem}
Let $(M,g)$ be a closed Riemannian manifold of dimension $3\leq n \leq 5$. Then
\begin{itemize}
\item[(i)] There exists $k_0 = k_0(M,g)$ such that $\mE_k = \mE$ for all $k\geq k_0$. Furthermore, $\optV_1(M,g) =2 \mE(M,g)$.
\item[(ii)] There exists a smooth potential $V$ with $\nu_2(V) = 0$ such that $\optV_1 = \int_M V\,dv_g$. In particular, there exists a smooth density $\beta\geq 0$ such that $\optV_1 = \lambda_1(\beta)\int_M\beta\,dv_g$.
\item[(iii)] For any admissible signed measure $\xi$ with $\nu_2(\xi)\geq 0$ one has $\nu(M)\leq \optV_1$ with equality iff $\nu_2(\xi) = 0$ and $\xi = |du|^2_g\,dv_g$, where $u\colon (M,g)\to\mathbb{S}^k$ is a smooth harmonic map.
In particular, for any admissible non-negative measure $\mu$ one has $\lambda_1(\mu)\mu(M)\leq\optV_1$ with equality iff 
$\lambda_1(\mu)\mu = |du|^2_g\,dv_g$, where $u\colon (M,g)\to\mathbb{S}^k$ is a smooth harmonic map.
\end{itemize}
\end{theorem}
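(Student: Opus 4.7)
The proof reduces to Part (i), with Parts (ii) and (iii) following from regularity of the $u_k$ and an approximation argument. My plan follows the template of the two-dimensional result in \cite{KS1}, with Proposition~\ref{prop:stabilization} playing the role of the conformal structure.

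By Proposition~\ref{prop:stabilization}, for $k\geq k_0(M,g)$ the harmonic maps $u_k$ from Corollary~\ref{sphere.cor} all factor through a fixed totally geodesic subsphere $\mathbb{S}^{k_0}\subset\mathbb{S}^k$; consequently $E(u_k)=\mathcal{E}_k$ is independent of $k$, and I denote this common value by $\mathcal{E}$. To obtain $\optV_1\geq 2\mathcal{E}$, my plan is to show $|du_k|^2\in\spaceV_1$ via the Morse-index bound. For each eigenfunction $\phi$ of $\Delta_g-|du_k|^2$ with negative eigenvalue and each of the $k-k_0$ unit vectors $e\in\mathbb{R}^{k+1}$ orthogonal to $\mathbb{S}^{k_0}$, the field $v=\phi\,e$ lies in $\mathcal{V}(u_k)$ and satisfies
\[
E''(u_k)(v,v)=\int_M\bigl(|d\phi|^2-|du_k|^2\phi^2\bigr)\,dv_g<0,
\]
and these directions are pairwise $E''(u_k)$-orthogonal by orthogonality of the $\phi$'s in $L^2$ and of the $e$'s in $\mathbb{R}^{k+1}$. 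Hence $(k-k_0)\mathcal{N}(|du_k|^2)\leq\ind_E(u_k)\leq k+1$, which forces $\mathcal{N}(|du_k|^2)\leq 1$ for $k$ large, while testing the Schrödinger form against constants gives the matching lower bound $\mathcal{N}(|du_k|^2)\geq 1$. So $|du_k|^2\in\spaceV_1$ with $\int_M|du_k|^2\,dv_g=2E(u_k)$, giving $\optV_1\geq 2\mathcal{E}$.

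The hard part will be the reverse inequality $\optV_1\leq 2\mathcal{E}$. Given $V\in\spaceV_1$ with $\int V\,dv_g$ close to $\optV_1$ and any $k\geq k_0$, I would construct a sweepout $(u_y)_{y\in\mathbb{B}^{k+1}}\in\Gamma_k(M)$ with
\[
\sup_{y\in\mathbb{B}^{k+1}}E_\epsilon(u_y)\leq\tfrac{1}{2}\int_M V\,dv_g+o_\epsilon(1),
\]
so that the definition of $\mathcal{E}_{k,\epsilon}$ yields $\mathcal{E}_k\leq\tfrac{1}{2}\int V\,dv_g$ and hence $2\mathcal{E}\leq\optV_1$. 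My candidate is to build a map $\Phi\colon M\to\mathbb{S}^k$ from a first eigenfunction $f_1$ of $\Delta_g-V$ (with $\nu_1(V)\leq 0$), whose Dirichlet energy is controlled by the identity $\int|df_1|^2=\nu_1(V)\int f_1^2+\int Vf_1^2\leq\int Vf_1^2$, and then sweep it out via the $(k+1)$-parameter family of Möbius dilations of $\mathbb{S}^k$ that degenerate to constant maps as $y\to\partial\mathbb{B}^{k+1}$, matching the boundary condition $u_y\equiv y$. Absent the conformal invariance available in dimension two, the uniform energy bound across the family is the delicate step; heat-flow regularization of $\Phi$ as in the proof of Lemma~\ref{lemma:energy_upbound} should reduce the problem to estimating the single map $\Phi$.

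Part (ii) then follows by taking $\beta=|du_k|^2$: by Corollary~\ref{sphere.cor} the map $u_k$ is smooth when $3\leq n\leq 5$ and $k\geq n$, so $\beta$ is a smooth density in $\spaceV_1$ realizing $\optV_1$ with $\lambda_1(\beta)=1$. For Part (iii), I would first extend $\xi(M)\leq\optV_1$ to admissible signed measures $\xi$ with $\nu_2(\xi)\geq 0$ by approximating $\xi$ via $L^\infty$-truncations and invoking the continuity of eigenvalues under admissible-measure convergence recalled in Section~\ref{sec:measures}. The equality case would come from a version of Proposition~\ref{prop:critical1} valid for admissible measures: any maximizing $\xi$ must be a critical admissible measure, hence of the form $|du|^2\,dv_g$ for a harmonic map $u\colon M\to\mathbb{S}^k$, with smoothness of $u$ following from Theorem~\ref{sph.reg.thm} combined with Part (ii) applied to the stabilized $u$.
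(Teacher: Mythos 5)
Your first half (showing $|du_k|^2\in\spaceV_1$ for large $k$ via the index splitting over the $k-k_0$ normal directions to the stabilized subsphere) is essentially the paper's argument, phrased through an explicit computation rather than through Proposition~\ref{prop:indS}. But the second half contains a genuine logical gap, and it is precisely the inequality you call ``the hard part,'' $\optV_1\leq 2\mE$. Your plan is to take a near-maximizing potential $V\in\spaceV_1$ and build a sweepout in $\Gamma_k(M)$ with $\sup_y E_\eps(u_y)\leq \tfrac12\int_M V\,dv_g+o_\eps(1)$. Such a construction can only yield $\mE_k\leq \tfrac12\int_M V\,dv_g$, i.e. $2\mE\leq\optV_1$ -- the inequality you have already proved -- and indeed your own chain of implications ends with ``$2\mE\leq\optV_1$.'' An upper bound on the min-max value in terms of one potential can never bound $\int V\,dv_g$ for \emph{all} $V$ with $\nu_2(V)\geq 0$, which is what $\optV_1\leq 2\mE$ requires. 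The paper goes in the opposite direction: for an arbitrary admissible $\xi$ with $\nu_2(\xi)\geq 0$ and an arbitrary almost-optimal family $(u^\eps_y)\in\Gamma_k$, a Brouwer fixed point (Hersch) argument produces $y_\eps$ with $\int_M u^\eps_{y_\eps}\varphi_0\,dv_g=0$, where $\varphi_0$ is the first eigenfunction of $\Delta_g-\xi$; passing to the limit gives $u\in W^{1,2}(M,\mathbb{S}^k)$ with $E(u)\leq\mE_k$ whose components are admissible test functions for $\nu_2(\xi)$, whence $\xi(M)\leq 2E(u)\leq 2\mE_k$ (Proposition~\ref{prop:upper_bound}). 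Without this step neither the equality $\optV_1=2\mE_k$ nor the constancy of $\mE_k$ follows; note also that your opening claim that stabilization alone makes $\mE_k$ independent of $k$ is unjustified -- in the paper this constancy is deduced \emph{from} the two-sided equality $\optV_1=2\mE_k$, not from Proposition~\ref{prop:stabilization} directly.

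The same missing proposition is what drives part (iii), so your treatment there is also incomplete. The paper obtains both the inequality $\xi(M)\leq\optV_1$ and the rigidity statement directly from the equality analysis in Proposition~\ref{prop:upper_bound}: equality forces the components of the limit map $u$ to be $\nu_2(\xi)$-eigenfunctions, hence $\xi=|du|^2_g\,dv_g$ and $u$ weakly harmonic, and smoothness then comes from Proposition~\ref{prop:harm_reg} (admissibility of $|du|^2_g\,dv_g$ implies the vanishing of scaled energy density, so Rivi\`ere--Struwe applies). Your alternative -- truncation plus a hypothetical ``version of Proposition~\ref{prop:critical1} for admissible measures'' and regularity via Theorem~\ref{sph.reg.thm} -- is not available as stated: a maximizing measure is not known a priori to be critical in any class where the Hahn--Banach deformation argument applies, and Theorem~\ref{sph.reg.thm} requires a locally stable \emph{stationary} harmonic map, which the weakly harmonic map produced from a measure maximizer is not known to be; the admissibility-based regularity of Proposition~\ref{prop:harm_reg} is the ingredient actually needed.
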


We conclude this section with the following proposition, which is one of the main ingredients in proving Theorem~\ref{ev:main_theorem}.(iii)

\begin{proposition}
\label{prop:harm_reg}
Let $u\colon (M,g)\to\mathbb{S}^k$ be a weakly harmonic map such that $\mu = |du|_g^2\,dv_g$ is an admissible measure. Then $u$ is smooth.
\end{proposition}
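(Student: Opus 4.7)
The plan is to promote the compactness in the embedding $W^{1,2}(M,g)\hookrightarrow L^2(\mu)$ to a uniform vanishing of the Morrey-type density of $\mu$, invoke the small-energy regularity available for sphere-valued weakly harmonic maps via their antisymmetric structure, and then bootstrap the resulting H\"older continuity to smoothness through the equation $\Delta u=|du|^2_g u$.

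For the first step, fix $\eta\in C_c^\infty([0,2))$ with $\eta\equiv 1$ on $[0,1]$ and, for each $x\in M$ and small $r>0$, define
$$
\phi_{x,r}(y):=r^{1-n/2}\eta(\dist(x,y)/r).
$$
A direct computation yields $\|\phi_{x,r}\|_{L^2(g)}\leq Cr$ and $\|d\phi_{x,r}\|_{L^2(g)}\leq C$ uniformly in $x\in M$ and small $r$, while $\phi_{x,r}\equiv r^{1-n/2}$ on $B_r(x)$. Suppose for contradiction that
$$
\lim_{r\to 0}\sup_{x\in M}r^{2-n}\mu(B_r(x))=0
$$
fails: then there are $\eps>0$, $r_k\to 0$, and $x_k\in M$ with $r_k^{2-n}\mu(B_{r_k}(x_k))\geq\eps$. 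The sequence $\phi_{x_k,r_k}$ is bounded in $W^{1,2}$ and tends to $0$ in $L^2(g)$, so, along a subsequence, converges weakly to $0$ in $W^{1,2}$. Admissibility forces strong convergence in $L^2(\mu)$, contradicting $\int_M\phi_{x_k,r_k}^2\,d\mu\geq r_k^{2-n}\mu(B_{r_k}(x_k))\geq\eps$.

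For the second step, I exploit the antisymmetric form of the sphere-valued harmonic map equation. Using $\sum_\beta u^\beta du^\beta=0$, which follows from $|u|^2\equiv 1$, the weak equation $\Delta u^\alpha=|du|^2u^\alpha$ rewrites as
$$
-\Delta u^\alpha=\sum_{\beta}\Omega^{\alpha\beta}\cdot du^\beta,\qquad \Omega^{\alpha\beta}:=u^\alpha du^\beta-u^\beta du^\alpha,
$$
with $\Omega^{\alpha\beta}=-\Omega^{\beta\alpha}$. This is the setting of the Rivi\`ere--Struwe $\eps$-regularity theorem for elliptic systems with antisymmetric potentials: there exists $\eps_0=\eps_0(n,k)>0$ such that if $\rho^{2-n}\int_{B_\rho(y)}|du|^2\,dv_g<\eps_0$ for all $\rho\leq r$ and all $y\in B_r(x)$, then $u\in C^{0,\alpha}(B_{r/2}(x))$ for some $\alpha>0$. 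The uniform vanishing of the first step furnishes a single $r_0>0$ for which the hypothesis holds at every $x\in M$, yielding $u\in C^{0,\alpha}(M)$. A standard bootstrap---feeding the improved regularity of $u$ back into $\Delta u=|du|^2u$ and iterating Calder\'on--Zygmund and Schauder estimates in the spirit of H\'elein's regularity theory---then upgrades $u$ to $C^\infty$.

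The principal obstacle is the $\eps$-regularity step, since no stationarity or Morse-index hypothesis on $u$ is available. In the customary Rivi\`ere--Struwe setting, stationarity enters through the monotonicity formula to propagate small energy from large to small scales; here that propagation is replaced by the direct output of admissibility, which delivers uniform Morrey smallness at all sufficiently small scales a priori, so the $\eps$-regularity portion of the argument applies as stated. A secondary subtlety is the passage from pointwise compactness of the embedding to Morrey vanishing \emph{uniformly} over $M$; this is handled by the cutoff family $\phi_{x,r}$ above, whose uniform $W^{1,2}$ bound is itself uniform in the centre $x$.
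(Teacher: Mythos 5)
Your proof is correct and follows essentially the same route as the paper: admissibility is converted, via the same rescaled cutoff construction, into the uniform vanishing $\lim_{r\to 0}\sup_{x\in M} r^{2-n}\int_{B_r(x)}|du|_g^2\,dv_g=0$, and smoothness is then deduced from the Rivi\`ere--Struwe $\eps$-regularity for systems with antisymmetric potentials. The paper simply cites \cite[Theorem 1.1]{RS08} at that stage, whereas you spell out the antisymmetric rewriting and the bootstrap, but the argument is the same.
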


\begin{proof} The admissibility of the energy density measure $\mu=|du|^2dv_g$ is evidently equivalent to the statement that, for any bounded sequence $\varphi_j\in W^{1,2}(M,g)$ satisfying $\varphi_j\rightharpoonup 0$ as $j\to\infty$, we have 
$$\lim_{j\to\infty}\int_M \varphi_j^2|du|^2dv_g=0.$$

Now, for an arbitrary sequence of points $p_j\in M$ and a vanishing sequence of radii $r_j\to 0$, consider a sequence $\varphi_j\in C_c^{\infty}(B_{2r_j}(p_j))$ such that $\varphi_j\equiv 1$ on $B_{r_j}(p_j)$ and $|d\varphi_j|\leq \frac{C}{r_j}$; then it is easy to see that
$$\int_M \varphi_j^2dv_g\leq C(M)r_j^n\text{ and }\int_M |d\varphi_j|_g^2dv_g\leq C(M)r_j^{n-2}.$$
In particular, setting $\psi_j:=r_j^{\frac{2-n}{2}}\varphi_j$, we obtain a sequence $\psi_j\in W^{1,2}(M)$ with 
$$\|\psi_j\|_{L^2}^2\leq C(M)r_j^2\text{ and }\|d\psi_j\|_{L^2}^2\leq C(M),$$
so that $\psi_j$ is bounded in $W^{1,2}(M)$ and $\psi_j\rightharpoonup 0$ as $j\to \infty$. Hence, by the admissibility of $|du|_g^2dv_g,$ we have
$$\lim_{j\to\infty}\int \psi_j^2 |du|_g^2dv_g=0.$$
In particular, since $\psi_j^2=r_j^{2-n}\varphi_j^2\equiv r_j^{2-n}\text{ on }B_{r_j}(p_j),$ it follows that
$$\lim_{j\to\infty}r_j^{2-n}\int_{B_{r_j}(p_j)}|du|_g^2dv_g=0$$
for every sequence $p_j\in M$ and $r_j\to 0$. In other words, we have
\begin{equation}\label{van.dens}
\lim_{r\to 0} \sup_{p\in M} r^{2-n}\int_{B_r(p)}|du|_g^2dv_g=0.
\end{equation}
Finally, it follows from work of Rivi\`ere-Struwe \cite[Theorem 1.1]{RS08} that any weakly harmonic map $u\colon M\to N$ satisfying \eqref{van.dens} must indeed be smooth, giving the desired result.

\end{proof}

\subsection{Spectral index} Let $u\colon (M,g)\to\mathbb{S}^k$ be a harmonic map. Consider the associated Schr\"odinger operator $\mL_u = \Delta_g  - |du|_g^2$.

\begin{definition}
The {\em spectral index} $\ind_S(u)$ of $u$ is the number of negative eigenvalues of $\mL_u$. Similarly, the {\em spectral nullity} 
$\nul_S(u)$ of $u$ is the dimension of the kernel of $\mL_u$.
\end{definition}

The proof of the following proposition is analogous to the case $\dim M=2$ covered in~\cite[Propositions 1.7 and 3.11]{KRP2}.

\begin{proposition} 
\label{prop:indS}
For any smooth harmonic map $u\colon (M,g)\to\mathbb{S}^k$ one has 
$$
\ind_S(u)\geqslant \frac{\ind_E(u)}{k+1}.
$$
Furthermore, let $i_{k,k'}\colon \mathbb{S}^k\hookrightarrow\mathbb{S}^{k'}$ be a totally geodesic embedding. Then $i_{k,k'}\circ u$ is harmonic, $\ind_S(i_{k,k'}\circ u) = \ind_S(u)$ and
\begin{equation}
\label{eq:indS}
\ind_E(i_{k,k'}\circ u) = \ind_E(u) + (k'-k)\ind_S(u).
\end{equation}
\end{proposition}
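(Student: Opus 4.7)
The plan is to exploit the fact that, for a harmonic map $u\colon M\to \mathbb{S}^k\subset \mathbb{R}^{k+1}$, the second fundamental form $\II_{\mathbb{S}^k}(X,Y) = -\langle X,Y\rangle u$ reduces the general second-variation formula \eqref{sec.var} to
$$E''(u)(v,v) = \int_M |dv|_g^2 - |du|_g^2 |v|^2\,dv_g$$
for every $v\in \mathcal V(u) := \{w\in [W^{1,2}\cap L^{\infty}](M,\mathbb{R}^{k+1}) : \langle w,u\rangle = 0 \text{ a.e.}\}$. Writing $v=(v^1,\ldots,v^{k+1})$ in Euclidean coordinates, this splits componentwise as
$$E''(u)(v,v) = \sum_{j=1}^{k+1} Q_{\mL_u}(v^j), \qquad Q_{\mL_u}(f) := \int_M |df|_g^2 - |du|_g^2 f^2\,dv_g,$$
so that $E''(u)$ is nothing but the restriction to $\mathcal V(u)\subset W^{1,2}(M)^{k+1}$ of the direct-sum quadratic form $\tilde Q := Q_{\mL_u}^{\oplus(k+1)}$ on $W^{1,2}(M)^{k+1}$.

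For the first inequality, I would observe that the component embedding $\mathcal V(u)\hookrightarrow W^{1,2}(M)^{k+1}$ is linear and injective, and carries any finite-dimensional subspace on which $E''(u)$ is negative definite to a subspace of the same dimension on which $\tilde Q$ is negative definite. Since $\tilde Q$ is a direct sum of $k+1$ copies of the Schr\"odinger form $Q_{\mL_u}$, its Morse index equals $(k+1)\ind_S(u)$: a maximal negative-definite subspace is obtained by placing the $\ind_S(u)$-dimensional negative $\mL_u$-eigenspace into each of the $k+1$ slots separately. The bound $\ind_E(u)\leq (k+1)\ind_S(u)$ follows at once.

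For the second claim, after a rotation we may assume $i_{k,k'}(x^1,\ldots,x^{k+1}) = (x^1,\ldots,x^{k+1},0,\ldots,0)$ in $\mathbb{R}^{k'+1}$. Because the embedding is totally geodesic, $i_{k,k'}\circ u$ is harmonic whenever $u$ is, and the equality $|d(i_{k,k'}\circ u)|_g^2 = |du|_g^2$ yields $\mL_{i_{k,k'}\circ u} = \mL_u$ and hence $\ind_S(i_{k,k'}\circ u) = \ind_S(u)$. For the Morse-index identity, the constraint $\langle w,i_{k,k'}\circ u\rangle = 0$ involves only the first $k+1$ coordinates of $w$, so
$$\mathcal V(i_{k,k'}\circ u) = \mathcal V(u) \oplus W^{1,2}(M)^{k'-k},$$
a direct sum decomposition into variations tangent to $\mathbb{S}^k$ (supported in the first $k+1$ coordinates) and variations in directions normal to $\mathbb{S}^k \subset \mathbb{S}^{k'}$ (supported in the last $k'-k$ coordinates, with no constraint). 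By the same componentwise identity, $E''(i_{k,k'}\circ u)$ splits orthogonally as $E''(u)$ on the first factor and $k'-k$ disjoint copies of $Q_{\mL_u}$ on the second; summing their Morse indices yields \eqref{eq:indS}.

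The only point requiring mild care is the second-variation identity and its componentwise decomposition, together with the observation that the constraint $\langle w,u\rangle=0$ does not obstruct the injectivity or the negativity transfer of the component embedding; everything else reduces to the elementary Morse-theoretic facts that the index of a direct sum of quadratic forms is additive and that restriction to a subspace bounds the index from above.
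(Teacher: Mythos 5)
Your argument is correct and is essentially the proof the paper has in mind: the paper simply cites the two-dimensional case in \cite{KRP2}, whose argument is exactly this componentwise splitting of $E''(u)$ into $k+1$ copies of the Schr\"odinger form $Q_{\mathcal{L}_u}$, together with additivity of the index under direct sums and the block decomposition of variations tangent and normal to the totally geodesic $\mathbb{S}^k\subset\mathbb{S}^{k'}$. The only points needing care (the identification of $\ind_S(u)$ with the index of $Q_{\mathcal{L}_u}$ on $W^{1,2}\cap L^\infty$, and the upper bound in the index additivity, which follows by restricting to $\pi_1(N)\oplus\pi_2(N)$ and applying Sylvester's law) are handled or correctly flagged in your write-up.
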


\subsection{Proof of Theorem~\ref{ev:main_theorem}}

The general strategy is the same the one employed in the proofs of~\cite[Theorems 1.3 and 1.4]{KS1}.

\begin{proposition}
\label{prop:upper_bound}
For any $k\geqslant 3$ and any signed admissible measure $\xi$ satisfying $\nu_2(\xi)\geqslant 0$ there exists a map $u\in W^{1,2}(M,\mathbb{S}^k)$ such that $E_g(u)\leq \mE_k$ and
$$
\int u \varphi_0\,dv_g = 0,
$$
where $\varphi_0$ is a $\nu_1(\xi)$-eigenfunction.

In particular, 
\begin{equation}
\label{ineq:main_ub}
\xi(M)\leqslant 2E_g(u) 
\end{equation}
with equality iff $\nu_2(\xi) = 0$, $u$ is a smooth harmonic map of spectral index $1$ and $\xi = |du|_g^2\,dv_g$.
\end{proposition}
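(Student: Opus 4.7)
The plan is to construct $u$ by a Brouwer-degree argument inside the min-max family, and then derive $\xi(M) \leq 2E(u)$ from the variational characterization of $\nu_{2}(\xi)$.

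First, the first eigenfunction $\varphi_0$ of $\Delta_g - \xi$ may be chosen nonnegative (replacing $\varphi_0$ by $|\varphi_0|$ preserves the Rayleigh quotient via Kato's inequality $|\nabla|\varphi_0|| \leq |\nabla \varphi_0|$), so $c := \int_M \varphi_0 \, dv_g > 0$. Fix $\epsilon, \delta > 0$ and select a family $(u_y^\epsilon)_{y \in \mathbb{B}^{k+1}} \in \Gamma_k(M)$ with $\max_y E_\epsilon(u_y^\epsilon) \leq \mE_{k,\epsilon} + \delta$. Consider the continuous map
\[
F_\epsilon \colon \mathbb{B}^{k+1} \to \mathbb{R}^{k+1}, \qquad F_\epsilon(y) := \int_M \varphi_0 \, u_y^\epsilon \, dv_g.
\]
Since $u_y^\epsilon \equiv y$ on $\mathbb{S}^k$, the restriction $F_\epsilon|_{\mathbb{S}^k}$ is $y \mapsto c y$, which has nonzero degree as a map into $\mathbb{R}^{k+1} \setminus \{0\}$. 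Hence there exists $y^\epsilon \in \mathbb{B}^{k+1}$ with $F_\epsilon(y^\epsilon) = 0$. The maps $u_{y^\epsilon}^\epsilon$ are uniformly bounded in $W^{1,2}(M, \mathbb{R}^{k+1})$ and satisfy $\int_M W(u_{y^\epsilon}^\epsilon) \, dv_g \leq \epsilon^2 (\mE_k + \delta) \to 0$; extracting a subsequence as $\epsilon, \delta \to 0$ yields a limit $u \in W^{1,2}(M, \mathbb{S}^k)$ with $\int_M \varphi_0 \, u \, dv_g = 0$ (by strong $L^2$ convergence from Rellich) and $E(u) \leq \mE_k$ (by weak lower semicontinuity of $E$).

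For the inequality, the orthogonality of each component $u^i$ to $\varphi_0$ combined with $\nu_2(\xi) \geq 0$ yields $\int_M |du^i|^2 \, dv_g - \int_M (u^i)^2 \, d\xi \geq 0$ via the variational characterization~\eqref{var:mu} of $\nu_2$ as a Courant-type minimum. Summing over $i = 1, \dots, k+1$ and using $|u|^2 \equiv 1$ produces $2E(u) \geq \xi(M)$. In the equality case, each nonzero $u^i$ must saturate its inequality, forcing $\nu_2(\xi) = 0$ and $\Delta_g u^i \, dv_g = u^i \, d\xi$ in the weak sense. Multiplying by $u^i$, summing, and using $\sum_i u^i \Delta_g u^i = |du|^2$ (from $|u|^2 \equiv 1$, whence $\Delta|u|^2 = 0$), we identify $d\xi = |du|^2 \, dv_g$. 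Then $u$ is a weakly harmonic map into $\mathbb{S}^k$ whose energy density measure coincides with the admissible $\xi$, so Proposition~\ref{prop:harm_reg} gives smoothness. Finally, the bound $\nu_1(|du|^2) \leq -\int_M |du|^2 \, dv_g < 0$ (testing $f \equiv 1$) together with $\nu_2(|du|^2) = 0$ forces $\ind_S(u) = 1$.

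The main technical point will be handling the first eigenfunction $\varphi_0$ when $\xi$ is a genuinely signed admissible measure: one must verify that $\nu_1(\xi)$ is actually attained by some $\varphi_0 \in W^{1,2}(M)$ (which follows from admissibility and the Courant--Fischer min--max, as in \cite{GKL}) and that the Kato-inequality sign argument remains valid in this generality. Once nonnegativity of $\varphi_0$ is established, the Brouwer-degree construction, the $\epsilon \to 0$ compactness via the Ginzburg--Landau framework of Section~\ref{pert.minmax.sec}, and the Rayleigh-quotient manipulations are routine adaptations of the two-dimensional arguments of \cite{KS1}.
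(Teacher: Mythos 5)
Your proposal follows essentially the same route as the paper: choose near-optimal families for $\mathcal{E}_{k,\epsilon}$, apply a Brouwer-type argument to $y\mapsto\int_M u_y^\epsilon\varphi_0\,dv_g$ (whose boundary restriction is a nonzero multiple of the identity) to enforce the orthogonality, pass to the limit $\epsilon\to 0$ using the vanishing of the potential term to force $u\in W^{1,2}(M,\mathbb{S}^k)$, and then obtain \eqref{ineq:main_ub} and its equality case exactly as the paper does, by testing the components of $u$ against $\nu_2(\xi)$, identifying $\xi=|du|_g^2\,dv_g$, and invoking Proposition~\ref{prop:harm_reg}. The only divergence is your reliance on ground-state positivity via Kato's inequality to guarantee $\int_M\varphi_0\,dv_g\neq 0$ --- the technical point you flag --- which the paper sidesteps entirely by taking $u$ to be a constant map in the degenerate case $\int_M\varphi_0\,dv_g=0$ and normalizing $\int_M\varphi_0\,dv_g=1$ otherwise.
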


\begin{proof}

 If $\int_M\varphi_0\,dv_g = 0$, then one can take $u$ to be a constant map. Otherwise we can choose $\varphi_0$ so that $\int_M\varphi_0\,dv_g = 1$.
For each $\varepsilon>0$ let $(\mathbb{B}^{k+1}\ni y\mapsto u^\varepsilon_y)\in\Gamma_k$ be a family such that $\sup_yE_\varepsilon(u^\varepsilon_y)\leqslant \mE_{k,\varepsilon}+\varepsilon$.
Define the map
$$
I(y):=\int_M u^\eps_y\varphi_0\,dv_g.
$$
Since $u^\eps_y\equiv y$ for $y\in\bd\mathbb{B}^{k+1} = \mathbb{S}^k$, the restriction of $I$ to $\bd\mathbb{B}^{k+1}$ is the identity map. A standard application of Brouwer fixed point theorem yields the existence of $y_\eps\in \mathbb{B}^{k+1}$ such that $I(y_\eps) = 0$. Setting $u_\eps:=u^\eps_{y_\eps}\in W^{1,2}(M,\mathbb{R}^{k+1})$, we then have

$$
\int_M u_\varepsilon\varphi_0\,dv_g=0, \quad E(u_\varepsilon)\leqslant E_\varepsilon(u_\varepsilon)\leqslant \mE_{k,\varepsilon}+\varepsilon\to \mE_k,
$$
$$
 \int_M d_{\mathbb{S}^k}(u_\eps)^2\,dv_g \leqslant \varepsilon^2c_1^{-1}E_\varepsilon(u_\varepsilon)\to 0,
$$
where $c_1$ is the constant in~\eqref{w.dist.comp}. Furthermore,
\begin{equation*}
\begin{split}
\int_M |u_\eps|^2\,dv_g \leq \int_M \max(W(u_\eps),R_0^2)\,dv_g \leq \eps^2E_\eps(u_\eps) + R_0^2\vol(M,g),
\end{split}
\end{equation*}
which implies that $\|u_\eps\|_{L^2(M,g)}$ is bounded.
Thus, there exists a subsequence $u_{\varepsilon_i}$ which converges weakly in $W^{1,2}(M,\mathbb{R}^{k+1})$ and strongly in $L^2(M,\mathbb{R}^{k+1})$ to $u\in W^{1,2}(M,\mathbb{R}^{k+1})$. Since $\varphi_0\in W^{1,2}(M)$ one has 
\begin{equation}
\label{eq0}
\int_M u\varphi_0\,dv_g = 0.
\end{equation}
Since $d_{\mathbb{S}^k}(u)^2 = |1-|u||^2\leqslant 2(|1-|u_\eps||^2+|u_\eps-u|^2)$, one also has
$$
\int_M d_{\mathbb{S}^k}(u)^2\,dv_g = 0,
$$
and, therefore, $u\in W^{1,2}(M,\mathbb{S}^k)$. Since the energy is lower-semicontinuous with respect to weak convergence in $W^{1,2}$ one has
$$
E(u)\leqslant\lim_{\varepsilon\to 0}E(u_\varepsilon)\leqslant \mE_k. 
$$

In particular, by~\eqref{eq0} the components $u^i$ of $u$ can be used as test-functions for $\nu_2(\xi)$,
$$
\int_M |du^i|^2_g\,dv_g - \int_M |u^i|^2\,d\xi\geq \nu_2(\xi)\geqslant 0.
$$
Summing over $i=1,\ldots, k+1$ yields
\begin{equation}
\label{ineq:aux1}
\xi(M)\leq \int |du|^2_g\,dv_g = 2E(u).
\end{equation}
The equality occurs iff $\nu_2(\xi) = 0$ and $u^i$ are $\nu_2(\xi)$-eigenfunctions. It then follows that for any $v\in C^\infty(M)$ one has
\begin{equation*}
\begin{split}
\int_M v\,d\xi = &\int_M v|u|^2\,dxi = \sum_{i=1}^{k+1}\int_M \la du^i, d(u^iv)\ra\,dv_g =\\
&\int_M|du|^2v + \la dv,\frac{1}{2}d|u|^2\ra\,dv_g =  \int v|du|^2_g\,dv_g,
\end{split}
\end{equation*}
i.e. $\xi =|du|^2_g\,dv_g$, which implies that $\Delta_g u = |du|_g^2u$ in the weak sense, i.e. $u$ is a weak harmonic map. Since $\xi$ is admissible by assumption, Proposition~\ref{prop:harm_reg} implies that $u$ is smooth.
\end{proof}

Note that Proposition~\ref{prop:upper_bound} implies the bound $\optV_1\leq 2\mE_k$ for all $k\geq 3$ and any $n\geq 3$. To prove the opposite inequality, assume $3\leq n\leq 5$ and
let $u_k\colon (M,g)\to \mathbb{S}^k$ be a smooth harmonic map given by Theorem~\ref{sk.minmax.thm}, so that
$$
E(u_k) = \mE_k;\qquad \ind_S(u_k)\leqslant k+1.
$$
Then by Proposition~\ref{prop:upper_bound} (or by~\eqref{unif.en.bds}), the maps $u_k$ satisfy the conditions of Proposition~\ref{prop:stabilization}. Let $k_0$ be as in Proposition~\ref{prop:stabilization} and let $k>2k_0+1$, then $u_{k}= i_{k_0,k}\circ v$ for some harmonic $v$. By~\eqref{eq:indS} one has
$$
k+1\geq \ind_E(u_{k}) \geq  (k-k_0)\ind_S(u_{k})> \frac{1}{2}(k+1)\ind_S(u_k).
$$
Since $\ind_S$ is integer, this implies $\ind_S(u_k) = 1$. Thus, $\lambda_2(|du_k|_g^2) = 0$ and one has
$$
2\mE_k = 2E_g(u_k) = \int|du_k|^2_g\,dv_g\leq\optV_1\leq2\mE_k,
$$
which implies that all inequalities are equalities. In particular, $\optV_1 = 2\mE_k$ for $k>2k_0+1$ and $|du_k|_g^2\in\spaceV_1$ is a smooth potential on which $\optV_1$ is attained. This completes the proofs of Theorem~\ref{ev:main_theorem}.(i) and (ii).

Let $\xi$ be any admissible measure with $\nu_2(\xi)\geq 0$ and such that $\xi(M) = \optV_1$. Let $k$ be large enough so that $\mE_k = \optV_1$ and let $u$ be the map given by Proposition~\ref{prop:upper_bound}. By inequality~\eqref{ineq:main_ub} one has
$$
\optV_1 = \xi(M) \leq 2E_g(u)\leq \mE_k = \optV_1.
$$
Thus, all inequalities are equalities and Proposition~\ref{prop:upper_bound} implies that $\nu_2(\xi) = 0$, $u$ is a smooth harmonic map with $\ind_S(u) = 1$ and $\xi = |du|_g^2\,dv_g$. This completes the proof of Theorem~\ref{ev:main_theorem}.(iii).

\subsection{Relation with conformal volume.} Particular examples of families 
$\mathbb{B}^{k+1}\ni y\mapsto u_y$ such that  $u_y\equiv y\text{ for }y\in \mathbb{S}^k$ can be obtained by post-composition of a given map with conformal automorphisms of $\mathbb{S}^k$. Namely, for any $y\in\mathrm{int}\left(\mathbb{B}^{k+1}\right)$ the map $G_y(x)=\frac{1-|y|^2}{|x+a|^2}+a$ is a conformal diffeomorphism $\mathbb{S}^k\to\mathbb{S}^k$. If $u=u_0\colon (M,g)\to \mathbb{S}^k$ is a conformal map, then it is easy to check that $u_y:=G_y\circ u_0$ extends to a weakly continuous family satisfying $u_y\equiv y\text{ for }y\in \mathbb{S}^k$. This family is often referred to as the {\em canonical family of $u_0$}. It was used by Li and Yau~\cite{LiYau} (see also~\cite{ESIconfvolume}) to define the notion of {\em conformal volume} as follows. One first defines conformal volume of the map $u$ to be
$$
V_c(k, u) = \sup_y\vol(M,(u_y)^*g_{\mathbb{S}^k}).
$$
Taking the infimum over all conformal maps $u$ gives the ($k$-)conformal volume of $(M,[g])$
$$
V_c(k,M,[g]) = \inf_{u}V_c(k,u).
$$
The connection between conformal volume and minimal submanifolds is formulated in the following theorem. It was proved by Li and Yau ~\cite{LiYau} for surfaces, by El Soufi and Ilias~\cite{ESIconfvolume} for higher dimensional manifolds.
\begin{theorem}[Li, Yau~\cite{LiYau}; El Soifi-Ilias~\cite{ESIconfvolume}]
\label{thm:conf_volume}
Let $u\colon M\to \mathbb{S}^k$ be a minimal immersion. Assume WLOG that $u$ is linearly full, i.e. the image $u(M)$ spans $\mathbb{R}^{k+1}$. Then for any $y\ne 0$ one has 
$$
\vol(M,(u_y)^*g_{\mathbb{S}^k})< \vol(M,(u)^*g_{\mathbb{S}^k})
$$
unless $M=\mathbb{S}^k$ and $u = G_{y'}$ up to a rotation of $\mathbb{R}^{k+1}$. In particular, 
$$
V_c(k,M,[u^*g_{\mathbb{S}^k}]) \leq \vol(M,(u)^*g_{\mathbb{S}^k}).
$$
\end{theorem}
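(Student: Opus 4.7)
My plan is to study the real-analytic function $V\colon \mathbb{B}^{k+1}\to[0,\infty)$ defined by $V(y):=\vol(M,u_y^*g_{\mathbb{S}^k})$. Using the explicit conformal factor $\lambda_y(x)=\frac{1-|y|^2}{1-2\langle x,y\rangle+|y|^2}$ of the Möbius transformation $G_y$ and the pullback identity $u_y^*g_{\mathbb{S}^k}=(\lambda_y\circ u)^2\,g_u$ (with $g_u:=u^*g_{\mathbb{S}^k}$), we may write $V(y)=\int_M(\lambda_y\circ u)^n\,dv_{g_u}$. Observe that $V$ extends continuously to $\overline{\mathbb{B}^{k+1}}$ with $V\equiv 0$ on $\partial\mathbb{B}^{k+1}=\mathbb{S}^k$ (since $u_y$ degenerates to the constant $y$ there), so any global maximum of $V$ lies in the interior; the theorem reduces to showing this maximum is attained only at $y=0$, outside the sphere-Möbius exception. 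As a first observation, the minimality of $u$ yields the eigenfunction equation $\Delta_{g_u}u=nu$ coordinatewise, hence $\int_M u\,dv_{g_u}=0$; combined with $\partial_{y^i}\lambda_y|_{y=0}=2x^i$, this gives $\nabla V(0)=2n\int_M u\,dv_{g_u}=0$, so $y=0$ is always a critical point of $V$.

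The heart of the proof is upgrading this critical point to a strict global maximum. Locally, I would compute
\begin{equation*}
\Hess V(0)_{ij}=4n\bigl((n+1)M_{ij}-\vol(M,g_u)\,\delta_{ij}\bigr),\qquad M_{ij}:=\int_M u^iu^j\,dv_{g_u},
\end{equation*}
and establish the matrix inequality $(n+1)M\preceq \vol(M,g_u)\,I_{k+1}$, forcing $\Hess V(0)\preceq 0$. To pass from local to global, I would combine the real-analyticity of $V$ on $\mathbb{B}^{k+1}$, its vanishing on the boundary, and the transitivity of the Möbius group action on $\mathbb{B}^{k+1}$: any alternative interior critical point $y_*\ne 0$ with $V(y_*)\ge V(0)$ can be moved to the origin via the reparameterization $u\rightsquigarrow u_{y_*}$, at which point the first-derivative and Hessian analyses combine to rule out such an alternative unless $V$ is constant on $\mathbb{B}^{k+1}$.

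The main obstacle will be the rigidity for equality, which isolates the sphere-Möbius case as the sole exception. The Hessian bound saturates precisely when $M$ is a scalar multiple of the identity \emph{and} $n=k$, which is exactly the situation $M=\mathbb{S}^k$ with $u$ a Möbius automorphism (there $V$ is constant on $\mathbb{B}^{k+1}$ by Möbius invariance of spherical volume). Ruling out any other equality case reduces to two interlocking subtler assertions. First, the sharp matrix bound $(n+1)M\preceq \vol(M,g_u)\,I_{k+1}$ must actually hold for every linearly full minimal immersion, a nontrivial consequence of $\Delta_{g_u}u=nu$ together with the pointwise constraints $|u|^2\equiv 1$ and $|du|^2_{g_u}\equiv n$ that I would derive by testing the eigenfunction equation against $u^j$-weighted combinations and invoking a Bochner-type identity. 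Second, saturation of this bound combined with linear fullness must force $u$ to be a linearly full isometric minimal map between equal-dimensional spheres, which by the classical rigidity of such maps is a Möbius transformation of $\mathbb{S}^k$ up to a rotation. Making this rigidity argument precise, while simultaneously preserving the strictness of the Hölder and Rayleigh-type inequalities that feed into the Hessian, is the technical crux of the proof.
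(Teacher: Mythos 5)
The paper does not prove this statement at all: it is quoted verbatim from Li--Yau and El Soufi--Ilias, so there is no internal proof to compare with, and your argument must stand on its own. Parts of your setup are sound: the formula $V(y)=\int_M(\lambda_y\circ u)^n\,dv_{g_u}$, the vanishing of $\nabla V(0)$ via $\Delta_{g_u}u=nu$ (hence $\int_M u\,dv_{g_u}=0$), and the Hessian formula at $y=0$ are all correct; moreover the matrix inequality $(n+1)\int_M\langle u,e\rangle^2\,dv_{g_u}\leq \vol(M,g_u)$ is indeed true and has a short proof you don't need Bochner identities for: with $\phi=\langle u,e\rangle$ one has pointwise $|\nabla\phi|^2=|e^T|^2\leq 1-\phi^2$, so $n\int\phi^2=\int|\nabla\phi|^2\leq \vol(M,g_u)-\int\phi^2$.

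The genuine gaps are in the passage from this local information to the global strict inequality, and they are fatal as written. First, the claim that $V$ extends by zero to $\partial\mathbb{B}^{k+1}$ is false: for $u=\mathrm{Id}\colon\mathbb{S}^k\to\mathbb{S}^k$ the function $V$ is identically $\vol(\mathbb{S}^k)$, and in general, as $y$ tends to a boundary point lying on (the closure of) $u(M)$, the conformal factor concentrates and $V(y)$ tends to $\vol(\mathbb{S}^n)$ times the density of the image there, which is bounded below by $\vol(\mathbb{S}^n)>0$; so the reduction ``any maximum is interior'' is not established (and repairing it already requires nontrivial comparison of boundary limits with $\vol(M,g_u)$). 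Second, and more seriously, the mechanism you propose for excluding an interior point $y_*\neq 0$ with $V(y_*)\geq V(0)$ --- replacing $u$ by $u_{y_*}$ and rerunning the analysis at the new origin --- does not work, because $G_{y_*}\circ u$ is conformal but no longer a \emph{minimal} immersion; the identities $\Delta u_{y_*}=n\,u_{y_*}$, $\int u_{y_*}\,dv=0$ and the Hessian computation are all unavailable at the new basepoint, so criticality and concavity cannot be transported by Möbius transitivity. Real-analyticity does not rescue this: negative semidefiniteness of $\Hess V$ at the single critical point $y=0$ is perfectly compatible with $V$ attaining a strictly larger value at some other interior point without being constant. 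This global step is exactly the content of the Li--Yau/El Soufi--Ilias theorem, and their proofs are genuinely global (first-variation/monotonicity arguments along the Möbius flow), not a second-order analysis at $y=0$. Finally, the equality discussion inherits these problems; in particular, saturation of the Hessian bound in a direction $e$ only says $e$ has no component normal to the image inside $\mathbb{S}^k$, which does not by itself force $n=k$ and $u$ Möbius, so the rigidity case also remains unproved.
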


Observe that for any canonical family $\{u_y\}$ one has
\begin{equation}
\label{ineq:n-energy}
\begin{split}
2E(u_y) &= \int_M|du_y|^2_g\,dv_g\leq n\left(n^{\frac{n}{2}}\int_M|du_y|^n_g\,dv_g\right)^\frac{2}{n}\vol(M,g)^{\frac{n-2}{n}} =\\
&= n \left(\vol(M,(u_y)^*g_{\mathbb{S}^k})\right)^\frac{2}{n}\vol(M,g)^{\frac{n-2}{n}},
\end{split}
\end{equation}
where we first used H\"older inequality and then conformality of $u_y$. Taking the supremum over $y$ and then the infimum over all canonical families suggests the following inequality
\begin{equation}
\label{bd:mEVc}
2\mE_k(M,g)\leq n \left(V_c(k,M,[g])\right)^\frac{2}{n}\vol(M,g)^{\frac{n-2}{n}}.
\end{equation}
However, note that the canonical family is only weakly continuous, so one can not use it directly in the definition of $\mE_k(M,g)$. Nevertheless, a mollification procedure similar to that employed in the proof of Lemma~\ref{lemma:energy_upbound} (see also~\cite[Proposition 3.3]{KS1}) yields inequality~\eqref{bd:mEVc}.
Combining Proposition~\ref{prop:upper_bound} with~\eqref{bd:mEVc} we obtain
\begin{equation}
\label{ineq:optVVc}
\optV_1(M,g)\leq 2\mE_k(M,g)\leq n \left(V_c(k,M,[g])\right)^\frac{2}{n}\vol(M,g)^{\frac{n-2}{n}},
\end{equation}
which together with Theorem~\ref{thm:conf_volume} gives the inequality in Theorem~\ref{thm:ev_exmp}.

To study the equality case, assume that there is a minimal immersion $u\colon M\to\mathbb{S}^k$ such that $u^*g_{\mathbb{S}^k}\in [g]$, in particular, 
$$
V_c(k,M,[g])\leq \vol(M,u^*g_{\mathbb{S}^k}).
$$
Assume that there exists an admissible signed measure $\xi$ with $\nu_2(M,g,\xi)\geq 0$ such that there is an equality
$$
\xi(M) = n \left(\vol(M,u^*g)\right)^\frac{2}{n}\vol(M,g)^{\frac{n-2}{n}} = 2\mE_k(M,g).
$$
Hersch's trick implies that there exists $u_y$ in the canonical family of $u$ such that components of $u$ are orthogonal to a $\nu_1(M,g,\xi)$-eigenfunction. Similarly to~\eqref{ineq:aux1} one obtains 
$$
\xi(M)\leq 2E_g(u_y)
$$
with equality only if $\nu_2(\xi)=0$.
Combining with~\eqref{ineq:n-energy} one obtains
\begin{equation*}
\begin{split}
2\mE_k(M,g)=\xi(M) & \overset{\raisebox{.5pt}{\textcircled{\raisebox{-.9pt} {1}}}}{\leq} 2E_g(u_y)  \overset{\raisebox{.5pt}{\textcircled{\raisebox{-.9pt} {2}}}}{\leq}  n \left(\vol(M,(u_y)^*g_{\mathbb{S}^k})\right)^\frac{2}{n}\vol(M,g)^{\frac{n-2}{n}} \\
&\overset{\raisebox{.5pt}{\textcircled{\raisebox{-.9pt} {3}}}}{\leq} n \left(\vol(M,u^*g_{\mathbb{S}^k})\right)^\frac{2}{n}\vol(M,g)^{\frac{n-2}{n}} = 2\mE_k(M,g).
\end{split}
\end{equation*}
In particular, all inequalities are equalities. Equality in \raisebox{.5pt}{\textcircled{\raisebox{-.9pt} {1}}} implies that $u_y$ is a harmonic map of spectral index $1$ and $\xi = |du_y|_g^2\,dv_g$. Equality in \raisebox{.5pt}{\textcircled{\raisebox{-.9pt} {2}}} (or, equivalently,~\eqref{ineq:n-energy}) implies that $|du_y|_g^2$ is constant and, in particular, $\xi =b\,dv_g$ for some $b\in\mathbb{R}$. Thus, $u_y$ is a conformal harmonic map with constant energy density, therefore, a minimal immersion such that $g = a(u_y)^*g_{\mathbb{S}^k}$. Since $\ind_S(u_y) = 1$, components of $u_y$ are the first eigenfunctions.
 If $(M,[g])\not=(\mathbb{S}^k,[g_{\mathbb{S}^k}])$ and $u\not=G_{y'}$ for some conformal automorphism $G_{y'}$, then equality in \raisebox{.5pt}{\textcircled{\raisebox{-.9pt} {3}}} implies $y=0$. Otherwise, $u_y = G_{y''}$ for some $y''$ and the equality statement in Theorem~\ref{thm:ev_exmp} follows. 

\subsection{Critical potentials}

In this section we collect results on critical points of functionals associated with the optimization problems studied above. The techniques of the proofs are standard and go back to the papers~\cite{NadirashviliT2, ESIextremal, FS:extremal}. We also make use of computations in~\cite{KM} which allow for a concise proof.

\begin{definition}
\label{def:1}
Let $0<\beta\in C^\infty(M)$. We say that $\beta$ is a critical density for the functional 
\begin{equation}
\label{def:functional}
F_m(\beta) = \lambda_m(\beta)\int_M\beta\,dv_g
\end{equation}
if for any smooth deformation $\beta(t)>0,$ $\beta(0) = \beta$, $t\in(-\eps,\eps)$ one has
\begin{equation}
\label{def:critical}
F_m(\beta(t))\leq F_m(\beta) + o(t)\qquad\text{or}\qquad F_m(\beta(t))\geq F_m(\beta) + o(t)
\end{equation}
as $t\to 0$.
\end{definition} 

\begin{proposition}
\label{prop:critical1}
Let $0<\beta\in C^\infty(M)$ be a critical density for the functional~\eqref{def:functional}.
Then there exists a smooth harmonic map $u\colon (M,g)\to \mathbb{S}^k$ such that $\lambda_m(\beta)\beta = |du|_g^2$ and the components of $u$ are $\lambda_m(\beta)$-eigenfunctions. In particular, $\ind_S(u)\leq m$. 

Conversely, let $u\colon (M,g)\to\mathbb{S}^k$ be a smooth harmonic map. Then $|du|_g^2$ is a critical density for the functional~\eqref{def:functional} with $m = \ind_S(u)$.
\end{proposition}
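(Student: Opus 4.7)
The plan is to follow the standard eigenvalue-optimization approach of Nadirashvili~\cite{NadirashviliT2}, El Soufi--Ilias~\cite{ESIextremal}, and Fraser--Schoen~\cite{FS:extremal}, adapted to the weighted-Laplacian setting: the maximizing densities correspond to sphere-valued harmonic maps through a convex-combination identity for eigenfunctions. First I would compute the directional derivatives of $F_m$ at $\beta$. Using the scale-invariance $\lambda_m(c\beta)=c^{-1}\lambda_m(\beta)$ to normalize $\int_M\beta\,dv_g=1$, the standard eigenvalue-perturbation formula gives, for any $\beta$-normalized eigenfunction $f$ in the $\lambda_m(\beta)$-eigenspace $\mathcal{F}_m$ (i.e.\ $\int_M\beta f^2\,dv_g=1$),
\[
\frac{d}{dt}\Big|_{t=0}\lambda_m(\beta+t\phi) = -\lambda_m(\beta)\int_M f^2\phi\,dv_g,
\]
so that differentiating $F_m(\beta+t\phi)=\lambda_m(\beta+t\phi)\bigl(1+t\int_M\phi\,dv_g\bigr)$ yields
\[
\frac{d}{dt}\Big|_{t=0}F_m(\beta+t\phi)=\lambda_m(\beta)\int_M(1-f^2)\phi\,dv_g.
\]
When $\lambda_m(\beta)$ has multiplicity, this directional derivative becomes one-sided, equal to a minimum (respectively maximum) over normalized $f\in\mathcal{F}_m$ for the right (resp.\ left) derivative, as in classical Lipschitz eigenvalue-perturbation theory.

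Next, I would translate the criticality condition of Definition~\ref{def:1}. The requirement that along every smooth path $\beta(t)$ the function $F_m(\beta(t))$ admits either a one-sided local max or a one-sided local min at $t=0$ is equivalent, via the derivative formulas above, to the statement that for every $\phi\in C^\infty(M)$,
\[
\min_{f\in\mathcal{F}_m,\,\int\beta f^2=1}\int_M(f^2-1)\phi\,dv_g \;\leq\; 0 \;\leq\; \max_{f\in\mathcal{F}_m,\,\int\beta f^2=1}\int_M(f^2-1)\phi\,dv_g.
\]
By the Hahn--Banach separation theorem applied in $L^2(M)$ (equivalently, a finite-dimensional min-max as in~\cite{NadirashviliT2,ESIextremal}) combined with Carath\'eodory's theorem, this is equivalent to the existence of finitely many normalized eigenfunctions $f_1,\dots,f_N\in\mathcal{F}_m$ and positive weights $a_1,\dots,a_N$ such that $\sum_{i=1}^N a_i f_i^2 \equiv 1$ pointwise on $M$.

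With this relation in hand, set $u:=(\sqrt{a_1}f_1,\dots,\sqrt{a_N}f_N)\colon M\to\mathbb{R}^N$; then $|u|^2\equiv 1$, so $u$ maps into $\mathbb{S}^{N-1}$. Each component satisfies $\Delta u_i=\lambda_m(\beta)\beta u_i$, hence $\Delta u = \lambda_m(\beta)\beta u$, and the pointwise identity $0=\Delta(|u|^2)=2u\cdot\Delta u-2|du|^2$ forces $|du|^2=\lambda_m(\beta)\beta$; substituting back gives the sphere-valued harmonic-map equation $\Delta u = |du|^2 u$. Smoothness follows from elliptic regularity for $\Delta f=\lambda\beta f$ with $\beta\in C^\infty$. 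For the index bound, $\ind_S(u)$ equals the number of negative eigenvalues of $\Delta-|du|^2=\Delta-\lambda_m(\beta)\beta$, which by a min-max comparison equals the number of $\lambda_j(\beta)<\lambda_m(\beta)$, hence is at most $m$. For the converse, given a smooth harmonic $u\colon M\to\mathbb{S}^k$ with $\beta:=|du|^2_g$, the identity $\Delta u=|du|^2 u=\beta u$ exhibits the $u_i$ as eigenfunctions of eigenvalue $1$, so $\lambda_m(\beta)=1$ for $m=\ind_S(u)$; the constraint $|u|^2\equiv 1$, after normalizing each $u_i$ by $\bigl(\int_M u_i^2\beta\,dv_g\bigr)^{1/2}$, supplies exactly the convex combination needed to run the earlier computation in reverse and verify criticality at level $m$.

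The most delicate step will be the passage from the criticality condition to the pointwise convex-combination identity $\sum a_i f_i^2\equiv 1$. This requires handling the multiplicity of $\lambda_m(\beta)$ so that the ``$\min/\max$'' one-sided-derivative formulas genuinely hold (a careful application of Kato-type perturbation theory for the Lipschitz eigenvalue branches), and upgrading the abstract $L^2$ separation statement to pointwise equality on $M$. Both ingredients are classical, going back to~\cite{NadirashviliT2,ESIextremal}, and I expect to follow the concise computations in~\cite{KM} to adapt them to the weighted-Laplacian framework at hand.
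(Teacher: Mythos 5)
Your proposal is correct and follows essentially the same route as the paper: a one-sided eigenvalue perturbation computation showing that at a critical density the quadratic form $(\phi,\psi)\mapsto -\lambda_m(\beta)\int_M W\phi\psi\,dv_g$ on the $\lambda_m(\beta)$-eigenspace is sign-indefinite for mean-zero $W$, a Hahn--Banach/Carath\'eodory argument producing normalized eigenfunctions with $\sum_i a_i f_i^2\equiv 1$, and the identity $\Delta_g|u|^2=2\lambda_m(\beta)\beta|u|^2-2|du|_g^2$ to conclude $\lambda_m(\beta)\beta=|du|_g^2$ and harmonicity -- the paper simply outsources the perturbation and separation steps to~\cite{KM}. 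One small caveat: your formula identifying the one-sided derivatives with the min/max over normalized eigenfunctions is only valid when $\lambda_m$ sits at the bottom (resp.\ top) of its multiplicity cluster, but this is harmless, since the forward direction only needs that a strictly definite form forces strict one-sided monotonicity of $F_m$, and in the converse $m=\ind_S(u)$ guarantees $\lambda_{m-1}(\beta)<\lambda_m(\beta)=1$, so the formula applies there.
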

\begin{proof}
 The general strategy of the proof for such results is the same. For a smooth deformation $\beta(t)$ one first computes the derivative of~\eqref{def:functional} at a generic point $t$. For critical $\beta$ the derivative at $t=0$ usually does not exist, but one can use criticality to show that a certain quadratic form $A(\beta'(0))$ on $\lambda_m(\beta)$-eigenspace is sign indefinite for all $\beta'(0)$. After that, an application of Hahn-Banach separation theorem yields a quadratic relation between elements of the eigenspace.

For the functional~\eqref{def:functional}, this was essentially carried out in~\cite{KM}, where the same functional is considered on a larger space: namely, the metric $g$ is also allowed to vary in the conformal class. In particular, 
 the computation in the proof of~\cite[Theorem 8]{KM} gives that if $\int_M\beta'(0)\,dv_g = 0$, then the quadratic form
$$
A(\beta'(0))[\phi,\psi] = -\lambda_m(\beta)\int_M\beta'(0)\phi\psi\,dv_g
$$
is sign indefinite.
Combining with~\cite[Lemma 1]{KM}, 
we deduce existence of $\lambda_m(\beta)$-eigenfunctions $u_1,\ldots, u_{k+1}$ such that
$\sum_{i=1}^{k+1} (u_i)^2 = 1$. Setting $u = (u_1,\ldots, u_{k+1})$ one has
$$
0 = \frac{1}{2}\Delta_g(|u|^2) = \lambda_m(\beta)\beta|u|^2 - |du|_g^2,
$$
which shows that $\lambda_m(\beta)\beta = |du|_g^2$. Since $\beta$ is smooth, $u$ is smooth and, furthermore, 
$$
\Delta_g u = \lambda_m(\beta)\beta u = |du|_g^2u,
$$
i.e. $u$ is a smooth harmonic map.

The proof of the converse is by now standard, we refer to~\cite{KM} for details. 
\end{proof}

We state the analogous theorem for potentials. This formulation has an advantage that there is no restriction on the sign of potential and, for example, can be used to study the functional~\eqref{def:functional} for $\beta\geq 0$. Note that the definition of a critical potential has to be modified due to the fact that the space $\spaceV_m$ is much more complicated than $C^\infty(M)$, so one has to allow one-sided deformations.
\begin{definition}
Let $V\in C^\infty(M)\cap \mathcal{P}_m$. We say that $V$ is a critical potential for the functional 
\begin{equation}
\label{def:functional2}
V\mapsto \int_MV\,dv_g
\end{equation}
in $\spaceV_m\cap C^\infty(M)$ if for any smooth deformation $[0,\epsilon)\ni t\mapsto V(t)\in \spaceV_m\cap C^\infty(M)$ with $V(0)=V$, one has 
\begin{equation}
\label{def:critical2}
\int V(t)\,dv_g\leq\int V\,dv_g + o(t)
\end{equation}
as $t\to 0+$.
\end{definition}
\begin{remark}
By considering the deformation $V(t) = V-t$, it is easy to see that the opposite of the inequality in~\eqref{def:critical2} can never be enforced for arbitrary deformations in $\mathcal{P}_m$.
\end{remark}

\begin{proposition}
\label{prop:critical2}
Let $V\in\spaceV_m\cap C^\infty(M)$ be a smooth potential critical for the functional~\eqref{def:functional2}
 in $\spaceV_m\cap C^\infty(M)$. Then there exists a smooth harmonic map $u\colon (M,g)\to \mathbb{S}^k$ such that $|du|_g^2 = V$ and  $\nu_m(V) = 0$. In particular, $\ind_S(u)\leq m$. 

Conversely, let $u\colon (M,g)\to\mathbb{S}^k$ be a smooth harmonic map. Then $|du|_g^2\in\spaceV_{\ind_S(u)}\cap C^\infty(M)$ is a critical potential for the functional~\eqref{def:functional2} in $\spaceV_{\ind_S(u)}\cap C^\infty(M)$.
\end{proposition}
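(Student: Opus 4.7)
The plan is to parallel the proof of Proposition~\ref{prop:critical1}, adapted to the one-sided nature of admissible deformations in $\spaceV_m$. First I would run a complementary-slackness argument: if $\nu_{m+1}(V)>0$, then the constant shift $V(t)=V+t$ stays in $\spaceV_m$ for $t\in[0,\nu_{m+1}(V))$, yet $\int_M V(t)\,dv_g - \int_M V\,dv_g = t\vol(M,g)$ is not $o(t)$, violating~\eqref{def:critical2}; so $\nu_{m+1}(V)=0$, and its (finite-dimensional) eigenspace $E\subset C^\infty(M)$ of $\Delta_g-V$ will carry the variational data. For a smooth family $V(t)=V+t\phi+O(t^2)$, standard eigenvalue perturbation identifies the eigenvalues of $\Delta_g-V(t)$ bifurcating from $0$ with the eigenvalues of the symmetric form $\psi\mapsto -\int_M\phi\psi^2\,dv_g$ on $E$, so the requirement $\nu_{m+1}(V(t))\geq 0$ for small $t>0$ reads
\begin{equation*}
\int_M\phi\,\psi^2\,dv_g\leq 0\qquad\text{for every }\psi\in E,
\end{equation*}
and the criticality hypothesis becomes: $\int_M\phi\,dv_g\leq 0$ whenever $\phi\in C^\infty(M)$ obeys this linear system.

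Next I would invoke a finite-dimensional Hahn--Banach / Farkas argument to construct the harmonic map. The convex cone $\mathcal{K}=\bigl\{\sum_i\alpha_i\psi_i^2:\alpha_i\geq 0,\,\psi_i\in E\bigr\}$ lies inside the finite-dimensional span of the products $\psi_i\psi_j$ and is therefore closed in $L^2(M)$; its polar cone is exactly the admissible $\phi$, so dualising the criticality condition gives $1\in\mathcal K$. Thus there exist $\psi_1,\ldots,\psi_{k+1}\in E$ with $\sum_{i=1}^{k+1}\psi_i^2\equiv 1$ on $M$, with $k+1$ bounded by $\dim(\mathrm{Sym}^2 E)+1$ via Carath\'eodory. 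The assembled map $u=(\psi_1,\ldots,\psi_{k+1})\colon M\to\mathbb{S}^k$ satisfies $\Delta_g\psi_i=V\psi_i$; applying $\tfrac12\Delta_g$ to $|u|^2\equiv 1$ yields $V\equiv|du|_g^2$ and hence $\Delta_g u=|du|_g^2\,u$, identifying $u$ as a weak sphere-valued harmonic map, which is smooth by elliptic regularity since $V\in C^\infty$. Finally $\ind_S(u)=\mN(V)\leq m$ is immediate from $V\in\spaceV_m$.

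The converse is a direct verification in the same perturbation framework. For a smooth harmonic $u\colon M\to\mathbb{S}^k$ with $V=|du|_g^2$ and $m=\ind_S(u)$, the components $u^i$ sit in the $0$-eigenspace of $\Delta_g-V$ and $V\in\spaceV_m$; any admissible $\phi$ satisfies $\int_M\phi(u^i)^2\,dv_g\leq 0$ for each $i$, and summing against $\sum_i(u^i)^2\equiv 1$ yields $\int_M\phi\,dv_g\leq 0$, which is criticality. The delicate step is the Hahn--Banach/Farkas passage, where one must ensure both that $\mathcal K$ is closed in the $L^2$ topology and that $1$ lies in $\mathcal K$ itself rather than only its closure; finite-dimensionality of $E$ makes both points routine, while everything else --- the constant-shift slackness argument, eigenvalue perturbation at a degenerate eigenvalue, and elliptic regularity --- is standard.
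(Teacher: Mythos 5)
Your proposal is correct and follows essentially the same route as the paper's proof: the constant-shift slackness argument pinning the relevant eigenvalue at zero, Kato--Rellich perturbation of the null eigenspace under $V+t\phi$, a Hahn--Banach/Farkas step producing eigenfunctions of $\Delta_g-V$ with $\sum_i\psi_i^2\equiv 1$ (which the paper outsources to Lemma~1 of~\cite{KM}), the identity $0=\tfrac12\Delta_g|u|^2=V|u|^2-|du|_g^2$ to get $V=|du|_g^2$ and harmonicity, and the converse by testing admissible deformations against the coordinate functions of $u$. Two small points should be tightened, both handled implicitly in the paper: when the form $A(\phi)$ is only positive semidefinite on $E$ the deformation $V+t\phi$ need not remain in $\spaceV_m$ (a marginal branch can be pushed negative at second order), so one should apply criticality to $\phi-\eps$, whose form is strictly positive definite, and let $\eps\to 0$ (this is exactly the paper's $W+\delta$ trick); and closedness of $\mathcal{K}$ follows not from finite dimensionality alone but from the fact that a positive semidefinite Gram combination of the linearly independent $\psi_i$ representing the zero function must vanish --- note also that your normalization $\nu_{m+1}(V)=0$ is the one consistent with the definition of $\spaceV_m$ and with the converse statement, whereas the paper writes $\nu_m(V)=0$.
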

\begin{proof}
Let $V\in\spaceV_m\cap C^\infty(M)$ be critical. If $\nu_m(V)>0$, then the deformation $V(t) = V+t\in \spaceV_m\cap C^\infty(M)$ for small $t>0$. For this deformation, condition~\eqref{def:critical2} is not satisfied and, thus, $\nu_m(V) = 0$.
 
Let $W\in C^\infty(M)$ be such that $\int W\,dv_g = 0$. Consider the
 deformation $V(t) = V + tW$ for $0\leq t<0$. Let 
$$
\nu_{i-1}(V) < 0 = \nu_i(V) = \ldots = \nu_m(V) = \ldots =\nu_{i+p}(V)<\nu_{i+p+1}(V),
$$ 
so that $\nu_m(V)$ has multiplicity $p+1$. An application of Rellich-Kato perturbation theory~\cite{Kato} (or~\cite[Lemma 3.2]{GNS}) yields the following expressions for $j=0,\ldots p$
\begin{equation*}
\nu_{i+j}(V(t)) = 
\gamma_jt +o(t),
\end{equation*}
where $\gamma_j$ are eigenvalues of the quadratic form $A(W)$ on $\nu_m(V)$-eigenspace
\begin{equation}
\label{Aform}
A(W)[\phi,\psi] = -\int_MW\phi\psi\,dv_g
\end{equation}
arranged in the increasing order. If $A(W)>0$, then for some small $\delta>0$ one has $A(W+\delta)>0$ and, as a result, $V+t(W+\delta)\in \spaceV_m$ for small $t>0$. But
$$
\int_M V+t(W+\delta)\,dv_g = \int_MV\,dv_g + t\vol(M,g)>\int_M V + o(t),
$$
 which contradicts~\eqref{def:critical2}. If $A(W)<0$, then $A(-W)>0$ and the same argument yields a contradiction. Thus, $A(W)$ is sign indefinite and the same application of Hahn-Banach separation theorem as in the proof of Proposition~\ref{prop:critical1} yields  
 the existence of $\nu_m(V)$-eigenfunctions $u_1,\ldots, u_{k+1}$ such that
$\sum_{i=1}^{k+1} (u_i)^2 = 1$. Setting $u = (u_1,\ldots, u_{k+1})$ one has
$$
0 = \frac{1}{2}\Delta_g(|u|^2) = V|u|^2 - |du|_g^2,
$$
which shows that $V = |du|_g^2$. Since $V$ is smooth, $u$ is smooth and, furthermore, 
$$
\Delta_g u = Vu = |du|_g^2u,
$$
i.e. $u$ is a smooth harmonic map.

For the converse, let $u$ be a harmonic map and let $V=|du|_g^2$. This implies that the form $A(W)$ is sign indefinite for all $W$ such that $\int_M W\,dv_g = 0$. Assume $V$ is not critical in $\spaceV_{\ind_S(u)}$. Then there exists a deformation $V(t)$ such that 
$$
a=\frac{1}{\vol(M,g)}\int_MV'(0)\,dv_g>0
$$
and $V(t)\in\spaceV_{\ind_S(u)}$. The latter implies that $A(V'(0))\geq 0$, therefore, $A(V'(0)-a) = A(V'(0))+aI>0$. At the same time, $\int V'(0)-a\,dv_g = 0$ and, thus, $A(V'(0)-a)$ has to be sign indefinite, a contradiction.
 
\end{proof}

For the remainder of this section $M$ has non-empty boundary and we study critical points of optimization problems associated with Examples~\ref{ex:Steklovdensity} and~\ref{ex:boundarypotential}. It turns out that they correspond to free boundary harmonic maps to the unit ball $\mathbb{B}^{k+1}$, similarly to the situation for Steklov eigenvalues on surfaces, see~\cite{FS:extremal, KM}.

\begin{definition}
A smooth map $u\colon (M,g)\to\mathbb{B}^{k+1}$ is called {\em free boundary harmonic} if $u^{-1}(\mathbb{S}^k) = \bd M$, $\Delta_g u = 0$ in the interior of $M$ and $\partial_n u\perp \mathbb{S}^k$.  
\end{definition}
Equivalently, $u$ is a free boundary harmonic map if components of $u$ are eigenfunctions of the problem
\begin{equation*}
\begin{cases}
\Delta_g f = \nu f  &\text{ in $M$};\\
\bd_n f = |\bd_n u| f &\text{ on $\bd M$}\\
\end{cases}
\end{equation*}
with eigenvalue $\nu = 0$. Note that this is exactly the problem from Example~\ref{ex:boundarypotential} with $v=|\bd_n u|$. We say that $\ind_S(u) = m$ if $|\bd_nu|\in \spaceV^{\bd}_{m+1}\setminus \spaceV^{\bd}_m$, i.e. $\nu_m(|\bd_nu|)<\nu_{m+1}(|\bd_nu|) = 0$. Similarly, $u$ is a free boundary harmonic map if components of $u$ are eigenfunctions of the problem
\begin{equation*}
\begin{cases}
\Delta_g f = 0  &\text{ in $M$};\\
\bd_n f = \sigma|\bd_n u| f &\text{ on $\bd M$}\\
\end{cases}
\end{equation*}
with eigenvalue $\sigma=1$. This is exactly the problem from Example~\ref{ex:Steklovdensity} with $\rho=|\bd_n u|$. Then $\ind_S(u) =m$ iff $\sigma_{m-1}(|\bd_n u|)<\sigma_{m}(|\bd_n u|)=1$.

 The following definition is analogous to Definition~\ref{def:1}.

\begin{definition}
Let $0<\rho\in C^\infty(\bd M)$. We say that $\rho$ is a critical density the functional 
\begin{equation}
\label{def:functional3}
G_m(\rho) = \sigma_m(\rho)\int_{\bd M}\rho\,ds_g
\end{equation}
if for any smooth deformation $\rho(t)>0,$ $\rho(0) = \rho$, $t\in(-\eps,\eps)$ one has
\begin{equation*}
G_m(\rho(t))\leq G_m(\rho) + o(t)\qquad\text{or}\qquad G_m(\rho(t))\geq G_m(\rho) + o(t)
\end{equation*}
as $t\to 0$.
\end{definition} 

\begin{proposition}
\label{prop:critical3}
Let $0<\rho\in C^\infty(\bd M)$  be a critical point of the functional~\eqref{def:functional3}.
Then there exists a smooth free boundary harmonic map $u\colon (M,g)\to \mathbb{B}^{k+1}$ and a positive constant  such that $\sigma_m(\rho)\rho = |\bd_n u|$ and the components of $u$ are $\sigma_m(\rho)$-eigenfunctions. In particular, $\ind_S(u)\leq m$. 

Conversely, let $u\colon (M,g)\to\mathbb{B}^{k+1}$ be a smooth free boundary harmonic map. Then $|\partial_nu |$ is a critical point of the functional~\eqref{def:functional3} with $m = \ind_S(u)$.
\end{proposition}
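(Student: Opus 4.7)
The argument parallels that of Proposition~\ref{prop:critical1}, with the volume measure $dv_g$ on $M$ replaced by the boundary measure $ds_g$ on $\bd M$, and Laplace eigenvalues replaced by weighted Steklov eigenvalues. My plan is the following. Given a critical density $\rho>0$, first apply the Hadamard-type first variation formula for weighted Steklov eigenvalues, derivable by perturbing the Rayleigh quotient $\int_M|d\phi|^2_g\,dv_g\big/\int_{\bd M}\rho\phi^2\,ds_g$. Standard Rellich--Kato perturbation theory (compare the use of~\cite[Lemma 3.2]{GNS} in the proof of Proposition~\ref{prop:critical2}) shows that along a smooth deformation $\rho(t)$, the eigenvalues branching from $\sigma_m(\rho)$ move to first order according to the eigenvalues of the quadratic form
$$A(\dot\rho)[\phi,\psi] = -\sigma_m(\rho)\int_{\bd M}\dot\rho\,\phi\psi\,ds_g$$
restricted to the $\sigma_m(\rho)$-eigenspace $E$ (with eigenfunctions normalized by $\int_{\bd M}\rho\phi_i\phi_j\,ds_g=\delta_{ij}$). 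The two-sided criticality of $G_m$ then forces the form $A(\dot\rho)+\sigma_m(\rho)\bigl(\int_{\bd M}\dot\rho\,ds_g\bigr)\bigl(\int_{\bd M}\rho\,ds_g\bigr)^{-1}\cdot I_E$ (the combined first-order variation of $\sigma_m\cdot\int\rho$) to be sign-indefinite on $E$ for every admissible $\dot\rho$.

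Next, apply the Hahn--Banach separation theorem in the space of symmetric bilinear forms on the finite-dimensional eigenspace $E$, exactly as in~\cite[Lemma 1]{KM}, to produce $\sigma_m(\rho)$-eigenfunctions $u_1,\ldots,u_{k+1}$ whose boundary values satisfy $\sum_{i=1}^{k+1}u_i^2\equiv\mathrm{const}$ on $\bd M$. After rescaling both $u$ and $\rho$, assume this constant is $1$. Setting $u=(u_1,\ldots,u_{k+1})\colon M\to\mathbb{R}^{k+1}$, the Steklov eigenvalue equation gives $\Delta_g u=0$ in $M$ and $\bd_n u=\sigma_m(\rho)\rho u$ on $\bd M$. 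Since $|u|^2=1$ on $\bd M$, we have $u(\bd M)\subset\mathbb{S}^k$, and the boundary flux $\bd_nu$ is parallel to $u$, hence normal to $\mathbb{S}^k$; in particular $|\bd_n u|=\sigma_m(\rho)\rho$ as claimed.

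It remains to verify $u^{-1}(\mathbb{S}^k)=\bd M$: the identity $\Delta_g|u|^2=-2|du|^2_g\leq 0$ says $|u|^2$ is subharmonic (in the $\Delta=d^*d$ convention), so the strong maximum principle and the boundary condition $|u|=1$ force $|u|<1$ strictly in the interior unless $u$ is constant, which is excluded since $\sigma_m(\rho)>0$ prevents vanishing of $\bd_n u=\sigma_m(\rho)\rho u$. The index bound $\ind_S(u)\leq m$ is immediate from the construction. For the converse, given a free boundary harmonic map $u$ with $\ind_S(u)=m$ and $\rho=|\bd_nu|$, the components $u_i$ form $\sigma_m(\rho)=1$-eigenfunctions with $\sum u_i^2=1$ on $\bd M$; summing the first-variation formula over $i$, the trace of $A(\dot\rho)$ on $\mathrm{span}\{u_i\}$ cancels the variation of $\int_{\bd M}\rho\,ds_g$ to first order, producing the sign-indefiniteness that is equivalent to two-sided criticality. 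The main obstacle throughout is handling the possibly high multiplicity of $\sigma_m(\rho)$ correctly, since only the \emph{branches} of the eigenvalue $\sigma_m(\rho(t))$ are differentiable at $t=0$; this forces the Hahn--Banach separation argument to be carried out in the space of quadratic forms on $E$ modulo multiples of the identity, rather than at the level of scalar derivatives, exactly as in the analogous surface argument in~\cite{FS:extremal,KM}.
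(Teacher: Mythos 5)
Your proposal is correct and follows essentially the same route as the paper: criticality forces the boundary quadratic form $-\sigma_m(\rho)\int_{\bd M}\dot\rho\,\phi\psi\,ds_g$ to be sign-indefinite on the $\sigma_m(\rho)$-eigenspace, a Hahn--Banach separation argument (as in \cite[Lemma 1]{KM}) yields eigenfunctions with $\sum_i u_i^2\equiv 1$ on $\bd M$, and then subharmonicity of $|u|^2$ plus the maximum principle and the boundary relation $\bd_n u=\sigma_m(\rho)\rho\,u$ identify $u$ as a free boundary harmonic map with $|\bd_n u|=\sigma_m(\rho)\rho$; the converse via the trace/sign-indefiniteness argument is the same standard computation the paper defers to \cite{KM}. (Only a cosmetic point: it suffices to rescale the eigenfunctions, not $\rho$, to normalize the constant to $1$.)
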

\begin{proof}
The proof is similar to that of Proposition~\ref{prop:critical1}. The computation in the proof of~\cite[Theorem 9]{KM} gives that the quadratic form
$$
B(\rho'(0))[\phi,\psi] = -\sigma_m(\rho)\int_{\bd M}\rho'(0)\phi\psi\,ds_g
$$
on $\sigma_m(\rho)$-eigenspace is sign indefinite as long as $\int_{\bd M}\rho'(0)\,ds_g=0$.
Combining with~\cite[Lemma 1]{KM} 
we deduce the existence of $\sigma_m(\rho)$-eigenfunctions $u_1,\ldots, u_{k+1}$ such that
$\sum_{i=1}^{k+1} (u_i)^2 = 1$ on $\bd M$. Setting $u = (u_1,\ldots, u_{k+1})$ one has $\Delta_g(|u|^2) = -2|du|_g^2\leq 0$, therefore, by the maximum principle $|u|\leq 1$ with equality only on $\bd M$. Furthermore, $\bd_n u = \sigma_m(\rho)\rho u = |\bd_n u|u$, so $u$ is a free boundary harmonic map and $\sigma_m(\rho)\rho = |\bd_n u|$.
\end{proof}

Finally, we state the corresponding result for potentials.

\begin{definition}
Let $v\in C^\infty(\bd M)$. We say that $v$ is a critical potential for the functional 
\begin{equation}
\label{def:functional4}
v\mapsto \int_Mv\,ds_g
\end{equation}
in $\spaceV^\bd_m\cap C^\infty(\bd M)$ if for any smooth deformation $v(t)\in \spaceV^\bd_m\cap C^\infty(\bd M)$, $v(0) = v$, $0\leq t < \eps$ one has 
\begin{equation*}
\int v(t)\,ds_g\leq\int v\,dv_g + o(t)
\end{equation*}
as $t\to 0+$.
\end{definition}

\begin{proposition}
Let $v\in\spaceV^{\bd}_m$ be a smooth potential critical for the functional~\eqref{def:functional4}
 in $\spaceV^{\bd}_m$. Then there exists a smooth free boundary harmonic map $u\colon (M,g)\to \mathbb{B}^{k+1}$ such that $|\bd_n u| = v$ and  $\nu_m(v\,ds_g) = 0$. In particular, $\ind_S(u)\leq m$. 

Conversely, let $u\colon (M,g)\to\mathbb{B}^{k+1}$ be a smooth free boundary harmonic map. Then $|\bd_n|\in\spaceV^\bd_{\ind_S(u)}$ is a critical potential for the functional~\eqref{def:functional4} in $\spaceV^\bd_{\ind_S(u)}$.
\end{proposition}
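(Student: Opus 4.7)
The plan is to closely mirror the argument for Proposition~\ref{prop:critical2}, transferring every step from the bulk setting to the boundary setting of Example~\ref{ex:boundarypotential}. First I will verify that $\nu_m(v\,ds_g)=0$: if instead $\nu_m(v\,ds_g)>0$, then the deformation $v(t)=v+t$ lies in $\spaceV^{\bd}_m\cap C^{\infty}(\bd M)$ for small $t>0$, while $\int_{\bd M}v(t)\,ds_g-\int_{\bd M}v\,ds_g=t\vol(\bd M,g)>0$, contradicting~\eqref{def:critical2} in its boundary form. Hence $0$ is an eigenvalue of the Robin problem with potential $v$; let $p+1$ denote its multiplicity.

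Next, for an arbitrary $W\in C^{\infty}(\bd M)$ with $\int_{\bd M}W\,ds_g=0$, I would apply Rellich--Kato perturbation theory to the one-parameter family of Robin operators with potentials $v+tW$. The first-order expansion of the $p+1$ eigenvalues branching from $\nu_m=0$ is governed by the quadratic form
$$B(W)[\phi,\psi]=-\int_{\bd M}W\phi\psi\,ds_g$$
on the $\nu_m$-eigenspace. I claim $B(W)$ must be sign-indefinite. Indeed, if $B(W)>0$, then $B(W+\delta)>0$ for sufficiently small $\delta>0$ (since $B(1)=-\int W$ shifts the form by a scalar matrix), so the perturbation $v+t(W+\delta)$ stays in $\spaceV^{\bd}_m$ for small $t>0$, while the integral increases linearly at rate $\delta\vol(\bd M,g)>0$, violating~\eqref{def:critical2}. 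The case $B(W)<0$ is handled by applying the same reasoning to $-W$, whose integral also vanishes.

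The Hahn--Banach separation argument from~\cite[Lemma 1]{KM}, which is exactly what was invoked in Propositions~\ref{prop:critical1}--\ref{prop:critical2} and~\ref{prop:critical3}, then produces $\nu_m(v\,ds_g)$-eigenfunctions $u_1,\dots,u_{k+1}$ with $\sum_i u_i^2\equiv 1$ on $\bd M$. Set $u=(u_1,\ldots,u_{k+1})\colon M\to\R^{k+1}$. Since each $u_i$ satisfies $\Delta_g u_i=0$ in the interior (as $\nu_m=0$), $u$ is harmonic in $M$; since $|u|\equiv 1$ on $\bd M$, $u$ maps $\bd M$ into $\mathbb{S}^k$, and by the maximum principle $u(M)\subset\mathbb{B}^{k+1}$ with $u^{-1}(\mathbb{S}^k)=\bd M$. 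The Robin condition $\bd_n u_i=v\, u_i$ on $\bd M$ gives $\bd_n u=v\,u$, which is parallel to the outward normal of $\mathbb{S}^k$ at $u$; hence $u$ is a free boundary harmonic map with $|\bd_n u|=v$. The index bound $\ind_S(u)\leq m$ is immediate from $\nu_m(v\,ds_g)=0$ together with the definition of $\ind_S$ in the Robin picture.

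For the converse, given a smooth free boundary harmonic map $u\colon M\to\mathbb{B}^{k+1}$, the eigenvalue equation already gives $|\bd_n u|\in\spaceV^{\bd}_{\ind_S(u)}$. The components $u_1,\ldots,u_{k+1}$ lie in the $0$-eigenspace of the Robin operator with potential $v=|\bd_n u|$ and satisfy $\sum u_i^2\equiv 1$ on $\bd M$. For any admissible perturbation $v(t)\in\spaceV^{\bd}_{\ind_S(u)}$ with $v(0)=v$, criticality will follow by testing: using the $u_i$ as test functions in the variational characterization and summing over $i$, the first-order terms reproduce the quadratic form $B(v'(0))$ on the eigenspace, and the constraint $v(t)\in\spaceV^{\bd}_{\ind_S(u)}$ forces the one-sided inequality $\int_{\bd M}v(t)\,ds_g\leq \int_{\bd M}v\,ds_g+o(t)$ exactly as in the last paragraph of the proof of Proposition~\ref{prop:critical2}. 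The principal obstacle is purely bookkeeping: making sure that the boundary integration by parts and the Rellich--Kato expansions used in~\cite{KM} apply verbatim to the Robin operator $\Delta_g-v\delta_{\bd M}$ in the admissible measure framework of Section~\ref{sec:measures}, which is standard.
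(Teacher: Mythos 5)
Your argument is correct and is essentially the paper's own proof: the paper simply observes that the argument of Proposition~\ref{prop:critical2} carries over with the boundary quadratic form $B(W)[\phi,\psi]=-\int_{\bd M}W\phi\psi\,ds_g$ and that the endgame (maximum principle, $\bd_n u = v\,u$ on $\bd M$, hence a free boundary harmonic map with $|\bd_n u|=v$) is that of Proposition~\ref{prop:critical3}, which is exactly your route. One small repair: your parenthetical claim that adding $\delta$ ``shifts the form by a scalar matrix'' is off, since the eigenfunctions are orthonormal in $L^2(M,dv_g)$ rather than in $L^2(\bd M,ds_g)$; the shift is by $-\delta$ times the fixed, positive semidefinite boundary Gram form, which still preserves $B(W)>0$ for $\delta$ small, so the conclusion is unaffected.
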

\begin{proof}
The proof is completely analogous to the proof of Proposition~\eqref{prop:critical2}. The only difference is in the expression for the quadratic form~\eqref{Aform}. For deformations $v(t)$ the eigenvalues $\nu_m(v(t)\,ds_g)$ change according to the eigenvalues of 
$$
B(v'(0))[\phi,\psi] = -\int_{\bd M}v'(0)\phi\psi\,ds_g.
$$
The same argument yields the fact that $B$ is sign indefinite as long as $\int_{\bd M}v'(0)\,ds_g = 0$.
One then concludes the existence of eigenfunctions $u_1,\ldots, u_{k+1}$ such that $\sum_{i=1}^{k+1} (u_i)^2 = 1$ on $\bd M$. The end of the proof is the same as in Proposition~\ref{prop:critical3}.
\end{proof}

\begin{remark}
The last two results suggest that it should be possible to develop existence theory for free boundary harmonic maps along the lines of what is done in Sections 2 and 3. 
\end{remark}

\section{Appendix}

\subsection{Properties of the distance function to a submanifold $N\subset \mathbb{R}^L$}\label{app.1}

As in Section \ref{sec:minmax}, let $N^k$ be a closed manifold of dimension $k$ embedded isometrically $N\subset \mathbb{R}^L$ in the Euclidean space $\mathbb{R}^L$. Let $U=B_{\delta_0}(N)\subset\mathbb{R}^L$ be a tubular neighborhood on which the nearest-point projection
$$\Pi_N\colon U\to N$$
is smooth, and for every $x\in U$, denote by $P(x),P^{\perp}(x)\in \mathrm{End}(\mathbb{R}^L)$ the projections
$$P(x):=P_{T_{\Pi(x)}N}\text{ onto }T_{\Pi(x)}N$$
(viewing $T_{\Pi(x)}N$ as a subspace of $\mathbb{R}^L$) and
$$P^{\perp}(x)=I-P(x).$$
For $x\in N$, denote by $\II_N(x)\in \mathrm{Sym}^2(TN)\otimes T^{\perp}N$ the second fundamental form 
$$\II_N(x)(X,Y):=(D_XY)^{\perp}\text{ for }X,Y\in \Gamma(TN),$$
and for general $x\in U$, denote by $B(x)\in \mathrm{Sym}^2(\mathbb{R}^L)\otimes \mathbb{R}^L$ the tensor
$$B(x)(X,Y):=\II_N(\Pi_N(x))(P(x)X, P(x)Y).$$

Since the nonlinear potential term in the energies $E_{\epsilon}$ defined in Section \ref{sec:minmax} coincides with the smooth function $\frac{1}{2}d_N^2$ in a neighborhood of $N$, it will be useful for us to record the following estimate for $\Hess(d_N^2)$. The following proposition is proved in~\cite[Propositions 3.3, 3.5]{Man}.

\begin{lemma}[Mantegazza~\cite{Man}]
\label{dsquared.lem} There is a constant $C(N)$ such that on the tubular neighborhood $U=B_{\delta_0}(N)$,
\begin{equation*}
\begin{split}
&\left|\Hess\left(\frac{1}{2}d_N^2\right)(x)(X,Y)-\langle X, P^{\perp}(x)Y\rangle+d_N(x)\langle B(x)(X,Y),\nabla d_N(x)\rangle\right|\\
&\leq Cd_N^2(x)|X||Y|
\end{split}
\end{equation*}
for any $x\in U$ and $X,Y\in \mathbb{R}^L$.
\end{lemma}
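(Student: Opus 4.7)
My plan is to compute $\Hess(\frac{1}{2}d_N^2)$ explicitly from the classical identity $\nabla(\frac{1}{2}d_N^2)(x) = x - \Pi_N(x)$ on $U$, so that the Hessian becomes $I - D\Pi_N(x)$, and then to expand $D\Pi_N$ to first order in $d_N(x)$, with the second fundamental form of $N$ entering through the geometry of the normal bundle.

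First I would fix an arbitrary $x_0 \in U$, set $y_0 := \Pi_N(x_0)$, and introduce local frames adapted to the tubular structure: an orthonormal tangent frame $\{\partial_i\}_{i=1}^{k}$ on $N$ near $y_0$, together with an orthonormal normal frame $\{N_\alpha\}_{\alpha=1}^{L-k}$ of $\nu N$ chosen so that the normal connection $\nabla^\perp N_\alpha$ vanishes at $y_0$. This last choice forces $\partial_i N_\alpha(y_0) = -h_{ij}^\alpha \partial_j$, where $h_{ij}^\alpha := \langle \II_N(\partial_i,\partial_j), N_\alpha\rangle$. The exponential-type parametrization $(y,t) \mapsto y + t^\alpha N_\alpha(y)$ is then a local diffeomorphism onto a neighborhood of $x_0$ under which $\Pi_N$ corresponds to projection onto the first factor, and $x_0 - y_0 = t_0^\alpha N_\alpha(y_0)$ with $|t_0| = d_N(x_0)$.

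Differentiating the parametrization at $(y_0, t_0)$ yields, in the tangent/normal splitting of $\mathbb{R}^L$ at $y_0$, a Jacobian with block form
\[
J = \begin{pmatrix} M & 0 \\ 0 & I \end{pmatrix}, \qquad M_{ij} = \delta_{ij} - t_0^\alpha h_{ij}^\alpha.
\]
Inverting, $D\Pi_N(x_0)$ kills the normal component and acts as $M^{-1}$ on $T_{y_0}N$. Writing $M = I - A$ with $\|A\| \leq C\, d_N(x_0)$, the Neumann series gives $I - M^{-1} = -A + O(d_N^2)$ uniformly. Decomposing $X, Y \in \mathbb{R}^L$ into tangent and normal parts at $y_0$, the normal-normal block of $\Hess(\frac{1}{2}d_N^2)(x_0) = I - D\Pi_N(x_0)$ equals $\langle P^\perp(x_0)X, Y\rangle$ exactly, while the tangent-tangent block equals $-A_{ij}X^i Y^j + O(d_N^2)|X||Y|$.

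Finally I would identify the leading tangential correction with the claimed geometric expression. Since $\nabla d_N(x_0)$ is the unit normal $t_0^\alpha N_\alpha(y_0)/d_N(x_0)$, unpacking the definition $B(x_0)(X,Y) = h_{ij}^\alpha X^i Y^j N_\alpha(y_0)$ gives $d_N(x_0)\langle B(x_0)(X,Y), \nabla d_N(x_0)\rangle = t_0^\alpha h_{ij}^\alpha X^i Y^j = A_{ij}X^i Y^j$, and the estimate follows. Uniformity in $x_0$ comes from compactness of $N$ and smooth dependence of all geometric data on the foot point $y_0$; the case $x_0 \in N$ is immediate by continuity, both sides reducing to $\langle X, P^\perp(x_0)Y\rangle$. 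The main technical obstacle is the bookkeeping of the Neumann remainders: one must check that the quadratic and higher-order terms in $A$, as well as any cross terms that would appear without the parallel normal frame, are genuinely of order $d_N^2|X||Y|$ with constants controlled by $\|\II_N\|_{C^1(N)}$. Once this local expansion is secured, the identification of the first-order correction with $\II_N$ through $B$ is essentially algebraic.
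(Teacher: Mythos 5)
Your argument is correct, but it is worth noting that the paper does not prove this lemma at all: it simply quotes it from Mantegazza's notes (Propositions 3.3 and 3.5 of \cite{Man}), so what you have written is a self-contained derivation of the cited fact, along essentially the standard lines used there. Your route — $\nabla(\tfrac12 d_N^2)(x)=x-\Pi_N(x)$, hence $\Hess(\tfrac12 d_N^2)=I-D\Pi_N$, then computing $D\Pi_N$ by inverting the differential of the tubular-neighborhood map $(y,t)\mapsto y+t^\alpha N_\alpha(y)$ — is sound: with the normal frame chosen so that $\nabla^\perp N_\alpha(y_0)=0$, the Jacobian is indeed block diagonal, the normal-normal block of the Hessian is exactly $\langle X,P^\perp(x_0)Y\rangle$, and the tangential block $I-M^{-1}=-A+O(\|A\|^2)$ with $A_{ij}=t_0^\alpha h^\alpha_{ij}$ matches $-d_N(x_0)\langle B(x_0)(X,Y),\nabla d_N(x_0)\rangle$ because $\nabla d_N(x_0)=t_0^\alpha N_\alpha(y_0)/|t_0|$ and $B$ only sees the tangential projections of $X,Y$. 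Two small points deserve care. First, uniformity of the constant: the Neumann expansion needs $\|A\|\leq d_N(x_0)\sup|\II_N|$ bounded away from $1$, and near the boundary of the maximal tubular neighborhood $M=I-t^\alpha h^\alpha$ can degenerate, so compactness of $N$ alone is not quite the reason the constant is finite — one should either take $\delta_0$ strictly smaller than the reach (which is how $\delta_0(N)$ is implicitly fixed in Section 2) or argue via smoothness of $(y,t)\mapsto M^{-1}$ on the compact closure of a slightly smaller neighborhood. Second, the identity $\nabla(\tfrac12 d_N^2)=x-\Pi_N(x)$ and the symmetry of $D\Pi_N$ deserve a one-line justification (orthogonality of $x-\Pi_N(x)$ to the image of $D\Pi_N(x)$), but these are classical. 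With those points made explicit, your proof is complete and gives exactly the statement the paper imports from \cite{Man}.
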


\subsection{Proofs of monotonicity and small energy regularity for the Ginzburg-Landau approximation}

The energy monotonicity and small energy regularity results for functionals like those defined in Section 2.1 above are well known to experts (see in particular \cite{CS}), but since it is somewhat difficult to find a complete, correct proof for the small energy regularity statement in the literature, we include the arguments below for the reader's convenience.

\begin{lemma}\label{mono}
Let $u\colon M\to \mathbb{R}^L$ be a critical point for $E_{\epsilon}$. Then on geodesic balls $B_r(p)$ of radius $r<\inj(M)$, we have
$$\frac{d}{dr}\left(e^{Cr^2}r^{2-n}\int_{B_r(p)}e_{\epsilon}(u)\right) \geq e^{Cr^2}r^{2-n}\left(\int_{\partial B_r(p)}\left|\frac{\partial u_{\epsilon}}{\partial \nu_p}\right|^2+\frac{2}{r}\int_{B_r(p)}\frac{W(u_{\epsilon})}{\epsilon^2}\right),$$
where $\nu_p$ denotes the gradient of the distance function $d_p$ to $p$, and $C=C(n,k)$ is a constant depending on the geometry of $(M,g)$ only through the dimension $n=\dim M$ and a sectional curvature bound $k\geq |\mathrm{Sec}_M|$.
\end{lemma}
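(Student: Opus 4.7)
The plan is to derive the monotonicity from a Pohozaev-type identity obtained by testing the divergence-free stress-energy tensor against a suitable radial vector field. Specifically, for a smooth solution of $\Delta_g u + \epsilon^{-2}DW(u) = 0$, a direct calculation in local coordinates shows that the stress-energy tensor
$$T_\epsilon := \left(\tfrac{1}{2}|du|_g^2 + \epsilon^{-2}W(u)\right)g - du^*du$$
satisfies $\nabla^j (T_\epsilon)_{ij} = \partial_i u \cdot (\Delta_g u + \epsilon^{-2}DW(u)) = 0$. Note $T_\epsilon$ is not trace-free: $\mathrm{tr}_g(T_\epsilon) = (n-2)e_\epsilon(u) + 2\epsilon^{-2}W(u)$, and this non-vanishing trace is precisely what produces the monotonic quantity.

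Next, on $B_r(p)$ with $r<\mathrm{inj}(M,g)$, I would test against the radial vector field $X = \tfrac{1}{2}\nabla(d_p^2) = d_p\,\nabla d_p$. Since $\nabla^j T_{ij} = 0$, integration by parts gives
$$\int_{B_r(p)} \langle T_\epsilon, \nabla X\rangle\,dv_g = \int_{\partial B_r(p)} T_\epsilon(\nu_p, X)\,d\sigma_g.$$
On $\partial B_r(p)$ we have $X = r\nu_p$, so the boundary term reduces to $r\int_{\partial B_r(p)}\bigl(e_\epsilon(u) - |\partial_{\nu_p} u|^2\bigr)\,d\sigma_g$. For the left-hand side, Hessian comparison using the bound $|\mathrm{sec}_M|\leq k$ yields $\nabla X = g + E$ with $|E|_g \leq C(n,k)d_p^2$, so
$$\int_{B_r(p)}\langle T_\epsilon, \nabla X\rangle = \int_{B_r(p)} \bigl((n-2)e_\epsilon(u) + 2\epsilon^{-2}W(u)\bigr)\,dv_g + R(r),$$
with $|R(r)| \leq C r^2 \int_{B_r(p)} e_\epsilon(u)\,dv_g$.

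Combining and using $\tfrac{d}{dr}\int_{B_r(p)} e_\epsilon(u) = \int_{\partial B_r(p)} e_\epsilon(u)$, one obtains, for $F(r) := r^{2-n}\int_{B_r(p)}e_\epsilon(u)\,dv_g$, the differential inequality
$$F'(r) \geq r^{2-n}\int_{\partial B_r(p)}|\partial_{\nu_p}u|^2\,d\sigma_g + \tfrac{2}{r}\,r^{2-n}\!\!\int_{B_r(p)}\epsilon^{-2}W(u)\,dv_g - CrF(r).$$
Multiplying by the integrating factor $e^{Cr^2/2}$ (and renaming the constant) absorbs the error term, yielding
$$\tfrac{d}{dr}\bigl(e^{Cr^2}F(r)\bigr)\geq e^{Cr^2}r^{2-n}\Bigl(\int_{\partial B_r(p)}|\partial_{\nu_p}u|^2\,d\sigma_g + \tfrac{2}{r}\int_{B_r(p)}\epsilon^{-2}W(u)\,dv_g\Bigr),$$
which is the desired inequality.

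The main technical point is the curvature correction: one has to verify that $\nabla X = g + E$ with a quantitative bound $|E|_g \leq C(n,k)d_p^2$ inside the injectivity radius, using the Hessian comparison estimate $|\mathrm{Hess}(d_p^2/2) - g|_g \leq C(n,k)d_p^2$ (equivalent to $|d_p\mathrm{Hess}(d_p) - (g - dd_p\otimes dd_p)|\leq C d_p^2$), which is standard but needs to be applied carefully because we want the constant $C$ to depend only on $n$ and $k$. Everything else is bookkeeping, including confirming that the sign of the error in the integrating-factor step is compatible with the stated inequality.
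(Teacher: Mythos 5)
Your proposal is correct and follows essentially the same argument as the paper: testing the divergence-free stress-energy tensor $T_\epsilon$ against $X=\tfrac{1}{2}\nabla d_p^2$, using Hessian comparison to control $\nabla X-g$ by $C(n,k)d_p^2$, and absorbing the resulting error with the factor $e^{Cr^2}$. No gaps to flag.
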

\begin{proof} The map $u\colon M\to \mathbb{R}^L$ is a critical point for $E_{\epsilon}$ if and only if it satisfies \eqref{gl.eqn}, and it is straightforward to check that the two-tensor
$$T_{\epsilon}(u):=e_{\epsilon}(u_{\epsilon})g-du_{\epsilon}^*du_{\epsilon}$$
must be divergence-free for any map $u$ solving \eqref{gl.eqn}, where we've set
$$e_{\epsilon}(u):=\frac{1}{2}|du|^2+\frac{W(u)}{\epsilon^2}.$$

Pairing the condition $\mathrm{div}(T_{\epsilon}(u))=0$ against the vector field $X(p)=\frac{1}{2}\nabla d_p^2$ on a small geodesic ball $B_r(p)$, one finds
$$\int_{B_r(p)}\langle T_{\epsilon}(u),DX\rangle=\int_{\partial B_r(p)}T_{\epsilon}(u)(X,\nu_p)=r\int_{\partial B_r(p)}T_{\epsilon}(u)(\nu_p,\nu_p).$$
It follows from the Hessian comparison theorem for the distance function $d_p$ that $|DX-g|\leq Cd_p^2$ for some constant $C(n,k)$ depending only on $n=\dim M$ and a sectional curvature bound $|\mathrm{Sec}_M|\leq k$, so the preceding identity yields
$$r\int_{\partial B_r(p)}T_{\epsilon}(u)(\nu_p,\nu_p)\geq \int_{B_r(p)}\langle T_{\epsilon}(u),g\rangle-C'(n,k) r^2\int_{B_r(p)}e_{\epsilon}(u).$$
By definition of $T_{\epsilon}(u)$, we can rewrite the above as
\begin{equation*}
\begin{split}
&r\int_{\partial B_r(p)}\left(e_{\epsilon}(u_{\epsilon})-\left|\frac{\partial u_{\epsilon}}{\partial\nu_p}\right|^2\right)\geq \\
\geq&\int_{B_r(p)}(n-2)e_{\epsilon}(u)+2\frac{W(u)}{\epsilon^2}-C'(n,k)r^2\int_{B_r(p)}e_{\epsilon}(u),
\end{split}
\end{equation*}
from which the desired monotonicity statement follows.
\end{proof}

\begin{lemma}\label{gl.epsreg} There exists a constant $\eta_0(N,n,k)>0$ depending only on the target manifold $N\subset \mathbb{R}^L$, the dimension $n=\dim M$, and a sectional curvature bound $|\mathrm{Sec}(M,g)|\leq k$, such that if $u\colon B_r(p)\to N$ solves \eqref{gl.eqn} on a ball $B_{2r}(p)\subset M$ with $2r<\min\{\inj(M,g),1\}$, and
$$r^{2-n}E_{\epsilon}(u;B_{2r}(p))<\eta_0,$$
then $r^2 e_{\epsilon}(u)\leq 1$ on $B_{r/2}(p)$.
\end{lemma}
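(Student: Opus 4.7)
This is the standard small-energy $\eps$-regularity statement for Ginzburg--Landau-type approximations of harmonic maps; I would prove it by a blow-up/compactness argument combining the point-selection lemma with the monotonicity formula (Lemma~\ref{mono}) and the compactness result of Lin--Wang (Theorem~\ref{lw.thm}). Suppose the conclusion fails: then one can extract sequences $\eps_i>0$ and critical points $u_i\colon B_{2r_i}(p_i)\to \mathbb{R}^L$ of $E_{\eps_i}$ (with $2r_i<\min\{\inj(M,g),1\}$) satisfying $\eta_i:=r_i^{2-n}E_{\eps_i}(u_i;B_{2r_i}(p_i))\to 0$ while $\sup_{B_{r_i/2}(p_i)} r_i^2\,e_{\eps_i}(u_i)>1$. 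Applying the classical point-selection to $h_i(x):=(r_i-d(x,p_i))^2\,e_{\eps_i}(u_i)(x)$ on $\overline{B_{r_i}(p_i)}$ produces points $x_i\in B_{r_i}(p_i)$ and scales $s_i:=r_i-d(x_i,p_i)$ with, writing $e_i:=e_{\eps_i}(u_i)(x_i)$, the bounds $s_i^2 e_i\geq 1/4$ and $e_{\eps_i}(u_i)\leq 4e_i$ on $B_{s_i/2}(x_i)$.

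Rescale with $\lambda_i:=e_i^{-1/2}$, $\tilde\eps_i:=\eps_i\sqrt{e_i}$, $\tilde g_i:=\lambda_i^{-2}\exp_{x_i}^*g$, and set $v_i(y):=u_i(\exp_{x_i}(\lambda_i y))$: each $v_i$ is critical for $E_{\tilde\eps_i,\tilde g_i}$ on a ball of radius $s_i\sqrt{e_i}/2\geq 1/4$ in $(T_{x_i}M,\tilde g_i)$, with $e_{\tilde\eps_i,\tilde g_i}(v_i)(0)=1$, $e_{\tilde\eps_i,\tilde g_i}(v_i)\leq 4$ on its domain, and $\tilde g_i\to g_{\mathrm{Euc}}$ in $C^\infty_{\mathrm{loc}}$. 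Scale-invariance of the normalized energy, combined with monotonicity centered at $x_i$, yields
$$
R^{2-n}\int_{B_R(0)} e_{\tilde\eps_i,\tilde g_i}(v_i)\,d\tilde v_i \;=\; (R\lambda_i)^{2-n}\int_{B_{R\lambda_i}(x_i)} e_{\eps_i}(u_i)\,dv_g \;\leq\; C\eta_i \;\longrightarrow\; 0
$$
for every fixed $R>0$, so the total rescaled energy on every compact set vanishes in the limit.

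Passing to a subsequence, distinguish cases by the limit of $\tilde\eps_i$. In the non-degenerate regime $\tilde\eps_i\to \tilde\eps_\infty\in (0,\infty)$, standard elliptic estimates applied to $\Delta_{\tilde g_i}v_i+DW(v_i)/\tilde\eps_i^2=0$ yield uniform $C^{1,\alpha}_{\mathrm{loc}}$ bounds; a subsequential limit $v_\infty$ is a critical point of the limit functional on $\mathbb{R}^n$ with $e_{\tilde\eps_\infty}(v_\infty)(0)=1$ but $\int_{B_R(0)} e_{\tilde\eps_\infty}(v_\infty)=0$, a contradiction. The case $\tilde\eps_i\to\infty$, where the limiting equation is Laplace's equation in $\mathbb{R}^L$, is handled analogously. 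In the regime $\tilde\eps_i\to 0$, Theorem~\ref{lw.thm} applies to the rescaled maps (of bounded energy density), producing a weakly harmonic limit $v_\infty\colon \mathbb{R}^n\to N$ and a discrepancy measure $\nu$; the vanishing of the rescaled total energy forces $\nu\equiv 0$, hence smooth convergence $v_i\to v_\infty\equiv c\in N$ on neighborhoods of $0$. A further, equation-based argument (explained next) then forces $e_{\tilde\eps_i,\tilde g_i}(v_i)(0)\to 0$, contradicting the pointwise normalization.

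\emph{Main obstacle.} The delicate point is the last assertion in the case $\tilde\eps_i\to 0$: a priori the pointwise normalization $e_{\tilde\eps_i}(v_i)(0)=1$ could be carried by the potential $W(v_i)/\tilde\eps_i^2$ spiking near $0$ on a scale of order $\tilde\eps_i$, compatibly with the vanishing of rescaled total energy. One closes the argument by exploiting the smooth convergence $v_i\to c$ together with the equation: from $C^2_{\mathrm{loc}}$ convergence one has $\Delta v_i(0)\to 0$, and the Euler--Lagrange equation gives $DW(v_i(0))=-\tilde\eps_i^2\Delta v_i(0)$, so $|DW(v_i(0))|=o(\tilde\eps_i^2)$; near the minimum $c\in N$ of $W$ one has $|DW(v)|\simeq 2|P^\perp(v-c)|$, whence $|v_i(0)-c|=o(\tilde\eps_i^2)$ and therefore $W(v_i(0))/\tilde\eps_i^2 = o(\tilde\eps_i^2)\to 0$. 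Combined with $|dv_i(0)|^2\to 0$ (from $C^1$ convergence to the constant $c$), this yields $e_{\tilde\eps_i,\tilde g_i}(v_i)(0)\to 0$, the required contradiction.
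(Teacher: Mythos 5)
Your overall strategy (contradiction, point selection on $(r_i-d(\cdot,p_i))^2e_{\eps_i}(u_i)$, rescaling, monotonicity to kill the normalized energy, equation-based control of the potential term at the blow-up point) is coherent in outline, and you correctly identified the genuinely delicate point, namely that in the regime $\tilde\eps_i\to 0$ the pointwise normalization could a priori be carried by $W(v_i)/\tilde\eps_i^2$. However, the way you close that case creates a genuine gap: you invoke Theorem~\ref{lw.thm} (Lin--Wang compactness) to get smooth local convergence of the rescaled maps, but the blow-up analysis behind that theorem is built precisely on the two ingredients the paper states beforehand -- the monotonicity formula \emph{and this very small-energy regularity lemma}. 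Within the paper's logical structure (and in the proof in \cite{LW99} itself) the $\eps$-regularity estimate is an input to the compactness theorem, so using the compactness theorem to prove the estimate is circular. Moreover, even taken as an external black box, Theorem~\ref{lw.thm} is stated for a fixed closed manifold and a fixed sequence $\eps\to 0$; you need a localized version on balls with varying metrics $\tilde g_i$, which requires separate justification. Note also that in the regime $\tilde\eps_i\to 0$ the uniform bound $e_{\tilde\eps_i}(v_i)\leq 4$ does \emph{not} give compactness by elliptic theory alone, since the right-hand side of the equation is only bounded by $Cd_N(v_i)/\tilde\eps_i^2\leq C/\tilde\eps_i$; so the crucial case cannot be patched by ``standard estimates'' and really needs the quantitative argument.

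By contrast, the paper's proof is direct and avoids compactness altogether, in the spirit of Chen--Struwe: a Bochner-type differential inequality $d^*d\,e_{\eps}(u)\leq C e_\eps(u)+Ce_\eps(u)^2$ (using the Hessian estimates for $d_N^2$ from Lemma~\ref{dsquared.lem}), point selection on $\psi=\dist(\cdot,\partial B_r(p))^2e_\eps(u)$, a mean value inequality coming from Hessian comparison, and the monotonicity formula, combined via a continuity argument for $\beta(\sigma)=\sigma^2e_\eps(u)(x_0)$; this yields the bound with constants visibly depending only on $N$, $n$ and the curvature bound. Two further (more minor) issues with your write-up: the uniformity of $\eta_0$ in $(M,g)$ through only $(N,n,k)$ is part of the statement, and a contradiction/compactness argument must take sequences of manifolds (or work in coordinates of uniform geometry) to deliver it; and the claim $\tilde g_i\to g_{\mathrm{Euc}}$ in $C^\infty_{\mathrm{loc}}$ is not justified by a two-sided sectional curvature bound alone (only $C^{1,\alpha}$-type control is available, which would in fact suffice). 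Finally, in your last step the correct conclusion from $|DW(v_i(0))|=o(\tilde\eps_i^2)$ is $d_N(v_i(0))=o(\tilde\eps_i^2)$, not $|v_i(0)-c|=o(\tilde\eps_i^2)$; this slip is harmless for the intended conclusion $W(v_i(0))/\tilde\eps_i^2\to 0$, but the main circularity above must be repaired, e.g.\ by replacing the appeal to Theorem~\ref{lw.thm} with a direct Bochner-based argument -- at which point one is essentially reproducing the paper's proof.
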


\begin{proof} Here we follow essentially the same argument as in \cite{CS}, taking care in our computations to avoid some minor errors introduced in that paper. Note first that for solutions of \eqref{gl.eqn}, the Bochner formula gives
\begin{equation*}
\begin{split}
-&d^*de_{\epsilon}(u)=\\
=&|\Hess(u)|^2+\langle \Ric(g),du^*du\rangle-\langle du, d\Delta u\rangle+\mathrm{div}\left(\frac{D_jW(u)}{\epsilon^2}du^j\right)\\
=&|\Hess(u)|^2+\langle \Ric(g),du^*du\rangle
+2\epsilon^{-2}D^2_{ij}W(u)\langle du^i,du^j\rangle+\frac{|DW(u)|^2}{\epsilon^4}.
\end{split}
\end{equation*}
By Lemma \ref{dsquared.lem}, we know that
$$\Hess(d_N^2)\geq -C(N) d_N g_{\mathbb{R}^L}$$
as quadratic forms on a tubular neighborhood of $N\subset \mathbb{R}^L$, so since $W(u)=\frac{1}{2}d_N(u)^2$ where $d_N(u)\leq \delta_0/2$, it follows that
$$D^2W(u)\geq -C(N)|DW(u)|g_{\mathbb{R}^L}\text{ where }d_N(u)\leq \delta_0/2,$$
and it is straightforward to check that
$$|D^2W(u)|\leq C(N)W(u)\text{ where }d_N(u)\geq \delta_0/2,$$
so that
\begin{equation}\label{hess.w.est}
D^2W(u)\geq -C(N)\left(|DW(u)|+W(u)\right)g_{\mathbb{R}^L}
\end{equation}
holds for any value of $u$.

Using \eqref{hess.w.est} and a simple application of Young's inequality, we see that
\begin{equation*}
\begin{split}
&2\epsilon^{-2}D^2_{ij}W(u)\langle du^i,du^j\rangle+\frac{|DW(u)|^2}{\epsilon^4}\geq\\
\geq& -C(N)\epsilon^{-2}\left(|DW(u)|+W(u)\right)|du|^2+\frac{|DW(u)|^2}{\epsilon^4}\\
\geq &-C(N)^2|du|^4-\frac{|DW(u)|^2}{\epsilon^4}-\frac{W(u)^2}{\epsilon^4}+\frac{|DW(u)|^2}{\epsilon^4}\\
\geq & -C(N)^2|du|^4-\frac{W(u)^2}{\epsilon^4}\geq -C'(N)e_{\epsilon}(u)^2.
\end{split}
\end{equation*}
Returning to the Bochner identity computation, we arrive at the following estimate.
\begin{equation}\label{gl.boch}
-d^*d e_{\epsilon}(u)\geq -C(n,k)e_{\epsilon}(u)-C(N)e_{\epsilon}(u)^2.
\end{equation}

Now, define $\psi\in C^1_0(B_r(p))$ by
$$\psi(x):=\dist(x,\partial B_r(p))^2e_{\epsilon}(u)(x),$$
and suppose $\psi$ achieves its max at $x_0\in B_r(p)$. Setting $\sigma_0:=\dist(x_0,\partial B_r(p))/2<1$, observe that
$$\dist(x,\partial B_r(p))\geq \dist(x_0,\partial B_r(p))-\sigma_0\geq \frac{1}{2} \dist(x_0,\partial B_r(p))\text{ for all }x\in B_{\sigma_0}(x_0),$$
and consequently
$$e_{\epsilon}(u)(x)=\frac{\psi(x)}{\dist(x,\partial B_r(p))^2}\leq \frac{4\psi(x)}{\dist(x_0,\partial B_r(p))^2}\leq \frac{4\psi(x_0)}{\dist(x_0,\partial B_r(p))^2}\leq 4e_{\epsilon}(u)(x_0)$$
for all $x\in B_{\sigma_0}(p)$. In particular, on the ball $B_{\sigma_0}(x_0)$, it follows from \eqref{gl.boch} that
\begin{equation}\label{bsigma.boch}
\Delta e_{\epsilon}(u)\leq C(n,k)e_{\epsilon}(u)+C(N)e_{\epsilon}(u)(x_0)e_{\epsilon}(u).
\end{equation}

Next, for any smooth function $f\in C^{\infty}(M)$, standard computations give
\begin{equation*}
\begin{split}
&\frac{d}{ds}\left(s^{1-n}\int_{\partial B_s(x_0)}f\right)=\\
=&s^{1-n}\int_{\partial B_s(x_0)}f(-\Delta d_{x_0})+(1-n)s^{-n}\int_{\partial B_s(x_0)}f-s^{1-n}\int_{B_s(x_0)}\Delta f\\
=&-s^{1-n}\int_{B_s(x_0)}\Delta f+s^{1-n}\int_{\partial B_s(x_0)}\left(\frac{(1-n)}{d_{x_0}}-\Delta d_{x_0}\right),
\end{split}
\end{equation*}
while the Hessian comparison theorem gives
$$\frac{1-n}{d_{x_0}}-\Delta d_{x_0}\geq -C(n,k),$$
so that we arrive at the mean value inequality
\begin{equation}\label{mvi.app}
\frac{d}{ds}\left(s^{1-n}\int_{\partial B_s(x_0)}f\right)\geq-s^{1-n}\int_{B_s(x_0)}\Delta f-C(n,k)s^{1-n}\int_{\partial B_s(x_0)}f.
\end{equation}
In particular, taking $f=e_{\epsilon}(u)$ and applying \eqref{bsigma.boch} gives
\begin{equation*}
\begin{split}
&\frac{d}{ds}\left(s^{1-n}\int_{\partial B_s(x_0)}e_{\epsilon}(u)\right)\geq \\
\geq&-C(N,n,k)(1+e_{\epsilon}(u)(x_0))s^{1-n}\int_{B_s(x_0)}e_{\epsilon}(u)-C(n,k)s^{1-n}\int_{\partial B_s(x_0)}e_{\epsilon}(u)
\end{split}
\end{equation*}
for all $s\in [0,\sigma_0)$. 

For a suitable constant $C=C(N,n,k)$, it follows that
$$\frac{d}{ds}\left(s^{1-n}e^{Cs}\int_{\partial B_s(x_0)}e_{\epsilon}(u)\right)\geq -C [1+e_{\epsilon}(u)(x_0)]s^{1-n}\int_{B_s(x_0)}e_{\epsilon}(u)$$
for all $s\in [0,\sigma_0)$. Note, moreover, that for $s\in [0,\sigma_0)$ we have
\begin{equation*}
\begin{split}
&s^{1-n}\int_{B_s(x_0)}e_{\epsilon}(u)\leq\left(s^{-n}\int_{B_s(x_0)}e_{\epsilon}(u)\right)^{1/2}\left(s^{2-n}\int_{B_s(x_0)}e_{\epsilon}(u)\right)^{1/2}\leq\\
&\text{(since $e_{\epsilon}(u)\leq 4e_{\epsilon}(u)(x_0))$ }\leq 2(e_{\epsilon}(u)(x_0))^{1/2}\left(s^{2-n}\int_{B_s(x_0)}e_{\epsilon}(u)\right)^{1/2}
\end{split}
\end{equation*}
At the same time, by Lemma \ref{mono}, we have
$$s^{2-n}\int_{B_s(x_0)}e_{\epsilon}(u)\leq C'(n,k)r^{2-n}\int_{B_r(x_0)}e_{\epsilon}(u)\leq C'(n,k)r^{2-n}E_{\epsilon}(u;B_{2r}(p_0)).$$
Thus, assuming that
\begin{equation}
r^{2-n}E_{\epsilon}(u;B_{2r}(p))<\eta,
\end{equation}
we can combine the estimates above to arrive at an inequality of the form
$$\frac{d}{ds}\left(s^{1-n}e^{Cs}\int_{\partial B_s(x_0)}e_{\epsilon}(u)\right)\geq -C(N,n,k)[1+e_{\epsilon}(u)(x_0)]e_{\epsilon}(u)(x_0)^{1/2}\eta^{1/2}$$
for all $s\in [0,\sigma_0)$. 

In particular, for any $\sigma\in [0,\sigma_0)$, upon integrating the preceding inequality over $s\in [0,\sigma)$, we see that
\begin{eqnarray*}
e_{\epsilon}(u)(x_0)&\leq & C\sigma^{-n}E_{\epsilon}(u;B_{\sigma}(x_0))+C\sigma\eta^{1/2}[1+e_{\epsilon}(u)(x_0)]\sqrt{e_{\epsilon}(u)(x_0)}\\
&\leq &C_0(N,n,k)\cdot \left(\sigma^{-2}\eta+\sigma\eta^{1/2}[1+e_{\epsilon}(u)(x_0)]\sqrt{e_{\epsilon}(u)(x_0)}\right).
\end{eqnarray*}
Multiplying through by $\sigma^2$ and setting $\beta(\sigma):=\sigma^2e_{\epsilon}(u)(x_0)$, we can rewrite this estimate as
\begin{equation}\label{beta.sig.ineq}
\beta(\sigma)\leq C_0\cdot \left(\eta+\eta^{1/2}[\sigma^2+\beta(\sigma)]\sqrt{\beta(\sigma)}\right)
\end{equation}
Recall that 
$$\beta(\sigma_0)=\sigma_0^2e_{\epsilon}(u)(x_0)=\frac{1}{4}\max_{x\in B_r(p)}\dist(x,\partial B_r(p))^2e_{\epsilon}(u)(x),$$
so to complete the proof of the theorem, it suffices to show that
$$\beta(\sigma_0)=\sigma_0^2e_{\epsilon}(u)(x_0)\leq 1/16$$
provided $\eta$ is sufficiently small. Indeed, if we assume that $\beta(\sigma_0)>1/16$, then there must be some $\sigma\in (0,\sigma_0)$ for which
$$\beta(\sigma)=\sigma^2e_{\epsilon}(u)(x_0)=1/16.$$
At this $\sigma\in (0,\sigma_0)\subset (0,1)$, the inequality \eqref{beta.sig.ineq} gives
\begin{equation}
\frac{1}{16}\leq C_0\cdot \left(\eta+\eta^{1/2}\left(1+\frac{1}{16}\right)\frac{1}{4}\right),
\end{equation}
but clearly this cannot hold for $\eta<\eta_0(N,n,k)$ given by $C_0\left(\eta_0+\eta_0^{1/2}\right)=\frac{1}{100}$. This completes the proof.

\end{proof}

\begin{remark}\label{gen.rk} Examining the proof of Lemma \ref{gl.epsreg}, one sees that just two properties of $e_{\epsilon}(u)$ play an essential role: the elliptic inequality \eqref{gl.boch} and the energy monotonicity stated in Lemma \ref{mono}. Using the same proof, one can likewise argue that for any nonnegative function $0\leq f\in C^{\infty}(M)$ satisfying 
$$\frac{d}{dr}\left(e^{A_1r^2}r^{2-n}\int_{B_r(p)}f\right)\geq 0$$
and
$$d^*d f\leq A_2(f+f^2)$$
on $M$ for some constants $A_1,A_2$, there exists $\eta_0(M,A_1,A_2)>0$ such that if 
$$r^{2-n}\int_{B_{2r}(p)}f<\eta_0$$
for some $2r<\min\{\inj(M,g),1\}$, then
$$\|f\|_{L^{\infty}(B_{r/2}(p))}\leq 1/r^2.$$

\end{remark}

\subsection{Universal lower bound on the energy of sphere-valued harmonic maps}

In this subsection we prove the following proposition, which is needed for the proof of Lemma \ref{no.ind.conc}.

\begin{proposition}\label{skener.prop} For any closed Riemannian manifold $(M^n,g)$, there exists a positive constant $\beta(M)>0$ such that 
$$E(u)\geq \beta$$
for any nonconstant harmonic map $u\colon M\to \mathbb{S}^k$ to the unit sphere of any dimension $k\in \mathbb{N}$.

\end{proposition}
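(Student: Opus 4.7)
The plan is to argue by contradiction: assume there is a sequence of nonconstant harmonic maps $u_j\colon M\to\mathbb{S}^{k_j}$ with $E(u_j)\to 0$ and show that each such $u_j$ must in fact be constant for $j$ sufficiently large. The overall strategy has two steps: first, upgrade the global energy smallness to the pointwise gradient smallness $\|du_j\|_{L^\infty}^2\to 0$, with constants independent of the target dimension $k_j$; then exploit the harmonic map equation $\Delta u_j=|du_j|^2 u_j$ together with the fact that a small gradient forces the image to lie in a small spherical cap to conclude that $u_j$ is constant.

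For the first step I would combine the harmonic-map monotonicity formula (the $\varepsilon\to 0$ version of Lemma~\ref{mono}, using that the stress-energy tensor of a harmonic map is divergence-free) with the sphere-valued Bochner identity~\eqref{bochner}. The former gives the Morrey-type bound
\begin{equation*}
r^{2-n}\int_{B_r(p)}|du_j|^2\,dv_g\leq C(M,g)\,R^{2-n}\int_{B_R(p)}|du_j|^2\,dv_g\leq 2C(M,g)R^{2-n}E(u_j)
\end{equation*}
for $r\leq R\leq R_0(M)$, while the latter implies that $f:=|du_j|^2$ satisfies $\Delta f\leq A(M,g)(f+f^2)$ with $A$ independent of $k_j$. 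I can then invoke Remark~\ref{gen.rk} with fixed $r_0,\eta_0>0$ depending only on $(M,g)$ to conclude that, once $E(u_j)$ is small enough, $|du_j|^2\leq r_0^{-2}$ everywhere. With this a priori bound in hand, Bochner linearizes to $\Delta f\leq A'(M)f$, and the standard Moser mean-value inequality for nonnegative subsolutions upgrades this to $\|du_j\|_{L^\infty}^2\leq C'(M)\,r_0^{-n}\int_M|du_j|^2\,dv_g\leq C''(M)E(u_j)\to 0$.

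For the second step I would fix a basepoint $x_0\in M$, set $a_j:=u_j(x_0)\in\mathbb{S}^{k_j}$, and orthogonally decompose $u_j=w_j a_j+\vec v_j$ with $\vec v_j\perp a_j$. The Lipschitz bound $|u_j(x)-a_j|\leq\|du_j\|_{L^\infty}\diam(M,g)$ together with the identity $|u_j-a_j|^2=2(1-w_j)$ gives $\|\vec v_j\|_{L^\infty}^2\leq\delta_j^2:=\|du_j\|_{L^\infty}^2\diam(M,g)^2$, which tends to zero by step one. Projecting $\Delta u_j=|du_j|^2u_j$ onto $a_j^\perp$ yields $\Delta\vec v_j=|du_j|^2\vec v_j$, and testing this against $\vec v_j$ gives
\begin{equation*}
\int_M|d\vec v_j|^2\,dv_g=\int_M|du_j|^2|\vec v_j|^2\,dv_g\leq\|\vec v_j\|_{L^\infty}^2\int_M|du_j|^2\,dv_g.
\end{equation*}
Since $w_j^2+|\vec v_j|^2\equiv 1$ implies $|dw_j|^2\leq|\vec v_j|^2|d\vec v_j|^2/(1-|\vec v_j|^2)$, one has $\int_M|du_j|^2\,dv_g\leq(1-\delta_j^2)^{-1}\int_M|d\vec v_j|^2\,dv_g$; feeding this back gives $\bigl(1-\delta_j^2/(1-\delta_j^2)\bigr)\int_M|d\vec v_j|^2\,dv_g\leq 0$, which for $\delta_j^2<1/2$ forces $d\vec v_j\equiv 0$. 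Hence $\vec v_j$ is constant, $w_j^2=1-|\vec v_j|^2$ is constant, and $w_j$ itself is constant by continuity on the connected $M$; so $u_j$ is constant, contradicting the hypothesis.

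The hard part will be the uniform-in-$k_j$ control in step one. This works out because $\mathbb{S}^{k_j}\subset\mathbb{R}^{k_j+1}$ enters the relevant estimates only through the extrinsic equation $\Delta u=|du|^2u$ and through the intrinsic pinching $\mathrm{Sec}_{\mathbb{S}^{k_j}}\equiv 1$, both independent of $k_j$; consequently the constants $A$, $A'$, $r_0$, and $\eta_0$ depend only on the domain geometry $(M,g)$, and the resulting threshold $\beta(M)$ can be chosen uniformly in the sphere dimension.
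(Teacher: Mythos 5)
Your proposal is correct and follows essentially the same route as the paper: a $k$-independent small-energy gradient estimate obtained from the harmonic-map monotonicity formula, the sphere-valued Bochner inequality, Remark~\ref{gen.rk} and a mean-value inequality (this is precisely Lemma~\ref{uni.eps}), followed by the observation that the resulting Lipschitz bound confines the image to a small spherical cap, after which the equation $\Delta u=|du|^2u$ forces $u$ to be constant. The only difference is cosmetic: in the final step the paper simply integrates the equation against the fixed vector $e_0=u(x_0)$, using $\langle u,e_0\rangle>\frac{1}{2}$, whereas you use the orthogonal decomposition $u=wa+\vec v$ and an absorption argument; both are valid.
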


The main ingredient needed to prove the proposition is the following simple lemma, observing that the constants in a simplified version of the small energy regularity theorem for sphere-valued harmonic maps--originally proved in \cite{Evans}--do not depend on the dimension $k$ of the target sphere $\mathbb{S}^k$.

\begin{lemma}\label{uni.eps} On a closed Riemannian manifold $(M^n,g)$, there exist constants $\beta_0(M)>0$ and $C(M)<\infty$ such that if $u\colon M\to \mathbb{S}^k$ is a harmonic map to the unit sphere of any dimension $k\in \mathbb{N}$ of energy
$$E(u)<\beta_0,$$
then 
$$\|du\|_{L^{\infty}(M)}^2<C E(u).$$

\end{lemma}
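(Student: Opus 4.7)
The key observation behind the $k$-independence of the constants is that $\mathbb{S}^k$-valued harmonic maps satisfy a Bochner sub-solution inequality whose coefficients depend only on $(M,g)$, not on $k$. Specifically, for any harmonic map $u\colon M\to\mathbb{S}^k$, the Bochner identity
$$-\tfrac{1}{2}\Delta|du|^2=|\Hess(u)|^2+\la\Ric_M,du^*du\ra-|du|^4$$
together with $|\Hess(u)|^2\geq 0$ yields $\Delta|du|^2\leq A_1(M)|du|^2+2|du|^4$, with $A_1(M)$ controlled by $\|\Ric_M\|_{L^\infty}$. Moreover, the stress-energy argument in the proof of Lemma~\ref{mono} (with $W\equiv 0$) gives energy monotonicity for $|du|^2$ with constants depending only on $(M,g)$.

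First, I would apply a harmonic-maps version of the small-energy regularity statement from Remark~\ref{gen.rk}, with $f:=|du|^2$. The hypotheses on $f$ are exactly the two items just listed, so Remark~\ref{gen.rk} supplies a constant $\eta_0=\eta_0(M)>0$, independent of $k$, such that on any ball $B_{2r}(p)\subset M$ with $2r<\min\{\inj(M,g),1\}$,
$$r^{2-n}\int_{B_{2r}(p)}|du|^2\,dv_g<\eta_0\quad\Longrightarrow\quad r^2\|du\|^2_{L^\infty(B_{r/2}(p))}\leq 1.$$
Choose once and for all a radius $r_0=r_0(M)$ with $2r_0<\min\{\inj(M,g),1\}$, and set $\beta_0:=\tfrac{1}{2}\eta_0\, r_0^{n-2}$. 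If $E(u)<\beta_0$, then for every $p\in M$
$$r_0^{2-n}\int_{B_{2r_0}(p)}|du|^2\,dv_g\leq r_0^{2-n}\cdot 2E(u)<\eta_0,$$
which, after covering $M$ by finitely many balls $B_{r_0/2}(p)$, produces a uniform bound $\|du\|_{L^\infty(M)}^2\leq r_0^{-2}$, independent of $k$.

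With this first, non-sharp bound in hand, the Bochner sub-solution inequality becomes \emph{linear}:
$$\Delta|du|^2\leq\bigl(A_1(M)+2r_0^{-2}\bigr)|du|^2=:A_2(M)|du|^2\quad\text{on }M.$$
The Moser iteration / De~Giorgi--Nash--Moser mean-value inequality for nonnegative subsolutions of $\Delta v\leq A_2 v$ on compact Riemannian manifolds gives $\sup_{B_{r_0/4}(p)}|du|^2\leq C(M)\int_{B_{r_0/2}(p)}|du|^2\,dv_g$ for every $p$. Covering $M$ by finitely many such balls yields
$$\|du\|_{L^\infty(M)}^2\leq C(M)\int_M|du|^2\,dv_g=2C(M)E(u),$$
which is the desired inequality. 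Thus $\beta_0(M)$ and $C(M)$ can both be taken independent of $k$.

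The only substantive point to verify is the $k$-independence of every constant along the way; this is immediate since the Bochner inequality, the monotonicity of Lemma~\ref{mono}, the threshold $\eta_0$ in Remark~\ref{gen.rk}, and the Moser constant for $\Delta v\leq A_2 v$ on $M$ all depend solely on the intrinsic geometry of $(M,g)$. I do not expect any genuine obstacle here; the mild technical care is to apply Remark~\ref{gen.rk} in the purely harmonic (not Ginzburg--Landau) setting, which is covered by the statement of that remark since our $f=|du|^2$ satisfies exactly its two hypotheses.
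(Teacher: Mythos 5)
Your proposal is correct, and its skeleton coincides with the paper's: both arguments first verify that $f=|du|^2$ satisfies the two $k$-independent hypotheses of Remark~\ref{gen.rk} (the Bochner inequality \eqref{du.subeq} and the harmonic-map monotonicity \eqref{hm.mono}), use the resulting small-energy regularity with threshold $\beta_0(M)$ to get the rough bound $\|du\|^2_{L^\infty}\leq C(M)$ independent of $k$, and then exploit this to linearize the Bochner inequality to $d^*d|du|^2\leq C'(M)|du|^2$. The only divergence is in the last step: where you invoke the De~Giorgi--Nash--Moser local maximum principle for nonnegative subsolutions of $d^*d v\leq A_2 v$ (with the $L^1$ version, or the $L^2$ version plus absorption against the rough bound) to pass from the linear inequality to $\sup_M|du|^2\leq C(M)\int_M|du|^2$, the paper stays self-contained: it integrates the spherical mean-value differential inequality \eqref{mvi.app} at a maximum point of $|du|^2$, controls the right-hand side via the monotonicity \eqref{hm.mono} and H\"older, and closes with Cauchy--Schwarz to get $|du|^2(x_0)\leq C_M(\sqrt{E(u)}\,|du|(x_0)+E(u))$. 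Your route is shorter at the cost of citing standard elliptic theory (whose constants indeed depend only on the geometry of $(M,g)$ and $A_2(M)$, so $k$-independence is preserved); the paper's route re-uses machinery it has already set up in the appendix and avoids any external regularity input.
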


\begin{proof}
Recall that for sphere-valued harmonic maps $u\colon M\to \mathbb{S}^k$, the Bochner identity gives
$$-\frac{1}{2}d^*d|du|^2=|\Hess(u)|^2+\langle \mathrm{Ric}_M,du^*du\rangle-|du|^4,$$
so in particular we have the inequality
\begin{equation}\label{du.subeq}
d^*d|du|^2\leq C_M(|du|^2+|du|^4),
\end{equation}
where the dimension $k$ of the target sphere $\mathbb{S}^k$ plays no role. Likewise, the standard monotonicity identity for harmonic maps gives
\begin{equation}\label{hm.mono}
\frac{d}{dr}\left(e^{C_Mr^2}r^{2-n}\int_{B_r(p)}|du|^2\right)\geq 0,
\end{equation}
where, as in Lemma \ref{mono}, the constant $C=C_M$ depends only on the dimension $n$ and curvature bounds of the domain manifold $(M^n,g)$. 

In particular, per Remark \ref{gen.rk}, setting $\delta_M=\inj(M)/2$, it follows that there exists some $\eta_0(M)<0$ independent of $k$ such that if
\begin{equation}\label{small.inj.en}
\delta^{2-n}\int_{B_{2\delta}(p)}|du|^2<\eta_0,
\end{equation}
then
$$\|du\|_{L^{\infty}(B_{\delta/2}(p))}\leq 1/\delta.$$
Thus, if
$$E(u)<\beta_0,$$
where $\beta_0:=\frac{1}{2}\delta_M^{n-2}\eta_0,$ then we can apply the preceding estimate on the ball $B_{2\delta}(p)$ of radius $2\delta_M=\inj(M)$ at every point $p\in M$, obtaining
$$\|du\|_{L^{\infty}(M)}\leq C(M)=1/\delta_M.$$

Returning to the Bochner formula for $|du|$ and estimating $|du|^4\leq C_M |du|^2$, we deduce that if $E(u)<\beta_0$, then 
$$dd^*|du|^2\leq C_M'|du|^2.$$
From here, we may apply the mean value inequality \eqref{mvi.app} with $f=|du|^2$ to see that for any $x_0\in M$ and $s\in (0,\inj(M))$,
$$\frac{d}{ds}\left(s^{1-n}\int_{\partial B_s(x_0)}|du|^2\right)\geq -C_M's^{1-n}\int_{B_s(x_0)}|du|^2-C_Ms^{1-n}\int_{\partial B_s(x_0)}|du|^2,$$
whence
$$\frac{d}{ds}\left(s^{1-n}e^{Cs}\int_{\partial B_s(x_0)}|du|^2\right)\geq -C s^{1-n}\int_{B_s(x_0)}|du|^2$$
for a suitable constant $C=C(M)$ still independent of $k$. In particular, taking $x_0$ such that $|du|^2(x_0)=\max_{p\in M}|du|^2(p)$, a simple application of H\"older's inequality to the preceding estimate gives
$$\frac{d}{ds}\left(s^{1-n}e^{Cs}\int_{\partial B_s(x_0)}|du|^2\right)\geq -C' \left(s^{2-n}\int_{B_s(x_0)}|du|^2\right)^{1/2}|du|(x_0),$$
while \eqref{hm.mono} gives
$$s^{2-n}\int_{B_s(x_0)}|du|^2\leq C E(u),$$
so that
$$\frac{d}{ds}\left(s^{1-n}e^{Cs}\int_{\partial B_s(x_0)}|du|^2\right)\geq -C \sqrt{E(u)}|du|(x_0).$$
Choosing $t\in [\delta_M/2,\delta_M]$ such that
$$\int_{\partial B_t(x_0)}|du|^2\leq \frac{2}{\delta_M}\int_{B_{\delta_M}(x_0)}|du|^2<C'(M)E(u)$$
and integrating the preceding inequality over $s\in [0,t)$, we find that
$$t^{1-n}e^{Ct}\int_{\bd B_t(x_0)}|du|^2-\vol(\mathbb{S}^{n-1})|du|(x_0)^2\geq -Ct \sqrt{E(u)}|du|(x_0).$$
Rearranging and recalling that $t\in [\delta_M/2,\delta_M]$, we obtain an estimate of the form
$$|du|(x_0)^2\leq C_M\left(\sqrt{E(u)}|du(x_0)|+E(u)\right),$$
and by a simple application of Cauchy-Schwarz to the term $\sqrt{E(u)}|du|(x_0)$, we arrive at an inequality of the desired form
$$|du|^2(x_0)\leq CE(u),$$
completing the proof. 
\end{proof}

We can now complete the proof of Proposition \ref{skener.prop}.

\begin{proof}[Proof of Proposition \ref{skener.prop}]

Let $u\colon M^n\to \mathbb{S}^k$ be a harmonic map of small energy
$$E(u)<\beta_0,$$
where $\beta_0$ is the constant from Lemma \ref{uni.eps}, so that
$$\|du\|_{L^{\infty}(M)}^2<C E(u).$$
Fixing any $x_0\in M$, it then follows that
$$|u(x)-u(x_0)|\leq C\sqrt{E(u)} \dist(x,x_0)$$
for any $x\in M$. In particular if 
$$E(u)<\beta(M):=\min\{\beta_0,(C\diam(M))^{-2}\},$$
writing $e_0:=u(x_0)$, it follows that
$$\langle u(x),e_0\rangle>\frac{1}{2}$$
for all $x\in M$. But it is easy to see that this forces $u$ to be constant: if $\langle u(x),e_0\rangle>\frac{1}{2}$, intregrating the harmonic map equation
$$\Delta u=|du|^2u$$
against the constant vector $e_0$ gives
$$0=\int |du|^2\langle u,e_0\rangle\geq\frac{1}{2}\int_M|du|^2.$$
This completes the proof.

\end{proof}

\end{document}